\newcommand{\notinsubfile}[1]{}
\DeclarePairedDelimiter\floor{\lfloor}{\rfloor}
\def\thm@space@setup{%
	\thm@preskip=0.5cm plus 0.25cm minus 0cm
	\thm@postskip=\thm@preskip 
}
\numberwithin{equation}{section}
\newtheorem{theorem}{Theorem}[section]
\newtheorem{corollary}[theorem]{Corollary}
\newtheorem{lemma}[theorem]{Lemma}
\newtheorem{proposition}[theorem]{Proposition}
\newtheorem{assumption}[theorem]{Assumption}
\theoremstyle{remark}
\newtheorem{definition}[theorem]{Definition}
\newtheorem{remark}[theorem]{Remark}
\newtheorem{example}[theorem]{Example}
\let\oldproofname=\proofname
\renewcommand{\proofname}{\rm\bf{\oldproofname}}
\newcommand{\mcB}{\mathcal{B}}
\newcommand{\mcC}{\mathcal{C}}
\newcommand{\mcE}{\mathcal{E}}
\newcommand{\mcF}{\mathcal{F}}
\newcommand{\mcG}{\mathcal{G}}
\newcommand{\mcH}{\mathcal{H}}
\newcommand{\mcK}{\mathcal{K}}
\newcommand{\mcL}{\mathcal{L}}
\newcommand{\mcM}{\mathcal{M}}
\newcommand{\mcN}{\mathcal{N}}
\newcommand{\mcP}{\mathcal{P}}
\newcommand{\mcS}{\mathcal{S}}
\newcommand{\mcX}{\mathcal{X}}
\newcommand{\indic}{\mathds{1}}
\newcommand{\mbC}{\mathbb{C}}
\newcommand{\mbE}{\mathbb{E}}
\newcommand{\mbN}{\mathbb{N}}
\newcommand{\mbP}{\mathbb{P}}
\newcommand{\mbR}{\mathbb{R}}
\newcommand{\mbT}{\mathbb{T}}
\newcommand{\mbZ}{\mathbb{Z}}
\newcommand{\mfB}{\mathfrak{B}}
\newcommand{\mfR}{\mathfrak{R}}
\newcommand{\msH}{\mathscr{H}}
\newcommand{\msL}{\mathscr{L}}
\newcommand{\msN}{\mathscr{N}}
\newcommand{\msP}{\mathscr{P}}
\let\div\relax
\DeclareMathOperator{\div}{div}
\DeclareMathOperator*{\esssup}{ess\,sup}
\DeclareMathOperator*{\Tr}{Tr}
\newcommand{\supp}{\text{supp}}
\newcommand{\dd}{\mathop{}\!\mathrm{d}}
\newcommand{\Op}{\text{Op}}
\newcommand{\tzero}{|_{t=0}}
\newcommand{\blue}[1]{{\color{blue} #1}}
\newcommand{\vertiii}[1]{{\left\vert\kern-0.25ex\left\vert\kern-0.25ex\left\vert #1 
		\right\vert\kern-0.25ex\right\vert\kern-0.25ex\right\vert}}
\newcommand*{\circledleq}{%
	\mathrel{%
		\tiny\mathpalette\@mathcircledtikz{\leqslant}%
	}%
}
\newcommand*{\circledless}{%
	\mathrel{%
		\tiny\mathpalette\@mathcircledtikz{<}%
	}%
}
\newcommand*{\circledgeq}{%
	\mathrel{%
		\tiny\mathpalette\@mathcircledtikz{\geqslant}%
	}%
}
\newcommand*{\circledgreater}{%
	\mathrel{%
		\tiny\mathpalette\@mathcircledtikz{>}%
	}%
}
\newcommand*{\circledequal}{%
	\mathrel{%
		\tiny\mathpalette\@mathcircledtikz{=}%
	}%
}
\newcommand*{\@mathcircledtikz}[2]{%
	\tikz[
	baseline=(X.base),
	inner sep=0.5\pgflinewidth,
	line width={%
		0.4pt%
		\ifx#1\scriptstyle -.1pt\fi
		\ifx#1\scriptscriptstyle -.2pt\fi
	},%
	]
	\node[circle,draw] (X) {$#1#2\m@th$};%
}
\newcommand*\bigcdot{\mathpalette\bigcdot@{.5}}
\newcommand*\bigcdot@[2]{\mathbin{\vcenter{\hbox{\scalebox{#2}{$\m@th#1\bullet$}}}}}
\newlength{\myleftlen}
\newcommand{\RN}[1]{%
	\textup{\uppercase\expandafter{\romannumeral#1}}%
}
\long\def\avi#1{{\color{red}A:\ #1}}
\title{\vspace{-5em} Variational Methods in Stochastic PDEs}  
\author{A. Mayorcas} 
\affil{Mini-course given at \emph{Probability and Analysis Summer School }\\\vspace{0.8em}
Feza G\"ursey Institute, Istanbul\\ \vspace{0.8em}
Problem Classes: \emph{D. O'Kane}
}
\date{\today}
\begin{document}
	\maketitle
	\tableofcontents

	\chapter{ Introduction and Preliminaries}\label{ch:Background}
This mini-course aims to serve as a brief introduction to the study of stochastic partial differential equations. The main goal of these notes is to provide a useful resource for both PDE and stochastic analysts. We aim to demonstrate both how some standard PDE techniques can be extended to the stochastic setting and how the tools of stochastic analysis can be extended to infinite dimensional settings, common to the analysis of PDE. The main focus of this course is on the variational method for coercive-monotone SPDE. The tools we introduce along the way should be familiar to researchers in both areas and hopefully this provides a sufficiently broad base of tools for applications to problems beyond the scope of this course. In Chapter \ref{ch:Extras} we give a brief introduction to the pathwise approach to SPDE, which has been a topic of significant research in recent years, sparked by the development of novel approaches to singular SPDE, \cite{hairer_14_RegStruct,gubinelli_imkeller_perkowski_15_GIP}.\\ \par 
The variational method for SPDE stems from early work by E. Pardoux, \cite{pardoux_72}, which extended the variational approach to PDE developed by J. Lions, \cite{lions_11}. The variational approach to SPDE has been developed in a number of directions and we refer to \cite{lototsky_rozovsky_17,prevot_rockner_07} for a more comprehensive treatment of the literature and material. In particular, however, we mention the early works by I. Gyongy \& N. Krylov, \cite{gyongy_krylov_81_1,gyongy_krylov_81_2,gyongy_82_3} in which the authors extend the results presented here to the case of general semi-martingale drivers. Alongside the variational approach there are broadly two other, non-pathwise, frameworks for solving SPDE; the perspective of martingale problems, \cite{metivier_viot_88,pardoux_21_spde_introduction} and the semi-group approach, \cite{daprato_zabczyk_14,hairer_09}. We also wish to mention the book by F. Flandoli, \cite{flandoli_11} that focuses on stochastic perturbations of fluid equations, the collected work, \cite{dalang_koshnevisan_mueller_nualart_xiao_09}, that contains an overview of a number of different problems in the field of SPDE and the book by L. Zambotti on stochastic obstacle problems, \cite{zambotti_15}.\\ \par
These notes were written to accompany a $6$ hour mini-course delivered at the University of Oxford in April 2021. The four chapters broadly follow the material presented across four lectures, with more background, references and details to some arguments. In the remainder of this chapter we first list some conventions, notation and terminology that we use throughout, Subsection \ref{subsec:Conventions}. Then in Section \ref{sec:RandomnessPDE} we give some background on various motivations for studying SPDE problems, focussing in particular on a presentation of a simplified version of the Kraichnan model for a passive scalar in a turbulent fluid. In the second half of that section, Subsection \ref{subsec:WorkedExample}, we give a worked example of the variational method that is presented abstractly in Chapter \ref{ch:SPDE}. The remainder of the first chapter is dedicated to recalling some necessary background material and developing the theory of stochastic integration in Hilbert spaces. In Chapter \ref{ch:PDE} we return to deterministic problems and present a summary of J. Lions variational approach to coercive PDE. Chapter \ref{ch:SPDE} contains the main focus of the course, the variational approach to coercive-monotone SPDE. We present the abstract method along with an extension of the worked example given in Subsection \ref{subsec:WorkedExample}. Finally in Chapter \ref{ch:Extras} we give a short introduction to the pathwise approach to non-singular SPDE, using a stochastic Burger's equation with white noise forcing as an example.
\subsection{Conventions, Notation and Terminology}\label{subsec:Conventions}
Throughout these notes we use the conventional notation for stochastic equations, that is we use the shorthand
	\begin{equation}\label{eq:GenParSDPEIntegral}
		\dd u_t =A(u_t)\dd t + B(u_t)\dd W_t, \quad \iff \quad u_t = u_0  +\int_0^t A(u_s)\dd s + \int_0^t B(u_s)\dd W_s.
	\end{equation}
Furthermore, when we refer to \textit{weak solutions}, to problems such as \eqref{eq:GenParSDPEIntegral}, we mean a weak solution in the PDE sense, that is we ask \eqref{eq:GenParSDPEIntegral} to hold in the sense of distributions. This is not to be confused with the notion of weak solution in stochastic analysis, which typically means a solution not adapted to a given filtration. The solutions we consider will always be weak PDE solutions but strong stochastic solutions. We adopt a slightly unusual convention and write $[X]_t$ for the quadratic variation of a stochastic process, instead of the usual angle brackets. This is to avoid confusion with the standard notation for inner products on Hilbert spaces and duality pairings between Banach spaces.\\ \par 
Unless otherwise specified, all Banach and Hilbert spaces are separable and whenever we refer to a \text{basis} $(e_k)_{k\geq 1}$ of a Hilbert space we mean a complete, orthonormal system. For a Banach space $E$ and a Hilbert space $U$ we typically write $\|\,\cdot\,\|_E$ for the norm on $E$ and $\|\,\cdot\,\|_U,\,\langle \,\cdot\,,\,\cdot\,\rangle_U$ for the norm and inner product on $U$. If such an expression appears without an explicit label it will always be explained what is meant. We write $E^\ast$ for the dual of $E$, the set of bounded, linear functionals $\varphi:E \rightarrow \mbR$. We say that a sequence $(u_n)_{n\geq 1}\subset E$ converges weakly in $E$, if $(\varphi(u_n))_{n\geq1} \subset \mbR$ converges in $\mbR$ for every $\varphi \in E^\ast$. We say that a sequence $(\varphi_n)_{n\geq 1}\subset E^\ast$ converges in weak-$*$ if the evaluations $(\varphi_n(u))_{n\geq 1}\subset \mbR$ are a convergent sequence for every $u \in E$. Through the Riesz representation theorem we identify the dual of a Hilbert space with itself, so that we say a sequence $(u_n)_{n\geq 1}\subset U$ converges weakly if $(\langle h,u_n\rangle_U)_{n\geq 1}\subset \mbR$ is convergent for every $h\in U$.
We say that a map $[0,\infty) \ni t\mapsto u_t \in E$ is continuous if it is continuous with respect to the strong topology on $E$. Given a $T>0$, we equip the space of continuous maps $u:[0,T]\rightarrow E$ with the structure of a Banach space, by defining the norm
\begin{equation*}
	\|u\|_{C_TE} := \sup_{t \in [0,T]} \|u_t\|_E.
\end{equation*}
Given a measure space $(E,\mcB(E),\mu)$, a measurable space $(F,\mcB(F))$ and a measurable map $X:E\rightarrow F$, the push-forward of $\mu$ by $X$, $X\#\mu = \mu(X^{-1}(\,\,\cdot\,))$ defines a measure on $F$ and for any integrable function $g:F\rightarrow \mbR$ and $A\in \mcB(F)$, we have the identity
\begin{equation*}
	\int_A g(x)\dd X\# \mu(x) = \int_{X^{-1}(A)} g(X(x))\dd\mu(x).
\end{equation*}
Given a family of sets $\{A\}$, we write $\sigma(\{A\})$ for the smallest $\sigma$-algebra containing the collection. Given a measurable space $(E,\mcB(E))$ we write $\mcP(E)$ for the set of probability measures on $E$. We say that a sequence of probability measures $(\mu_n)_{n\geq 1}\subset \mcP(E)$ converges weakly to $\mu\in \mcP(E)$ if $\int_E f \dd\mu^n\rightarrow \int_E f\dd \mu$ for all $f:E\rightarrow \mbR$, continuous and bounded. We say that a sequence of random variables converges weakly if their laws converge weakly. The strong topology on $\mcP(E)$ is the topology induced by the total variation distance.\\ \par 
For two scalars, $s,\,t \in \mbR$, we use the notation $s\wedge t$ to denote their minimum and $s\vee t$ to denote their maximum. Typically we use this when considering functions $u:[0,\infty)\rightarrow E$, where we write $u_{t\wedge s}$ to mean the value of $u$ at the minimum of $s,\,t$. For functions of multiple variables, $u:\mbR^d\rightarrow \mbR$, we use standard multi-index notation to denote partial derivatives, i.e for $k \in \mbN^d$, $\partial_x^{k}u$ denotes the $|k|^{\text{th}}$ partial derivative in the coordinates describe by the multi-index $k =(k_1,\,\ldots,k_d)$. If we write $D^{|k|}$ we mean the vector of all possible $|k|^{\text{th}}$ order derivatives.
\section{Randomness in PDE}\label{sec:RandomnessPDE}
In very abstract terms we may think of a typical PDE problem as being constituted of the following elements; a finite dimensional metric space, $\Gamma$, a partial differential operator $L(u,Du,\ldots)$, acting on functions $u:\Gamma\rightarrow \mbR^n$, a non-linear functional, $F$, and some data $g$ defined on the boundary $\partial \Gamma$. The associated PDE problem is to find $u$, a solution in a suitable sense to the problem
\begin{equation}\label{eq:GenPDE}
	\begin{cases}
		L u = F(u,D u, D^2 u,\ldots), &\text{inside }\Gamma,\\
		u|_{\partial \Gamma} = g,
	\end{cases}
\end{equation}
The distinction between the linear part, $L$, and non-linear part, $F$, is not necessary but provides a useful framing for the equations considered in these notes.\\ \par 
In this setting there are essentially four ways in which noise can be included in a PDE problem; through the linear operator $L$, through the non-linearity $F$, through the domain, $\Gamma$ or the boundary data $u|_{\partial \Gamma}=g$. We will not explicitly consider the latter two cases; some treatment of random data equations can be found in \cite{flandoli_17, tzvetkov_16} while some treatment of equation on random domains can be found in \cite{xiu_tartakovsky_06, harbrecht_peters_siebenmorgen_16}. In fact we will mostly restrict ourselves to cases where randomness occurs in the \textit{right hand side} of the equation, $F$.\\ \par 
Motivations for studying SPDE come from many directions in both mathematics and the applied sciences. To mention only a few references, they have found applications in: constructive quantum field theory, \cite{damgaard_huffel_87,gubinelli_hofmanova_18_pde,hairer_14_Phi43,kupiainen_16}; the study of disordered media, \cite{labbe_19,allez_chouk_15}; non-linear filtering, \cite{zakai_69,pardoux_79}; population dynamics, \cite{dawson_72,fleming_75} and stochastic fluid dynamics including models of turbulence, \cite{kraichnan_68,flandoli_11}. In particular we refer to the introductions of \cite{daprato_zabczyk_14,pardoux_21_spde_introduction,dalang_koshnevisan_mueller_nualart_xiao_09} for more detailed summaries of some of these applications along with others not mentioned here. In order to motivate this course a little better we proceed to describe how an SPDE model can arise in the study of a passive scalar transported by a turbulent flow. This model is related to the Kraichnan model of turbulence, \cite{kraichnan_68}, mentioned above.
\subsection{Passive Scalar in a Turbulent Vector Field}\label{subsec:Turbulence}
We present a simplified version of a model developed by separately by R. Kraichnan, \cite{kraichnan_68} and A. Kazantsev \cite{kazantzev_68}, both of which were inspired by the work of G. Batchelor on turbulence, \cite{batchelor_00,moffatt_02}. More detailed presentations and analysis of this model and its variants can be found among the non-exhaustive references, \cite{flandoli_11,flandoli_luo_21_high, flandoli_galeati_luo_21_delayed,gess_yaroslavtsev_21, majda_kramer_99,chetrite_delannoy_gawedzki_07}. We will use SPDE related to this model as recurring examples throughout the course.\\ \par
Let us imagine that we are interested in the diffusion of a given concentration in a two dimensional fluid. We may model this situation by the simple transport equation, on $[0,\infty)\times \mbR^d$, with $\Gamma_0\subset \mbR^2$ closed, bounded, $\partial \Gamma_0$ sufficiently smooth, and $(v_t)_{t\geq 0}$ a family of divergence free vector fields,
\begin{equation}\label{eq:SimpleTransport}
	\begin{cases}
		\partial_t u_t - v_t \cdot \nabla u_t = 0, & \text{ in }\Gamma\\
		u\tzero = \frac{1}{|\Gamma_0|}\mathds{1}_{\Gamma_0}.&
	\end{cases}
\end{equation}
The solution $u_t:\Gamma \rightarrow \mbR$ describes the concentration at time $t\geq 0$. Formally speaking, due to the divergence free assumption, for any solution to \eqref{eq:SimpleTransport}, one has $\frac{1}{|\Gamma|}\int_{\Gamma} u_0(x)\dd x = \frac{1}{|\Gamma|}\int_{\Gamma} u_t(x)\dd x$. That is mass cannot be lost during the evolution, but the local density may vary. For simplicity, let us assume that $\Gamma=\mbT^2$, the two dimensional torus and as a toy example let us consider the simple vector fields,
\begin{equation*}
	v_t(x,y) = \begin{bmatrix}
		- y\\
		x
	\end{bmatrix}\theta_t,
\end{equation*}
for $t\mapsto \theta_t \in \mbR$ a differentiable function. The characteristics of $\eqref{eq:SimpleTransport}$ are the system of ODEs,
\begin{equation}\label{eq:TransportODE}
	\begin{cases}
		\frac{d}{dt}X_t =  Y_t \frac{d}{dt}\theta_t,\\
		\frac{d}{dt} Y_t = -X_t\frac{d}{dt}\theta_t.
	\end{cases}
\end{equation}
Provided the paths $t\mapsto \theta_t \in \mbR$ are sufficiently regular (and we ignore the issue of boundary data) we can hope to solve \eqref{eq:TransportODE} and using a flow map give the solution to \eqref{eq:SimpleTransport}, $u(t,x,y) = u_0(X^{-1}_t(x),Y^{-1}_t(y))$, where $(X^{-1}_t,Y^{-1}_t)$ denotes the backwards flow associated to \eqref{eq:TransportODE}, see \cite[Sec. 3.2]{evans_10}. \\ \par
Now we imagine that there is some uncertainty over the coefficient $\theta_t$. For example the concentration might be a dye diffusing far out at sea and we only have intermittent readings on the local fluid velocity. Alternatively, as in \cite{kraichnan_68}, we might intentionally introduce some randomness to model a turbulent velocity field. We can account for both situations by fixing a probability space $(\Omega,\mcF,\mbP)$ and now considering random coefficients, given by a measurable map, $(\omega,t)\mapsto \theta_t(\omega) \in \mbR$.  We can also see the map $\Omega \ni \omega \mapsto \theta(\omega)$ as a random mapping into the space of paths. If we assume that for $\mbP$-a.a. $\omega \in \Omega$, we still have $\theta(\omega) \in C^1(\mbR_+;\mbR)$, then we may replace \eqref{eq:TransportODE} with the random system of characteristics,
\begin{equation}\label{eq:RandomODE}
	\begin{cases}
		\frac{d}{dt}X_t(\omega) =  Y_t(\omega) \frac{d}{dt}\theta_t(\omega),\\
		\frac{d}{dt} Y_t(\omega) = -X_t(\omega)\frac{d}{dt}\theta_t(\omega).
	\end{cases}
\end{equation}
Again, up to solving this system, we provide a solution the random PDE,
\begin{equation}\label{eq:RandomTransport}
	\begin{cases}
		\partial_t u_t(\omega) - v_t(\omega) \cdot \nabla u_t(\omega) = 0, & \text{ in }\mbT^2\\
		u\tzero = \frac{1}{|\Gamma_0|}\mathds{1}_{\Gamma_0}.
	\end{cases}
\end{equation}
By integrating over $\omega \in \Omega$ we can now answer more nuanced questions regarding the evolution of $u$. For example we could study the expected density in a certain region, or the expected time to total mixing. The idea is that if the random coefficients are described by a sufficiently rich probability space, and of a form suitable to some statistical properties of the situation at hand, then our model can be made robust to many realised scenarios.\\ \par
To this end, we note that the model considered so far is very simple and unlikely to describe many situations of interest sufficiently well. In order to enrich the model let us first develop the form of vector fields we consider. We now define,
\begin{equation}\label{eq:DivFreeVectorFlow}
	v_t(x,y) := \sum_{k=1}^\infty \sigma_k v_k(x,y)\theta^k_t,\quad v_k(x,y) = \begin{bmatrix}
		-(y-a_k)\\
		x-b_k 
	\end{bmatrix},\quad \theta^k_t\sim \theta_t \quad \text{i.i.d,}
\end{equation}
with $(a_k,b_k)_{k\geq 1}\subset
\Gamma$ given centres of rotation, the coefficients $\sigma_k$ chosen so as to ensure the sum converges suitably, $\mbP$-a.s. (note that this is possible since we specified that $\Gamma$ be bounded) and $(\theta^k_t)_{k\geq 1}$ a family of i.i.d random scalar paths. Without loss of generality we may assume that $\mbE[\theta^k_t]=0$ for all $k,\,t$. This superposition of random, divergence free, vector fields now gives a lot of scope to accommodate an expected fluid flow and with randomness built in to account for uncertainty. In the Kraichnan model the spatial features of the vector field flow is achieved in a different manner, but the end results are qualitatively similar, see \cite[Sec. III]{chetrite_delannoy_gawedzki_07} and \cite[Sec. 5]{gess_yaroslavtsev_21} for presentations of this more detailed model.\\ \par 
While the spatial properties of the flow \eqref{eq:DivFreeVectorFlow} can now be adjusted to a large extent, the time dependence is constrained by requiring that $t\mapsto \theta^k_t$ be at least $C_1$. For example, if we choose the $\sigma_k$ so that the series is $\ell^2$ convergent, we have,
\begin{align*}
	\mbE[|v_t(x,y)-v_{s}(x,y)|^2] &= \sum_{k=1}^\infty \sigma_k^2 |v_k(x,y)|^2 \mbE[|\theta_t^k -\theta_s^k|^2] \\
	&\leq \sup_{k\geq 1} \mbE[|\partial_t \theta^k_s|^2]|t-s|^2 \|\sigma_{\,\cdot\,} v_{\,\cdot\,}(x,y)\|^2_{\ell^2(\mbN)}.
\end{align*}
That is, the values $v_t(x,y)$ and $v_s(x,y)$ must be relatively well correlated. So if we expect the vector fields at a given location to oscillate faster than linearly in time we will be unable to capture this by our current model. Following similar considerations, in \cite{kraichnan_68}, Kraichnan proposed an extension, allowing for arbitrarily short correlation times for the vector flows. In order to achieve this one can replace the $(\theta^k)_{k\geq 1}$ with less regular trajectories, a common choice being Brownian motions, $(W^k)_{k\geq 1}$, where one has,
\begin{equation*}
	\mbE[|W^k_t-W^k_s|^2] = |t-s|.
\end{equation*}
More pertinently, when we consider the characteristic equations, we replace $\frac{\dd}{\dd t}\theta_t$ with the differential $\dd W_t$ and if we compute the covariation between $\dd W_t$ and $\dd W_s$ we formally have,
\begin{equation*}
	\mbE[\dd W_t\dd W_s] = \delta(t-s).
\end{equation*}
So we see that replacing $(\theta^k)_{k\geq 1}$ with $(W^k)_{k\geq1}$ allows for a fluid flow with arbitrarily small correlation times. While this extreme may also not be physically well justified it provides more flexibility than the previous regularity requirements, gives way to a mathematically rich theory and can be further adapted to more physically relevant settings.\\ \par 
The price to pay for introducing vector fields with this more singular behaviour is that the ODEs, \eqref{eq:RandomODE}, are no longer trivially well-posed. In particular, one has that $\mbP$-a.s., the paths $t\mapsto W^k_t$ are only almost $1/2$-H\"older continuous and so $t\mapsto \dd W^k_t$ represents a proper distribution. In order to solve \eqref{eq:RandomODE} in this case then we require a definition of integrals $\int_0^t X_s\dd W^k_s$ for paths $t\mapsto X_t$ of no better regularity than $t\mapsto W_t$. This is a primary aim of stochastic analysis, which the remainder of this chapter is dedicated to. While we refer to \eqref{eq:RandomTransport} as a random PDE, we refer to,
\begin{equation}\label{eq:TransportSPDE}
	\begin{cases}
		\dd u_t - \sum_{k=1}^\infty \sigma_k v_k \cdot \nabla u_t \dd W_t^k=0,&\text{ in }\mbT^2,\\
		u\tzero = u_0,&
	\end{cases}
\end{equation}
as a stochastic PDE (SPDE). This is to emphasise the fact that one requires some additional theory in order to handle the time integral in \eqref{eq:TransportSPDE}.
\subsection{A Worked Example of the Variational Method}\label{subsec:WorkedExample}
In the final subsection of this introductory portion of the notes let us present a model example of the variational method that we will cover in more detail in the rest of the course. This example serves both as a motivation for the model described at the end of Chapter \ref{ch:SPDE} and an example walk through of the abstract method in a relatively simple case. Let $(\Omega,\mcF,(\mcF_t)_{t\geq 0},\mbP)$ be a filtered probability space, carrying  $(W_t)_{t\geq 0}$ a standard, $\mbR^d$ valued Brownian motion. We consider the semi-linear, parabolic SPDE, for simplicity on the $d$-dimensional torus, $\mbT^d$,
\begin{equation}\label{eq:BMTransportPDE}
	\begin{cases}
		\dd u =\Delta u \dd t+ \sqrt{\sigma} \nabla u \cdot \dd W_t , & \text{ on }\mbR_+\times \mbT^d,\\
		u\tzero=u_0.
	\end{cases}
\end{equation}
\begin{remark}
	Note that \eqref{eq:BMTransportPDE} has the structure of a diffusive equivalent of the transport equation \eqref{eq:SimpleTransport} with $v_t = \dd W_t$. This can be generalised to the superposition example discussed in that context $v_t = \sum_{k} \sqrt{\sigma}_kv_k \dd W^k_t$. 
\end{remark}
For the sake of presentation we restrict to the one dimensional case here, although the method applies almost \textit{mutatis mutandi} in higher dimensions. Let $(e_k)_{k\in \mbZ}$ be the standard Fourier basis of $L^2(\mbT)$. Then defining $u_{k;t} := \langle u_t,e_k\rangle$, we derive from \eqref{eq:BMTransportPDE} the system of SDEs,
\begin{equation}\label{eq:FourierCoeffSDEs}
	\dd u_{k;t} = - (2\pi  k)^2 u_{k;t} \dd t - 2\sqrt{\sigma} \pi i k u_{k;t} \dd W_t, \quad u_{k;0}= \langle u_0,e_k\rangle.
\end{equation}
The linear system, \eqref{eq:FourierCoeffSDEs}, is seen to be well-posed by a stochastic analogue of the standard Cauchy--Lipschitz theory, see \cite[Thm. 5.2.1]{oksendal_13}. Granting that for any $n\geq 1$, we have global, unique solutions, $(u_k)_{k=1}^n$, we set
\begin{equation*}
	u_{n;t} = \sum_{|k|\leq n} u_{k;t}e_k, \quad u_{n;0} := \sum_{|k|\leq n} u_{k;0}e_k,
\end{equation*}
So, at least formally,  $u_{n}$, is a smooth approximation to $u$  solving \eqref{eq:BMTransportPDE}. The question is whether we can really take the limit and define a solution to \eqref{eq:BMTransportPDE} as $u := \lim_{n \rightarrow \infty} u_n$. The key is to obtain a suitable \textit{a priori} bound on the solutions $u_n$, uniform in $n\geq 1$. By the usual Parseval's identity, it holds that
\begin{equation*}
	\|u_{n;t}\|^2_{L^2(\mbT)} = \sum_{|k|\leq n} |u_{k;t}|^2.
\end{equation*}
Applying It\^o's formula (the chain rule for It\^o processes, see \cite[Thm. 4.1.2 \& 4.2.1]{oksendal_13} and Section \ref{sec:QVandIto} below), we see that for each $k=1,\ldots n$, $|u_{k;t}|^2$ solves the SDE,
\begin{align*}
	\dd |u_{k;t}|^2 &= 2 u_{k;t}\dd u_{k;t} +  \dd [ u_k ]_t\\
	&=  -2(2\pi k)^2u_{k;t}u_{k;t} \dd t+ 2\sqrt{\sigma}  (2\pi i k u_{k;t}) u_{k;t} \dd W_t + \sigma(2\pi i k u_{k;t})^2\dd t\\
	&= 2\partial_{xx} u_{k;t} u_{k;t} \dd t + 2\sqrt{\sigma} \partial_x  u_{k;t} u_{k;t} \dd W_t + \sigma |\partial_x u_{k;t}|^2 \dd t.
\end{align*}
Here $[u_{k}]_t$ denotes the quadratic variation of the process $t\mapsto u_{k;t}$, in our case this is just $4\sigma \pi^2|u_{k;t}|^2 t$, since $[W]_t= t$. For more details see \cite[Ex. 2.17]{oksendal_13} and Section \ref{sec:QVandIto} below. Summing these equations up, or equivalently applying It\^o's formula directly to the PDE, and converting to integral form, we get the identity,
\begin{equation*}
	\|u_{n;t}\|^2_{L^2}  = \|u_{n;0}\|^2_{L^2}+\int_0^t\left(  2\langle \partial_{xx} u_{n;t},u_{n;t}\rangle  + \sigma \langle \partial_x u_{n;s},\partial_xu_{n;s}\rangle \right)\dd s + 2\sqrt{\sigma} \int_0^t \langle u_{n;s},\partial_x u_{n;s}\rangle \dd W_s.
\end{equation*}
The final integral may, at this stage be understood, as a finite dimensional stochastic integral. Integrating by parts in the first term and applying the product rule in the final term we write,
\begin{equation}\label{eq:IntroEnergyBnd1}
	\|u_{n;t}\|^2_{L^2}  = \|u_{n;0}\|^2_{L^2}-(2-\sigma)\int_0^t\|\partial_x  u_{n;s}\|^2_{L^2}\dd s + \sqrt{\sigma} \int_0^t \int_{\mbT }\partial_x (u_{n;s}(x))^2\dd x \dd W_s.
\end{equation}
We see that the last term contains the integral of a derivative on $\mbT$  and so it disappears for all $\omega \in \Omega$ such that the integral is well defined, that is,
\begin{equation}\label{eq:StochIntVanish}
	\int_0^t \int_{\mbT }\partial_x (u_{n;s}(x))^2\dd x \dd W_s(\omega) =0 \quad \text{for }\mbP\text{-a.a. } \omega \in \Omega.
\end{equation}
This is no accident and will also be true in higher dimensions if this term is of the form $\nabla \cdot (v_t u_t )\dd W_t$ with $\nabla \cdot v_t =0$. However, in many cases, even if the stochastic integral does not vanish $\mbP$-a.s. it will be zero in expectation, so long as it is well-defined. In this case a stopping time argument can be run under the expectation.\\ \par
With this in hand, we now define, for $f \in C^\infty(\mbT)$, the norm $\|f\|^2_{H^1} := \|f\|^2_{L^2} + \|\partial_x f\|^2_{L^2}$, and rewrite \eqref{eq:IntroEnergyBnd1} as,
\begin{equation}\label{eq:IntroEnergyBnd2}
	\begin{aligned}
		\|u_{n;t}\|^2_{L^2} + (2-\sigma)\int_0^t \|u_{n;s}\|^2_{H^1}\dd s &= \|u_{n;0}\|^2_{L^2}+ (2-\sigma)\int_0^t \|u_{n;s}\|^2_{L^2}\dd s.
	\end{aligned}
\end{equation}
which by \eqref{eq:StochIntVanish} holds for $\mbP$-a.a. $\omega\in \Omega$. So now, if $(2-\sigma)>0$ and we fix some $T>0$, we may apply Gr\"onwall's inequality, to obtain the estimate,
\begin{equation}\label{eq:IntroEnergyBnd3}
	\sup_{t \in [0,T]}\|u_{n;t}\|^2_{L^2} + (2-\sigma)\int_0^T \|u_{n;s}\|^2_{H^1}\dd s  \leq \|u_{n;0}\|^2_{L^2} \,e^{(2-\sigma)T}
\end{equation}
 So, assuming $(2-\sigma )>0$, we have shown,
\begin{equation*}
	u_{n}(\omega) \in C([0,T];L^2(\mbT))\cap L^2 ([0,T];H^1(\mbT)), \quad \text{for }\mbP\text{-a.a. }\omega \in \Omega.
\end{equation*} 
Furthermore, the quantity $\|u_{n}(\omega)\|_{C_TL^2}+ \|u_{n}(\omega)\|_{L^2_TH^1}$ depends only on $\|u_{n;0}\|^2_{L^2}$ - which we assume to be deterministic for ease. Now if $u_0 \in L^2(\mbT)$ we easily have,
\begin{equation*}
	\|u_{n;0}\|^2_{L^2} = \sum_{|k|\leq n} |u_{k;0}|^2 \leq \sum_{k}|u_{k;0}|^2 = \|u_0\|^2_{L^2}.
\end{equation*} 
Therefore the bound \eqref{eq:IntroEnergyBnd2}, and hence the norm $\|u_{n}(\omega)\|_{C_TL^2}+ \|u_{n}(\omega)\|_{L^2_TH^1}$ is controlled independently of $n\geq 1$. Concretely,
\begin{equation*}
	\sup_{n \geq 1} \left(\|u_{n}(\omega)\|_{C_TL^2}+ \|u_{n}(\omega)\|_{L^2_TH^1}\right) \leq \|u_0\|_{L^2} e^{(2-\sigma)T}.
\end{equation*}
This uniform bound is essentially enough to obtain a weak solution to \eqref{eq:BMTransportPDE} as the limit $u_{n}\rightarrow u$. We will detail this final step in the Chapter \ref{ch:SPDE} since it requires some functional analysis which we present in a more general setting.\\ \par 
The main takeaway, however, is that the above argument applies only for $\sqrt{\sigma}< \sqrt{2}$. Philosophically, this strategy is based on the ellipticity of the operator $u\mapsto (-\Delta\,\, \dd t+ \sqrt{\sigma} \nabla \,\,\cdot\,\dd W_t)u$ which we have seen does not hold for $\sqrt{\sigma} \geq \sqrt{2}$. In fact, using the notion of a Stratonovich integral (a stochastic integral which does obey the usual chain rule), we can re-write \eqref{eq:BMTransportPDE} in the form,
\begin{equation}\label{eq:BMStratTransport}
	\begin{cases}
		\dd u_t = \left(1-\frac{\sigma}{2} \right)\Delta  u \dd t  + \sqrt{\sigma} \,\nabla u \circ \dd W_t& \text{ on }\mbR_+\times \mbT^d,\\
		u\tzero=u_0.
	\end{cases}
\end{equation}
Now if we take $\sigma >2$ we see that the deterministic part of \eqref{eq:BMStratTransport} becomes a backwards heat equation.\\ \par
In the second half of this introductory chapter we recall some standard material and prepare some preliminaries that we need throughout the course. The majority of this background material is presented without proof and in  general we refer to \cite{brezis_11, evans_10,daprato_zabczyk_14,cohen_elliott_15} for details where they are omitted.

\section{Bounded Operators on Infinite Dimensional Spaces}
Given Banach spaces $E,\,F$ we write $L(E;F)$ for the set of bounded, linear operators $O:E\rightarrow F$. We define the operator norm of $O$ by the expression
$$\|O\|_{\Op(E;F)} := \sup_{\|x\|_E \leq 1}\|Ox\|_{F} = \sup_{x \in F} \frac{\|Ox\|_{F}}{\|x\|_{E}}.$$
$L(E;F)$ is itself a Banach space, although non-separable if both $E,\,F$ are infinite dimensional.
\begin{definition}[Finite Rank and Compact Operators]\label{def:FRandCompOps}
	Given Banach spaces, $E,\,F$ we say that an operator $O\in L(E;F)$ has \textit{finite rank} if $O(E)$ is a finite dimensional subspace of $F$. We say that an operator $O\in L(E;F)$ is \textit{compact} if for any bounded set $B\subset E$, $O(B)\subset E$ is compact.
\end{definition}
By definition, any finite rank operator is also compact. The following theorem says that in fact all compact operators can be approximated by finite rank operators.
\begin{theorem}[Representation of Compact Operators]\label{th:CompactOps}
	An operator $O\in L(E;F)$ is compact if there exists a sequence $(O_n)_{n\geq 1}\subset L(E;F)$ of finite rank operators such that $\|O-O_n\|_{\Op(E;F)} \rightarrow 0$. If $U,\,H$ are a Hilbert spaces and $O \in L(U;H)$ is compact, then there exist orthonormal families, $(\tilde{e}_k)_{k\geq 1}$ of $U$ and $(\tilde{f}_k)_{k\geq 1}$ of $H$, and a sequence of real numbers, $(\lambda_k)_{k\geq 1}$, which if they converge must converge to zero, such that
	\begin{equation}\label{eq:CompactOpRep}
		O x = \sum_{k=1}^\infty \lambda_k \langle x,\tilde {e}_k\rangle_U \tilde{f}_k, \quad \text{for all } x\in U.
	\end{equation}
\end{theorem}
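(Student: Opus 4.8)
The plan is to treat the two assertions separately. The first is the standard fact that a norm-limit of finite rank operators is compact, and the second is the singular value decomposition of a compact operator between Hilbert spaces, which I would deduce from the spectral theorem for compact self-adjoint operators applied to $O^\ast O$.

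For the first assertion I would show that $O$ maps bounded sets to totally bounded sets; since $F$ is complete this yields relative compactness. It suffices to treat the closed unit ball $B \subset E$. Fix $\eps > 0$ and choose $n$ with $\norm{O - O_n}_{\Op(E;F)} < \eps/3$. Because $O_n$ has finite rank, $O_n(B)$ is a bounded subset of the finite dimensional space $O_n(E)$, hence totally bounded, so it admits a finite $\eps/3$-net $\{O_n x_1,\dots,O_n x_m\}$. For arbitrary $x \in B$ I would pick $x_j$ with $\norm{O_n x - O_n x_j}_F < \eps/3$ and estimate
\[
\norm{Ox - O x_j}_F \leq \norm{(O-O_n)x}_F + \norm{O_n x - O_n x_j}_F + \norm{(O_n - O)x_j}_F < \eps,
\]
using $\norm{x}_E,\norm{x_j}_E \leq 1$ in the outer terms. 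Thus $\{Ox_1,\dots,Ox_m\}$ is a finite $\eps$-net for $O(B)$. This step is routine and presents no real difficulty.

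For the second assertion I would first record that $T := O^\ast O \in L(U;U)$ is self-adjoint, nonnegative (since $\langle Tx, x\rangle_U = \norm{Ox}_H^2 \geq 0$), and compact (as $O$ is compact and $O^\ast$ is bounded, so the composition is compact). I would then invoke the spectral theorem for compact self-adjoint operators: there is an orthonormal system $(\tilde e_k)_{k\geq 1}$ of eigenvectors of $T$ spanning $\overline{\mathrm{ran}(T)} = (\ker T)^\perp$, with eigenvalues $\mu_k \geq 0$ that converge to zero whenever infinitely many are nonzero. Setting $\lambda_k := \sqrt{\mu_k}$ and, for those $k$ with $\lambda_k > 0$, $\tilde f_k := \lambda_k^{-1} O \tilde e_k \in H$, the identity $\langle O\tilde e_j, O\tilde e_k\rangle_H = \langle T\tilde e_j, \tilde e_k\rangle_U = \mu_j \delta_{jk}$ shows that $(\tilde f_k)$ is orthonormal in $H$.

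To conclude, I would decompose an arbitrary $x \in U$ as $x = \sum_k \langle x, \tilde e_k\rangle_U \tilde e_k + x_0$ with $x_0 \in \ker T$. Since $\norm{Ox_0}_H^2 = \langle Tx_0, x_0\rangle_U = 0$, the operator $O$ annihilates $\ker T = \ker O$, and applying the bounded operator $O$ termwise to the convergent orthogonal expansion gives $Ox = \sum_k \lambda_k \langle x, \tilde e_k\rangle_U \tilde f_k$, which is \eqref{eq:CompactOpRep}; the decay $\lambda_k \to 0$ is exactly the convergence statement for the $(\lambda_k)$. The main obstacle is the spectral theorem itself, whose crux is showing that a nonzero compact self-adjoint operator attains a nonzero eigenvalue: one maximizes $|\langle Tx, x\rangle_U|$ over the unit sphere, uses compactness of $T$ to extract a convergent image along a maximizing sequence, and identifies the limit as an eigenvector with eigenvalue $\pm\norm{T}_{\Op(U;U)}$; the full system is then produced by peeling off eigenspaces on successive orthogonal complements.
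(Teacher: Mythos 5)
Your proof is correct. The paper itself gives no argument for this theorem---its ``proof'' is only a citation to Brezis (Ch.~6, Cor.~6.2 and Thm.~6.8)---and your two steps, namely the $\varepsilon/3$-net argument showing that a norm limit of finite rank operators maps the unit ball to a totally bounded set, and the singular value decomposition obtained by applying the spectral theorem for compact self-adjoint operators to $O^\ast O$ (with $\lambda_k = \sqrt{\mu_k}$, $\tilde f_k = \lambda_k^{-1}O\tilde e_k$, and $\ker(O^\ast O)=\ker O$ handling the remainder), are precisely the content of those cited results, so you have in effect supplied the standard proof that the paper outsources.
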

\begin{proof}
	See \cite[Ch. 6]{brezis_11} in particular Corollary 6.2 and Theorem 6.8.
\end{proof}
Note that the families $(\tilde{e}_k)_{k\geq 1},\,(\tilde{f}_k)_{k\geq 1}$ need not be complete, in the sense that they need not form bases of $U,\,H$. Consider $O$ a finite rank operator, for example. If $U,\,H$ are Hilbert spaces and $O \in L(U;H)$, then there is an operator $O^\ast \in L(H;U)$ such that,
\begin{equation*}
	\langle O x,h\rangle_{H} = \langle x,O^\ast h\rangle_{U},\quad \text{ for all } x \in U,\, h\in H.
\end{equation*}
The operator $O^\ast$ is known as the adjoint (or Hermitian adjoint) of $O$. If $O \in L(H):= L(H;H)$ is such that $O=O^\ast$, then $O$ is said to be self-adjoint. If $O$ is a compact, self-adjoint operator then there exists a representation of the form \eqref{eq:CompactOpRep} with $(f_k)_{k\geq 1}=(e_k)_{k\geq 1}$ an orthonormal basis of  $H$. See \cite[Thm. 6.11]{brezis_11}.\\ \par
An operator $O\in L(H)$ is said to be positive if for any $h\in H$,
\begin{equation*}
	\langle Oh,h\rangle \geq 0.
\end{equation*}
The class of non-negative, symmetric operators $O\in L(H)$ will be of particular importance. To every non-negative operator, $O\in L(H)$, there exists a second non-negative operator, $O^{1/2},\in L(H)$, known as the square root of $O$, such that $O=O^{1/2}(O^{1/2})^\ast$. See \cite[Thm. 3.1.28]{lototsky_rozovsky_17}. If $O\in L(H)$ is symmetric and non-negative, then $O^{1/2}=(O^{1/2})^\ast \in L(H)$.
\subsection{Trace Class and Hilbert--Schmidt Operators}\label{subsec:TraceAndHSOps}
We refer to \cite[App. C]{daprato_zabczyk_14} for more details on much of the material in this subsection.
\begin{definition}[Trace Class Operators]\label{def:TraceClass}
	Let $E,\,F$ be Banach spaces and $T\in L(E;F)$. We say that $T$ is a \textit{trace class} (or \textit{nuclear}) operator, if there exist two sequences, $(a_k)_{k\geq 1}\subset E^\ast,\,(b_k)_{k\geq 1} \subset F$, such that 
	\begin{equation*}
		\sum_{k=1}^\infty \|a_k\|_{E^*}\|b_k\|_{F} <\infty,
	\end{equation*}
	and for any $x \in E$,
	\begin{equation*}
		Tx = \sum_{k=1}^\infty a_k(x) b_k.
	\end{equation*}
	We write $L_1(E;F)$ for the set of trace class operators and equip this space with the norm,
	\begin{equation}\label{eq:TraceNorm}
		\|T\|_{L_1} := \inf \left\{ \sum_{k=1}^\infty \|a_k\|_{E^\ast}\|b_k\|_{F}<\infty\,:\, Tx = \sum_{k=1}^\infty a_k(x)b_k \right\},
	\end{equation}
	under which $L_1(E;F)$ becomes a Banach space. 
\end{definition}
Let $U,\,H$ be Hilbert spaces and $T\in L_1(U;H)$, then by making the identification, $U^\ast\cong U$, there exist sequences $(a_k)_{k\geq 1} \subset U,\, (b_k)_{k\geq 1} \subset H$ such that, for all $x \in U$,
\begin{equation*}
	Tx = \sum_{k=1}^\infty \langle a_k,x\rangle b_k.
\end{equation*}
If $T\in L_1(H):=L_1(H;H)$, and $(e_k)_{k\geq 1}\subset H$ is a basis of $H$, then we define the trace of $T$ by the expression,
\begin{equation}
	\Tr T := \sum_{k=1}^\infty \langle Te_k,e_k\rangle \in \mbR.
\end{equation}
It is relatively straightforward to check that the definition of $\Tr T$ is independent of the choice of basis $(e_k)_{k\geq 1}\subset H$, see \cite[Prop. C.1]{daprato_zabczyk_14}, and that $|\Tr T|\leq \|T\|_{L_1}$. The following proposition says that the equality $|\Tr T|= \|T\|_{L_1}$ holds when $T\in L_1(H)$ is symmetric and non-negative. In this case, the property of finite trace is equivalent to being trace class.
\begin{proposition}[Symmetric Non-negative Trace Class Operators]\label{prop:SymmetricNonNegativeTraceClass}
	Let $T\in L(H)$, be symmetric and non-negative. Then $T\in L_1(H)$ if and only if, for some (equivalently any) basis $(e_k)_{k\geq 1}\subset H$, 
	\begin{equation*}
		|\Tr T| =\Tr T = \sum_{k=1}^\infty \langle Te_k,e_k\rangle <\infty.
	\end{equation*}
	Furthermore, in this case, $\Tr T = \|T\|_{L_1}$.
\end{proposition}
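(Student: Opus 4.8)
The plan is to reduce every claim to the self-adjoint square root $T^{1/2}$, whose existence and symmetry were recorded above. The starting observation is the identity, valid for any $h\in H$,
\begin{equation*}
	\langle Th,h\rangle = \langle T^{1/2}T^{1/2}h,h\rangle = \langle T^{1/2}h,T^{1/2}h\rangle = \abs{T^{1/2}h}^2,
\end{equation*}
using $T=(T^{1/2})^2$ and $(T^{1/2})^\ast = T^{1/2}$. In particular each summand $\langle Te_k,e_k\rangle$ is non-negative, so $\Tr T = \sum_k \langle Te_k,e_k\rangle = \sum_k \abs{T^{1/2}e_k}^2 \ge 0$, whence $\abs{\Tr T}=\Tr T$ whenever the sum converges. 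Expanding $\abs{T^{1/2}e_k}^2$ in a second basis $(f_j)$ by Parseval and interchanging the two non-negative sums (Tonelli) gives $\sum_k \abs{T^{1/2}e_k}^2 = \sum_j \abs{T^{1/2}f_j}^2$, which settles the \emph{some $\Leftrightarrow$ any} clause for the sum before trace-class membership is known. The \textbf{only if} direction is then immediate: if $T\in L_1(H)$ then $\abs{\Tr T}\le \norm{T}_{L_1}<\infty$ as recorded above, and non-negativity gives $\abs{\Tr T}=\Tr T$.

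For the \textbf{if} direction, assume $\sum_k \abs{T^{1/2}e_k}^2 = \Tr T<\infty$. I would first show $T^{1/2}$ is compact by exhibiting it as a norm-limit of finite rank operators. Setting $S_n x := \sum_{k\le n}\langle e_k,x\rangle\, T^{1/2}e_k$, each $S_n$ is finite rank, and Cauchy--Schwarz gives
\begin{equation*}
	\norm{T^{1/2}-S_n}_{\Op}^2 \le \sup_{\abs{x}\le 1}\Big(\sum_{k>n}\abs{\langle e_k,x\rangle}^2\Big)\Big(\sum_{k>n}\abs{T^{1/2}e_k}^2\Big) \le \sum_{k>n}\abs{T^{1/2}e_k}^2 \xrightarrow[n\to\infty]{}0.
\end{equation*}
Hence $T^{1/2}$ is compact by Theorem \ref{th:CompactOps}, and so is $T=(T^{1/2})^2$. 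Being compact, self-adjoint and non-negative, $T$ admits by the self-adjoint refinement of Theorem \ref{th:CompactOps} an orthonormal basis $(f_k)$ of eigenvectors, $Tf_k=\lambda_k f_k$, with $\lambda_k = \langle Tf_k,f_k\rangle\ge 0$ and $\lambda_k\to 0$, so that $Tx=\sum_k \lambda_k\langle f_k,x\rangle f_k$ and, by the basis-independence above, $\sum_k \lambda_k = \Tr T<\infty$.

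It remains to read off trace-class membership and the norm identity. Choosing $a_k := \sqrt{\lambda_k}\,f_k$ and $b_k := \sqrt{\lambda_k}\,f_k$ gives $Tx=\sum_k\langle a_k,x\rangle b_k$ with $\sum_k \abs{a_k}\,\abs{b_k}=\sum_k\lambda_k<\infty$, so $T\in L_1(H)$, and from the definition \eqref{eq:TraceNorm} of the trace norm, $\norm{T}_{L_1}\le \sum_k\lambda_k=\Tr T$. Combined with the reverse inequality $\Tr T=\abs{\Tr T}\le\norm{T}_{L_1}$ from the first paragraph, this yields $\norm{T}_{L_1}=\Tr T$.

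I expect the one genuine obstacle to be the compactness step: establishing that $T$ is compact is what unlocks the spectral decomposition, and it must be argued directly from $\sum_k\abs{T^{1/2}e_k}^2<\infty$ (i.e.\ that $T^{1/2}$ is Hilbert--Schmidt) rather than assumed, since a priori $T$ is merely a bounded operator. Everything past the spectral representation is bookkeeping, and the only subtlety is ensuring the basis-independence used for $\sum_k\lambda_k=\Tr T$ is legitimate before trace-class membership is in hand — which the Tonelli argument of the first paragraph provides.
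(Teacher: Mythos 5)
Your proof is correct and follows essentially the same route as the paper's: both exploit the square root $T^{1/2}$, show it is a norm-limit of finite rank operators (hence compact) using Cauchy--Schwarz against $\sum_k\|T^{1/2}e_k\|^2_H$, invoke the spectral representation of the compact self-adjoint operator $T$, and then read off $T\in L_1(H)$ and $\|T\|_{L_1}\leq \Tr T$ from an explicit representation, closing with the pre-recorded inequality $|\Tr T|\leq\|T\|_{L_1}$. The only cosmetic differences are your truncation $T^{1/2}P_n$ versus the paper's $P_nT^{1/2}$, your choice $a_k=b_k=\sqrt{\lambda_k}f_k$ versus the paper's $(f_k),(\lambda_k f_k)$, and your explicit Tonelli justification of basis independence, which the paper carries out implicitly in its double-sum computation.
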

\begin{proof}
	Let $T^{1/2}$, denote the non-negative square root of $T$, and $(e_k)_{k\geq 1}$ be a basis of $H$, so that for any $x \in H$,
	\begin{equation*}
		T^{1/2}x = \sum_{k=1}^\infty \langle T^{1/2}x,e_k\rangle e_k.
	\end{equation*}
	Therefore, for any $x \in H$ and $N\geq 1$,
	\begin{align*}
		\left\| T^{1/2}x - \sum_{k=1}^N \langle T^{1/2}x,e_k\rangle e_k \right\|_H^2 = \sum_{k=N+1}^\infty |\langle T^{1/2}x,e_k\rangle|^2 &\leq |x|^2\sum_{k=N+1}^\infty \|T^{1/2}e_k\|^2_{H} \\
		&= |x|^2\sum_{k=N+1}^\infty \langle T e_k,e_k\rangle\\
		&\leq |x|^2 \Tr T. 
	\end{align*}
	It follows that $T^{1/2}$ is the limit in operator norm of a sequence of finite rank operators and so $T^{1/2}$ is a compact operator and hence $T= T^{1/2}T^{1/2}$ is also compact. Therefore, by Theorem \ref{th:CompactOps}, there exists a basis $(f_k)_{k\geq 1} \subset H$ and a sequence of non-negative, real numbers, $(\lambda_k)_{k\geq 1}\subset \mbR_+$, accumulating to $0$, such that
	\begin{equation}\label{eq:TraceClassRep}
		Tx =\sum_{k=1}^\infty \lambda_k \langle x,f_k\rangle f_k.
	\end{equation}
	Using this formula, we have that $\langle T e_k,e_k\rangle = \sum_{k=1}^\infty \lambda_k \langle e_k,f_k\rangle^2$ and so,
	\begin{equation*}
		\Tr T = \sum_{k=1}^\infty \langle Te_k,e_k\rangle = \sum_{k=1}^\infty \sum_{l=1}^\infty \lambda_k |\langle  e_k,f_l\rangle|^2 = \sum_{k=1}^\infty \lambda_k <\infty.
	\end{equation*}
	Thus we have shown that $T\in L_1(H)$, with sequences $(a_k)_{k\geq 1} = (f_k)_{k\geq 1}$ and $(b_k)_{k\geq 1}= (\lambda_k f_k)_{k\geq 1}$, and that $\Tr T \geq \|T\|_{L_1}$. Hence $\Tr T = \|T\|_{L_1}$. 
\end{proof}
\begin{definition}[Hilbert--Schmidt Operator]\label{def:HSOperator}
	Let $U,\,H$ be Hilbert spaces with bases $(e_k)_{k\geq 1},\,(f_k)_{k\geq 1}$ respectively. Then $T\in L(U;H)$ is said to be a \textit{Hilbert--Schmidt} operator, if,
	\begin{equation}\label{eq:HSNorm}
		\|T\|^2_{L_2} := \sum_{k=1}^\infty \|T e_k\|^2_{H} = \sum_{k=1}^\infty \sum_{l=1}^\infty \langle Te_k,f_l\rangle_H^2 = \sum_{l=1}^\infty \|T^\ast f_l\|_U^2 \,<\,\infty.
	\end{equation}
\end{definition}
\begin{remark}\label{rem:HilbSchmidtBasis}
		The space $L_2(U;H)$ is a separable Hilbert space when equipped with the norm $\|T\|_{L_2}$ and scalar product,
	\begin{equation*}
		\langle T,S\rangle = \sum_{k=1}^\infty \langle Te_k,Se_k\rangle_H, \quad \text{ for }T,\,S \in L_2(U;H),
	\end{equation*}
	and the sequence $(e_k\otimes f_k)_{k\geq 1}\subset U\otimes H$ provides a complete orthonormal basis for $L_2(U;H)$. It follows that $L(U;H)$ is densely embedded in $L_2(U;H)$.
\end{remark}
\begin{remark}
	It follows from \eqref{eq:HSNorm} that the definition of $\|T\|_{L_2}$ is independent of the choice of bases for $U,\,H$ and that $\|T^\ast\|_{L_2}= \|T\|_{L_2}$.
\end{remark}
We see that $T\in L(H)$ is trace class if and only if $|T|^{1/2} := (T T^\ast)^{1/2} \in L_2(H)$. Furthermore, since both trace class and Hilbert--Schmidt operators are seen to be compact, from Theorem \ref{th:CompactOps}, for any $T \in L_2(U;H)$ and $TT^\ast \in L_1(H)$, there exist bases $(e_k)_{k\geq 1}\subset U$ and $(f_k)_{k\geq 1}\subset H$ and a sequence of positive real numbers, $(\lambda_k)_{k\geq 1}$ such that, for all $x \in U$, $h\in H$,
\begin{equation}\label{eq:HSandTraceClassRep}
	\begin{aligned}
		Tx &= \sum_{k\geq 1} \sqrt{\lambda_k} \langle x,e_k\rangle_Uf_k,\\
		(TT^\ast)h &= \sum_{k\geq 1}\lambda_k \langle h,f_k\rangle_H f_k .
	\end{aligned}
\end{equation}
\subsection{Pseudo Inverses of Linear Operators}
In the following sections we turn to the study of Gaussian random variables and Wiener processes taking values in infinite dimensional spaces. Instead of being described by a covariance matrix these random variables are described by covariance operators. In developing the theory of stochastic integration with respect to generalised Wiener processes, Subsection \ref{subsec:GenStochInt}, we will be particularly concerned with the pre-images of these operators. It will therefore be useful to recall some facts regarding the images and pre-imagines of linear operators and their pseudo-inverses. Throughout we let $U,\,H$ be two Hilbert spaces and recall that $L(U;H)$ is the set of bounded linear operators from $U$ to $H$. 
\begin{lemma}\label{lem:ImagesAndPreImages}
	Let $O\in L(U;H)$ and $R\geq 0$. Then,
	\begin{itemize}
		\item  the set $ O \left(\overline{B_U(0,R)}\right):=\{  O x\in H\,:\, x \in U,\, \|x\|_{U}\,\leq\, R\}$ is convex and closed,
		\item the set $O^{-1}\left(\overline{B_H(0,R)}\right):=\{ x\in U\,:\, \|Ox\|_H\,\leq\, R \}$ is convex and closed.
	\end{itemize}
\end{lemma}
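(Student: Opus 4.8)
The plan is to establish convexity and closedness for each of the two sets separately, handling them in order of increasing difficulty; the only genuine subtlety lies in the closedness of the \emph{image} of the ball. I would begin with the pre-image $O^{-1}(\overline{B_H(0,R)}) = \{x \in U : \|Ox\|_H \leq R\}$, for which both properties are immediate. Convexity follows from linearity of $O$ and the triangle inequality: if $\|Ox_1\|_H \leq R$ and $\|Ox_2\|_H \leq R$, then for $\lambda \in [0,1]$ one has $\|O(\lambda x_1 + (1-\lambda)x_2)\|_H \leq \lambda\|Ox_1\|_H + (1-\lambda)\|Ox_2\|_H \leq R$. Closedness is equally direct, since the set is the pre-image under the continuous map $O$ of the closed ball $\overline{B_H(0,R)}$, and pre-images of closed sets under continuous maps are closed.

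For the image $O(\overline{B_U(0,R)})$, convexity again follows at once: given $y_i = Ox_i$ with $\|x_i\|_U \leq R$ for $i=1,2$, the point $\lambda y_1 + (1-\lambda)y_2 = O(\lambda x_1 + (1-\lambda)x_2)$ lies in the image because $\overline{B_U(0,R)}$ is convex. The closedness of $O(\overline{B_U(0,R)})$ is, by contrast, the main obstacle, and this is where the infinite-dimensional nature of the problem bites: a norm-continuous image of a closed set need not be closed, and the closed ball $\overline{B_U(0,R)}$ is \emph{not} norm-compact when $\dim U = \infty$, so one cannot simply invoke compactness of the domain.

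To circumvent this I would argue sequentially, exploiting reflexivity of the Hilbert space $U$. Let $y_n = Ox_n$ with $\|x_n\|_U \leq R$ and suppose $y_n \to y$ strongly in $H$. The sequence $(x_n)_{n\geq 1}$ is bounded, so by weak sequential compactness of bounded sets in a Hilbert space (Banach--Alaoglu together with reflexivity, or the Eberlein--\v{S}mulian theorem) there is a subsequence $x_{n_k} \rightharpoonup x$ converging weakly in $U$; by weak lower semicontinuity of the norm, $\|x\|_U \leq \liminf_k \|x_{n_k}\|_U \leq R$, so $x \in \overline{B_U(0,R)}$. Since $O$ is bounded and linear it is weak-to-weak continuous (for $\varphi \in H^\ast$ one has $\varphi \circ O \in U^\ast$), whence $Ox_{n_k} \rightharpoonup Ox$ weakly in $H$; but $Ox_{n_k} = y_{n_k} \to y$ strongly, hence also weakly, and uniqueness of weak limits forces $y = Ox \in O(\overline{B_U(0,R)})$. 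This shows the image is sequentially closed, and since $H$ with its norm topology is a metric space this is equivalent to being closed, completing the argument.
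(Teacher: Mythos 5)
Your proof is correct, and it rests on the same two pillars as the paper's: weak compactness of the closed ball $\overline{B_U(0,R)}$ (Banach--Alaoglu plus reflexivity of the Hilbert space) and the weak-to-weak continuity of the bounded operator $O$. Your treatment of convexity and of the pre-image is identical to the paper's. Where you diverge is in how weak compactness is converted into norm-closedness of the image. The paper argues topologically: the image of the weakly compact ball under the weakly continuous map $O$ is weakly compact in $H$, and then closedness is deduced by appealing to the relation between strong and weak closedness for convex sets; in fact the paper's phrasing of that step (``a convex subset of a Hilbert space is closed if and only if it is weakly compact'') is a misstatement --- the correct equivalence, via Mazur, is closed if and only if \emph{weakly closed}, and weak compactness is genuinely stronger --- though the intended argument goes through because weakly compact sets are weakly closed, hence norm-closed. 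You instead argue sequentially: extract a weakly convergent subsequence from the pre-images $x_{n_k}\rightharpoonup x$, keep the limit inside the ball by weak lower semicontinuity of the norm, and identify $y=Ox$ by weak continuity of $O$ together with uniqueness of weak limits. Your route is slightly more self-contained: convexity plays no role whatsoever in your closedness argument (it is needed only for the convexity claim itself), and you avoid Mazur's theorem entirely. What the paper's topological route buys in exchange is brevity and a nominally stronger conclusion, namely that $O\left(\overline{B_U(0,R)}\right)$ is not merely closed but weakly compact.
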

\begin{proof}
	Convexity of both sets follows from linearity of $O$. We only show that $O \left(\overline{B_U(0,R)}\right)$ is closed, since it follows directly by continuity of $O$ that $O^{-1}\left(\overline{B_H(0,R)}\right)$, as the pre-image of a closed set, is closed. Recall that a convex subset of a Hilbert space is closed if and only if it is weakly compact. By the Banach--Alaoglou theorem, any bounded set in $U$ is weakly compact, and since $O$ is strongly, and therefore also weakly, continuous, the image of a weakly compact set under $O$ is also weakly compact in $H$. It therefore follows from convexity that $O \left(\overline{B_U(0,R)}\right)$ is also closed.
\end{proof}

For $h \in H\setminus \{0\}$, let us define the set,
\begin{equation*}
	O^{-1}(h):= \{ x \in U\,:\, Ox = h\}.
\end{equation*}
It follows from Lemma \ref{lem:ImagesAndPreImages} that $O^{-1}(h)$ is convex and closed. By continuity of $O$ it also does not contain $0 \in U$. Therefore, there is a unique element of $O^{-1}(h)$ that minimizes the distance to $0\in U$. This leads to the following definition.
\begin{definition}[Pseudo-inverse]\label{def:PseudoInverse}
	Let $O\in L(U;H)$ and the map $H\setminus\{0\}\ni h\mapsto O^{-1}h$ be defined by setting,
	\begin{equation*}
		O^{-1} h :=  \left\{y \in O^{-1}(h)\,:\, \|y\|_{U}\leq \|x\|_{U},\, \forall\, x \in O^{-1}(h)  \right\}.
	\end{equation*}
	Then $O^{-1}\in L(H\setminus\{0\};U )$ is the \textit{pseudo-inverse} of $O$.
\end{definition}
Recall that for a linear operator $O\in L(U;H)$, we define $\text{Ker}(O):= \{ x\in U\,:\, Ox=0\in H \}$ and $\text{Ker}(O)^\perp = \{ y\in U\,:\, \langle x,y\rangle_U =0,\forall\, x \in \text{Ker}(O) \} = O^\ast(H)$.
%
%
\begin{proposition}
	Let $O\in L(U;H)$ and $O^{-1}$ be its pseudo-inverse.
	\begin{enumerate}[label=\roman*)]
		\item The space $O(U)$ is a Hilbert space when equipped with the inner product,
		\begin{equation*}
			\langle h,g\rangle_{O(U)} := \langle O^{-1}h,O^{-1}h\rangle_{U},\quad \text{ for all } h,\,g \in O(U).
		\end{equation*}
		\item Let $(e_k)_{k\geq 1}$ be a basis of $\text{Ker}(O)^\perp$. Then, $(Oe_k)_{k\geq 1}$ is a basis of $(O(U),\langle \,\cdot\,,\,\cdot\,\rangle_{O(U)})$.
	\end{enumerate}
\end{proposition}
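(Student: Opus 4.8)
The plan is to realize the restriction $\Phi := O|_{\text{Ker}(O)^\perp} : \text{Ker}(O)^\perp \to O(U)$ as a \emph{linear isometric isomorphism} and then simply transport the Hilbert-space structure of the closed subspace $\text{Ker}(O)^\perp \subset U$ across it. (I read the second argument of the displayed form as $O^{-1}g$, so that $\langle h,g\rangle_{O(U)} = \langle O^{-1}h, O^{-1}g\rangle_U$ is the intended symmetric bilinear map.) The crucial preliminary step, which I would carry out first, is to check that for every $h \in O(U)$ the minimal-norm preimage $O^{-1}h$ lies in $\text{Ker}(O)^\perp$: writing $O^{-1}(h) = x_p + \text{Ker}(O)$ for any particular solution $x_p$, this affine set is closed and convex, and its element of least norm is precisely its unique point orthogonal to the direction space $\text{Ker}(O)$, hence the point in $\text{Ker}(O)^\perp$. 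Conversely, for $x \in \text{Ker}(O)^\perp$ one has $O^{-1}(Ox) = x$, since $x$ is the only member of the coset $x + \text{Ker}(O)$ lying in $\text{Ker}(O)^\perp$. Thus $O^{-1}\circ O = \text{Id}$ on $\text{Ker}(O)^\perp$ and $O \circ O^{-1} = \text{Id}$ on $O(U)$, so $\Phi$ is a bijection with inverse $O^{-1}$; the same coset description shows $O^{-1}$ is linear on $O(U)$.

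For part (i) I would first verify that $\langle\cdot,\cdot\rangle_{O(U)}$ is a genuine inner product: bilinearity follows from linearity of $O^{-1}$ on $O(U)$ together with bilinearity of $\langle\cdot,\cdot\rangle_U$, symmetry is inherited from $U$, and definiteness holds because $\langle h,h\rangle_{O(U)} = |O^{-1}h|_U^2 = 0$ forces $O^{-1}h = 0$, whence $h = O(O^{-1}h) = 0$. By construction $\Phi$ is isometric: for $x,y \in \text{Ker}(O)^\perp$ we have $O^{-1}(Ox)=x$ and $O^{-1}(Oy)=y$, so $\langle Ox, Oy\rangle_{O(U)} = \langle x,y\rangle_U$. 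Since $\text{Ker}(O)^\perp$ is a closed subspace of the Hilbert space $U$ it is itself complete, and a linear isometric bijection carries Cauchy sequences to Cauchy sequences and preserves their limits; therefore $(O(U), \langle\cdot,\cdot\rangle_{O(U)})$ is complete, i.e.\ a Hilbert space. I would emphasise that this concerns the \emph{new} norm $\|h\|_{O(U)} = |O^{-1}h|_U$ and makes no claim that $O(U)$ is closed in the ambient $H$-norm, which in general it is not.

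For part (ii) the isomorphism does all the work. Given a basis $(e_k)_{k\geq 1}$ of $\text{Ker}(O)^\perp$, orthonormality of $(Oe_k)_{k\geq 1}$ in $O(U)$ is immediate from the isometry, $\langle Oe_k, Oe_l\rangle_{O(U)} = \langle e_k, e_l\rangle_U = \delta_{kl}$. For completeness I would argue that if $h \in O(U)$ satisfies $\langle h, Oe_k\rangle_{O(U)} = 0$ for all $k$, then $\langle O^{-1}h, e_k\rangle_U = 0$ for all $k$; since $O^{-1}h \in \text{Ker}(O)^\perp$ and $(e_k)$ is complete there, this forces $O^{-1}h = 0$, hence $h = 0$. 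Thus $(Oe_k)_{k\geq 1}$ is a complete orthonormal system, i.e.\ a basis of $(O(U), \langle\cdot,\cdot\rangle_{O(U)})$.

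The one genuinely substantive point — and the step I expect to be the main obstacle — is the identification of the pseudo-inverse with the honest inverse of $\Phi$, namely proving that the least-norm preimage lands in $\text{Ker}(O)^\perp$ and that $O^{-1}$ is linear there. Everything afterwards is the formal transport of structure along an isometric isomorphism and requires no estimates.
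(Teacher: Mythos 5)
Your proposal is correct and follows essentially the same route as the paper, whose entire proof is the one-line observation that $O:\text{Ker}(O)^\perp \rightarrow O(U)$ is an isometry between $\text{Ker}(O)^\perp$ with the inner product of $U$ and $O(U)$ with $\langle\,\cdot\,,\,\cdot\,\rangle_{O(U)}$. You simply supply the details the paper leaves implicit (that the least-norm preimage lies in $\text{Ker}(O)^\perp$, that $O^{-1}$ inverts the restriction and is linear, and the transport of completeness and of the basis), and you correctly read the displayed formula as $\langle O^{-1}h,O^{-1}g\rangle_U$.
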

\begin{proof}
	Both conclusions are a consequence of the fact that $O : \text{Ker}(O)^\perp \rightarrow O(U)$ is an isometry when $\text{Ker}(O)^\perp$ is equipped with the inner product of $U$ and $O(U)$ is equipped with the inner product $\langle \,\cdot\,,\,\cdot\,\rangle_{O(U)}$ defined above.
\end{proof}
\begin{proposition}\label{prop:OpImageCompare}
	Let $(U_1,\langle \,\cdot\,,\,\cdot\,\rangle_{1}),\,(U_2,\langle \,\cdot\,,\,\cdot\,\rangle_{2})$ be two Hilbert spaces and $O_1\in L(U_1,H)$, $O_2\in L(U_2,H)$. Then the following both hold,
	\begin{enumerate}[label=\roman{*})]
		\item \label{it:OpInclusion} If there exists a $c\geq 0$ such that for all $h\in H$, $\|O^{\ast}_1h\|_{1} \leq c\|O^{\ast}_2h\|_2$ then
		$$\{ O_1x \,:\, x \in U_1,\, \|x\|_{1}\leq 1\}\subseteq \{ O_2y \,:\, y \in U_2,\, \|x\|_{2}\leq 1\}.$$
		In particular, $\text{Im}O_1\subseteq \text{Im}O_2$.
		\item \label{it:OpIsometry} If  for all $h\in H$, $\|O_1^\ast h\|_1=\|O_2^\ast h\|_2$, then $\text{Im}O_1=\text{Im}(O_2)$ and for all $h \in \text{Im}O_2$,  $\|O^{-1}_1h\|_1=\|O^{-1}_2h\|_{2}$. 
	\end{enumerate}
\end{proposition}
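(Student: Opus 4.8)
The plan is to derive both statements from a single separation argument, exploiting that Lemma \ref{lem:ImagesAndPreImages} has already established that the sets $O_i\!\left(\overline{B_{U_i}(0,R)}\right)$ are convex and closed --- precisely the hypotheses needed to invoke the Hahn--Banach separation theorem in a Hilbert space. For part \ref{it:OpInclusion}, I would fix $g = O_1 x$ with $\|x\|_1 \le 1$ and aim to show $g$ lies in the closed convex set $K := O_2\!\left(\overline{B_{U_2}(0,c)}\right)$ (so the inclusion naturally carries the constant $c$ on the right; for $c=1$ this is the displayed statement, and the image inclusion $\text{Im}\,O_1 \subseteq \text{Im}\,O_2$ then follows by homogeneity, since every element of $\text{Im}\,O_1$ is a scalar multiple of one lying in $O_1\!\left(\overline{B_{U_1}(0,1)}\right)$).

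First I would argue by contradiction: if $g \notin K$, then since $K$ is closed and convex I may strictly separate $g$ from $K$, i.e. find (via Riesz representation) an element $h \in H$ with
\begin{equation*}
	\langle g, h\rangle_H > \sup_{y \in K} \langle y, h\rangle_H.
\end{equation*}
The right-hand side is a support function I can evaluate explicitly: writing $y = O_2 z$ with $\|z\|_2 \le c$ and passing to the adjoint, $\sup_{\|z\|_2 \le c}\langle O_2 z, h\rangle_H = \sup_{\|z\|_2 \le c}\langle z, O_2^\ast h\rangle_2 = c\,\|O_2^\ast h\|_2$ by Cauchy--Schwarz. On the other hand the left-hand side is controlled by the hypothesis: $\langle g, h\rangle_H = \langle O_1 x, h\rangle_H = \langle x, O_1^\ast h\rangle_1 \le \|x\|_1\,\|O_1^\ast h\|_1 \le \|O_1^\ast h\|_1 \le c\,\|O_2^\ast h\|_2$. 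These two estimates contradict the strict separation inequality, forcing $g \in K$.

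For part \ref{it:OpIsometry}, applying part \ref{it:OpInclusion} with $c=1$ in both directions yields $O_1\!\left(\overline{B_{U_1}(0,1)}\right) = O_2\!\left(\overline{B_{U_2}(0,1)}\right)$, hence by homogeneity $O_1\!\left(\overline{B_{U_1}(0,R)}\right) = O_2\!\left(\overline{B_{U_2}(0,R)}\right)$ for every $R \ge 0$, and in particular $\text{Im}\,O_1 = \text{Im}\,O_2$. For the pseudo-inverse norms I would use the characterization $\|O_i^{-1}h\|_i = \min\{\|x\|_i : O_i x = h\} = \inf\{R \ge 0 : h \in O_i\!\left(\overline{B_{U_i}(0,R)}\right)\}$, which is exactly the minimal-norm preimage defining the pseudo-inverse in Definition \ref{def:PseudoInverse}, the minimum being attained thanks to closedness of the ball images. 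Since these ball images coincide for $O_1$ and $O_2$ at every radius, the two infima range over identical sets of admissible radii, so $\|O_1^{-1}h\|_1 = \|O_2^{-1}h\|_2$.

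I expect the main obstacle to be the separation step in part \ref{it:OpInclusion}: getting the logic of the strict separation right and, above all, computing the support function of $K$ correctly so that the adjoint hypothesis $\|O_1^\ast h\|_1 \le c\,\|O_2^\ast h\|_2$ plugs in cleanly. Everything else --- the homogeneity scaling and the infimum characterization of the pseudo-inverse --- is routine bookkeeping once the ball inclusion is in hand.
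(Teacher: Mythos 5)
Your proof is correct and follows essentially the same route as the proof the paper defers to in \cite[Prop. C.0.5]{prevot_rockner_07}: Hahn--Banach strict separation of the point $O_1x$ from the closed convex set $O_2\left(\overline{B_{U_2}(0,c)}\right)$ supplied by Lemma \ref{lem:ImagesAndPreImages}, evaluation of the support function through the adjoint to reach the contradiction, and then part \ref{it:OpIsometry} via the coincidence of ball images at every radius together with the minimal-norm (infimum-of-radii) characterization of the pseudo-inverse from Definition \ref{def:PseudoInverse}. You also correctly noted that the displayed inclusion in part \ref{it:OpInclusion} should carry radius $c$ rather than $1$ on the right-hand side; that is indeed a typo in the statement as printed.
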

\begin{proof}
	See the proof of \cite[Prop. C.0.5]{prevot_rockner_07}
\end{proof}
\begin{corollary}\label{cor:SquareRootOfSquare}
	Let $O\in L(U;H)$ and set $Q = OO^\ast \in L(H)$. Then it holds that,
	\begin{equation*}
		\text{Im}Q^{1/2}=\text{Im}O,\quad \text{and}\quad \|Q^{-1/2}h\|_H = \|O^{-1}h\|_{U},\quad \text{for all}\quad h\in \text{Im}O,
	\end{equation*}
	where $Q^{-1/2}$ is the pseudo-inverse of $Q^{1/2}$.
\end{corollary}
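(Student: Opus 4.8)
The plan is to deduce this directly from Proposition \ref{prop:OpImageCompare}\ref{it:OpIsometry}, with the role of $O_1$ played by $Q^{1/2}\in L(H)$ (so $U_1=H$) and the role of $O_2$ played by $O\in L(U;H)$ (so $U_2=U$). To invoke that proposition I only need to check its hypothesis, namely that the adjoints of these two operators have equal norms on every $h\in H$: that is, $\|(Q^{1/2})^\ast h\|_H=\|O^\ast h\|_U$ for all $h\in H$. Granting this, the proposition immediately yields both $\mathrm{Im}\,Q^{1/2}=\mathrm{Im}\,O$ and the pseudo-inverse norm identity $\|(Q^{1/2})^{-1}h\|_H=\|O^{-1}h\|_U$ for every $h\in\mathrm{Im}\,O$, which is exactly the asserted $\|Q^{-1/2}h\|_H=\|O^{-1}h\|_U$.

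So the substance of the proof is the verification of that single norm identity, which I would carry out in two short steps. First I would observe that $Q=OO^\ast$ is symmetric and non-negative, since $\langle Qh,h\rangle_H=\langle OO^\ast h,h\rangle_H=\langle O^\ast h,O^\ast h\rangle_U=\|O^\ast h\|_U^2\geq 0$; hence its square root $Q^{1/2}$ is symmetric and non-negative, and by the fact recorded earlier in the text we have $(Q^{1/2})^\ast=Q^{1/2}$. Second, using this self-adjointness I would compute, for any $h\in H$,
\[
	\|Q^{1/2}h\|_H^2=\langle Q^{1/2}h,Q^{1/2}h\rangle_H=\langle Q^{1/2}Q^{1/2}h,h\rangle_H=\langle Qh,h\rangle_H=\|O^\ast h\|_U^2,
\]
where the last equality is precisely the computation from the previous step. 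Taking square roots gives $\|(Q^{1/2})^\ast h\|_H=\|Q^{1/2}h\|_H=\|O^\ast h\|_U$, the required hypothesis.

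There is no real obstacle here; the only point needing care is ensuring the operators are matched to the correct slots in Proposition \ref{prop:OpImageCompare} (in particular that the norm $\|\,\cdot\,\|_1$ associated with $O_1=Q^{1/2}$ is the $H$-norm and that the self-adjointness of $Q^{1/2}$ is used to rewrite $(Q^{1/2})^\ast$ as $Q^{1/2}$). Once the norm identity is in place, the conclusion is a verbatim restatement of part \ref{it:OpIsometry} of that proposition.
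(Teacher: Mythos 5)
Your proposal is correct and follows essentially the same route as the paper's own proof: both verify the identity $\|(Q^{1/2})^\ast h\|_H=\|Q^{1/2}h\|_H=\langle Qh,h\rangle_H^{1/2}=\|O^\ast h\|_U$ using the self-adjointness of $Q^{1/2}$, and then conclude by Item \ref{it:OpIsometry} of Proposition \ref{prop:OpImageCompare}. Your write-up merely makes explicit the non-negativity of $Q=OO^\ast$ and the resulting self-adjointness of its square root, which the paper leaves as brief remarks.
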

\begin{proof}
	First note that $Q^{1/2}$ is well defined since $OO^\ast$ is always a non-negative operator. Furthermore, since $Q^{1/2}$ is symmetric, for all $h\in H$, we have that
	\begin{equation*}
		\|(Q^{1/2})^\ast h\|^2_H = \|Q^{1/2}h\|^2_{H} = \langle Qh,h\rangle_H = \langle OO^\ast h,h\rangle_H= \|O^\ast h\|^2_U.
	\end{equation*}
	Therefore, the conclusion follows by Item \ref{it:OpIsometry} of Proposition \ref{prop:OpImageCompare}.
\end{proof}
\section{Lebesgue and Bochner Integration}\label{sec:LebBochInt}
We very briefly recall the construction of the Lebesgue and Bochner integrals. We will build the stochastic integral in a similar manner as we build the Lebesgue integral below. The expectation of infinite dimensional random variables gives a recurring example of a Bochner integral in our analysis.\\ \par
Let $(E,\mcB(E),\mu)$ be a topological, measure space, equipped with its Borel sigma algebra, $\mcB(E)$. We define the set of simple functions, $\mcE$, to be all functions, $g:E\rightarrow \mbR$ of the form
\begin{equation*}
	g = \sum_{k=1}^N c_k \mathds{1}_{A_k}, \quad c_k \in \mbR, \,\, A_k \in \mcB(E),\,N\in \mbN.
\end{equation*}
Define the integral of $g\in \mcE$ by the expression
\begin{equation*}
	\int_E g(x) \dd \mu(x) = \sum_{k=1}^N c_k \mu(A_k) \,\in \,\mbR,
\end{equation*}
and then note that for any $p\in [1,\infty]$, the space $\mcE$ equipped with the norm,
\begin{equation*}
	\|g\|_{L^p} := \begin{cases}
		\displaystyle	\sum_{k=1}^N |c_k|^p|A_k,& p \in [1,\infty)\\
		\displaystyle	\sup_{ k=1,\ldots,N} |c_k|, & p=\infty,
	\end{cases}
\end{equation*}
defines a complete, normed, vector space. We set $L^p(\mu)$ to be the completion of $\mcE$ under the norm $\|\,\cdot\,\|_{L^p}$ and the Lebesgue integral of a measurable function $f:E \rightarrow \mbR$ as the equivalence class of limit points of all simple approximating sequences $g_n\rightarrow f \in L^p(\mu)$. Extending to vector valued functions can be done component by component.\\ \par
A similar notion of integral can be defined for maps taking values in infinite dimensional spaces, this is the Bochner integral. Let $(V,\|\,\cdot\,\|_V)$ be a Banach space and define by $\mcE_V$ the set of $V$ valued simple maps $g :E\rightarrow V$ of the form
\begin{equation*}
	g = \sum_{k=1}^N c_k \mathds{1}_{A_k},\quad c_k \in V, A_k \in \mcB(E),\, N\in \mbN,
\end{equation*}
and we now set
\begin{equation*}
	\int_E g(x)\dd \mu(x) = \sum_{k=1}^N c_k \mu(A_k) \,\in V.
\end{equation*}
This is \textit{mutatis mutandi} the same construction as for the Lebesgue integral. However, in order to extend to general measurable maps $f :E\rightarrow V$ we now say that $f$ is Bochner integrable if there exists a sequence of simple function $(g_n)_{n\geq 1} \subset \mcE_V$ such that 
\begin{equation*}
	\lim_{n\rightarrow \infty} \int_E \|f(x)-g_n(x)\|_V \dd \mu (x)= 0,
\end{equation*}
where each integral is understood as the Lebesgue integral of the map $E \ni x \mapsto \|f(x)-g_n(x)\|_{V}\,\in \mbR$.\\ \par 
A version of the dominated convergence theorem holds for the Bochner integral.
\begin{theorem}[DCT for Bochner Integrals]\label{th:DCTBochner}
	Let $(E,\mcB(E),\mu)$ be a topological measure space and $V$ be a Banach space, $f:E\rightarrow V$ be Bochner integrable and $(f_n)_{n\geq 1}$ a sequence of Bochner integral maps, such that for $\mu$-a.e $x\in E$, $f(x)=\lim_{n\rightarrow \infty} f_n(x)$. Then, if there exists a real, integrable function $g \in L^1(\mu;\mbR)$ such that for $\mu$-a.e $x\in E$, $\|f_n(x)\|_V\leq g(x)$, it holds that
	\begin{equation*}
		\lim_{n\rightarrow \infty} \int_E\|f-f_n\|_V\,\dd\mu  =0,
	\end{equation*}
	and so
	\begin{equation*}
		\int_E f_n \dd \mu \rightarrow \int_E f \dd \mu.
	\end{equation*}
\end{theorem}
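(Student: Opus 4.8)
The plan is to reduce the entire statement to the classical scalar dominated convergence theorem applied to the real-valued functions $h_n := \|f - f_n\|_V$, and then to transfer the conclusion back into $V$ using the contraction inequality $\|\int_E h \dd\mu\|_V \le \int_E \|h\|_V \dd\mu$ for the Bochner integral.

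First I would record that each $h_n$ is a genuine real-valued measurable function. Since $f$ and every $f_n$ are Bochner integrable, they are strongly measurable (almost-everywhere limits of $V$-valued simple maps), hence so is their difference $f - f_n$; composing with the continuous map $v \mapsto \|v\|_V$ then shows $h_n : E \to \mbR$ is measurable. Next I would identify the almost-everywhere limit and a dominating function. By continuity of the norm, the hypothesis $f_n(x) \to f(x)$ for $\mu$-a.e.\ $x$ gives $h_n(x) = \|f(x) - f_n(x)\|_V \to 0$ for $\mu$-a.e.\ $x$. Moreover, passing to the limit in $\|f_n(x)\|_V \le g(x)$ (again by continuity of the norm together with the a.e.\ convergence) yields $\|f(x)\|_V \le g(x)$ for $\mu$-a.e.\ $x$, whence $h_n(x) \le \|f(x)\|_V + \|f_n(x)\|_V \le 2 g(x)$ for $\mu$-a.e.\ $x$. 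Since $g \in L^1(\mu;\mbR)$, the function $2g$ is an integrable scalar envelope for the whole sequence $(h_n)_{n\geq 1}$.

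With these three facts in hand --- measurability, a.e.\ convergence to $0$, and domination by $2g \in L^1(\mu;\mbR)$ --- the scalar dominated convergence theorem applies directly and gives
\[
	\lim_{n\to\infty} \int_E \|f - f_n\|_V \dd\mu = 0,
\]
which is the first assertion. For the second assertion I would invoke linearity of the Bochner integral together with its contraction property: the latter holds for $V$-valued simple maps by the triangle inequality in $V$ and extends to all Bochner integrable maps by the approximation built into the definition, so
\[
	\left\| \int_E f \dd\mu - \int_E f_n \dd\mu \right\|_V = \left\| \int_E (f - f_n)\dd\mu \right\|_V \le \int_E \|f - f_n\|_V \dd\mu \longrightarrow 0 .
\]

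The argument is largely a bookkeeping exercise once the scalar theorem is granted; the only genuinely substantive points are (i) upgrading the domination hypothesis on the $f_n$ to the bound $\|f\|_V \le g$ a.e.\ on the limit, so that $2g$ serves as a uniform envelope for the $h_n$, and (ii) the contraction inequality for the Bochner integral, which is the one place where the infinite-dimensional structure actually enters. I expect (ii) to be the main obstacle in the sense of requiring care rather than difficulty: both points rest on continuity of the norm together with the density/approximation inherent in the definition of Bochner integrability, and neither presents a real conceptual hurdle.
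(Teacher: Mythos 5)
Your proof is correct and follows essentially the same route as the paper's own (much terser) argument: reduce to the scalar dominated convergence theorem applied to $\|f-f_n\|_V$, then pass the conclusion through the Bochner integral via its contraction property $\|\int_E h\,\dd\mu\|_V \le \int_E \|h\|_V\,\dd\mu$. Your write-up simply fills in the details the paper leaves implicit (measurability of $\|f-f_n\|_V$, the envelope $2g$, and the contraction inequality itself), so there is nothing to correct.
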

\begin{proof}
	The first assertion is a consequence of the DCT for Lebesgue integrals, since by assumption $\|f_n(x)\|_V\rightarrow \|f(x)\|_V\in \mbR$ for $\mu$-a.e $x\in E$. The second assertion follows from the first and the discussion above regarding construction of the Bochner integral.
\end{proof}
\section{Probability and Random Variables}\label{subsec:Probabillity}
Let $(\Omega,\mcF,\mbP)$ be a probability space, consisting of an abstract measurable space $(\Omega,\mcF)$, equipped with $\mbP$ a probability. We also assume we are given, $(E,\mcB(E))$ a measurable, separable, Banach space, with $\mcB(E)$ the Borel sigma algebra induced by the norm $\|\,\cdot\,\|_E$. We will assume both these to be in place without further comment. We say that a measurable map $X:\Omega\rightarrow E$ is an $E$ valued random variable. We write $\mcL(X) := X_\#\mbP \in \mcP(E)$ that is, for any $A \in \mcB(E)$,
\begin{equation*}
	\mcL(X)(A) = \mbP[\omega \in \Omega \,:\, X(\omega) \in A].
\end{equation*}
We say that a property holds $\mbP$-a.s. if the $\mbP$ measure of the set where the property does not hold is zero. The following proposition says that the Borel $\sigma$-algebra, $\mcB(E)$ is generated by the cylinder sets of $E^\ast$. We recall that given a family of sets $A \in 2^E$, the notation $\sigma(A)$ denotes the smallest $\sigma$-algebra containing $A$.
\begin{proposition}\label{prop:CylinderSets}
	Let $(E,\mcB(E))$ be a separable, Banach space equipped with its Borel $\sigma$-algebra. Firstly, there exists a sequence $(\varphi_n)_{n\geq 1} \subset E^\ast$ such that,
	\begin{equation}\label{eq:NormApproxSequence}
		\|x\|_E = \sup_{n\geq 1} |\varphi_n(x)|,\quad \text{for any}\quad x \in E.
	\end{equation}
	As a result, 
	\begin{equation}\label{eq:CylinderBorel}
		\mcB(E)= \sigma(\{ x \in E\,:\, \varphi(x)\leq R,\,R \in \mbR,\, \varphi \in E^\ast \}),
	\end{equation}
	and so $X$ is an $E$ valued random variable, if and only if $\varphi(X)$ is a real random variable for all $\varphi\in E^\ast$.
\end{proposition}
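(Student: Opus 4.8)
The plan is to establish the three assertions in order, with the norming family \eqref{eq:NormApproxSequence} doing the heavy lifting for the latter two. For the first assertion I would exploit separability together with the Hahn--Banach theorem. Fix a countable dense subset $(x_n)_{n\geq 1}\subset E$ and, for each $n$, use Hahn--Banach to select $\varphi_n\in E^\ast$ with $\|\varphi_n\|_{E^\ast}=1$ and $\varphi_n(x_n)=\|x_n\|_E$. The bound $\sup_n|\varphi_n(x)|\leq \|x\|_E$ is immediate from $\|\varphi_n\|_{E^\ast}=1$. For the reverse inequality, given $x\in E$ and $\eps>0$ I choose $x_n$ with $\|x-x_n\|_E<\eps$ and estimate
\begin{equation*}
	|\varphi_n(x)|\geq \varphi_n(x_n)-|\varphi_n(x-x_n)|\geq \|x_n\|_E-\eps\geq \|x\|_E-2\eps,
\end{equation*}
so that $\sup_n|\varphi_n(x)|\geq \|x\|_E-2\eps$; letting $\eps\downarrow 0$ yields equality and hence \eqref{eq:NormApproxSequence}.

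For \eqref{eq:CylinderBorel} write $\mcC$ for the generating family and argue both inclusions. Since every $\varphi\in E^\ast$ is continuous, each set $\{\varphi\leq R\}$ is closed and therefore Borel, giving $\sigma(\mcC)\subseteq \mcB(E)$. The reverse inclusion is where the norming family enters: because $E$ is separable metric, $\mcB(E)$ is generated by the closed balls $\overline{B_E(y,R)}$, so it suffices to place each such ball in $\sigma(\mcC)$. Using \eqref{eq:NormApproxSequence} I would write
\begin{equation*}
	\overline{B_E(y,R)}=\{x\in E:\sup_n|\varphi_n(x-y)|\leq R\}=\bigcap_{n\geq 1}\{x\in E:|\varphi_n(x)-\varphi_n(y)|\leq R\},
\end{equation*}
and observe that each set in the intersection equals $\{x:\varphi_n(x)\leq R+\varphi_n(y)\}\cap\{x:(-\varphi_n)(x)\leq R-\varphi_n(y)\}$, an intersection of two members of $\mcC$ (note $-\varphi_n\in E^\ast$). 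As a countable intersection this lies in $\sigma(\mcC)$, so every closed ball, and hence every Borel set, belongs to $\sigma(\mcC)$; thus $\mcB(E)\subseteq\sigma(\mcC)$.

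The measurability equivalence then follows formally from \eqref{eq:CylinderBorel}. If $X$ is $\mcB(E)$-measurable, then $\varphi(X)=\varphi\circ X$ is measurable for every $\varphi\in E^\ast$, since $\varphi$ is continuous and therefore Borel. Conversely, assume $\varphi(X)$ is a real random variable for all $\varphi\in E^\ast$. The collection $\mcG:=\{A\in\mcB(E):X^{-1}(A)\in\mcF\}$ is a $\sigma$-algebra, and for $C=\{\varphi\leq R\}\in\mcC$ we have $X^{-1}(C)=\{\varphi(X)\leq R\}\in\mcF$; hence $\mcC\subseteq\mcG$, whence $\mcB(E)=\sigma(\mcC)\subseteq\mcG$ and $X$ is measurable. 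I expect the only genuine subtlety to lie in the first assertion---specifically the correct invocation of Hahn--Banach on a dense set and the $\eps$-approximation that upgrades a pointwise norming property into a supremum identity valid at every $x$. Once \eqref{eq:NormApproxSequence} is in hand, the remaining two parts reduce to routine $\sigma$-algebra bookkeeping.
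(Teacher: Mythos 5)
Your proposal is correct and follows essentially the same route as the paper: Hahn--Banach on a countable dense set to produce the norming family \eqref{eq:NormApproxSequence}, writing closed balls as countable intersections of cylinder sets to get \eqref{eq:CylinderBorel}, and deducing the measurability equivalence from the generating property. Your explicit decomposition of $\{x : |\varphi_n(x)-\varphi_n(y)|\leq R\}$ into two one-sided cylinder sets and the good-sets argument for the converse measurability direction are details the paper glosses over, but they are refinements of, not departures from, its argument.
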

\begin{proof}
	We show \eqref{eq:NormApproxSequence} first. Let $(x_n)_{n\geq 1}\subset E$ be a dense set. By the Hahn--Banach theorem, \cite[Thm. 1.1]{brezis_11}, for all $n\geq 1$ there exists a $\varphi_n\in E^\ast$ such that $\varphi_n(x_n)=\|x_n\|_E$ and $\|\varphi_n\|_{E^\ast}=1$. We show that the sequence $(\varphi_n)_{n\geq 1}$ has the property \eqref{eq:NormApproxSequence}. Fix $x \in E$ and since $\|\varphi_n\|_{E^\ast}\leq 1$ it holds that $|\varphi_n(x)|\leq \|x\|_E$ for each $n\geq 1$ and therefore $\sup_{n\geq 1}|\varphi_n(x)|\leq \|x\|_E$. On the other hand, for any $\varepsilon>0$, there exists an $x_n$, such that $\|x-x_n\|_E <\varepsilon$ and so $|\varphi_n(x)-\|x_n\|_E|= |\varphi_n(x-x_n)|\leq \varepsilon$. Therefore, $|\varphi_n(x)|>\|x\|_E -2\varepsilon$, and since $\varepsilon$ was arbitrary we conclude that \eqref{eq:NormApproxSequence} holds.\\ \par
	We use \eqref{eq:NormApproxSequence} to prove \eqref{eq:CylinderBorel}. It follows from \eqref{eq:NormApproxSequence}, that for $a \in E$,\, $R>0$, 
	\begin{equation*}
		\overline{B}_E(a,R) = \{ x \in E\,:\, \|x-a\|_{E}\leq R \} = \bigcap_{n\geq 1}\{ x \in E\,:\, |\varphi_n(x)-\varphi_n(a)|\leq R \}.
	\end{equation*}
	Therefore, the smallest $\sigma$-algebra containing all sets of the form $\{  x \in E\,:\,\varphi(x)\leq R,\, R\in \mbR, \,\varphi\in E^\ast\}$ contains the closed balls of $E$ and thus contains $\mcB(E)$. On the other hand, every $\varphi \in E^\ast$ is a continuous map from $(E,\mcB(E))\rightarrow (\mbR,\mcB(\mbR))$ and so is measurable. Therefore, the reverse inclusion holds from which \eqref{eq:CylinderBorel} follows. Since a map $X:\Omega\rightarrow E$ is an $E$ valued random variable if and only if it is measurable with respect to $\mcF$ and $\mcB(E)$, the final assertion follows since \eqref{eq:CylinderBorel} is equivalent to the statement $\mcB(E)= \sigma(\{\varphi \in E^\ast \})$.
\end{proof}
The following corollary of Proposition \ref{prop:CylinderSets} allows us to turn some questions regarding $E$ valued random variables into questions regarding real valued random variables through the dual mapping $\omega \mapsto \varphi(X(\omega))$ for $\varphi \in E^\ast$.
\begin{corollary}\label{cor:EValuedRVs}
	Let $(E,\mcB(E))$ be a separable Banach space equipped with its Borel $\sigma$-algebra. Then the following all hold,
	\begin{enumerate}[label = \roman*)]
		\item \label{it:VectoSpace}The set of $E$ valued random variables is a vector space. That is, if $X,\, Y$ are $E$ valued random variables, then for any $\alpha,\,\beta \in \mbR$, $\alpha X + \beta Y$ is an $E$ valued random variable.
		\item \label{it:WeaklyClosed}If $(X_n)_{n\geq 1}$ is a sequence of $E$ valued random variables such that $X_n(\omega)\rightharpoonup X(\omega)\in E$, $\mbP$-a.s. then $X:\Omega\rightarrow E$ is an $E$ valued random variable.
		\item \label{it:NormRealRV} If $X$ is an $E$ valued random variable, then $\Omega \ni \omega\mapsto \|X(\omega)\|_E$ is a real random variable.
		\item \label{it:Modification} If $X,\,Y$ are two $E$ valued random variables. Then $X=Y$, $\mbP$-a.s. if and only if $\varphi(X)=\varphi(Y)$ $\mbP$-a.s. for all $\varphi\in E^\ast$.
	\end{enumerate}
\end{corollary}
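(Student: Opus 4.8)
The plan is to reduce each of the four assertions to a statement about the real-valued random variables $\varphi(X)$, $\varphi\in E^\ast$, and then invoke the measurability criterion from Proposition~\ref{prop:CylinderSets} together with the countable norming sequence $(\varphi_n)_{n\geq 1}$ furnished by \eqref{eq:NormApproxSequence}. The whole corollary rests on the principle that $E$-valued measurability is detected by the countably many functionals $\varphi_n$, so each part becomes an exercise in transferring a known stability property of real random variables (closure under linear combinations, pointwise limits, and countable suprema) through the dual pairing.

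For item~\ref{it:VectoSpace} I would fix $\varphi\in E^\ast$ and use linearity, $\varphi(\alpha X+\beta Y)=\alpha\varphi(X)+\beta\varphi(Y)$; as the real random variables form a vector space this is a real random variable for every $\varphi$, whence $\alpha X+\beta Y$ is $E$-valued measurable by Proposition~\ref{prop:CylinderSets}. For item~\ref{it:WeaklyClosed}, weak convergence $X_n(\omega)\rightharpoonup X(\omega)$ means precisely that $\varphi(X_n(\omega))\to\varphi(X(\omega))$ for $\mbP$-a.a.\ $\omega$ and every $\varphi$; since an almost sure pointwise limit of real random variables is again a real random variable, $\varphi(X)$ is measurable for each $\varphi$ and the same criterion applies. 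For item~\ref{it:NormRealRV}, I would write $\norm{X(\omega)}_E=\sup_{n\geq 1}\abs{\varphi_n(X(\omega))}$ using \eqref{eq:NormApproxSequence}, exhibiting the norm as a countable supremum of the real random variables $\abs{\varphi_n(X)}$, hence measurable.

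The one genuinely delicate point is the reverse implication in item~\ref{it:Modification}. There the hypothesis gives, for each fixed $\varphi$, an exceptional $\mbP$-null set off which $\varphi(X)=\varphi(Y)$, but this null set may depend on $\varphi$ and $E^\ast$ is uncountable, so one cannot simply intersect over all $\varphi$. The resolution, and the crux of the argument, is once more the countable sequence $(\varphi_n)$: for each $n$ pick a null set $N_n$ outside which $\varphi_n(X)=\varphi_n(Y)$, set $N=\bigcup_{n\geq 1}N_n$, which is still null as a countable union, and note that on $N^{c}$ one has $\norm{X-Y}_E=\sup_{n\geq 1}\abs{\varphi_n(X-Y)}=0$ by \eqref{eq:NormApproxSequence}, so $X=Y$ there. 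This step also quietly uses items~\ref{it:VectoSpace} and~\ref{it:NormRealRV} to know that $X-Y$ and its norm are measurable. The forward implication is immediate, since $X=Y$ off a single null set forces $\varphi(X)=\varphi(Y)$ off that same set for every $\varphi$. I expect no real obstacle beyond this countability bookkeeping.
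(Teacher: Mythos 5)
Your proposal is correct and follows essentially the same route as the paper: reduce every assertion to real-valued random variables via Proposition~\ref{prop:CylinderSets}, and use the countable norming sequence $(\varphi_n)_{n\geq 1}$ from \eqref{eq:NormApproxSequence} to handle the norm and the reverse implication in item~\ref{it:Modification}. One small improvement on your side: for item~\ref{it:NormRealRV} the paper asserts that $\|\,\cdot\,\|_E \in E^\ast$, which is not literally true since the norm is not linear, whereas your argument writing $\|X\|_E = \sup_{n\geq 1}|\varphi_n(X)|$ as a countable supremum of real random variables is the correct justification (and is in fact what the paper itself relies on when proving item~\ref{it:Modification}).
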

\begin{proof}
	Items \ref{it:VectoSpace} and \ref{it:WeaklyClosed} follow from Proposition \ref{prop:CylinderSets} and the fact that both properties hold for real random variables. Item \ref{it:NormRealRV} follows from the fact that by definition $\|\,\cdot\,\|_E \in E^\ast$. In order to show item \ref{it:Modification}, by linearity of the space $E$ it suffices to show that if $\varphi(X)=0\in \mbR$, $\mbP$-a.s. for all $\varphi\in E^\ast$, then $X=0\in E$, $\mbP$-a.s.. Let, $(\varphi_n)_{n\geq 1}$ be the sequence from \eqref{eq:NormApproxSequence}, then if by assumption $\varphi_n(X)=0$, $\mbP$-a.s. it holds that $\sup_{n\geq 1}|\varphi_n(X)|=\|X\|=0$, $\mbP$-a.s., which implies the conclusion.
\end{proof}
We say that an $E$ valued random variable, $X$, is simple if there exists an $N\geq 1$ and sequences $(x_k)_{k=1}^N\subset E$, $(A_k)_{k=1}^N\subset \mcF$, such that,
\begin{equation}\label{eq:SimpleRV}
	X(\omega) = \sum_{k=1}^N x_k \indic_{A_k}(\omega), \quad \text{for}\quad \mbP\text{-a.a. } \omega \in \Omega.
\end{equation}
It follows from, Corollary \ref{cor:EValuedRVs}, that for any sequence of simple random variables $(X_n)_{n\geq 1}$ converging, weakly or strongly, to $X\in E$, defines an $E$ valued random variable. Conversely, it also holds that to any $E$ valued random variable, there exists a sequence of simple random variables converging strongly to it. We give this result in the more general setting of  separable metric spaces.
\begin{proposition}\label{prop:SimpleApprox}
	Let $(E,\rho_E)$ be a separable metric space and $X$ be an $E$ valued random variable. Then there exists a sequence of simple random variables $(X_n)_{n\geq 1}$, of the form, \eqref{eq:SimpleRV}, such that $\lim_{n\rightarrow \infty}\rho_E(X(\omega),X_n(\omega))\rightarrow 0$, $\mbP$-a.s..
\end{proposition}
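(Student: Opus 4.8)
The plan is to build the approximants by ``rounding'' $X$ to the nearest element of a growing finite subset of a countable dense set, exploiting separability of $E$. First I would fix a sequence $(y_k)_{k\geq 1}$ that is dense in $E$, which exists precisely because $(E,\rho_E)$ is separable. The guiding idea is that for each fixed $n$ the approximant $X_n$ should return the closest of the first $n$ reference points $y_1,\dots,y_n$ to the current value $X(\omega)$; as $n$ grows the reference set becomes dense enough to approximate $X(\omega)$ arbitrarily well, while for each fixed $n$ only finitely many values are used.

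Concretely, for $n\geq 1$ I would set $d_n(\omega) := \min_{1\leq k\leq n}\rho_E(X(\omega),y_k)$, let $\kappa_n(\omega)$ be the smallest index $k\in\{1,\dots,n\}$ attaining this minimum, and define $X_n(\omega):=y_{\kappa_n(\omega)}$. This $X_n$ takes only the finitely many values $y_1,\dots,y_n$ and is constant on each set $A_{n,k}:=\{\omega:\kappa_n(\omega)=k\}$, so it is a simple random variable (when $E$ is in addition a vector space it is literally of the form \eqref{eq:SimpleRV}, namely $X_n=\sum_{k=1}^n y_k\indic_{A_{n,k}}$), provided the $A_{n,k}$ are shown to lie in $\mcF$.

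For the measurability the key observation is that each $\omega\mapsto \rho_E(X(\omega),y_k)$ is a real random variable, being the composition of the measurable map $X$ with the continuous (hence Borel) function $E\ni y\mapsto \rho_E(y,y_k)$. A finite minimum of real random variables is again measurable, and the tie-broken level sets can be written as
$$A_{n,k}=\bigcap_{1\leq j< k}\{\rho_E(X,y_k)<\rho_E(X,y_j)\}\cap\bigcap_{k\leq j\leq n}\{\rho_E(X,y_k)\leq\rho_E(X,y_j)\},$$
a finite combination of measurable sets, whence $A_{n,k}\in\mcF$.

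Finally, convergence is immediate from density: fixing $\omega$ and $\varepsilon>0$, there is some $y_k$ with $\rho_E(X(\omega),y_k)<\varepsilon$, and then for every $n\geq k$ we have $\rho_E(X(\omega),X_n(\omega))=d_n(\omega)\leq \rho_E(X(\omega),y_k)<\varepsilon$. Hence $\rho_E(X(\omega),X_n(\omega))\to 0$ for \emph{every} $\omega\in\Omega$, which is in fact stronger than the claimed $\mbP$-a.s.\ statement. I do not anticipate a serious obstacle; the only point needing care is the measurability of the selection $\kappa_n$, i.e.\ verifying that breaking ties by least index keeps the value sets in $\mcF$, which the display above resolves.
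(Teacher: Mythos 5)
Your construction is exactly the one the paper uses: round $X$ to the nearest of the first $n$ points of a countable dense set, breaking ties by least index, and conclude by density. Your additional verification of the measurability of the tie-broken level sets $A_{n,k}$ is a detail the paper leaves implicit, so your write-up is correct and, if anything, slightly more complete.
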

\begin{proof}
	Let $(x_k)_{k\geq 1}\subset E$ be a countable dense set. Then for every $n\geq 1$ and $\mbP$-a.a. $\omega \in \Omega$, we define
	\begin{align*}
		\rho_n(\omega) &= \min\{ \rho(X(\omega),x_k),\,k=1,\ldots,n \},\\
		k_n(\omega) &= \min\{ k\leq n\,:\, \rho_n(\omega)= \rho(X(\omega),x_k) \},\\
		X_n(\omega)&= x_{k_n(\omega)}.
	\end{align*}
	The random variables $X_n$ are simple, since each $X_n$ takes values only in the set $\{ x_1,\ldots,x_n\}$. Furthermore, since the sequence $(x_k)_{k\geq 1}$ is dense, one has that $\lim_{n\rightarrow \infty} \rho_n(\omega)=0$, $\mbP$-a.s. Since $\rho_n(\omega) = \rho_E(X(\omega),X_n(\omega))$, the conclusion follows.
\end{proof}
For a simple random variable we define the Bochner integrals,
\begin{equation*}
	\mbE[X]:=\int_{\Omega} X(\omega) \dd \mbP(\omega) = \sum_{k=1}^N \mbE[x_k] \mbP[A_k],
\end{equation*}
and
\begin{equation*}
	\mbE[\|X\|_E] = \sum_{k=1}^N \mbE[\|x_k\|_E] \mbP[A_k].
\end{equation*}
By completeness of $(E,\,\|\,\cdot\,\|_E)$, an $E$ valued random variable is Bochner integrable if and only if 
\begin{equation*}
	\int_\Omega \|X(\omega)\|_E \dd \mbP(\omega) <\infty.
\end{equation*}
Therefore, using Proposition \ref{prop:SimpleApprox} we may extend the Bochner integral, $\mbE[X]$, to all $E$ valued random variables, as the limit of the Bochner integral along a sequence of approximating simple random variables. For $p\in [1,\infty]$ we define the spaces $L^p(\Omega;E)$ as the completions of the simple random variables under the norms,
\begin{equation*}
	\|X\|_{p}:= \begin{cases}
		\mbE\left[\|X\|_E^p\right]^{\frac{1}{p}}, & \text{ if } p\in [1,\infty),\\
		\displaystyle \esssup_{\omega \in \Omega} \|X(\omega)\|_E, & \text{ if } p =\infty.
	\end{cases}
\end{equation*}
Note that $\mbE[X] \in E$ while $\mbE[\|X\|_E^p] \in \mbR$. For concision and where it will not cause confusion we sometimes write the Bochner integral as
\begin{equation*}
	\int_{\Omega} X \dd \mbP := \int_{\Omega}X(\omega)\dd \mbP(\omega).
\end{equation*}
The Bochner integral of a random variable obeys analogues of the dominated convergence and monotone convergence theorems for the Lebesgue integral.
Let $(F,\|\,\cdot\,\|_E)$ be a second separable Banach space and consider a linear operator $A:D(A) \subseteq E \rightarrow F$. Recall that we say $A$ is closed if the set,
\begin{equation*}
	\text{graph}(A):= \{ (x,y) \in E\times F\,:\, x\in D(A),\, y = Ax \},
\end{equation*}
is closed in the product space $E\times F$. We equip $D(A)$ with the graph norm, $\|x\|_{D(A)}:= \|x\|_E+\|Ax\|_F$. We have the following result regarding $D(A)\subseteq E$ valued random variables.
\begin{theorem}\label{th:OpExpectationCommute}
	Let $E,\,F$ be separable Banach spaces and $A:D(A)\subseteq E\rightarrow F$ be a closed operator such that $D(A)\in \mcB(E)$. If $X:\Omega\rightarrow E$ is $\mbP$-a.s. $D(A)$ valued, then $AX$ is an $F$ valued random variable and $X$ is a $D(A)$ valued random variable (seen as a Banach space equipped with the graph norm). If $X$ is Bochner integrable and
	\begin{equation*}
		\int_{\Omega}\|AX(\omega)\|_E\dd \mbP(\omega)<\infty,
	\end{equation*}
	then
	\begin{equation}\label{eq:OpExpectationCommute}
		A \int_{\Omega} X(\omega)\dd \mbP(\omega) = \int_\Omega X(\omega) \dd \mbP(\omega).
	\end{equation}
\end{theorem}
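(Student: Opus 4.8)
The plan is to reformulate everything in terms of the graph $G := \text{graph}(A) \subseteq E\times F$, which by the closedness hypothesis is a closed subspace of the separable Banach space $E\times F$ and is therefore itself a separable Banach space. The map $\iota : x \mapsto (x, Ax)$ is an isometric isomorphism from $(D(A), \|\cdot\|_{D(A)})$ onto $G$, so that statements about the $D(A)$-valued (graph-norm) random variable $X$ translate into statements about the $G$-valued map $\omega \mapsto (X(\omega), AX(\omega))$, and the commutation identity \eqref{eq:OpExpectationCommute} becomes the single assertion that the Bochner integral $\mbE[(X, AX)]$ lies in the closed subspace $G$.

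The first, and main, difficulty is measurability: I must show $AX$ is an $F$-valued random variable before I can even form $(X, AX)$. Here I would use that, $A$ being closed, $G$ is a closed (hence Polish) subset of $E \times F$, and that the coordinate projection $\pi_E : G \to E$ is a continuous linear injection with image exactly $D(A)$. By the Lusin--Souslin theorem (a continuous injection of a Polish space maps Borel sets to Borel sets and has Borel-measurable inverse on its image), the inverse $(\pi_E|_G)^{-1} : D(A) \to G$ is Borel. Since $X$ is measurable into $E$, takes values in $D(A)$ $\mbP$-a.s., and $D(A) \in \mcB(E)$, composing $X$ with $(\pi_E|_G)^{-1}$ and then the continuous projection $\pi_F : G \to F$ exhibits $AX$ as a composition of measurable maps on a set of full measure, hence an $F$-valued random variable (extend it arbitrarily on the null complement). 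Equivalently, $\omega \mapsto (X(\omega), AX(\omega))$ is measurable into the closed subspace $G$, and transporting through the isometry $\iota$ gives the second measurability claim, that $X$ is a $D(A)$-valued (graph-norm) random variable.

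With measurability in hand the commutation is soft. Under the integrability assumption $\mbE\|AX\|_F < \infty$ together with Bochner integrability of $X$, the map $\Phi := (X, AX)$ is Bochner integrable into $E \times F$; since the coordinate projections are bounded and linear they commute with the integral (this holds for simple functions and passes to the limit via Theorem \ref{th:DCTBochner}), giving $\mbE[\Phi] = (\mbE[X], \mbE[AX])$. It remains to show $\mbE[\Phi] \in G$. Because $G$ is a closed subspace, the Hahn--Banach theorem lets me write $G = \bigcap \{\ker \psi : \psi \in (E\times F)^\ast,\ \psi|_G = 0\}$, and for each such $\psi$ boundedness yields $\psi(\mbE[\Phi]) = \mbE[\psi(\Phi)] = 0$ since $\Phi \in G$ $\mbP$-a.s. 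As this holds for every annihilating functional, $\mbE[\Phi] = (\mbE[X], \mbE[AX]) \in G = \text{graph}(A)$, which is precisely the statement that $\mbE[X] \in D(A)$ and $A\mbE[X] = \mbE[AX]$, i.e. \eqref{eq:OpExpectationCommute}.

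The crux is therefore the measurability step: the commutation identity is a routine consequence of bounded functionals passing through the Bochner integral, whereas measurability of $AX$ genuinely requires the closedness of $A$ through the Polish structure of the graph and the Lusin--Souslin (Kuratowski) descriptive set-theoretic input. If one wishes to avoid that input, the alternative is a direct construction of $A$-valued simple functions adapted to $X$, but the graph argument seems the cleanest route.
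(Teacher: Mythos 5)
Your proof is correct, but it takes a genuinely different route from the paper's. For the identity \eqref{eq:OpExpectationCommute} the paper argues by approximation: it takes simple $D(A)$-valued random variables $X_n$ converging to $X$ in the graph norm (via Proposition \ref{prop:SimpleApprox} applied in the Banach space $(D(A),\|\cdot\|_{D(A)})$), notes that $A\int_\Omega X_n\,\dd\mbP = \int_\Omega AX_n\,\dd\mbP$ holds trivially for simple functions, passes to the limit in both coordinates, and invokes closedness of $A$ to identify the limit pair $\left(\int_\Omega X\,\dd\mbP,\ \int_\Omega AX\,\dd\mbP\right)$ as an element of the graph. You instead run a soft duality argument: the pair $(X,AX)$ is a Bochner-integrable random variable valued a.s. in the closed subspace $G=\mathrm{graph}(A)$, bounded functionals commute with the Bochner integral, and the Hahn--Banach annihilator characterisation of closed subspaces forces $\mbE[(X,AX)]\in G$. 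Both arguments use closedness of $A$ as the decisive input, just packaged differently: the paper as ``limits of graph elements lie in the graph,'' you as ``closed subspaces are intersections of kernels of annihilating functionals.'' Your route avoids the approximation step entirely, at the price of invoking Hahn--Banach; the paper's is more constructive and stays within the machinery it has already built. Where your write-up is genuinely stronger is the measurability of $AX$: the paper waves this through as ``essentially following from the definitions,'' but since a closed operator is \emph{not} continuous from $(D(A),\|\cdot\|_E)$ to $F$, showing that $AX$ is measurable (equivalently, that $X$ is measurable for the graph-norm Borel structure) really does require the Lusin--Souslin/Kuratowski fact that a continuous linear injection between Polish spaces is a Borel isomorphism onto its image -- an input the paper also needs implicitly, since its simple-function approximation in $D(A)$ presupposes exactly this graph-norm measurability of $X$.
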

\begin{proof}
	The first statements essentially follow from the definitions, using the fact that $(D(A),\|\,\cdot\,\|_{D(A)})$ is itself a Banach space. To prove \eqref{eq:OpExpectationCommute} consider a sequence of simple random variables $(X_n)_{n\geq 1}\subset D(A)$ approximating $X$ in $D(A)$, that is
	$$\|X-X_n\|_{D(A)}:= \|X-X_n\|_{E} + \|AX - AX_n\|_F\rightarrow 0.$$
	It follows directly that we have
	\begin{equation*}
		\int_\Omega \|X(\omega)-X_n(\omega)\|_{D(A)}\dd \mbP(\omega) \rightarrow 0.
	\end{equation*} 
	Thus, by definition of the Bochner integral, we have that both
	\begin{equation*}
		\int_{\Omega} X_n(\omega)\dd \mbP(\omega) \rightarrow \int_{\Omega}X(\omega)\dd \mbP(\omega), \qquad \int_{\Omega} AX_n(\omega)\dd \mbP(\omega) \rightarrow \int_{\Omega} AX(\omega)\dd \mbP(\omega).
	\end{equation*}
	However, it also follows from the definition of the Bochner integral of simple random variables that
	\begin{equation*}
		\int_{\Omega} AX_n(\omega)\dd \mbP(\omega)= A\int_{\Omega} X_n(\omega)\dd \mbP(\omega),
	\end{equation*}
	therefore \eqref{eq:OpExpectationCommute} follows from the closedness of $A$.
\end{proof}
\begin{remark}\label{rem:LinOperatorExpecationCommute}
	It follows from Theorem~\ref{th:OpExpectationCommute} that $\varphi \in L(E,F)$ and if $X$ is $E$ valued random variable that is Bochner integrable then,
	\begin{equation*}
		\varphi \left( \int_{\Omega} X(\omega)\dd \mbP(\omega)\right) = \int_{\Omega} \varphi(X(\omega))\dd \mbP(\omega).
	\end{equation*} 
	That is $\varphi(\mbE[X])= \mbE[\varphi(X)]$. In fact, the expectation is characterised as the unique element $x\in E$ such that,
	\begin{equation*}
		\varphi(x) = \mbE[\varphi(X)],\quad \text{ for all }\varphi \in E^\ast.
	\end{equation*}
\end{remark}
In the sequel we employ the notion of a conditional expectation, in particular when studying martingales taking values in Hilbert spaces. Existence and uniqueness of the conditional expectation for $E$ valued random variables follows in a similar manner as in the finite dimensional case. 
\begin{theorem}\label{th:CondExpect}
	Let $X$ be a Bochner integrable, $E$ valued random variable and let $\mcG \subseteq \mcF$ be a sub sigma-algebra of $\mcF$. Then there exists a unique, up to a $\mbP$ measure zero set, integrable, $E$ valued random variable, $Z$, that is $\mcG$ measurable and such that 
	\begin{equation}\label{eq:CondExpDefine}
		\int_A X \dd \mbP = \int_A Z \dd \mbP,\quad \text{ for all }A \in \mcG.
	\end{equation}
	We write $\mbE[X|\mcG]$ for the random variable $Z$ and refer to it as the conditional expectation of $X$ with respect to $\mcG$.
\end{theorem}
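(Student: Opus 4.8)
The plan is to establish existence and uniqueness by reducing everything to the scalar-valued conditional expectation, which we take as known, and then bootstrapping from simple random variables to general Bochner-integrable ones via a nonexpansiveness estimate in $L^1(\Omega;E)$.

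I would treat uniqueness first, since it is the lighter half. Suppose $Z_1,Z_2$ are both $\mcG$-measurable, integrable, and satisfy \eqref{eq:CondExpDefine}. Then $\int_A (Z_1-Z_2)\dd\mbP = 0$ for every $A\in\mcG$. Fixing any $\varphi\in E^\ast$ and using Remark \ref{rem:LinOperatorExpecationCommute} to pull $\varphi$ through the Bochner integral, the real, $\mcG$-measurable random variable $\varphi(Z_1-Z_2)$ integrates to zero over every $A\in\mcG$, and hence vanishes $\mbP$-a.s. (test against $A=\{\varphi(Z_1-Z_2)>0\}$ and its complement). Applying this to the countable norming family $(\varphi_n)_{n\ge1}$ of Proposition \ref{prop:CylinderSets} and invoking Corollary \ref{cor:EValuedRVs}\ref{it:Modification} gives $Z_1=Z_2$ $\mbP$-a.s.

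For existence, the first step is the simple case: for $X=\sum_{k=1}^N x_k\indic_{A_k}$ I set $Z:=\sum_{k=1}^N x_k\,\mbE[\indic_{A_k}\mid\mcG]$, where each $\mbE[\indic_{A_k}\mid\mcG]$ is the scalar conditional expectation. This $Z$ is manifestly $\mcG$-measurable and integrable, and the defining identity \eqref{eq:CondExpDefine} follows termwise from the scalar version together with linearity of the Bochner integral. The crucial engine is the contraction estimate $\|Z\|_E\le\mbE[\|X\|_E\mid\mcG]$ $\mbP$-a.s.: for each $\varphi_n$ from \eqref{eq:NormApproxSequence} one has $|\varphi_n(Z)|=|\mbE[\varphi_n(X)\mid\mcG]|\le\mbE[\|X\|_E\mid\mcG]$ a.s. by the scalar Jensen inequality, and taking the supremum over the countable family $(\varphi_n)_{n\ge1}$ yields $\|Z\|_E\le\mbE[\|X\|_E\mid\mcG]$; integrating gives $\mbE[\|Z\|_E]\le\mbE[\|X\|_E]$, so the assignment $X\mapsto Z$ is nonexpansive on simple functions.

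The final step extends by density. Using Proposition \ref{prop:SimpleApprox} and dominated convergence (Theorem \ref{th:DCTBochner}, with the integrable dominating function $\|X\|_E+\|x_1\|_E$), choose simple $X_n\to X$ in $L^1(\Omega;E)$; writing $Z_n$ for the conditional expectation of $X_n$ constructed above, the contraction estimate applied to $X_n-X_m$ shows $(Z_n)_{n\ge1}$ is Cauchy in $L^1(\Omega;E)$, with limit $Z$. Passing to an a.s.\ convergent subsequence and using Corollary \ref{cor:EValuedRVs}\ref{it:WeaklyClosed} shows that $Z$ is $\mcG$-measurable, while passing to the limit in $\int_A X_n\dd\mbP=\int_A Z_n\dd\mbP$ (both sides converge by $L^1$ convergence and boundedness of the map $A\mapsto\int_A(\,\cdot\,)\dd\mbP$) yields \eqref{eq:CondExpDefine} for $Z$. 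I expect the main obstacle to be the contraction estimate, specifically justifying the conditional Jensen inequality and the interchange of the supremum over $(\varphi_n)_{n\ge1}$ with the almost-sure scalar bounds; once this nonexpansiveness is in hand, the density argument and the $\mcG$-measurability of the limit are routine.
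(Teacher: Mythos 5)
Your proof is correct and follows essentially the same route as the paper's: uniqueness by reducing to the scalar conditional expectation through dual functionals and Corollary \ref{cor:EValuedRVs}, and existence by setting $Z=\sum_{k}x_k\,\mbE[\indic_{A_k}\mid\mcG]$ on simple random variables, establishing an $L^1(\Omega;E)$ contraction, and passing to the limit via the approximation of Proposition \ref{prop:SimpleApprox}. The only (harmless) deviation is in the contraction step, where you derive the stronger pointwise bound $\|Z\|_E\le\mbE[\|X\|_E\mid\mcG]$ from the norming sequence of Proposition \ref{prop:CylinderSets} and conditional Jensen, whereas the paper gets the integrated bound $\mbE[\|Z\|_E]\le\mbE[\|X\|_E]$ directly from the triangle inequality and the law of total expectation.
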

\begin{remark}
	Often, we wish to talk about the conditional expectation of a random variable, $X$, with respect to another random variable $Y$. In this case we use the abuse of notation $\mbE[X|Y]$ by which we really mean $\mbE[X|\sigma(Y)]$, where $\sigma(Y)$ is the $\sigma$-algebra generated by $Y$.
\end{remark}
\begin{proof}[Proof of Theorem \ref{th:CondExpect}]
	We begin by showing uniqueness. Assume that there exists two random variables $Z,\, \tilde{Z}$ such that both equal $\mbE[X|\mcG]$. Therefore, using Remark \ref{rem:LinOperatorExpecationCommute} it holds that for any $\varphi \in E^\ast$,
	\begin{equation*}
		\int_A \varphi(Z)\dd \mbP = \varphi \left(\int_A Z \dd \mbP\right) =  \int_A \varphi (X)\dd \mbP, \quad \text{ for all } A \in \mcG.  
	\end{equation*}
	Similarly, $\int_A \varphi(\tilde{Z})\dd \mbP=\int_A\varphi(X)\dd \mbP$. Therefore, for all $\varphi \in E^\ast$, $\varphi(Z),\, \varphi(\tilde{Z})$ both equal the real, conditional expectation $\mbE[\varphi(X)|\mcG]$. Since uniqueness holds for the finite dimensional conditional expectation we have $\varphi(Z)=\varphi(\tilde{Z})$ for all $\varphi \in E^\ast$ and so by Item \ref{it:Modification} of Corollary \ref{cor:EValuedRVs} we have $Z=\tilde{Z}$ $\mbP$-a.s.\\ \par 
	To prove existence, let $X = \sum_{k=1}^N x_k\mathds{1}_{A_k}$ be a simple random variable, then we define,
	\begin{equation*}
		Z = \sum_{k=1}^N x_k \mbP[A_k|\mcG],
	\end{equation*}
	where $\mbP[A_k|\mcG]$ is the usual conditional probability of $A_k$ with respect to $\mcG$. It is clear that $Z$ satisfies \eqref{eq:CondExpDefine} and by the law of total expectation we have
	\begin{equation}\label{eq:SimpleCondExpBnd}
		\mbE[\|Z\|_E] \leq \sum_{k=1}^N \mbE[\|x_k\|_E]\mbP[A_k] =  \mbE[\|X\|_E].
	\end{equation}
	Now let $(X_n)_{n\geq 1}$ be a sequence of simple random variables, approximating $X$ and $Z_n = \mbE[X_n|\mcG]$. So then for $n,\,m \in \mbN$, by \eqref{eq:SimpleCondExpBnd}, we have
	\begin{equation*}
		\mbE[\|Z_n-Z_m\|_E] \leq \mbE[\|X_n-X_m\|_E] \rightarrow 0,\quad \text{ as }n,\,m \rightarrow \infty.
	\end{equation*}
	Therefore $Z_n$ is a Cauchy sequence in $L^1(\Omega,\mcG,\mbP;E)$ and thus there exists a $\mcG$-measurable random variable $Z:= \lim_{n\rightarrow \infty}Z_n$ which by continuity of the Bochner integral also satisfies \eqref{eq:CondExpDefine}.
\end{proof}
\subsection{Gaussian Measures on Hilbert Spaces}
Recall that a real random variable $X:\Omega\rightarrow \mbR$ is called Gaussian if there exist $m \in \mbR$, $q \in [0,\infty)$ such that for any $A \in \mcB(\mbR)$,
\begin{equation*}
	\mcL(X)(A):=\mbP[X\in A] = \frac{1}{\sqrt{4\pi \sigma}}\int_A e^{-\frac{|x-m|^2}{2q}}\dd x.
\end{equation*}
In the multivariate case we say that an $\mbR^d$ valued random variable is Gaussian if there exist $m \in \mbR^d$ and $Q\in \mbR^{d\times d}$, a non-negative, symmetric matrix such that for any $A \in \mcB(\mbR^d)$,
\begin{equation*}
	\mcL(X)(A):=\mbP[X\in A] = \frac{1}{(4\pi )^{d/2}\sqrt{\det{Q}}} \int_A e^{-\frac{1}{2} \langle Q^{-1}(x-m),x-m\rangle}\dd x
\end{equation*}
In both cases, if the variance is zero we interpret the measure, $\mcL(X)$, as a dirac mass centred at $m$. Both definitions can also be concisely given in terms of characteristic functions (Fourier transforms) of measures. A measure $\mu \in \mcP(\mbR^d)$ is a Gaussian measure if and only if its characteristic function $\mbR^d \ni \lambda  \mapsto \hat{\mu}(\lambda) := \mbE[e^{i\langle \lambda, X\rangle }]$ is given by the expression
\begin{equation}\label{eq:FiniteDimGaussMuHat}
	\hat{\mu}(\lambda)= e^{i \langle m,\lambda \rangle  - \frac{1}{2}\langle Q\lambda,\lambda\rangle } \quad \text{ for all }\lambda \in \mbR^d,
\end{equation}
where $m\in \mbR^d$ and $Q\in\mbR^{d\times d}$ is a symmetric, non-negative matrix. We say that an $\mbR^d$ valued random variable is Gaussian if its law, $\mcL(X)$ is a Gaussian measure in this sense.\\ \par 
These definitions motivate the notion of Hilbert space valued random variables in possibly infinite dimensions. We fix a separable, Hilbert space $U$ and identify $U^\ast$ with $U$ from now on. As a result we change notation from the previous section, instead of writing $g(h)$ for $g \in U^\ast,\, f\in U$, we write $\langle f,g\rangle$, where the inner product is understood as the inner product on $U$.

\begin{definition}[$U$-Valued Gaussian Random Variable]\label{def:GaussianRV}
	Let $X$ be a $U$-valued random variable. Then we say that $X$ is Gaussian if and only if $\langle X,g\rangle$ is a real Gaussian random variable, for every $g \in U$. We say that a measure $\mu \in \mcP(U)$ is Gaussian if $\mu(A)= \mbP[X\in A]$ for a Gaussian random variable $X$.
\end{definition}

It follows from the Riesz representation theorem, that given a Gaussian measure $\mu \in \mcP(U)$, there exists $m\in U$ and $Q :U\rightarrow U$, such that for all $h,\,g \in U$,
\begin{equation}\label{eq:GaussMeanCov}
	\begin{aligned}
		\langle m,h\rangle  &= \int_U \langle x,h\rangle\dd \mu(x) = \mbE[\langle X,h\rangle],\\ \langle Qh,g\rangle &= \int_U \langle x,h\rangle \langle x,g\rangle \dd \mu(x)= \mbE[\langle X,h\rangle\langle X,g\rangle].
	\end{aligned}
\end{equation}
From now on we only deal with centred Gaussian measures, that is measures for which $m=0$. By linearity one may always translate a centred measure to a non-centred measure. It is immediate that $Q$ is linear, symmetric and non-negative. We will shortly demonstrate that in fact $Q$ defines a trace class operator from $U$ to itself. \\ \par
At this point we remark that much of the theory presented below for Gaussian measures on Hilbert spaces extends quite naturally, to the more general setting of separable, reflexive Banach spaces, satisfying either a $2$-martingale or uniform martingale difference (UMD) property, see \cite{neerven_veraar_weis_07} for an overview on these topics. A benefit of the theory of stochastic integration in UMD Banach spaces is that it leads to maximal regularity of the constructed integrals. On the other hand, the theory presented here can be implemented to develop a theory for Banach space valued integrals at the price of a small loss in regularity, see \cite[Sec. 5]{hairer_09}. For an approach to SPDEs valued in $L^\infty$ based spaces, such as $\mcC^\alpha$, $W^{k,\infty}$ we refer to the pathwise approach briefly presented in Chapter \ref{ch:Extras} here.\\ \par 
For a measure, $\mu$, on a Hilbert space $U$, we define the characteristic function $U \ni h\mapsto \hat{\mu}(h)\in \mbC$, by the expression,
\begin{equation*}
	\hat{\mu}(h):= \int_U e^{i\langle g,h\rangle }\dd\mu(g).
\end{equation*}
It follows from Definition \ref{def:GaussianRV} and \eqref{eq:GaussMeanCov} that for $\mu$ a centred, Gaussian measure on $U$, one has
\begin{equation}\label{eq:InfiniteDimMuHat}
	\hat{\mu}(h) = e^{- \frac{1}{2}\langle Qh,h\rangle},\quad \text{ for all } h\in U.
\end{equation}
To see this, use the fact that by definition, there exists a $U$ valued random variable, $X$, such that $\langle X,h\rangle$ is a one dimensional Gaussian and the expression \eqref{eq:FiniteDimGaussMuHat} for the Fourier transform of finite dimensional Gaussian measures. In fact, Theorem \ref{th:GuassianCharacteristic} below, shows that any measure with characteristic function of the form \eqref{eq:InfiniteDimMuHat} is a Gaussian measure. \\ \par
We say that two measures, $\mu,\,\nu\in \mcP(U)$ are equal, if $\mu(A)=\nu(A)$ for all $A \in \mcB(U)$. A useful extension of Item \ref{it:Modification} of Corollary \ref{cor:EValuedRVs} shows that the characteristic function uniquely determines the measure. We note that the following sequence of results hold directly on separable Banach spaces, not only Hilbert spaces provided one appropriately defines $\hat{\mu}$ in terms of elements of $E^\ast$.
\begin{proposition}\label{prop:FourierUnique}
	Let $\mu,\,\nu\in \mcP(U)$ be two measures on a Hilbert space $U$. Then $\mu=\nu$ if and only if $\hat{\mu}(g)=\hat{\nu}(g)$ for all $g\in U$.
\end{proposition}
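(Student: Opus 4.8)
The forward implication is immediate: if $\mu = \nu$ as measures, then by the very definition of the characteristic function $\hat{\mu}(g) = \int_U e^{i\langle x,g\rangle}\dd\mu(x) = \int_U e^{i\langle x,g\rangle}\dd\nu(x) = \hat{\nu}(g)$ for every $g \in U$. The content of the proposition lies in the converse, and the plan is to reduce it to the (standard) uniqueness of the Fourier transform in finite dimensions and then patch the finite-dimensional marginals together by a monotone class argument.

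First I would fix a basis $(e_k)_{k\geq 1}$ of $U$ and, for each $n \geq 1$, introduce the coordinate projection $\pi_n : U \to \mbR^n$ given by $\pi_n(x) = (\langle x, e_1\rangle, \ldots, \langle x, e_n\rangle)$, together with the push-forward measures $\pi_n\#\mu,\, \pi_n\#\nu \in \mcP(\mbR^n)$. The key computation is that for any $\lambda = (\lambda_1,\ldots,\lambda_n) \in \mbR^n$, setting $g := \sum_{k=1}^n \lambda_k e_k \in U$, the change-of-variables formula for push-forwards gives
\[
\widehat{\pi_n\#\mu}(\lambda) = \int_{\mbR^n} e^{i\langle \lambda, y\rangle}\dd(\pi_n\#\mu)(y) = \int_U e^{i\langle x, g\rangle}\dd\mu(x) = \hat{\mu}(g),
\]
and likewise for $\nu$. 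By hypothesis $\hat{\mu}(g) = \hat{\nu}(g)$, so $\pi_n\#\mu$ and $\pi_n\#\nu$ share the same characteristic function on $\mbR^n$. Invoking the uniqueness of characteristic functions for finite-dimensional measures (e.g. via Fourier inversion), I conclude $\pi_n\#\mu = \pi_n\#\nu$ for every $n \geq 1$; equivalently, $\mu$ and $\nu$ agree on every cylinder set $\pi_n^{-1}(B)$ with $B \in \mcB(\mbR^n)$.

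It remains to upgrade agreement on cylinders to agreement on all of $\mcB(U)$, and this is the step I expect to require the most care. The collection $\mcC := \{\pi_n^{-1}(B) : n \geq 1,\, B \in \mcB(\mbR^n)\}$ is a $\pi$-system, since the intersection of two cylinders based on $n$ and $m$ coordinates is again a cylinder based on $\max(n,m)$ coordinates. Moreover, each $g \in U$ satisfies $\langle x, g\rangle = \sum_{k} \langle g, e_k\rangle \langle x, e_k\rangle$ as a pointwise limit of $\sigma(\langle\,\cdot\,,e_k\rangle : k\geq 1)$-measurable functions, so every functional $\langle\,\cdot\,,g\rangle$ is $\sigma(\mcC)$-measurable; combined with Proposition \ref{prop:CylinderSets}, which (after the identification $U^\ast \cong U$) presents $\mcB(U)$ as the $\sigma$-algebra generated by these functionals, this yields $\sigma(\mcC) = \mcB(U)$. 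Since $\mu$ and $\nu$ are probability measures agreeing on the generating $\pi$-system $\mcC$, Dynkin's $\pi$--$\lambda$ theorem forces $\mu = \nu$ on $\mcB(U)$, completing the proof.

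The only places separability enters are in furnishing the countable basis $(e_k)$ and, through Proposition \ref{prop:CylinderSets}, in guaranteeing that the cylinder $\pi$-system exhausts $\mcB(U)$; the same argument therefore runs verbatim on a separable Banach space once $\hat{\mu}$ is defined through $E^\ast$, as anticipated in the remark preceding the statement.
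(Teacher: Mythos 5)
Your proof is correct, and it shares the paper's overall strategy---reduce to finite dimensions, where uniqueness of the Fourier transform is available, then exploit the fact that cylinder sets generate the Borel $\sigma$-algebra of a separable Hilbert space---but the execution differs at both ends in ways worth recording. At the finite-dimensional end, the paper reduces by linearity to a single \emph{one}-dimensional (signed) measure and proves uniqueness there by hand, via the Parseval-type identity $\int_\mbR \varphi\,\dd\tilde{\mu} = \tfrac{1}{2\pi}\int_\mbR \hat{\varphi}(\xi)\hat{\tilde{\mu}}(-\xi)\,\dd\xi$ for $\varphi \in C^\infty_c(\mbR)$, whereas you work with the $n$-dimensional push-forwards $\pi_n\#\mu$, $\pi_n\#\nu$ and cite finite-dimensional uniqueness as a black box; the paper's route is more self-contained analytically, yours captures the joint laws of $(\langle\,\cdot\,,e_1\rangle,\ldots,\langle\,\cdot\,,e_n\rangle)$ directly rather than implicitly through a Cram\'er--Wold-type observation. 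At the gluing end your argument is genuinely more careful than the paper's: the paper concludes by asserting that the infinite-dimensional case ``follows from Item \ref{it:Modification} of Corollary \ref{cor:EValuedRVs}'', but that item concerns $\mbP$-a.s.\ equality of two random variables on a common probability space, not equality of their laws, so as stated it is not quite the tool being used. What is actually needed is precisely what you supply: the cylinders $\mcC$ form a $\pi$-system with $\sigma(\mcC)=\mcB(U)$ (via Proposition \ref{prop:CylinderSets}), the class of sets on which two probability measures agree is a $\lambda$-system, and Dynkin's theorem closes the argument. In short, your proof makes explicit---and in effect repairs---the step the paper leaves vague, at the modest price of invoking rather than deriving the finite-dimensional uniqueness theorem.
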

\begin{proof}
	By linearity, it suffices to show that if $\hat{\mu}\equiv 0$ then $\mu=0$. For $\tilde{\mu}\in \mcP(\mbR)$ and $\varphi \in C^\infty_c(\mbR)$ one has
	\begin{equation*}
		\int_\mbR \varphi(x)\dd \tilde{\mu}(x) = \frac{1}{2\pi} \int_{\mbR} \int_{\mbR} \hat{\varphi}(\xi) e^{-i\xi x} \dd \xi \,\tilde{\mu}(\dd x) = \frac{1}{2\pi}\int_{\mbR} \hat{\varphi}(\xi) \hat{\tilde{\mu}}(-\xi)\dd \xi.
	\end{equation*}
	Therefore, approximating any continuous and bounded function by smooth functions we see that the Fourier transform uniquely determines one dimensional probability measures. The infinite dimensional case then follows from Item \ref{it:Modification} of Corollary \ref{cor:EValuedRVs}.
\end{proof}
So far we have not in fact shown that measures satisfying Definition \ref{def:GaussianRV} in fact exist. In order to do so we establish some a priori properties of Gaussian measures. Recall that given an angle $\theta \in [0,2\pi)$ a vector in $U^2$ is rotated under the map $R_\theta(h,g):= (h \cos \theta- g \sin \theta, h\sin \theta +g\cos \theta)$. By extension we define the rotation of $\mu\in \mcP(U)$  by $\theta$, as the image of $\mu\otimes \mu$ under the map,
\begin{equation*}
	\begin{aligned}
		\mfR_\theta :\mcP(U)\otimes \mcP(U)&\rightarrow \mcP(U)\otimes \mcP(U)\\
		\mu \otimes \nu &\mapsto R_\theta\# (\mu\otimes \nu)
	\end{aligned}
\end{equation*}
\begin{lemma}\label{lem:GaussRotInvar}
	Let $\mu$ be a Gaussian measure on $U$. Then for any $\theta \in [0,2\pi)$, one has $\mfR_\theta(\mu\times \mu)= \mu\otimes \mu$.
\end{lemma}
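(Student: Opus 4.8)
The plan is to prove equality of the two product measures through their characteristic functions, using Proposition \ref{prop:FourierUnique}. The key point is that this uniqueness result applies verbatim on the separable Hilbert space $U^2 := U\times U$, equipped with the inner product $\langle (x,y),(h,g)\rangle_{U^2} = \langle x,h\rangle + \langle y,g\rangle$. Since both $\mfR_\theta(\mu\otimes\mu)$ and $\mu\otimes\mu$ are probability measures on $U^2$, it suffices to check that their characteristic functions agree at every $(h,g)\in U^2$.

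First I would record the characteristic function of the product measure. By Fubini and the fact that $\mu$ is centred, the expression \eqref{eq:InfiniteDimMuHat} gives
\[
	\widehat{\mu\otimes\mu}(h,g) = \hat{\mu}(h)\,\hat{\mu}(g) = \exp\!\left(-\tfrac12\big(\langle Qh,h\rangle + \langle Qg,g\rangle\big)\right).
\]
Next, using the general rule $\widehat{T\#\nu}(\xi) = \hat{\nu}(T^\ast\xi)$ for a bounded linear $T$ together with the observation that $R_\theta$ is an orthogonal transformation of $U^2$ whose adjoint is $R_\theta^\ast = R_{-\theta}$, I obtain
\[
	\widehat{\mfR_\theta(\mu\otimes\mu)}(h,g) = \widehat{\mu\otimes\mu}\big(R_{-\theta}(h,g)\big) = \hat{\mu}(h')\,\hat{\mu}(g'),
\]
where $h' = h\cos\theta + g\sin\theta$ and $g' = -h\sin\theta + g\cos\theta$.

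The crux is then the elementary computation showing that $R_\theta$ preserves the block quadratic form associated with $\mathrm{diag}(Q,Q)$. Expanding $\langle Qh',h'\rangle$ and $\langle Qg',g'\rangle$, using symmetry of $Q$ so that the mixed terms $\langle Qh,g\rangle$ and $\langle Qg,h\rangle$ coincide, the cross terms carry opposite signs and cancel, while $\cos^2\theta + \sin^2\theta = 1$ collapses the diagonal contributions, yielding $\langle Qh',h'\rangle + \langle Qg',g'\rangle = \langle Qh,h\rangle + \langle Qg,g\rangle$. Hence the two characteristic functions coincide, and Proposition \ref{prop:FourierUnique} on $U^2$ delivers $\mfR_\theta(\mu\otimes\mu) = \mu\otimes\mu$. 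I do not expect any genuine analytic obstacle here: the entire content is the algebraic cancellation of the cross terms. The one point demanding care is that the argument uses $m=0$ essentially — for a non-centred Gaussian the linear phase $e^{i\langle m,\,\cdot\,\rangle}$ would not be invariant under $R_\theta$ — which is consistent with the standing restriction to centred measures.
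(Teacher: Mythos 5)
Your proof is correct and follows essentially the same route as the paper: reduce to equality of characteristic functions on $U^2$ via Proposition \ref{prop:FourierUnique}, then verify the identity using the explicit expression \eqref{eq:InfiniteDimMuHat}. The only difference is that you carry out the algebraic cancellation of the cross terms explicitly (and correctly), whereas the paper leaves that computation to the reader.
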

\begin{proof}
	It follows from Proposition \ref{prop:FourierUnique} that is enough to check that the identity,\\ $\left(\widehat{\mu\otimes \mu}\right)\circ R^{-1}_\theta= \widehat{\mu\otimes \mu}:U^2\rightarrow \mbC^2$, holds for the characteristic functions. This can be checked using the explicit expression \eqref{eq:InfiniteDimMuHat}.
\end{proof}
A consequence of this rotation invariance is a bound on exponential moments of Gaussian measures known as Fernique's theorem. Note that the theorem holds for all measures that are invariant by rotations of $\pi/4$ degrees, not just Gaussian measures.
\begin{theorem}[Fernique's Theorem]\label{th:Fernique}
	Let $\mu \in \mcP(U)$ be a probability measure such that $\mfR_{\pi/4}(\mu\otimes \mu) = \mu \otimes \mu$. Then there exists an $\alpha>0$ such that
	\begin{equation*}
		\int_U e^{\alpha\|h\|^2_U} \dd \mu(h) <\infty.
	\end{equation*}
\end{theorem}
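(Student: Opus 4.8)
The plan is to run the classical two-copies argument of Fernique, using the rotation invariance at $\theta=\pi/4$ to produce a self-improving tail estimate. First I would take $X,Y$ to be two independent $U$-valued random variables, each with law $\mu$, so that $(X,Y)$ has law $\mu\otimes\mu$. Since $\cos(\pi/4)=\sin(\pi/4)=1/\sqrt2$, the hypothesis $\mfR_{\pi/4}(\mu\otimes\mu)=\mu\otimes\mu$ says precisely that the pair $\bigl(\tfrac{X-Y}{\sqrt2},\tfrac{X+Y}{\sqrt2}\bigr)$ again has law $\mu\otimes\mu$; in particular its two components are independent and each distributed as $\mu$.

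The heart of the matter is a single inequality relating the upper and lower tails of $\|X\|_U$. Fix $0\le s\le t$. Using independence and then the rotation identity,
\begin{equation*}
\mbP[\|X\|_U\le s]\,\mbP[\|Y\|_U>t]=\mbP\Bigl[\bigl\|\tfrac{X-Y}{\sqrt2}\bigr\|_U\le s,\ \bigl\|\tfrac{X+Y}{\sqrt2}\bigr\|_U>t\Bigr].
\end{equation*}
On the event on the right one has $\|X-Y\|_U\le s\sqrt2$ and $\|X+Y\|_U>t\sqrt2$, so from $2\|X\|_U\ge\|X+Y\|_U-\|X-Y\|_U$ and the analogous bound for $Y$ I obtain $\|X\|_U,\|Y\|_U>(t-s)/\sqrt2$. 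Invoking independence of $X$ and $Y$ once more yields the key estimate
\begin{equation*}
\mbP[\|X\|_U\le s]\,\mbP[\|Y\|_U>t]\le\mbP\bigl[\|X\|_U>\tfrac{t-s}{\sqrt2}\bigr]^2.
\end{equation*}

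Next I would bootstrap this into doubly-exponential decay. Choose $s$ large enough that $c:=\mbP[\|X\|_U\le s]>1/2$, which is possible because $\|X\|_U<\infty$ almost surely, and define $t_0=s$, $t_{n+1}=s+\sqrt2\,t_n$, so that $(t_{n+1}-s)/\sqrt2=t_n$. Writing $F(t)=\mbP[\|X\|_U>t]$ and $a_n=F(t_n)/c$, the key estimate becomes $a_{n+1}\le a_n^2$, hence $a_n\le a_0^{2^n}$ with $a_0=F(s)/c<1$ by the choice of $s$. Solving the linear recursion for $t_n$ shows $t_n$ grows like $(\sqrt2)^n$, so $t_n^2\asymp 2^n$; substituting this into $F(t_n)\le c\,a_0^{2^n}=c\exp(2^n\log a_0)$ converts the doubly-exponential decay in $n$ into a genuine Gaussian tail bound $F(t)\lesssim\exp(-\gamma t^2)$ for a suitable $\gamma>0$.

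Finally, to pass from the tail bound to the exponential-moment bound, I would split $\int_U e^{\alpha\|h\|_U^2}\dd\mu$ over the dyadic shells $\{t_n<\|X\|_U\le t_{n+1}\}$, bounding $e^{\alpha\|X\|_U^2}\le e^{\alpha t_{n+1}^2}$ there and the shell probability by $F(t_n)\le c\,a_0^{2^n}$. Since $t_{n+1}^2\le C\,2^{n+1}$, the $n$-th term is controlled by $\exp\bigl(2^n(2\alpha C+\log a_0)\bigr)$, which is summable as soon as $\alpha<|\log a_0|/(2C)$; any such $\alpha>0$ proves the claim. The main obstacle is the tuning in the last two steps: one must verify that the growth rate of $t_n$ (governed by the factor $\sqrt2$ coming from the $\pi/4$ rotation) is exactly quadratic after the change of variables $2^n\leftrightarrow t_n^2$, and then choose $\alpha$ small relative to that rate so the shell sum converges. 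The rotation-invariance-plus-triangle-inequality step producing the quadratic recursion $a_{n+1}\le a_n^2$ is the conceptual crux; everything afterward is bookkeeping on the two competing exponential scales.
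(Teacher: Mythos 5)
Your proposal is correct and follows essentially the same route as the paper's proof: the identical key estimate obtained from $\pi/4$-rotation invariance plus the triangle inequality, the same geometric sequence $t_{n+1}=s+\sqrt{2}\,t_n$ yielding the quadratic recursion $a_{n+1}\le a_n^2$ and hence doubly-exponential tail decay, and the same conversion $t_n^2\asymp 2^n$ into a Gaussian tail bound. The only cosmetic differences are your threshold $c>1/2$ in place of the paper's $3/4$, and your dyadic-shell summation of $\int_U e^{\alpha\|h\|_U^2}\,\dd\mu$ where the paper integrates the tail bound by parts; both are equivalent bookkeeping.
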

\begin{proof}
	Let $\tau,\,\sigma>0$. Then by $\pi/4$-rotation invariance we have that
	\begin{align*}
		\mu(\|h\|_{U}\leq \tau)\mu(\|g\|_U>\sigma) &= \mu\left(\left\| \frac{h-g}{\sqrt{2}}\right\|_U\leq \tau\right) \mu\left(\frac{\|h+g\|_U}{\sqrt{2}}>\sigma\right) \\
		&= \int_{\left\|\frac{h-g}{\sqrt{2}}\right\|_U \leq \tau }\int_{\left\|\frac{h+g}{\sqrt{2}}\right\|_U> \sigma} \dd\mu(h)\dd \mu(g)
	\end{align*}
	By the triangle inequality we see that
	\begin{equation*}
		\min\{ \|h\|_U,\,\|g\|_U \} \geq \frac{1}{2}\left(\|h+g\|_{U}-\|h-g\|_{U}\right),
	\end{equation*}
	and so it follows that together, the conditions, $\|h-g\|_{U}\leq \sqrt{2}\tau$ and $\|h+g\|_{U}>\sqrt{2}\sigma$, that $\min\{ \|h\|_{U},\,\|g\|_{U}\} \geq \frac{\sigma-\tau}{\sqrt{2}}$. So using the above equality, we have that
	\begin{equation*}
		\mu(\|h\|_U\leq \tau)\mu(\|g\|_U>\sigma) \leq \int_{\|h\|_U >\frac{\sigma-\tau}{\sqrt{2}}} \int_{\|g\|_U>\frac{\sigma-\tau}{\sqrt{2}}} \dd\mu(h)\dd \mu(g) = \mu\left(\|h\|_{U}>\frac{\tau-\sigma}{\sqrt{2}}\right)^2.
	\end{equation*}
	Since $\|h\|_U <\infty$ $\mu$-a.s., there exists some $\sigma>0$ such that $\mu(\|h\|_U\leq \sigma)\geq \frac{3}{4}$. So now we set $t_0 = \sigma$ to be the minimal value such that this holds and for $n>0$ we define, $t_n = \frac{t_{n+1}-\sigma}{\sqrt{2}}$. So it follows that
	\begin{equation*}
		\mu(\|h\|_U>t_{n+1}) \leq \frac{\mu\left( \|h\|_U> \frac{t_{n+1}-\sigma}{\sqrt{2}} \right)}{\mu(\|h\|_U \leq \sigma)}\leq \frac{4}{3}\mu(\|h\|_{U}>t_n)^2.
	\end{equation*}
	Therefore, defining the shorthand, $k_n := \frac{4}{3}\mu(\|h\|_{U}>t_n)$, we have shown the recursion $k_{n+1}\leq k_n^2$ with $k_0 \leq \frac{1}{3}$. Repeatedly applying this inequality gives the bound $\mu(\|h\|_U>t_n)\leq 3^{-2n}$. On the other hand, one can check explicitly that $t_n = \frac{\sqrt{2}^{n+1}-1}{\sqrt{2}-1}\sigma \leq 2^{n/2}(2+\sqrt{2})\sigma$ so that in particular $t_{n+1} \leq 2^{n/5}5\sigma$. Therefore in combination we have
	\begin{equation*}
		\mu(\|h\|_{U}>t_n)\leq 3^{-\frac{t^2_{n+1}}{25\sigma^2}}.
	\end{equation*}
	So it follows that there exists a universal constant $\tilde{\alpha}>0$ such that $\mu(\|h\|_U>t)\leq e^{-\frac{2\tilde{\alpha} t^2}{\sigma^2}}$, holds for every $t>\sigma$. So integrating by parts we have that
	\begin{align*}
		\int_U \exp\left(\frac{\tilde{\alpha}\|h\|^2_U}{\sigma^2} \right)\dd \mu(h) &\leq e^{\tilde{\alpha}} + \frac{2\tilde{\alpha} }{\sigma^2}\int_{\sigma}^{\infty} t e^{ \frac{\tilde{\alpha} t^2}{\sigma^2}}\mu(\|h\|_U>t)\dd t \notag \\
		&\leq e^{\tilde{\alpha}} + 2\tilde{\alpha} \int_1^\infty t e^{-\tilde{\alpha} t^2}\dd t \\
		&<\infty\notag ,
	\end{align*}
	which is the stated result with $\alpha =\tilde{\alpha}/\sigma$.
\end{proof}
It follows from Fernique's theorem that Gaussian measures have finite moments of all orders. In fact one can show an even stronger result, namely that the second moment of a Gaussian measure controls all higher moments. We postpone this proof to Chapter \ref{ch:Extras} since we do not make use of it here. We employ Theorem \ref{th:Fernique} here to give the following characterisation of Gaussian measures on a Hilbert space.
\begin{theorem}\label{th:GuassianCharacteristic}
	Let $U$ be a Hilbert space. A measure $\mu \in \mcP(U)$ is Gaussian if and only if there exist $m \in U$ and $Q \in L_1(U)$ such that
	\begin{equation}\label{eq:InfiniteDimGaussChar}
		\hat{\mu}(h) = e^{i\langle m,h\rangle - \frac{1}{2}\langle Qh,h\rangle}, \quad \text{ for all }h\in U.
	\end{equation}
	Furthermore one has the identity,
	\begin{equation}\label{eq:VarIsTrace}
		\int_U\|h\|^2_U\dd \mu(h) = \Tr Q.
	\end{equation}
	Finally, if $\mu= \mcL(X)$ then we say that $X\sim \mcN(m,Q)$.
\end{theorem}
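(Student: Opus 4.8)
The plan is to prove the two implications of the characterisation separately, extracting the trace identity \eqref{eq:VarIsTrace} along the way. Throughout I write $X \sim \mu$ for a $U$-valued random variable with law $\mu$, and I note that the form \eqref{eq:InfiniteDimGaussChar} of the characteristic function in the Gaussian case has essentially already been recorded in \eqref{eq:InfiniteDimMuHat}, so that the genuinely new content of the theorem is the trace-class property of $Q$ together with \eqref{eq:VarIsTrace}.

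\textbf{The ``if'' direction.} Suppose $\hat\mu$ has the form \eqref{eq:InfiniteDimGaussChar} for some $m \in U$ and $Q \in L_1(U)$. By Definition \ref{def:GaussianRV} it suffices to show that $\langle X, g\rangle$ is a real Gaussian for each fixed $g \in U$. I would compute its one-dimensional characteristic function directly: for $t \in \mbR$,
\begin{equation*}
	\mbE\!\left[e^{it\langle X,g\rangle}\right] = \hat\mu(tg) = e^{it\langle m,g\rangle - \tfrac{t^2}{2}\langle Qg,g\rangle},
\end{equation*}
which is exactly the characteristic function \eqref{eq:FiniteDimGaussMuHat} of a one-dimensional Gaussian with mean $\langle m, g\rangle$ and variance $\langle Qg, g\rangle \geq 0$. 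Since the characteristic function determines a real probability measure uniquely (as established inside the proof of Proposition \ref{prop:FourierUnique}), $\langle X, g\rangle$ is Gaussian, and hence so is $\mu$.

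\textbf{The ``only if'' direction.} Suppose $\mu$ is Gaussian, with $m$ and $Q$ the mean and second-moment operator of \eqref{eq:GaussMeanCov}; recall $Q$ is bounded, symmetric and non-negative. By translating we may assume $\mu$ is centred ($m=0$), in line with the standing convention, so that $\hat\mu$ takes the form \eqref{eq:InfiniteDimMuHat} and the only new claims are $Q \in L_1(U)$ and \eqref{eq:VarIsTrace}. By Proposition \ref{prop:SymmetricNonNegativeTraceClass} it suffices to show $\Tr Q < \infty$. Fixing a basis $(e_k)_{k\geq 1}$ of $U$ and using \eqref{eq:GaussMeanCov},
\begin{equation*}
	\Tr Q = \sum_{k=1}^\infty \langle Q e_k, e_k\rangle = \sum_{k=1}^\infty \mbE\!\left[\langle X, e_k\rangle^2\right] = \mbE\!\left[\sum_{k=1}^\infty \langle X, e_k\rangle^2\right] = \mbE\big[\|X\|_U^2\big],
\end{equation*}
where the interchange of sum and expectation is justified by the non-negativity of the summands (Tonelli) and the final equality is Parseval's identity; these hold even allowing the value $+\infty$ on both sides. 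Now Lemma \ref{lem:GaussRotInvar} shows that $\mu$ is invariant under $\mfR_{\pi/4}$, so Fernique's theorem (Theorem \ref{th:Fernique}) applies and yields $\int_U e^{\alpha\|h\|_U^2}\dd\mu(h) < \infty$ for some $\alpha > 0$; in particular $\mbE[\|X\|_U^2] < \infty$. Hence $\Tr Q = \mbE[\|X\|_U^2] < \infty$, which simultaneously establishes $Q \in L_1(U)$ and the identity \eqref{eq:VarIsTrace}.

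\textbf{Main obstacle.} The ``if'' direction is soft: it is a one-line reduction to the one-dimensional case via the scalar characteristic function. The crux lies in the finiteness of the trace in the ``only if'' direction. A priori $Q$ is only a bounded second-moment operator, and there is no reason for its trace to converge without a quantitative control on the tail of the mass of $\mu$; the essential input is therefore Fernique's theorem, whose applicability rests precisely on the rotation invariance of Gaussian measures (Lemma \ref{lem:GaussRotInvar}). Once $\mbE[\|X\|_U^2] < \infty$ is secured, the passage from finite trace to the trace-class property is exactly Proposition \ref{prop:SymmetricNonNegativeTraceClass}, and the identity \eqref{eq:VarIsTrace} drops out of the same Tonelli--Parseval computation.
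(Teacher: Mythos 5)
Your proof is correct, and the trace direction matches the paper's argument almost exactly: both invoke Fernique's theorem (applicable by the rotation invariance of Lemma \ref{lem:GaussRotInvar}) to secure $\mbE[\|X\|_U^2]<\infty$, then identify $\mbE[\|X\|_U^2]=\Tr Q$ by expanding in a basis (you via Tonelli--Parseval, the paper via dominated convergence), and conclude $Q\in L_1(U)$ through Proposition \ref{prop:SymmetricNonNegativeTraceClass}, which you cite explicitly and the paper uses tacitly. Where you genuinely diverge is the ``if'' direction. The paper proceeds constructively: it diagonalises $Q$ using \eqref{eq:TraceClassRep}, builds the random series $X=\sum_n \sqrt{\lambda_n}\,\beta_n e_n$ from i.i.d.\ scalar normals, checks mean-square (hence a.s.\ along partial sums) convergence in $U$, computes the characteristic function of $X$, and then concludes via the uniqueness result, Proposition \ref{prop:FourierUnique}. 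You instead reduce to one dimension by reading off the scalar characteristic function of each projection, $\mbE[e^{it\langle X,g\rangle}]=\hat\mu(tg)$, and invoking one-dimensional uniqueness; this is shorter, avoids all convergence issues, and in fact never uses the trace-class hypothesis on $Q$ in this direction --- which is a genuine insight: $Q\in L_1(U)$ is needed so that such a characteristic function can come from a countably additive measure on $U$ at all, not to deduce Gaussianity once the measure is handed to you. What the paper's longer route buys is precisely that missing existence statement: a Gaussian measure with an arbitrary prescribed trace-class covariance actually exists, a fact the notes rely on later when constructing $Q$-Wiener processes, whereas your argument is conditional on being given $\mu$. As a proof of the stated equivalence, however, yours is complete.
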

\begin{proof}
	As above, without loss of generality we may prove the result only for centred Gaussian measures, so we set $m=0$. Furthermore, we have already seen in \eqref{eq:InfiniteDimMuHat} that given a Gaussian measure there exists a linear, non-negative, symmetric operator $Q:U\rightarrow U$ such that \eqref{eq:InfiniteDimGaussChar} holds. Therefore, in one direction it suffices to show that this operator $Q$ is in fact trace class and in turn that the expression \eqref{eq:VarIsTrace} is finite.\\ \par 
	It follows from Theorem \ref{th:Fernique} that given a Gaussian measure, a priori one has  the bound
	\begin{equation*}
		\int_U \|h\|^2_U\dd\mu(h)\leq \frac{1}{\alpha}\int_U e^{\alpha\|h\|^2_U}\dd\mu(h)<\infty. 
	\end{equation*}
	Now let $(e_n)_{n\geq 1}\subset U$ be a basis, so that applying Lebesgue's dominated convergence in the first step, we have,
	\begin{equation*}
		\int_U \|h\|^2_U \dd\mu(h) = \sum_{k=1}^\infty \int_U \langle h,e_k\rangle^2\dd\mu(h) = \sum_{k=1}^\infty\langle Qe_k,e_k\rangle = \Tr Q.
	\end{equation*}
	That is, if a measure, $\mu\in \mcP(U)$, satisfies \eqref{eq:GaussMeanCov}, then $Q \in L_1(U)$.\\ \par 
	To prove the converse, let $Q\in L_1(U)$ so that by \eqref{eq:TraceClassRep} there exists a basis $(e_n)_{n\geq 1}\subset U$ such that $Qe_n = \lambda_n e_n$ for a summable sequence $(\lambda_n)_{n\geq 1} \subset \mbR_{\geq 0}$. Then, let $(\beta_n)_{n\geq 1}$ be a family of i.i.d, real valued standard normal random variables, which exist by Kolmogorov's extensions theorem. By definition, $\sum_{n\geq 1} \lambda_n \mbE[\beta_n^2] = \Tr Q$ and so the series $\sum_{n\geq 1}\sqrt{\lambda_n}\beta_n e_n$ is finite in mean square and so there exists a subsequence of partial sums converging $\mbP$-a.s. in $U$. It follows that $X := \sum_{n\geq 1}^\infty \sqrt{\lambda_n}\beta_n e_n$ defines a $U$ valued Gaussian random variable in the sense of Definition \ref{def:GaussianRV}.\\ \par 
	In both cases the form of the characteristic function, \eqref{eq:InfiniteDimGaussChar}, can be computed explicitly.
\end{proof}
In the case of Banach space valued Gaussian random variables, one can instead show that the covariance operators is a compact operator. See \cite[Ex. 3.16]{hairer_09}.
\subsection{Generalised Gaussian Random Variables}\label{subsec:GenGaussinRV}
One may wonder if we can allow for more general covariance operators. An example of particular interest is the white-noise, otherwise known as a cylindrical Gaussian random variable. Let $U$ be as above, $(e_k)_{k\geq 1}$ be any orthonormal basis and $(\beta_k)_{k\geq 1}$ be i.i.d, real, standard normal, random variables. Then consider the random variable defined by the formal sum,
\begin{equation}\label{eq:WhiteNoiseSum}
	W:= \sum_{k=1} \beta_k e_k.
\end{equation}
This corresponds to formally setting $Q=\text{Id}_U$, the identity map on $U$. Since the identity map is not trace-class, $W$, cannot be a $U$ valued Gaussian random variable. However, if we consider $W$ as a random variable taking values in a suitable space larger than $U$, it turns out to be perfectly well defined Gaussian random. For example, consider the Hilbert--Schmidt extension, $U_{\text{ex}}$, of $U$, defined to be the completion of $U$ under the norm,
\begin{equation*}
	\|x\|^2_{U_{\text{ex}}} := \sum_{k=1}^\infty\frac{1}{k^2}\langle x,e_k\rangle^2.
\end{equation*}
The inclusion map $\iota:U\rightarrow U_{\text{ex}}$ is Hilbert--Schmidt and since $U\subset U_{\text{ex}}$, for any $h\in U$, the action $\langle W,h\rangle_{U_{\text{ex}}}$ is well defined. Interpreting all inner products in the only admissible sense, we then have, for $h,\,g\in E$,
\begin{equation*}
	\mbE[\langle W,h\rangle ] =0,\quad \mbE[\langle W,h\rangle\langle W,g\rangle]=\langle h,g\rangle.
\end{equation*}
Thus we may treat $W$ as defining a Gaussian random variable on $U$, which takes values almost surely in $U_{\text{ex}}$. This leads us to the following definition.
\begin{definition}[Generalised Gaussian Random Variable]\label{def:GenGaussDef}
	Given a Hilbert space, $U$, we say that a linear map $h \mapsto X_h$ defines a generalised Gaussian random variable, if $X_h$ is a real Gaussian random variable for every $h \in U$ and if $h_n\rightarrow h \in U$,
	\begin{equation*}
		\mbE\left[	|X_{h}-X_{h_n}|^2\right] \rightarrow 0.
	\end{equation*}
\end{definition}
Consider the example of the white noise, $W$, above. The map $h\mapsto \langle W,h\rangle =: W_h$ is linear and defines a family of real, Gaussian random variables. Furthermore, for $(h_n)_{n\geq 1}\subset U$ converging to $h\in U$ we have,
\begin{equation*}
	\mbE\left[|W_{h}-W_{h_n}|^2\right] = \|h_n-h\|^2_U \rightarrow 0.
\end{equation*}
It follows from the definition of a generalised Gaussian random variable, that there exists a bi-linear form, $K:E\times E\rightarrow \mbR$ and a symmetric, non-negative operator $Q:E\rightarrow E$ such that,
\begin{equation*}
	\mbE\left[X_h X_g\right] = K(h,g) = \langle Qh,g\rangle,\quad \text{ for all }h,\,g \in U.
\end{equation*}
As before, we call $Q$ the covariance operator of $X$. In the case of the white noise, $W$, the covariance operator is the identity on $U$. In fact, all generalised Gaussian random variables can be written formally as
\begin{equation}\label{eq:GenGaussianSum}
	X = \sum_{k=1} \beta_k Q^{1/2}e_k,
\end{equation} 
where $(\beta_k)_{k\geq 1}$ are a family of  i.i.d, real, standard normals, $(e_k)_{k\geq 1}$ is an orthonormal basis of $U$ and $Q\in L(U)$ is a symmetric and non-negative operator. Note that $Q$ need not be trace class, however, since it is symmetric and non-negative it admits an invertible, square root. From this we define the reproducing kernel space of $X$.
\begin{definition}[Reproducing Kernel Space]\label{def:RepKernelSpace}
	Let $X$ be a generalised Gaussian random variable on a Hilbert space $U$, with covariance $Q$. Then, we define the reproducing kernel of $X$ to be the space $U_0 := Q^{1/2}(U)$. Note that $U_0$ is itself a Hilbert space, with inner product $\langle h,g\rangle_{U_0}= \langle Q^{-1/2}h,Q^{-1/2}g\rangle_U$.
\end{definition}
This definition allows us to give proper meaning to the generalised Gaussian random variables as defined in \eqref{eq:GenGaussianSum}.
\begin{proposition}\label{prop:HSExtension}
	Let $U,\,U_1$ be Hilbert spaces and $Q \in L(U)$ be such that $Q^{1/2}(U) =: U_0 \subseteq U_1$ with Hilbert--Schmidt embedding $\iota Q^{1/2} : U\rightarrow U_1$. Then the sum on the right hand side of \eqref{eq:GenGaussianSum} defines a generalised Gaussian random variable on $U$ in the sense of Definition \ref{def:GenGaussDef}. Furthermore, the definition of $X$ as a generalised Gaussian random variable on $U$ is independent of the choice of $U_1$ and $\iota$. Finally, defining $Q_1 := \iota Q\iota^\ast $, one has that $\iota Q^{1/2}:U\rightarrow Q^{1/2}_1(U_1)$ defines an isomorphism 
\end{proposition}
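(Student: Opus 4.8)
The plan is to handle the three assertions in turn, using the Hilbert--Schmidt hypothesis for existence and Corollary \ref{cor:SquareRootOfSquare} for the final isomorphism; throughout I take $Q$ symmetric and non-negative as in the surrounding discussion, so that $(Q^{1/2})^\ast = Q^{1/2}$ and $Q^{1/2}Q^{1/2}=Q$.

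For the first assertion I would begin by making sense of the series. Since $\iota Q^{1/2}$ is Hilbert--Schmidt, Definition \ref{def:HSOperator} gives $\sum_{k\geq 1}\|\iota Q^{1/2}e_k\|_{U_1}^2 = \|\iota Q^{1/2}\|_{L_2}^2<\infty$, so by the mean-square and almost-sure convergence argument already used in the proof of Theorem \ref{th:GuassianCharacteristic} the partial sums of $\sum_k \beta_k\iota Q^{1/2}e_k$ converge in $L^2(\Omega;U_1)$, and a.s.\ along a subsequence, defining a $U_1$-valued random variable. To read this as a generalised Gaussian variable on $U$ in the sense of Definition \ref{def:GenGaussDef}, I would set, for $h\in U$,
\[
X_h := \sum_{k\geq 1}\beta_k\langle Q^{1/2}e_k,h\rangle_U = \sum_{k\geq 1}\beta_k\langle e_k,Q^{1/2}h\rangle_U,
\]
where the second equality uses symmetry of $Q^{1/2}$. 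This sum converges in $L^2(\Omega)$ because $\sum_k \langle e_k,Q^{1/2}h\rangle_U^2 = \|Q^{1/2}h\|_U^2<\infty$ by Parseval, and $X_h$ is a real Gaussian with $\mbE[X_h^2]=\langle Qh,h\rangle_U$. Linearity in $h$ is immediate, and the continuity required by Definition \ref{def:GenGaussDef} follows from $\mbE[|X_h-X_{h_n}|^2]=\|Q^{1/2}(h-h_n)\|_U^2=\langle Q(h-h_n),h-h_n\rangle_U\leq \|Q\|_{\Op(U;U)}\|h-h_n\|_U^2\to 0$.

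The second assertion is then a bookkeeping observation: the displayed formula for $X_h$, and hence the joint law of the family $(X_h)_{h\in U}$, depends only on the fixed data $Q$, $(e_k)_{k\geq 1}$ and $(\beta_k)_{k\geq 1}$, and makes no reference to the ambient space $U_1$ or to the embedding $\iota$. Consequently the generalised Gaussian variable determined on $U$ is unchanged if $U_1$ and $\iota$ are replaced by any other admissible pair, which is exactly the claim.

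For the final assertion I would set $O:=\iota Q^{1/2}\in L(U;U_1)$ and compute, using $(Q^{1/2})^\ast=Q^{1/2}$ and $Q^{1/2}Q^{1/2}=Q$, that $OO^\ast = \iota Q^{1/2}(Q^{1/2})^\ast\iota^\ast = \iota Q\iota^\ast = Q_1$. Corollary \ref{cor:SquareRootOfSquare}, applied with this $O$, then yields both $\mathrm{Im}\,Q_1^{1/2}=\mathrm{Im}\,O=\mathrm{Im}(\iota Q^{1/2})$ and $\|Q_1^{-1/2}a\|_{U_1}=\|O^{-1}a\|_U$ for every $a\in\mathrm{Im}(\iota Q^{1/2})$. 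The first identity says $\iota Q^{1/2}$ maps $U$ onto $Q_1^{1/2}(U_1)$; the second, combined with the reproducing-kernel inner product of Definition \ref{def:RepKernelSpace} (for which $\|a\|_{Q_1^{1/2}(U_1)}=\|Q_1^{-1/2}a\|_{U_1}$), shows that for $a=\iota Q^{1/2}x$ one has $\|a\|_{Q_1^{1/2}(U_1)}=\|O^{-1}(Ox)\|_U$, namely the norm of the orthogonal projection of $x$ onto $\mathrm{Ker}(\iota Q^{1/2})^\perp$. Hence $\iota Q^{1/2}$ is an isometry of $\mathrm{Ker}(\iota Q^{1/2})^\perp$ (all of $U$ when $Q$, equivalently $\iota Q^{1/2}$, is injective) onto $Q_1^{1/2}(U_1)$, which is the asserted isomorphism. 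The main obstacle, and the step warranting the most care, is the clean identification $OO^\ast=Q_1$ and the correct invocation of Corollary \ref{cor:SquareRootOfSquare}, since it is precisely there that the reproducing-kernel norm on $Q_1^{1/2}(U_1)$ is matched to the norm on $U$; the injectivity caveat is the only remaining point needing a word.
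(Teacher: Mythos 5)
Your proof is correct and follows essentially the same route as the paper: the same Hilbert--Schmidt summability argument for convergence of the series in $U_1$, the same observation that the action $h\mapsto X_h$ makes no reference to $U_1$ or $\iota$ (hence independence of the choice), and the same invocation of Corollary \ref{cor:SquareRootOfSquare} with $O=\iota Q^{1/2}$, $OO^\ast = Q_1$, for the final isomorphism. Your explicit continuity estimate $\mbE[|X_h-X_{h_n}|^2]\leq \|Q\|_{\Op(U;U)}\|h-h_n\|_U^2$ and your kernel caveat (the isometry is from $\mathrm{Ker}(\iota Q^{1/2})^\perp$, equivalently from $U_0$, which is how the paper phrases it via $\iota^{-1}$ and the $U_0$-norm) are slightly more careful than the paper's ``readily checked,'' but the substance is identical.
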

\begin{proof}
	Almost sure convergence of the sum $\eqref{eq:GenGaussianSum}$ in $U_1$ follows from the bound,
	\begin{align*}
		\sum_{k\geq 1} \mbE[\beta^2_k] \langle Q^1/2 e_k,Q^{1/2}e_k\rangle_U &=  \sum_{k\geq 1}\langle \iota Q^{1/2}e_k,\iota Q^{1/2}e_k\rangle_{U_1}\\
		& = \sum_{k\geq 1} \langle \iota Q\iota^\ast e_k,e_k\rangle\\
		& = \Tr \iota Q \iota^\ast <\infty.
	\end{align*}
	Where the last bracket is understood as the duality bracket between $U$ and $U_1$. Recall that since $\iota Q^{1/2}$ is Hilbert--Schmidt, $\iota Q\iota^\ast$ is trace class by definition. It therefore follows that $X = \sum_{k\geq 1} \beta_k Q^{1/2}e_k$ defines a $U_1$ valued Gaussian random variable, on $U$ in the sense of Definition \ref{def:GaussianRV}. Furthermore, it is readily checked that for $X$ defined in this way, the map,
	\begin{equation*}
		U\ni h\mapsto X_h := \sum_{k=1} \beta_k \langle Q^{1/2}e_k,e_k\rangle \langle h,e_k\rangle,
	\end{equation*}
defines a generalised Gaussian random variable, with zero mean, in the sense of Definition \ref{def:GenGaussDef}. Furthermore, since the choice of $\iota,\,U_1$ does not play a role in the action $h\mapsto X_h$ they do not affect the definition, provided the embedding $\iota Q^{1/2}:U\rightarrow U_1$ is Hilbert--Schmidt.\\ \par 
	To see that $\iota Q^{1/2}$ defines an isomorphism between $U$ and $Q_1^{1/2}(U_1)$, we first note that since $Q_1 = (\iota Q)(\iota Q^{1/2})^\ast$, by Corollary \ref{cor:SquareRootOfSquare}, we have $Q^{1/2}_1(U_1)= \iota (U_0)$. Furthermore, from the same result we have that $\|Q^{-1/2}_1 h_1\|_{U_1} = \|\iota^{-1}u_1\|_{U_0}$ for all $h_1 \in \iota(U_0)$. However, since the mapping $\iota :U_0\rightarrow U_1$ is surjective, we may replace $h_1$ by $\iota h_0$ for any $h_0 \in U_0$ and the final claim is shown.
\end{proof}
In the case of the white noise, $W$, defined above, the Hilbert--Schmidt extension of $U$ provides an example of $U_1$. However, due to the final assertion of Proposition \ref{prop:HSExtension}, the definition of the white noise is independent of the choice of $U_1$, provided the embedding $\iota:U_0\rightarrow U_1$ is Hilbert--Schmidt.
\begin{example}
	Let $U = L^2([0,T])$ and $W$ be a white noise defined on $U$. Then, define the real process, $t\mapsto W_t := \langle W,\indic_{[0,t]}\rangle$. For any $t,\,s \in [0,T]$ we have, 
	\begin{equation*}
		\mbE[W_t]=0,\quad \mbE[W_tW_s]= t\wedge s.
	\end{equation*}
	The mapping $[0,T] \ni t\mapsto W_t \in \mbR$ is by definition $\mbP$-a.s. continuous. Finally, if we consider the filtration $(\mcF_t)_{t\in [0,T]}$ generated by $(W_t)_{t\in [0,T]}$, then by linearity, for any $t>s$ we have
	\begin{equation*}
		\mbE[W_t|\mcF_s] = \mbE[W_{s}+ W_{t-s}|\mcF_s] = W_s.
	\end{equation*}
	So by Levy's characterisation and Kolmogorov's continuity criterion, $t\mapsto W_t$ defines a real Brownian motion.
\end{example}
\section{Stochastic Analysis in Infinite Dimensions}
The main purpose of this chapter is to build up a theory of stochastic analysis in infinite dimensional Hilbert spaces. In Chapter \ref{ch:SPDE} we will use this theory to understand a certain class of SPDE as infinite dimensional stochastic evolution equations. We first develop some theory of abstract martingales in Banach spaces, after which we turn to the specific case of Gaussian processes in Hilbert spaces and conclude by developing a theory of stochastic integration for these processes.
\begin{definition}[Stochastic Processes]\label{def:StochasticProcess}
	Given a Banach space $E$, equipped with its Borel $\sigma$-algebra and $(\Omega,\mcF,\mbP)$ an abstract probability space, we say that a measurable map $X:[0,\infty)\times \Omega\rightarrow E$ is an $E$ valued stochastic process.
\end{definition}
We typically suppress the dependence of $X$ on $\omega\in \Omega$ and use the notations $(X_t)_{t\in [0,\infty)}$ and $[0,\infty)\ni t\mapsto X_t$ to denote these processes.
\begin{definition}[Modifications of Stochastic Processes]\label{def:Modifications}
	Let $X,\,Y :[0,\infty)\times \Omega\rightarrow E$ be two stochastic processes. Then we say that $Y$ is a modification of $X$ if for all $t\in [0,\infty)$, one has
	\begin{equation*}
		\mbP[X_t=Y_t]=1.
	\end{equation*}
\end{definition}
In what follows we often deal with stochastic processes restricted to an interval $[0,T]$ for $T\in (0,\infty)$. In this case these definition only change up to modifying the interval of definition.
\subsection{Banach Space Valued Martingales}\label{subsec:BValuedMartingales}
As in finite dimensional stochastic analysis, the class of Martingale processes will play a central role. We introduce the concept of a filtered probability space.
We say that a family $(\mcF_t)_{t\geq 0}$, of $\sigma$-algebras is non-decreasing if $\mcF_s \subseteq \mcF_t$ for all $0\leq s <t<\infty$. We also introduce the notations,
\begin{equation*}
	\mcF_{t+} = \bigcap_{s>t} \mcF_s,\quad \mcF_{t-}= \bigcap_{s<t}\mcF_s.
\end{equation*}
\begin{definition}[Filtered Probability space]\label{def:FilteredSpace}
	We say that a tuple, $(\Omega,\mcF,(\mcF_t)_{t\geq 0},\mbP)$, consisting of the the usual triple; state space $\Omega$, $\sigma$-algebra, $\mcF$, and probability measure, $\mbP$, and in addition a non-decreasing family of sub-$\sigma$-algebras, $(\mcF_t)_{t\geq 0}$ is a filtered probability space. We say that a filtration is normal if $\mcF_0$ contains all $\mbP$-null sets of $\mcF$ and if
	$$\mcF_t =\mcF_{t+} \quad \text{ for all }t\geq 0.$$
\end{definition}
Given a filtered probability space, we define the notion of an $(\mcF_t)_{t\geq 0}$-martingale.
\begin{definition}[Martingales]\label{def:EValuedMart}
	Let $(M_t)_{t\geq 0}$ be a stochastic process, taking values in $E$ and $(\mcF_t)_{t\geq 0}$ be a filtration on $(\Omega,\mcF,\mbP)$. Then we say that $(M_t)_{t\geq 0}$ is an $(\mcF_{t})_{t\geq 0}$-martingale if,
	\begin{enumerate}
		\item $\mbE[\|M_t\|_E]<\infty$ for all $t\geq 0$,
		\item $M_t$ is $\mcF_t$ measurable for all $t\geq 0$,
		\item $\mbE[M_t \,|\,\mcF_s] = M_s$ $\mbP$-a.s. for all $0\leq s\leq t<\infty$.
	\end{enumerate}
\end{definition}
We say that a stochastic process is a sub-martingale if the final property is replaced with the inequality $\mbE[M_t|\mcF_s]\geq M_s$ and a super-martingale if it is replaced by the inequality $\mbE[M_t|\mcF_s] \leq M_s$.\\ \par 
Given a filtered probability space $(\Omega,\mcF,(\mcF_t)_{t\geq 0}, \mbP)$, we recall the notion of a stopping time.
\begin{definition}[Stopping Times]\label{def:StoppingTime}
	We say that a random variable $\Omega\ni \omega \mapsto \tau(\omega)\in \mbR_+$ is an $(\mcF_t)_{t\geq 0}$ stopping time, if for any $t\geq 0$,
	\begin{equation*}
		\{ \omega \in \Omega\,:\, \tau(\omega) \leq t\} \in \mcF_t.
	\end{equation*}
\end{definition}
\begin{remark}
	If $(\mcF_t)_{t\geq 0}$ is normal, in the sense of Definition \ref{def:FilteredSpace}, then any deterministic time $T\geq 0$ is an $(\mcF_t)_{t\geq 0}$ stopping time.
\end{remark}
\begin{remark}
	We will mostly be concerned with so called hitting times, that is given an $E$ valued process, $(X_t)_{t\geq0}$, for $R\geq 0$ and $\varphi \in E^\ast$, we define
	\begin{equation*}
		\tau_R := \inf \{ t\geq 0\,:  |\varphi(X_t)| > R \}.
	\end{equation*}
	If $t\mapsto X_t$ is almost surely continuous and $(\mcF_t)_{t\geq 0}$ is normal then any random time of this form is an $(\mcF_t)_{t\geq 0}$ stopping time.
\end{remark}
The following example shows that in some cases the requirements of being a true martingale in the sense of Definition \ref{def:EValuedMart} are too stringent.
\begin{example}
	Let $(\Omega,\mcF,(\mcF_t)_{t\geq 0},\mbP)$ be a filtered probability space carrying two random variables, $\xi$ which is $\mcF_1$ measurable, takes values in $\mbR_+$ and such that $\mbE[\xi]=\infty$ and $\eta$ which is $\mcF_2$ measurable, independent of $\mcF_{2-}$ and takes values $\pm 1$ each with probability $1/2$. We define the process $[0,\infty)\ni t\mapsto X_t := \mathds{1}_{t\geq 2}\eta \xi$. It follows immediately that $\mbE[|X_2|] = \mbE[\xi] =\infty$ so that $X$ cannot be a martingale. However, we can define the sequence of stopping times, for $R\geq 0$,
	\begin{equation*}
		\tau_R := \begin{cases}
			R & \text{if }\xi\leq R,\\
			1 & \text{if }\xi>R.
		\end{cases}
	\end{equation*}
	Since $\xi(\omega)<\infty$ $\mbP$-a.s., $\tau_R \nearrow \infty$ $\mbP$-a.s. and for every $n\geq 0$, $X^{\tau_R}_{\,\cdot\,}:= X_{\tau_R \wedge \,\cdot\,}$ is a bounded martingale.
\end{example}
This leads us to the definition local martingales, which will be recurring objects in our analysis. In particular the stochastic integrals we define in Subsections \ref{subsec:QStochInt} \& \ref{subsec:GenStochInt} will only be local martingales for the integrands we typically consider.
\begin{definition}[Local Martingale]\label{def:LocalMart}
	We say that an $E$ valued stochastic process $(M_t)_{t\geq 0}$ is a local martingale, if there exists a sequence of $(\mcF_t)_{t\geq 0}$ stopping times $(\tau_n)_{n\geq 0}$ such that $\tau_n\rightarrow \infty$, $\mbP$-a.s. and for any $n\geq 0$, the stopped process $(M^{\tau_n}_t)_{t\geq 0}:=(M_{\tau_n\wedge t})_{t\geq 0}$ is a martingale in the sense of Definition \ref{def:EValuedMart}. Local sub/super-martingales are defined analogously. The sequence $(\tau_n)_{n\geq 0}$ is called a localising sequence for the local martingale $M$.
\end{definition}
As in the previous section we make a connection to real martingales.
\begin{proposition}\label{prop:RealMartingales}
	Let $(M_t)_{t\geq 0}$ be an $E$ valued stochastic process such that $\mbE[\|M_t\|_E]<\infty$ for all $t\geq 0$ and $(\mcF_t)_{t\geq 0}$ be a filtration as in Definition \ref{def:EValuedMart}. Then $(M_t)_{t\geq 0}$ is an $E$ valued martingale if and only if $(\langle M_t,h\rangle)_{t\geq 0}$ is a real, $(\mcF_t)_{t\geq 0}$-martingale for all $h\in E$. Furthermore, if $(M_t)_{t\geq 0}$ is an $(\mcF_t)_{t\geq 0}$-sub-martingale, then $(\|M_t\|^p_E)_{t\geq 0}$ is a real $(\mcF_t)_{t\geq 0}$-sub-martingale for any $p\in [1,\infty)$.
\end{proposition}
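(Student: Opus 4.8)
The plan is to reduce every claim to its scalar counterpart, exploiting the reduction machinery already assembled: Remark \ref{rem:LinOperatorExpecationCommute}, Corollary \ref{cor:EValuedRVs}, and the characterising property of the conditional expectation from Theorem \ref{th:CondExpect}. Throughout I read $\langle M_t,h\rangle$ as a duality pairing $\varphi(M_t)$ with $\varphi\in E^\ast$, this being the form in which those earlier results are phrased. The single fact carrying most of the weight is that conditional expectation commutes with bounded linear functionals, i.e. $\varphi(\mbE[X\mid \mcG])=\mbE[\varphi(X)\mid \mcG]$ $\mbP$-a.s. for every $\varphi\in E^\ast$ and Bochner-integrable $X$. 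This is precisely the identity extracted in the uniqueness half of the proof of Theorem \ref{th:CondExpect}, obtained by testing $\int_A\varphi(\mbE[X\mid\mcG])\dd\mbP=\varphi\!\big(\int_A X\dd\mbP\big)=\int_A\varphi(X)\dd\mbP$ against every $A\in\mcG$ and invoking the defining property of the real conditional expectation.

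For the equivalence the forward direction is immediate: if $(M_t)$ is an $E$-valued martingale then $\mbE[\abs{\varphi(M_t)}]\leq\norm{\varphi}_{E^\ast}\,\mbE[\norm{M_t}_E]<\infty$, measurability of $\varphi(M_t)$ with respect to $\mcF_t$ follows from continuity of $\varphi$, and the commutation identity gives $\mbE[\varphi(M_t)\mid\mcF_s]=\varphi(\mbE[M_t\mid\mcF_s])=\varphi(M_s)$. For the converse I would first recover adaptedness of $M_t$ from measurability of all $\varphi(M_t)$ via Proposition \ref{prop:CylinderSets}, since $\mcB(E)=\sigma(\{\varphi\in E^\ast\})$. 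The martingale identity is then obtained by uniqueness: set $Z:=\mbE[M_t\mid\mcF_s]$, which exists by Theorem \ref{th:CondExpect}; the commutation identity together with the scalar martingale property yields $\varphi(Z)=\mbE[\varphi(M_t)\mid\mcF_s]=\varphi(M_s)$ $\mbP$-a.s. for every $\varphi\in E^\ast$, whence $Z=M_s$ $\mbP$-a.s. by Item \ref{it:Modification} of Corollary \ref{cor:EValuedRVs}.

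For the second assertion, which I read as the statement that if $(M_t)$ is a martingale then $(\norm{M_t}_E^p)$ is a real submartingale (the natural reading, since a compatible order on a general Banach space need not be available), I would use the representation of the norm as a countable supremum from \eqref{eq:NormApproxSequence}, writing $\norm{M_t}_E=\sup_{n\geq 1}\abs{\varphi_n(M_t)}$ with $\norm{\varphi_n}_{E^\ast}\leq 1$. By the first part each $\varphi_n(M_t)$ is a real martingale, so each $\abs{\varphi_n(M_t)}$ is a real submartingale by scalar conditional Jensen, and all are dominated by the integrable $\norm{M_t}_E$. The key observation is that a countable supremum of submartingales is again a submartingale: from $\mbE[\norm{M_t}_E\mid\mcF_s]\geq\mbE[\abs{\varphi_n(M_t)}\mid\mcF_s]\geq\abs{\varphi_n(M_s)}$ for every $n$ one passes to the supremum on the right to get $\mbE[\norm{M_t}_E\mid\mcF_s]\geq\norm{M_s}_E$. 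Composing with the convex nondecreasing map $r\mapsto r^p$ on $[0,\infty)$ and applying scalar conditional Jensen once more propagates the submartingale property to $\norm{M_t}_E^p$; here one must additionally assume the integrability $\mbE[\norm{M_t}_E^p]<\infty$, which for $p>1$ does not follow from the standing hypothesis $\mbE[\norm{M_t}_E]<\infty$ alone.

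The step I expect to need the most care is the converse of the equivalence, where scalar information must be upgraded to a vector-valued conclusion. The point is that the commutation identity only holds for each \emph{fixed} $\varphi$ off its own null set, yet Item \ref{it:Modification} of Corollary \ref{cor:EValuedRVs} is exactly engineered to bridge this gap, its proof collapsing the verification onto the countable determining family $(\varphi_n)$ supplied by \eqref{eq:NormApproxSequence}. Separability of $E$ is therefore the structural hypothesis that makes both the converse and the supremum-of-submartingales argument legitimate; once it is in place, the remaining manipulations are routine.
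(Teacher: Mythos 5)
Your proof is correct and follows essentially the same route as the paper's: reduction to scalar (sub)martingales via functionals, measurability of $M_t$ from the cylinder-set characterisation, the martingale identity from commuting conditional expectation with functionals together with the determining family of Corollary \ref{cor:EValuedRVs}, and the norm statement via the countable norming sequence of \eqref{eq:NormApproxSequence} followed by a supremum and conditional Jensen for $p>1$. You are in fact slightly more careful than the paper on two points it glosses over: the explicit uniqueness argument $Z=\mbE[M_t\mid\mcF_s]=M_s$ in the converse, and the observation that $\mbE[\|M_t\|_E^p]<\infty$ must be assumed (or the conditional expectation read in the extended sense) for $p>1$.
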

\begin{proof}
	To prove the equivalence, it is clear that if $(M_t)_{t\geq 0}$ is an $E$ valued $(\mcF_t)_{t\geq0}$-martingale then $(\langle M_t,h\rangle)_{t\geq 0}$ is a real $(\mcF_t)_{t\geq 0}$-martingale for all $h \in E$. On the other hand, if $\langle M_t,h\rangle$ is $\mcF_t$ measurable for all $t\geq 0$ and $h\in E$ then the element $M_t\in E$ must be $\mcF_t$ measurable for every $t\geq 0$. To show the martingale property it suffices to recall the construction of the conditional expectation in Hilbert spaces from Theorem \ref{th:CondExpect}.\\ \par 
	To show that the norm is a real martingale, recall that by Hahn--Banach there exists a sequence of elements $(h_k)_{k\geq 1}$ such that $\|M_t\|_E =\sup_{k\geq 1}\langle M_t,h_k\rangle$. Therefore, from the first equivalence, $\mbP$-a.s., we have that
	\begin{align*}
		\mbE[\|M_t\|_E |\mcF_s]\geq \sup_{k\geq 1} \mbE[\langle M_t,h_k\rangle|\mcF_s] = \sup_{k\geq 1} \langle M_s,h_k\rangle = \|M_t\|_E.
	\end{align*}
	This proves the claim for $p=1$, to show the same for $p>1$ it suffices to apply Jensen in the first inequality.
\end{proof}
We have the following version of Doob's maximal inequality.
\begin{theorem}\label{th:Doob}
	Let $(M_t)_{t\in [0,T]}$ be a right continuous, $E$ valued $(\mcF_t)_{t\in [0,T]}$-martingale. Then for any $p>1$,
	\begin{equation*}
		\mbE\left[\sup_{t \in [0,T]}\|M_t\|^p_E\right]^{\frac{1}{p}} \leq \frac{p}{p-1}\mbE[\|M_T\|^p_{E}]^{\frac{1}{p}}.
	\end{equation*}
\end{theorem}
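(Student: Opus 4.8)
The plan is to reduce the vector-valued statement to the classical real-valued Doob inequality by passing to the scalar process $N_t := \|M_t\|_E$. The crucial structural input is Proposition \ref{prop:RealMartingales}: since $(M_t)_{t\in[0,T]}$ is an $E$-valued martingale, the real process $(N_t)_{t\in[0,T]} = (\|M_t\|_E)_{t\in[0,T]}$ is a nonnegative real $(\mcF_t)$-submartingale. It is moreover right-continuous, because $M$ is right-continuous and the norm $\|\,\cdot\,\|_E : E \to \mbR$ is (Lipschitz) continuous, so $t \mapsto N_t$ is right-continuous $\mbP$-a.s. The assertion to be proved is then exactly the $L^p$ maximal inequality $\mbE[\sup_{t\in[0,T]} N_t^p]^{1/p} \leq \tfrac{p}{p-1}\,\mbE[N_T^p]^{1/p}$ for the right-continuous nonnegative submartingale $N$, and the constant $\tfrac{p}{p-1}$ is precisely the one appearing for the real case. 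Thus the first step collapses the problem to a purely scalar statement.

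The second step is to establish the real-valued Doob inequality for $N$. I would first prove it in discrete time for a finite family $X_0,\dots,X_n$ forming a nonnegative submartingale, via the weak-type bound $\lambda\,\mbP[\max_{k\leq n}X_k \geq \lambda] \leq \mbE[X_n \indic_{\{\max_{k\leq n}X_k\geq \lambda\}}]$, which follows from optional stopping applied to the hitting time $\tau = \min\{k : X_k \geq \lambda\}\wedge n$ together with $\{\max_k X_k \geq \lambda\} \in \mcF_\tau$ and $\mbE[X_n\mid\mcF_\tau]\geq X_\tau \geq \lambda$ on that event. Writing $X^* := \max_{k\leq n} X_k$ and integrating the weak-type inequality against $p\lambda^{p-1}\dd\lambda$ (layer-cake), one obtains $\mbE[(X^*)^p] \leq \tfrac{p}{p-1}\mbE[X_n (X^*)^{p-1}]$, and Hölder's inequality with conjugate exponents $p, p/(p-1)$ yields the stated bound after dividing through (with the usual preliminary truncation of $X^*$ to guarantee finiteness before dividing). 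Finally I would transfer this to continuous time: fix a countable set $D \subset [0,T]$ dense with $T \in D$, exhaust it by finite sets $D_1 \subset D_2 \subset \cdots$, apply the discrete inequality on each $D_m$, and pass to the limit by monotone convergence to control $\sup_{t\in D} N_t$; right-continuity of $N$ then gives $\sup_{t\in[0,T]} N_t = \sup_{t\in D} N_t$ a.s., delivering the full result.

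The main obstacle is the continuous-time passage rather than any deep estimate: one must ensure that $\sup_{t\in[0,T]} N_t$ is a genuine random variable and coincides a.s. with the supremum over the countable dense set $D$, which is where right-continuity of the submartingale is indispensable and where the right-continuity hypothesis on $M$ in the theorem is actually used. A secondary technical care point is the integrability bookkeeping in the layer-cake/Hölder step: the division by $\mbE[(X^*)^p]^{(p-1)/p}$ is only legitimate once this quantity is known to be finite, so one truncates $X^*$ at a level and lets the level tend to infinity, invoking monotone convergence at the end. Everything else — the submartingale property of $\|M_t\|_E$, the optional-stopping estimate, and Hölder — is routine given Proposition \ref{prop:RealMartingales}.
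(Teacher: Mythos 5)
Your proof is correct and follows essentially the same route as the paper: reduce to the real-valued case by noting, via Proposition \ref{prop:RealMartingales}, that $\|M_t\|_E$ is a nonnegative right-continuous real submartingale, and then invoke the classical Doob $L^p$ maximal inequality. The only difference is that you prove the real-valued inequality from scratch (optional stopping, layer-cake, H\"older, and the countable-dense-set passage to continuous time), whereas the paper simply cites it from \cite[Thm. 4.5.6]{cohen_elliott_15}.
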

\begin{proof}
	The result follows from Proposition \ref{prop:RealMartingales} and the same result for real valued, non-negative sub-martingales. See \cite[Thm. 4.5.6]{cohen_elliott_15}
\end{proof}
From now on we will focus in particular on the square integrable, continuous, martingales, i.e $E$ valued, continuous, $(\mcF_{t})_{t\in[0,T]}$-martingales such that
\begin{equation}\label{eq:M2Norm}
	\|M\|_{\mcM^2_T}:= \sup_{t \in [0,T]} \mbE\left[ \|M_t\|^2_E \right] = \mbE\left[\|M_T\|^2_E \right] <\infty.
\end{equation}
Identifying $\mcM^2_T(E)$ as a space as a subspace of the Banach space $L^2(\Omega;C([0,T];E))$ it follows that $\mcM^2_T(E)$ is itself a Banach space if it can be shown to be closed. However, by construction of the conditional expectation, it is easy to show that even convergence in $L^1(\Omega;C([0,T];E))$ preserves the martingale property and so $\mcM^2_T(E)$ is a Banach space, with norm given by \eqref{eq:M2Norm}. Note that finiteness of \eqref{eq:M2Norm} is not enough to ensure that a local martingale, $M$, is a true martingale, so that we may also speak of continuous, square integrable, local martingales and we write $\mcM^2_{T;\text{loc}}(E)$ for this space.\\ \par
Our ultimate goal in this chapter is to build the stochastic integral with respect to a given element of $\mcM^2_T(E)$, concretely a $Q$-Wiener process. In order to do so we proceed in analogy with the construction of the Lebesgue integral. We first identify a suitable space of simple processes, for which the integral is naturally defined. We then show that this integral is an isometry from the simple processes, equipped with a suitable norm, to the space $\mcM_T^2$. We then define the stochastic integral as the limit under approximation by simple processes. From now on we keep as implicit a filtered probability space $(\Omega,\mcF,(\mcF_t)_{t\in [0,T]};\mbP)$ and assume that all martingales are with respect to $(\mcF_t)_{t\in [0,T]}$. We also assume without further comment that all martingales are zero at $t=0$.
\subsection{Hilbert Space Valued Wiener Processes}
In the context of stochastic evolution equations we will wish to consider noise terms given by evolving processes of random variables. That is a Hilbert space valued stochastic process. Given a spatial domain, $\Gamma\subset \mbR^d$, we typically think of $U$ in the previous section, being a Hilbert space of functions $f:\Gamma \rightarrow \mbR^m$ and the noise as being a sequence of random elements in $U$ (or some larger space) indexed continuously by $t\geq 0$. We begin by defining the class of $Q$-Wiener processes.
\begin{definition}\label{def:QWienerProcess}
	Given a Hilbert space $U$ and a trace class operator $Q\in L_1(U)$, we say that $(W_t)_{t\geq 0}$ is a standard $Q$-Wiener process if,
	\begin{enumerate}
		\item $W_0 =0 \in U$,
		\item the map $\mbR_+\ni t\mapsto W_t\in U$ is $\mbP$-a.s. continuous,
		\item for any $0\leq s<t<\infty$, one has
		\begin{equation*}
			W_t-W_s \sim \mcN(0,(t-s)Q)\quad\text{and}\quad W_t-W_s\perp W_s.
		\end{equation*}
	\end{enumerate} 
\end{definition}
The qualification \textit{standard} here refers to the condition $W_0=0$, which entails by the martingale property (see below) that $\mbE[\langle W_t,h\rangle ]=0$ for all $t\geq 0$. Setting $W_0=m$ for a deterministic element $m\in E$ gives a non-centred $Q$-Wiener  process. One can also consider random initial data, provided the independent increments assumption still holds. This can be useful for example when studying invariant behaviour of stochastic systems, as one may wish to start the system already from its invariant distribution. From now on we only treat standard $Q$-Wiener processes and remove the qualifier from all subsequent statements.\\ \par  
Given Theorem \ref{th:GuassianCharacteristic} we can give a characterisation of the $Q$-Wiener processes.
\begin{theorem}
	Let $Q \in L_1(U)$ and $(e_k)_{k\geq 1},\,(\lambda_k)_{k\geq 1}$ be an orthonormal basis of $U$ generated by $Q$ and associated eigenvalues. Then $(W_t)_{t\geq 0}$ is a $Q$-Wiener process if and only if,
	\begin{equation}\label{eq:QWienerDef}
		W_t = \sum_{k\geq 1} \sqrt{\lambda_k}\beta^k_t e_k,
	\end{equation}
\end{theorem}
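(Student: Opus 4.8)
The plan is to prove both implications, where $(\beta^k_t)_{k\geq 1}$ are understood to be i.i.d.\ standard real Brownian motions and $(e_k,\lambda_k)_{k\geq 1}$ diagonalise $Q$, so that $Qe_k = \lambda_k e_k$, $\lambda_k \geq 0$, and $\sum_{k\geq 1}\lambda_k = \Tr Q < \infty$ by Proposition \ref{prop:SymmetricNonNegativeTraceClass}. The two directions use complementary tools: the forward direction extracts the Brownian motions by testing against the basis, while the reverse direction assembles the series into a continuous process using the martingale theory of Subsection \ref{subsec:BValuedMartingales}.

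For the forward direction, suppose $(W_t)_{t\geq 0}$ is a $Q$-Wiener process. For each $k$ with $\lambda_k>0$ I would set
\begin{equation*}
	\beta^k_t := \lambda_k^{-1/2}\langle W_t, e_k\rangle_U,
\end{equation*}
noting that the terms with $\lambda_k=0$ contribute $0$ $\mbP$-a.s., since then $\langle W_t,e_k\rangle_U$ has variance $t\langle Qe_k,e_k\rangle_U=0$. Each $\beta^k$ inherits continuity and $\beta^k_0=0$ from $W$, and by Definition \ref{def:QWienerProcess} its increments $\beta^k_t-\beta^k_s=\lambda_k^{-1/2}\langle W_t-W_s,e_k\rangle_U$ are centred Gaussian with variance $\lambda_k^{-1}(t-s)\langle Qe_k,e_k\rangle_U = t-s$ and inherit the independence of the increments of $W$, so each $\beta^k$ is a standard real Brownian motion. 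To see that the family is jointly independent I would first observe that, via the telescoping $W_{t_i}=\sum_\ell (W_{t_\ell}-W_{t_{\ell-1}})$, any finite linear combination of the $\langle W_{t_i},e_j\rangle_U$ rewrites as a linear combination of the independent Gaussian increments and is therefore Gaussian; hence the whole family $\{\langle W_t,e_k\rangle_U\}$ is jointly Gaussian. A direct computation using the independence of increments gives $\mbE[\beta^j_s\beta^k_t]=(s\wedge t)\delta_{jk}$, so uncorrelatedness upgrades to independence, and $(\beta^k)_{k\geq 1}$ is a family of i.i.d.\ Brownian motions. Finally, Parseval's identity in the complete orthonormal system $(e_k)$ yields $W_t=\sum_k\langle W_t,e_k\rangle_U e_k=\sum_k\sqrt{\lambda_k}\beta^k_t e_k$ in $U$, $\mbP$-a.s.\ for each $t$.

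For the reverse direction, given i.i.d.\ Brownian motions $(\beta^k)$ I would first show the series defines a genuine continuous process. Setting $W^n_t:=\sum_{k=1}^n\sqrt{\lambda_k}\beta^k_t e_k$, each $W^n$ is a continuous $U$-valued martingale by Proposition \ref{prop:RealMartingales}, and for $m<n$ Doob's maximal inequality (Theorem \ref{th:Doob} with $p=2$) gives
\begin{equation*}
	\mbE\Big[\sup_{t\in[0,T]}\|W^n_t-W^m_t\|_U^2\Big] \leq 4\,\mbE\big[\|W^n_T-W^m_T\|_U^2\big] = 4T\sum_{k=m+1}^n\lambda_k,
\end{equation*}
which tends to $0$ as $m,n\to\infty$ since $\sum_k\lambda_k=\Tr Q<\infty$. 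Hence $(W^n)$ is Cauchy in $\mcM^2_T(U)\subset L^2(\Omega;C([0,T];U))$ and, by completeness, converges to a continuous limit $W$ with $W_0=0$. It remains to identify the increment law: for fixed $h\in U$, $\langle W_t-W_s,h\rangle_U$ is an $L^2$-limit of finite sums of independent Gaussians, hence centred Gaussian with variance $(t-s)\sum_k\lambda_k\langle e_k,h\rangle_U^2=(t-s)\langle Qh,h\rangle_U$; thus $W_t-W_s$ has characteristic function $\exp(-\tfrac12(t-s)\langle Qh,h\rangle_U)$ and is $\mcN(0,(t-s)Q)$ by Theorem \ref{th:GuassianCharacteristic} and \eqref{eq:InfiniteDimGaussChar}, while independence of $W_t-W_s$ from $W_s$ follows from the independence of Brownian increments over disjoint intervals, checked on finite truncations and passed to the limit.

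The step I expect to be most delicate is the convergence claim in the reverse direction: establishing not merely fixed-time $L^2$ convergence but almost-sure convergence to a \emph{continuous} $U$-valued process. This is precisely where the martingale structure is indispensable, as Doob's inequality converts the trace summability $\sum_k\lambda_k<\infty$ into uniform-in-time control and completeness of $\mcM^2_T(U)$ delivers the continuous limit. A secondary subtlety is the passage from uncorrelated to independent in the forward direction, which relies on first verifying joint Gaussianity of the coordinate processes rather than taking it for granted from the one-dimensional increment law.
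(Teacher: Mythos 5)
Your proof is correct, and it is both more complete and more careful than the paper's own argument. The paper's proof treats essentially only the reverse direction: it invokes Theorem \ref{th:GuassianCharacteristic} to see that the sum defines centred, Gaussian, $U$-valued random variables, uses trace summability and dominated convergence to exchange summation and expectation, and then asserts that almost sure continuity of $t\mapsto W_t$ ``follows from'' continuity of the individual maps $t\mapsto \beta^k_t$, deferring a genuine proof to a Kolmogorov-criterion argument in Chapter \ref{ch:Extras}. That assertion, as stated, has a gap: an almost sure pointwise limit of continuous functions need not be continuous, and what is really needed is uniform-in-time convergence. Your Doob-plus-completeness argument supplies exactly this missing ingredient --- Doob's maximal inequality (Theorem \ref{th:Doob}) converts $\sum_{k}\lambda_k = \Tr Q<\infty$ into Cauchyness of the partial sums in $\mcM^2_T(U)\subset L^2(\Omega;C([0,T];U))$, and completeness of $\mcM^2_T(U)$ delivers a continuous limit --- so your route is the standard rigorous one (as in \cite{daprato_zabczyk_14,prevot_rockner_07}) rather than the paper's sketch. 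You also prove the forward direction, extracting the i.i.d. Brownian motions from a given $Q$-Wiener process via $\beta^k_t=\lambda_k^{-1/2}\langle W_t,e_k\rangle_U$ and correctly flagging that joint Gaussianity (via the telescoping-increments argument) is needed before uncorrelatedness can be upgraded to independence; the paper's proof omits this direction entirely even though the statement is an equivalence. One cosmetic remark: for indices with $\lambda_k=0$ your $\beta^k$ is undefined, so to obtain the literal representation \eqref{eq:QWienerDef} you should adjoin arbitrary independent Brownian motions for those indices, noting that the corresponding terms vanish.
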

where $(\beta^k)_{k\geq 1}$ are independent, real, Brownian motions.
\begin{proof}
	This result essentially follows from the characterisation of Gaussian random variables given by Theorem \ref{th:GuassianCharacteristic}. It follows directly that the random variables $(W_t)_{t\geq 0}$ defined in this way are centred, Gaussian, $U$-valued random variables. Since $Q$ is trace class, the right hand side of \eqref{eq:QWienerDef} is square summable in expectation and so by the dominated convergence theorem we may exchange summation and the expectation. Therefore, independence of the increments, $W_t-W_s$, follows from independence of the increments $\beta^k_t -\beta_s^k$. finally, almost sure continuity of the paths follows also from almost sure continuity of the maps $t\mapsto \beta^k_t$, although in Chapter \ref{ch:Extras} we show this more directly by a generalisation of the Kolmogorov continuity criterion.
\end{proof}
For the remainder of this chapter, unless otherwise mentioned $Q\in L_1(U)$. Given a filtered probability space we define the notion of a $Q$-Wiener process with respect to a filtration.
\begin{definition}
	Let $(\Omega,\mcF,(\mcF_t)_{t\geq 0},\mbP)$ be a normal, filtered probability space and $(W_t)_{t\geq 0}$ be a $Q$-Wiener process. We say that $(W_t)_{t\geq 0}$ is a $Q$-Wiener process with respect to $(\mcF_t)_{t\geq0}$ if,
	\begin{enumerate}
		\item $W_t$ is $\mcF_t$ measurable for every $t\geq 0$,
		\item $W_t-W_s$ is independent of $\mcF_s$ for all $0\leq s<t<\infty$.
	\end{enumerate}
\end{definition}
Every $Q$-Wiener process naturally induces a normal filtration, defined by setting, for all $t\geq 0$,
\begin{equation*}
	\tilde{\mcF}_t := \sigma(W_t\,:\,s\leq t), \quad \tilde{\mcF}^0_t:= \sigma(\tilde{\mcF}_t \cup\{ A\in \mcF\,:\,\mbP[A]=0\}), \quad \mcF_t := \bigcap_{s>t}\tilde{\mcF}^0_s.
\end{equation*}
From now on we will always consider filtrations generated in this way and so without further comment we will assume that any $Q$-Wiener process is adapted to the given filtration. A probabilistically strong solution to our SPDE will be a stochastic process $(u_t)_{t\geq 0}$ also adapted to the given filtration.\\ \par 
As in the finite dimensional case, the  $Q$-Wiener processes are martingales. Let us a fix a $T>0$ and adapt the definitions above accordingly. Then we have the following lemma.
\begin{lemma}\label{lem:QWienerMartingale}
	Let $(W)_{t\in[0,T]}$ be a $Q$-Wiener process and $(\mcF_t)_{t\in[0,T]}$ be its natural filtration. Then $(W_t)_{t\in [0,T]}\in \mcM^2_T(U)$.
\end{lemma}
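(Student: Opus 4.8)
The plan is to verify the three defining properties of an element of $\mcM^2_T(U)$ --- square integrability, adaptedness, and the martingale property --- together with the claimed value of the norm, using the characterisation of Gaussian measures from Theorem \ref{th:GuassianCharacteristic} and the reduction to real-valued martingales in Proposition \ref{prop:RealMartingales}.

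First I would establish square integrability and compute the norm. Since $W_0 = 0$, the increment property gives $W_t = W_t - W_0 \sim \mcN(0, tQ)$, so by the identity \eqref{eq:VarIsTrace} applied to the covariance $tQ$,
\[
	\mbE[\|W_t\|^2_U] = \int_U \|h\|^2_U \, \dd\mcL(W_t)(h) = \Tr(tQ) = t\,\Tr Q.
\]
Because $Q \in L_1(U)$ we have $\Tr Q < \infty$, and since $t \mapsto t\,\Tr Q$ is non-decreasing on $[0,T]$, the supremum is attained at $t = T$, giving $\|W\|_{\mcM^2_T} = \sup_{t\in[0,T]} \mbE[\|W_t\|^2_U] = T\,\Tr Q = \mbE[\|W_T\|^2_U] < \infty$. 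In particular $\mbE[\|W_t\|_U] < \infty$ for all $t$ by Jensen's inequality, so the hypotheses of Proposition \ref{prop:RealMartingales} are met.

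Next I would verify the martingale property by reducing to the scalar case. Adaptedness is immediate, since by construction of the natural filtration $\sigma(W_s : s \le t) \subseteq \mcF_t$, whence $W_t$ is $\mcF_t$-measurable. Fix $h \in U$ and $0 \le s \le t \le T$; writing $W_t = (W_t - W_s) + W_s$ and using that $\langle W_s, h\rangle$ is $\mcF_s$-measurable,
\[
	\mbE[\langle W_t, h\rangle \mid \mcF_s] = \mbE[\langle W_t - W_s, h\rangle \mid \mcF_s] + \langle W_s, h\rangle.
\]
Since $W$ is a $Q$-Wiener process with respect to $(\mcF_t)_{t\in[0,T]}$, the increment $W_t - W_s$ is independent of $\mcF_s$, so the conditional expectation reduces to the ordinary one; and because $W_t - W_s \sim \mcN(0,(t-s)Q)$ has zero mean, Remark \ref{rem:LinOperatorExpecationCommute} gives $\mbE[\langle W_t - W_s, h\rangle] = \langle \mbE[W_t - W_s], h\rangle = 0$. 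Hence $(\langle W_t, h\rangle)_{t\in[0,T]}$ is a real $(\mcF_t)$-martingale for every $h \in U$, and Proposition \ref{prop:RealMartingales} upgrades this to the assertion that $(W_t)_{t\in[0,T]}$ is a $U$-valued martingale.

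Finally, almost sure continuity of $t \mapsto W_t$ holds by definition of a $Q$-Wiener process, so all the requirements are in place and $W \in \mcM^2_T(U)$. None of the steps presents a genuine obstacle; the only point requiring care is to invoke the independence of the increment from $\mcF_s$ (rather than merely from $\sigma(W_u : u \le s)$), which is exactly what is built into the definition of a $Q$-Wiener process with respect to the given filtration, so the scalar computation goes through without any separate approximation argument.
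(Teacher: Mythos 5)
Your proof is correct and takes essentially the same route as the paper's: square integrability via $\mbE[\|W_t\|^2_U] = t\,\Tr Q \leq T\,\Tr Q < \infty$, and the martingale property from independent increments together with the zero mean of $W_t - W_s$. The only difference is one of emphasis — the paper carries out the norm computation explicitly by expanding in the eigenbasis of $Q$ and dispatches the martingale property in a single remark, whereas you cite the trace identity \eqref{eq:VarIsTrace} for the norm and instead spell out the reduction to real-valued martingales via Proposition \ref{prop:RealMartingales}, flagging (appropriately) the same point about independence from the augmented natural filtration that the paper also leaves implicit.
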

\begin{proof}
	The fact that $(W_t)_{t\in [0,T]}\in \mcM^2_T(U)$ follows from the definition of $(\mcF_t)_{t\in [0,T]}$ and the assumption of independent increments of $(W_t)_{t\in[0,T]}$. If we equip $U$ with the orthonormal basis generated by $Q$, then we have
	\begin{align*}
		\sup_{t \in [0,T]}\mbE[\|W_t\|^2_U] =\sup_{t \in[0,T]} \mbE\left[ |\langle W_t,W_t\rangle|\right] &= \sup_{t \in[0,T]} \mbE\left[ \sum_{k=1}^\infty|\langle W_t,e_k\rangle|^2\right]  \\
		&= T \sum_{k=1}^\infty \langle Qe_k,e_k\rangle\\
		&=  \Tr Q\, T <\infty.
	\end{align*}
\end{proof}
\subsection{Generalised Wiener Processes}
As in subsection \ref{subsec:GenGaussinRV}, it is natural to ask if we can extend the definition of a Wiener process beyond that of Definition \ref{def:QWienerProcess}. This leads to the definition of generalised Wiener processes, which includes the white noise (or cylindrical Wiener) process.
\begin{definition}[Generalised Wiener Process]\label{def:GenWienerProcess}
	Given a Hilbert space $U$, we say that a family of linear maps $ [0,T]\times U\ni (t,h) \mapsto W_{h;t}\in \mbR$ defines a generalised Wiener process if for each $h \in U$, $t\mapsto W_{h;t}$ defines a real Wiener process and if for any sequence $(h_n)_{n\geq 1}\subset U$ converging to $h\in U$ we have,
	\begin{equation*}
		\mbE\left[|W_{h;t} -W_{h_n;t}|^2 \right] \rightarrow 0, \quad \text{ for all }t \geq 0.
	\end{equation*}
\end{definition}
We employ the same ideas as in Subsection \ref{subsec:GenGaussinRV} to understand $(W_{h;t})_{t\geq 0}$ as a Wiener process taking values in a space slightly larger than $U$. From the definition of a generalised Wiener process it follows that there exists a bi-linear form $K:U\times U\rightarrow \mbR$ and a symmetric, non-negative operator $Q\in L(U)$ such that for all $s,\,t \geq 0$ and $h,g\in U$,
\begin{equation*}
	\mbE[W_{h;t}W_{g;s}] = t\wedge s K(h,g) = t\wedge s \langle Qh,g\rangle.
\end{equation*}
We write $W_{t}$ as a formal sum,
\begin{equation*}
	W_t := \sum_{k\geq 1} \beta^k_t Qe_k,
\end{equation*}
where $(\beta^k)_{k\geq 1}$ is a family of i.i.d, standard, real, Brownian motions and $(e_k)_{k\geq 1}$ is an orthonormal basis of $U$. As in Subsection \ref{subsec:GenGaussinRV} we define the reproducing kernel of $W$, as the space $U_0:= Q^{1/2}(U)$ and we recall that for any $t\geq 0$, $h\mapsto W_{t;h}$ defines a generalised Gaussian random variable on $U$ and it takes values in any space $U_1 \hookleftarrow U_0$ such that the embedding $\iota:U_0\rightarrow U_1$ is Hilbert--Schmidt. Thus, we may consider $(W_{h;t})_{t\in [0,T],h\in U}$ as a $U_1$ valued $Q$-Wiener process on $U$ and its definition is independent of the choice of $U_1$. By definition, the generalised Wiener process is an element of $\mcM^2_T(U_1)$.
\begin{example}[Space-Time White Noise]\label{ex:WhiteNoiseProcess}
	A common example of a generalised Wiener process is the space-time white noise, sometimes called a cylindrical Wiener process. It is the natural infinite dimensional analogue of the finite dimensional white noise familiar from classical stochastic analysis. Let $\Gamma \subset \mbR^d$ be a bounded, domain and $U:= L^2(\Gamma)$. Then the space-time white noise on $[0,T]\times \Gamma$ is the generalised Wiener process $[0,T]\times L^2(\Gamma) \ni(t,h)\mapsto W_{h;t}$ such that, for all $h,\,g \in U$ and $s,\,t \in [0,T]$,
	\begin{equation*}
		\mbE[W_{t;h}]=0,\quad \mbE[W_{t;h}W_{s;g}] = t\wedge s \langle h,g\rangle_{L^2(\Gamma)}.
	\end{equation*}
	One can check that for given a family of i.i.d, standard, real Brownian motions, $(\beta^k)_{k\geq 1}$ and a orthonormal basis of $L^2(\Gamma)$, $(e_k)_{k\geq 1}$, the formal sum
	\begin{equation*}
		W_{t}:= \sum_{k\geq 1} \beta^k_te_k,
	\end{equation*}
	defines a generalised Gaussian process in the above sense, known as the cylindrical Gaussian process, and that the sum converges in any Hilbert space $U_1$ such that the embedding $\iota : L^2(\Gamma)\rightarrow U_1$ is Hilbert--Schmidt. The formal derivative, $\dd W_t = \sum_{k\geq 1} \dd \beta^k_te_k$ is referred to as the space-time white-noise.\\ \par
	An alternative approach to defining the space-time white noise is to return to Subsection \ref{subsec:GenGaussinRV} and consider the Hilbert space $U:= L_0^2([0,T]\times \Gamma)$ of square integrable space-time functions, zero at zero. Then it is an easy exercise to check that the white noise on $L_0^2([0,T]\times \Gamma)$ coincides with the space-time white noise defined above.
\end{example}
\subsection{Stochastic Integration Against $Q$-Wiener Processes}\label{subsec:QStochInt}
We conclude this chapter by giving meaning to integrals of the form 
\begin{equation}\label{eq:ItoIntegral}
	[0,T]\ni t\mapsto \int_0^t \varphi_s \dd W_s,
\end{equation}
where $(\varphi_t)_{t\in [0,T]}$ is an adapted, suitably regular, operator valued process. We first treat the case when $(W_t)_{t\in [0,T]}$ is a $Q$-Wiener process from a Hilbert space to itself and treat this case in the most detail. We follow this by defining the stochastic integral for $(W_t)_{t\in [0,T]}$ a generalised Wiener process, of which the space-time white noise will be our canonical example.\\ \par
We fix a Hilbert space, $U$ and assume that $W$ is an $U$ valued $Q$-Wiener process. We recall the definition of its reproducing kernel, the Hilbert space $U_0 := Q^{1/2}(U) \subseteq U$ equipped with the inner product $\langle Q^{-1/2}f,Q^{-1/2}g\rangle_U$. We also fix a second Hilbert space, $H$; we will subsequently assume $(\varphi_t)_{t\in [0,T]}$ takes values in a subspace of $L(U_0;H)$, so that the stochastic integral, \eqref{eq:ItoIntegral}, takes values in $H$.\\ \par
As in the finite dimensional case we first define the stochastic integral for a suitable class of simple processes, before extending it to a larger space of integrands.
\begin{definition}[Simple Processes in $L(U;H)$]\label{def:SimpleProcesses}
	We say that a process $[0,T]\ni t\mapsto \varphi_t\in L(U;H)$ is simple if, there exist a sequence of deterministic times, $0=t_0<t_1<\cdots<t_{n-1}=T$, and a set of $L(U;H)$ valued, $(\mcF_{t_m})^{n-1}_{m=0}$-measurable, random variables, $(\varphi_{m})_{m=1}^{n-1}$, taking only finitely many values, such that,
	\begin{equation*}
		\varphi_t = \varphi_0 \mathds{1}_{\{0\}}(t) + \sum_{m=0}^{n-1} \varphi_{m} \mathds{1}_{(t_m,t_{m+1}]}(t).
	\end{equation*}
	We write $\mcE_T(U;H)$ for the set of $L(U;H)$ valued simple processes on $[0,T]$.
\end{definition}
We define the space of Hilbert--Schmidt operators $L^0_2(U_0;H)$ equipped with the norm,
\begin{equation*}
	\|\varphi \|^2_{L^0_2} :=  \Tr\left[(Q^{1/2}\varphi)(Q^{1/2}\varphi)^\ast \right].
\end{equation*}
Note that since $L^0_2$ is the space of Hilbert--Schmidt operators defined on the image of a Hilbert--Schmidt operator $Q^{1/2}:U\rightarrow U$, it follows that that $L(U;H)\subset L^0_2(U,H)$ and so example the simple process $\mcE_T(U;H)$ are measurable, $L^0_2(U_0,H)$, valued processes.\\ \par 
This allows us to define the set of processes that are square integrable with respect to a $Q$-Wiener process. For a measurable process $[0,T]\ni t\mapsto \varphi_t \in L^0_2(U_0,H)$, we set,
\begin{equation*}
	\|\varphi\|_{\msH_T^2} := \mbE\left[\int_0^T \|\varphi_s\|^2_{L^0_2}\dd s \right]^{\frac{1}{2}} = \mbE\left[ \int_0^T \Tr\left[(Q^{1/2}\varphi_s)(Q^{1/2}\varphi_s)^\ast \right]\dd s\right]^{\frac{1}{2}}.
\end{equation*}
If $\varphi \in \mcE_T$ then, since we specified that the random variables $(\varphi_m)_{m=1}^{n-1}$ all take only finitely many values in $L(U;H)$, it follows directly that $\|\varphi\|_{\msH^2_T}<\infty$ for simple processes.\\ \par
We are now able to define the stochastic integral for simple process. We will then show that the map sending $\varphi \in \mcE_T(U;H)$ to its stochastic integral, \eqref{eq:ItoIntegral}, is an isometry between $\mcE_T(U;H)$ and $L^2(\Omega;H)$. Given $(W_t)_{t\in [0,T]}$ we define the linear map,
\begin{equation}\label{eq:SimpleStochIntegral}
	\begin{aligned}
		\mcE_T(U;H)&\longrightarrow  \mcM^2_T(H)\\
		\varphi &\mapsto \varphi \bigcdot W_{\,\cdot\,} := \int_0^{\,\cdot\,} \varphi _s \dd W_s := \sum_{k=1}^{n-1} \varphi_k (W_{t_{k+1}\wedge \,\cdot\,}-W_{t_k\wedge \,\cdot\,}).
	\end{aligned}
\end{equation}
We show in the next proposition that this map does indeed define an isometry from $(\mcE_T,\|\,\cdot\,\|_{\msH^2_T})$ to the space of square integrable martingales, $(\mcM_T^2(H),\,\|\,\cdot\,\|_{\mcM^2_T})$, where the norm is defined in \eqref{eq:M2Norm}.
\begin{proposition}\label{prop:SimpleStochInt}
	Let $\varphi \in \mcE_T(U;H)$. Then, $(\varphi \bigcdot W_t)_{t\in [0,T]}$, defined in \eqref{eq:SimpleStochIntegral}, is a continuous, square integrable, $H$-valued martingale, adapted to the natural filtration of $(W_t)_{t\in[0,T]}$. Furthermore, one has that
	\begin{equation}\label{eq:SimpleItoIsometry}
		\|\varphi\bigcdot W\|_{\mcM^2_T}=\sup_{t\in [0,T]}\mbE[\|\varphi\bigcdot W_t\|_H^2 ]= \|\varphi\|_{\msH^2_T}.
	\end{equation}
\end{proposition}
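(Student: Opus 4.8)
The plan is to verify the three assertions in turn---continuity together with adaptedness, the martingale property with square-integrability, and the isometry \eqref{eq:SimpleItoIsometry}---exploiting throughout that $\varphi\bigcdot W$ is a \emph{finite} sum and that each $\varphi_k$ is $\mcF_{t_k}$-measurable and takes only finitely many values in $L(U;H)$. Continuity is immediate: each summand $t\mapsto\varphi_k(W_{t_{k+1}\wedge t}-W_{t_k\wedge t})$ is $\mbP$-a.s. continuous because $t\mapsto W_t$ is, and a finite sum of continuous maps is continuous. Adaptedness follows since, for fixed $t$, the operator $\varphi_k$ is $\mcF_{t_k}\subseteq\mcF_t$-measurable and the increment $W_{t_{k+1}\wedge t}-W_{t_k\wedge t}$ is $\mcF_t$-measurable; square-integrability will then drop out of the isometry computation below.

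For the martingale property I would fix $s<t$ and, after refining the partition so that both $s$ and $t$ become partition points, write $\varphi\bigcdot W_t-\varphi\bigcdot W_s=\sum_k\varphi_k(W_{t_{k+1}}-W_{t_k})$, the sum running over the subintervals contained in $(s,t]$. By the tower property and $\mcF_s\subseteq\mcF_{t_k}$, it suffices to show $\mbE[\varphi_k(W_{t_{k+1}}-W_{t_k})\mid\mcF_{t_k}]=0$ for each such $k$. Writing $\varphi_k=\sum_i\indic_{A_i}\psi_i$ with $\psi_i\in L(U;H)$ deterministic and $A_i\in\mcF_{t_k}$, each term becomes $\indic_{A_i}\psi_i(W_{t_{k+1}}-W_{t_k})$; pulling the $\mcF_{t_k}$-measurable factor $\indic_{A_i}\psi_i$ through the conditional expectation, as in Remark \ref{rem:LinOperatorExpecationCommute}, and using that $W_{t_{k+1}}-W_{t_k}$ is independent of $\mcF_{t_k}$ with mean zero (Definition \ref{def:QWienerProcess}) gives $0$. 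This reduction of random operators to deterministic ones on measurable sets is exactly where the finitely-many-values hypothesis is used, and I expect this bookkeeping to be the main routine obstacle rather than any deep difficulty.

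The isometry is the substantive step. Taking $t=T$ so that all increments are complete, I expand
\begin{equation*}
\mbE\big[\|\varphi\bigcdot W_T\|_H^2\big]=\sum_{k,l}\mbE\big[\langle\varphi_k\Delta_kW,\varphi_l\Delta_lW\rangle_H\big],\qquad \Delta_kW:=W_{t_{k+1}}-W_{t_k}.
\end{equation*}
For $k<l$ the cross term vanishes: rewriting $\langle\varphi_k\Delta_kW,\varphi_l\Delta_lW\rangle_H=\langle\varphi_l^\ast\varphi_k\Delta_kW,\Delta_lW\rangle_U$ and conditioning on $\mcF_{t_l}$, the first argument is $\mcF_{t_l}$-measurable while $\Delta_lW$ is independent of $\mcF_{t_l}$ with mean zero (again pulling through the finitely-valued factors). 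For the diagonal terms I condition on $\mcF_{t_k}$ and use, for deterministic $\psi$ and $X\sim\mcN(0,(t_{k+1}-t_k)Q)$, the Gaussian second-moment identity $\mbE[\|\psi X\|_H^2]=(t_{k+1}-t_k)\Tr(\psi Q\psi^\ast)=(t_{k+1}-t_k)\|\psi\|_{L^0_2}^2$, which I obtain by expanding $\|\psi X\|_H^2=\sum_j\langle X,\psi^\ast f_j\rangle^2$ over an orthonormal basis $(f_j)$ of $H$ and applying the covariance identity \eqref{eq:GaussMeanCov} coordinatewise. Summing yields $\sum_k(t_{k+1}-t_k)\,\mbE[\|\varphi_k\|_{L^0_2}^2]=\mbE[\int_0^T\|\varphi_s\|_{L^0_2}^2\,\dd s]=\|\varphi\|_{\msH^2_T}$.

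Finally, to identify $\sup_t\mbE[\|\varphi\bigcdot W_t\|_H^2]$ with $\mbE[\|\varphi\bigcdot W_T\|_H^2]$, I invoke Proposition \ref{prop:RealMartingales}: once the martingale property is known, $t\mapsto\|\varphi\bigcdot W_t\|_H^2$ is a real submartingale, so $t\mapsto\mbE[\|\varphi\bigcdot W_t\|_H^2]$ is nondecreasing and attains its supremum at $t=T$. Combined with the computation above this gives \eqref{eq:SimpleItoIsometry}, and in particular $\varphi\bigcdot W\in\mcM^2_T(H)$. The genuine mathematical content is concentrated in the Gaussian trace identity and the orthogonality of disjoint increments; everything else is the reduction from finitely-valued random operators to deterministic ones.
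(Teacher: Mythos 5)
Your proof is correct and follows essentially the same route as the paper's: the isometry is obtained by expanding $\mbE[\|\varphi\bigcdot W_T\|_H^2]$ into diagonal and cross terms, killing the cross terms by independence of increments, evaluating the diagonal terms through the Gaussian covariance/trace identity $\mbE[\|\psi X\|_H^2]=(t_{k+1}-t_k)\Tr(\psi Q\psi^\ast)$ expanded over an orthonormal basis of $H$, and identifying the supremum via the submartingale property of the norm. The only difference is one of detail rather than method: you prove the martingale property directly (partition refinement, tower property, reduction of finitely-valued random operators to deterministic ones), whereas the paper declares it clear from the definitions and outsources the optional-stopping verification to \cite{prevot_rockner_07}.
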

\begin{proof}
	The fact that $t\mapsto \varphi \bigcdot W_{t}$ defines a continuous, $H$-valued martingale is clear from the definitions. Once can check the martingale property in detail using optional stopping, see proof of \cite[Prop. 2.3.2]{prevot_rockner_07}. We check square integrabillity by proving \eqref{eq:SimpleItoIsometry} directly. Note that it suffices to show \eqref{eq:SimpleItoIsometry} for $t= t_k \in [0,T]$. Let $\zeta_k := W_{t_{k+1}}-W_{t_k}$ so that we have,
	\begin{align*}
		\mbE[\|(\varphi \cdot W)_{t}\|^2_H] &= \mbE\left[\left\|\sum_{k=0}^{n-1} \varphi_{k} \zeta_k\right\|_E^2\right]  = \mbE\left[ \sum_{k=0}^{n-1} \|\varphi_k \zeta_k\|_H^2\right] + 2\mbE\left[\sum_{\substack{k_2 =1\\k_1<k_2}}^{n-1} \langle \varphi_{k_1}\zeta_{k_1},\varphi_{k_2}\zeta_{k_2}\rangle\right].
	\end{align*}
	Considering the first term, taking each summand individually and using the definition of a $Q$-Wiener process, and $(e_k)_{k\geq 1}$ the basis of $U_0$ associated to the Hilbert--Schmidt operator $\varphi_k\in L_2^0(U_0,H)$, and defining $\varphi^\ast_k \in L_2(H;U_0)$ to be its adjoint, we have
	\begin{align*}
		\mbE[\|\varphi_k\zeta_k\|_E^2] = \sum_{m=1}^\infty \mbE[|\langle\varphi_k\zeta_k,e_m\rangle|^2] &=  \sum_{m=1}^\infty \mbE[|\langle\zeta_k,\varphi_k^\ast e_m\rangle|^2]\\
		& =  (t_{k+1}-t_k)\sum_{m=1}^\infty |\langle Q \varphi_k^\ast e_m, \varphi_k^\ast e_m\rangle|\\
		&=  (t_{k+1}-t_k) \sum_{m=1}^\infty \|Q^{1/2} \varphi_k^\ast e_m\|^2_{E}\\
		&= (t_{k+1}-t_k)  \|\varphi_{k}\|_{L^0_2}.
	\end{align*}
	Similarly, by the independence of increments, one has
	\begin{equation*}
		\mbE[\langle \varphi_{k_1}\zeta_{k_1},\varphi_{k_2}\zeta_{k_2}\rangle] =0, \quad k_1 \neq k_2.
	\end{equation*}
	Putting these two results together and summing over $k$ gives equality \eqref{eq:SimpleItoIsometry}.
\end{proof}
We now wish to extend the definition of $\varphi\bigcdot W$ to the set of all $\mbP$-a.s. $L^0_2$-predictable processes. Before doing so however, we introduce the predictable $\sigma$-algebra and the notion of a predictable process. This is necessary since we will want our stochastic integration to preserve the martingale property and the notion of a predictable process ensures it does not look into the future. Note that for simple processes this was not a problem, since almost by definition their integrals were martingales.
\begin{definition}\label{def:PredictableSigAlgebra}[Predictable $\sigma$-algebra]
	Let $(\mcF_t)_{t\in [0,T]}$ be a given filtration. Then we define, $\msP_T$, to be the $\sigma$-algebra of subsets of $[0,T]\times \Omega$ given by,
	\begin{equation*}
		\msP_T := \sigma \left(\{ (s,t]\times F\,:\, 0\leq s<t\leq T,\, F\in \mcF_s\}\cup \{ \{0\}\times F\,:\, F\in \mcF_0\}\right).
	\end{equation*}
	Given a Hilbert space, $\mcX$, we say that a process $\varphi :[0,T]\times \Omega\rightarrow \mcX$ is predictable if it is $\msP_T$-measurable.
\end{definition}
Let us define the measure space $(\Omega_T,\mcP_T,\mbP_T)$ where $\Omega_T =[0,T]\times \Omega$, $\mcP_T$ is as above and $\mbP_T = \lambda_T \otimes \mbP$ with $\lambda_T$ the Lebesgue measure on $[0,T]$. Note that the norm $\|\,\cdot\,\|_{\msH^2_T}$ defined above corresponds to the Bochner integral,
\begin{equation*}
	\|\varphi\|^2_{\msH^2_T} = \int_{\Omega_T} \|\varphi_t(\omega)\|^2_{L^0_2} \dd \mbP_T(t,\omega).
\end{equation*}
Recall that since $L^0_2$ is a separable Banach space, this Bochner integral is well defined, see Section \ref{sec:LebBochInt}.\\ \par
With $W$ fixed, let us define the space of square integrable stochastic integrands - the fact that they are suitable stochastic integrands will be proved in Theorem \ref{th:StochInt}.
\begin{definition}\label{def:StochIntegrands}[Stochastic Integrands]
	We say that $[0,T] \times \Omega \ni (t,\omega ) \mapsto \varphi_t(\omega)\in L^0_2(U_0,H)$ is stochastically integrable with respect to $W$ if it is $\msP_T$ measurable with respect to the natural filtration of $W$ and is such that $\|\varphi\|_{\msH^2_T}<\infty$. We denote this space by $\msH^2_T(W)$ and observe that it forms a Hilbert space with inner product $\langle \varphi,\tilde{\varphi}\rangle_{\msH^2_T} =  \int_0^T \langle \varphi_t,\tilde{\varphi}_t\rangle_{L^0_2}\,\dd t$
\end{definition}
The following proposition allows us to approximate predictable elements of $\msH^2_T(W)$ by simple processes in $\msH^2_T(W) \cap \mcE_T$.
\begin{proposition} \label{prop:SimplePredictApprox} 
	The following statements both hold:
	\begin{enumerate}[label=\roman*)]
		\item If a mapping $\varphi:[0,T]\times \Omega\rightarrow L(U;H)$ is $L(U;H)$-predictable, then it is also $L^0_2$-predictable. \label{it:LPredictIsL_2^0Predict}
		\item If $\varphi \in \msH^2_T(W)$ is $L^0_2$-predictable then there exists a sequence of simple processes $(\varphi^n)_{n\geq 1}$ such that $\displaystyle \lim_{n\rightarrow \infty}\|\varphi-\varphi^n\|_{\msH^2_T} =0$. \label{it:SimpleApprox}
	\end{enumerate}
\end{proposition}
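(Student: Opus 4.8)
For part \ref{it:LPredictIsL_2^0Predict}, the plan is to exploit the continuous embedding $L(U;H)\hookrightarrow L^0_2$ already noted above. First I would record that this inclusion is bounded: for $\varphi\in L(U;H)$ and $(e_k)_{k\geq 1}$ an orthonormal basis of $U$, one has $\|\varphi\|^2_{L^0_2}=\sum_{k\geq 1}\|\varphi Q^{1/2}e_k\|_H^2\leq \|\varphi\|^2_{L(U;H)}\sum_{k\geq 1}\|Q^{1/2}e_k\|_U^2=\|\varphi\|^2_{L(U;H)}\Tr Q$. Since a bounded linear map between Banach spaces is Borel measurable, the inclusion $\iota:L(U;H)\to L^0_2$ is $\mcB(L(U;H))/\mcB(L^0_2)$-measurable; composing with a $\msP_T$-measurable $\varphi$ shows that $\iota\circ\varphi$ is $\msP_T$-measurable, which is exactly $L^0_2$-predictability.

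For part \ref{it:SimpleApprox}, I would begin from the observation that, by Definition \ref{def:StochIntegrands}, $\msH^2_T(W)=L^2(\Omega_T,\msP_T,\mbP_T;L^0_2)$ is the Bochner space of square-integrable $\msP_T$-measurable maps into the separable Hilbert space $L^0_2$. As the target is a separable Hilbert space, the $\msP_T$-measurable step functions $\sum_{i=1}^m \mathds{1}_{A_i}x_i$ with $A_i\in\msP_T$ and $x_i\in L^0_2$ are dense in this space, so it suffices to approximate a single term $\mathds{1}_A x$. I would then reduce the predictable set $A$ to a generating rectangle: letting $\mcR$ be the algebra generated by the sets $(s,t]\times F$ ($F\in\mcF_s$) and $\{0\}\times F_0$ ($F_0\in\mcF_0$) from Definition \ref{def:PredictableSigAlgebra}, a Dynkin/monotone-class argument shows that $\{A\in\msP_T:\inf_{R\in\mcR}\mbP_T(A\triangle R)=0\}$ is a $\sigma$-algebra containing $\mcR$, hence equals $\msP_T$. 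Because $\|\mathds{1}_A x-\mathds{1}_R x\|_{\msH^2_T}=\|x\|_{L^0_2}\,\mbP_T(A\triangle R)^{1/2}$, this replaces $\mathds{1}_A x$ by a finite combination of terms $\mathds{1}_{(s,t]}(r)\mathds{1}_F(\omega)\,x$ (the $\{0\}$-type term being handled identically), each of which is $\mcF_s$-measurable and constant in $r$ on $(s,t]$.

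Finally I would upgrade the constant $L^0_2$-coefficients to bounded operators. By Remark \ref{rem:HilbSchmidtBasis} — equivalently, via the isometry $T\mapsto TQ^{1/2}$ identifying $L^0_2$ with $L_2(U;H)$ together with the density of finite-rank operators whose range vectors lie in $U_0=\mathrm{Im}\,Q^{1/2}$ — the space $L(U;H)$ is dense in $L^0_2$, so each $x$ may be replaced by some $\tilde x\in L(U;H)$ with $\|x-\tilde x\|_{L^0_2}$ arbitrarily small. Refining the finitely many interval endpoints into one common deterministic partition $0=t_0<\cdots<t_{n-1}=T$, the resulting process has precisely the form demanded in Definition \ref{def:SimpleProcesses}: on each $(t_m,t_{m+1}]$ its coefficient is an $\mcF_{t_m}$-measurable, $L(U;H)$-valued random variable taking only finitely many values (products of indicators $\mathds{1}_F$ with fixed operators $\tilde x$). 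Assembling the three approximations through the triangle inequality then yields a sequence $(\varphi^n)_{n\geq 1}\subset\mcE_T(U;H)$ with $\|\varphi-\varphi^n\|_{\msH^2_T}\to 0$.

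I expect the delicate point to be the middle step: approximating an arbitrary predictable set by rectangles while keeping the approximants adapted in the sense required by Definition \ref{def:SimpleProcesses}, namely that the coefficient on $(t_m,t_{m+1}]$ be $\mcF_{t_m}$-measurable rather than merely $\msP_T$-measurable. The Dynkin-system argument secures the measure approximation, but one must verify that the generating rectangles — and hence the step-function coefficients built from them — respect this left-endpoint adaptedness, so that the constructed process genuinely lies in $\mcE_T(U;H)$ and not just in a larger class of $\msP_T$-measurable step processes.
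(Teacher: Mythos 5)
Your proposal is correct and takes essentially the same route as the paper's own proof: part i) via continuity of the embedding $L(U;H)\hookrightarrow L^0_2$ (predictability being preserved under continuous maps), and part ii) via density of predictable step functions, a Dynkin/$\pi$-system approximation of predictable sets by adapted rectangles, and density of $L(U;H)$ in $L^0_2$. The only distinction is one of detail: where you carry out the rectangle approximation and the left-endpoint adaptedness check explicitly (the ``delicate point'' you flag), the paper appeals to its Proposition \ref{prop:SimpleApprox} together with dominated convergence and defers exactly that step to the proof of \cite[Prop. 4.22]{daprato_zabczyk_14}, so your write-up fills in the outsourced step rather than deviating from the argument.
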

\begin{proof}
	To prove \ref{it:LPredictIsL_2^0Predict}, first recall from Remark \ref{rem:HilbSchmidtBasis} that given bases $(e_k)_{k\geq 1},\,(f_k)_{k\geq 1}$ of $U$ and $H$ respectively, the family $(e_k\otimes f_k)_{k\geq 1}$ is a basis of $L_2(U;H)$. So it follows that $L(U;H)$ is a dense, linear subspace of $L_2(U;H)$. Furthermore, since $Q^{1/2}(U) = U_0 \subset U$ has continuous image in $U$, it follows that $L(U;H)\subset L_2(U_0;H)= L^0_2$ with continuous embedding. By an adaptation of Theorem \ref{th:OpExpectationCommute} it follows that any $L(U;H)$ valued process is also an $L^0_2$ valued process, with continuous image. Since predictability is preserved by continuous transformations the first claim follows.\\ \par
	As in the proof of \ref{it:LPredictIsL_2^0Predict} we use the fact that $L(U;H)$ is densely embedded in $L^0_2$ and Proposition \ref{prop:SimpleApprox} that for $\mbP_T$-a.e $(t,\omega)\in \Omega_T$ there exists a sequence of simple $L(U;H)$ valued random variables, $(\varphi_n)_{n\geq 1}\subset \mcE_T$ such that
	\begin{equation*}
		\|\varphi_t(\omega) - \varphi_{n;t}(\omega)\|_{L^2_0} \rightarrow 0, \quad \mbP_T\,{\text{-a.s.}}.
	\end{equation*}
It follows from the assumptions, that therefore $\|\varphi-\varphi_n\|_{\msH^2_T}\rightarrow 0$. Note, however, that we have not yet treated predictability. So to conclude it suffices to show that any predictable set $A \in \msP_T$ can be approximated arbitrarily by predictable events in $\mbP_T$. However, this follows from Dynkin's lemma, since the predictable sets form a $\pi$-system which generates $\mcP_T$. See the last paragraph in the proof of \cite[Prop. 4.22]{daprato_zabczyk_14} for details.
\end{proof}
Therefore, for fixed $(W_{t})_{t\in [0,T]}$, we have defined an isometry, $\msH^2_T(W) \ni \varphi \mapsto (\varphi \bigcdot W) \in \mcM^2_T(U)$, on a linear, dense subspace. This allows us to extend the map, first to all of $\msH^2_T(W)$ and then to all $L^0_2$-predictable processes, $\varphi$ such that
\begin{equation}\label{eq:FullIntegrandCondition}
	\mbP\left[ \int_0^T \|\varphi_s\|^2_{L^0_2}\dd s <\infty\right] =1.
\end{equation}
We denote this, the full space of integrands, by $\msH_{T}(W)$.
\begin{theorem}\label{th:StochInt}
	Given $\varphi \in \msH^2_T(W)$, the stochastic integral,
	\begin{equation}\label{eq:QWienerStockInt}
		[0,T]\ni t\mapsto \int_0^t \varphi_s\dd W_s =: \varphi \bigcdot W_t,
	\end{equation}
	defines a continuous, square integrable, $H$-valued martingale. Furthermore, if instead $\varphi \in \msH_T(W)$ then \eqref{eq:QWienerStockInt} defines a continuous, local martingale.
\end{theorem}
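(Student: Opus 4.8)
The plan is to construct the integral in two stages: first extend the isometry of Proposition~\ref{prop:SimpleStochInt} from simple processes to all of $\msH^2_T(W)$ by density and completeness, and then remove the requirement of square-integrability in expectation by a localisation argument.

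For the first statement, given $\varphi \in \msH^2_T(W)$ I would use Proposition~\ref{prop:SimplePredictApprox} to select simple processes $(\varphi^n)_{n\geq 1}\subset \mcE_T$ with $\|\varphi-\varphi^n\|_{\msH^2_T}\to 0$. Since Proposition~\ref{prop:SimpleStochInt} gives $\|\psi \bigcdot W\|_{\mcM^2_T} = \|\psi\|_{\msH^2_T}$ for every simple $\psi$, linearity yields
\[
	\|\varphi^n \bigcdot W - \varphi^m \bigcdot W\|_{\mcM^2_T} = \|\varphi^n-\varphi^m\|_{\msH^2_T}\longrightarrow 0
\]
as $n,m\to\infty$, so $(\varphi^n\bigcdot W)_{n\geq 1}$ is Cauchy in $\mcM^2_T(H)$. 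As $\mcM^2_T(H)$ is a Banach space, the limit exists there and I define $\varphi\bigcdot W$ to be this limit; by definition of the space it is a continuous, square integrable, $H$-valued martingale. A standard interleaving argument shows the limit does not depend on the approximating sequence, so the extension is well defined and is again an isometry $\msH^2_T(W)\to\mcM^2_T(H)$.

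For the second statement I would pass to $\msH_T(W)$ by localisation. Given $\varphi$ satisfying only \eqref{eq:FullIntegrandCondition}, define
\[
	\tau_n := \inf\left\{ t\in[0,T]\,:\, \int_0^t\|\varphi_s\|^2_{L^0_2}\,\dd s \geq n \right\}\wedge T,
\]
which is a stopping time because $t\mapsto \int_0^t\|\varphi_s\|^2_{L^0_2}\,\dd s$ is continuous and adapted. The truncated process $\mathds{1}_{[0,\tau_n]}\varphi$ is again predictable and satisfies $\mbE\big[\int_0^T \|\mathds{1}_{\{s\leq\tau_n\}}\varphi_s\|^2_{L^0_2}\,\dd s\big]\leq n$, so it lies in $\msH^2_T(W)$ and the first part produces the continuous square integrable martingale $(\mathds{1}_{[0,\tau_n]}\varphi)\bigcdot W$. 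The structural fact I need next is that stopping commutes with the integral: for $m\geq n$ one has $\big((\mathds{1}_{[0,\tau_m]}\varphi)\bigcdot W\big)^{\tau_n} = (\mathds{1}_{[0,\tau_n]}\varphi)\bigcdot W$. Granting this, the processes agree on $[0,\tau_n]$, so $\varphi\bigcdot W_t := (\mathds{1}_{[0,\tau_n]}\varphi)\bigcdot W_t$ for $t\leq\tau_n$ is unambiguous. Finally, \eqref{eq:FullIntegrandCondition} forces $\int_0^T\|\varphi_s\|^2_{L^0_2}\,\dd s<\infty$ $\mbP$-a.s., whence $\tau_n=T$ for all large $n$, $\mbP$-a.s.; thus $(\tau_n)_{n\geq 1}$ is a localising sequence in the sense of Definition~\ref{def:LocalMart} and $\varphi\bigcdot W$ is a continuous local martingale.

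The main obstacle is the commutation identity $(\psi\bigcdot W)^\tau=(\mathds{1}_{[0,\tau]}\psi)\bigcdot W$ underlying the localisation. I would establish it first for simple $\psi$, where the interplay between the random stopping time $\tau$ and the deterministic grid $t_0<\cdots<t_{n-1}$ must be handled carefully---most cleanly by treating stopping times taking finitely many values among the grid points and then approximating a general $\tau$ from above by such times---and only afterwards extend it to $\psi\in\msH^2_T(W)$ using the isometry together with the continuity of $M\mapsto M^\tau$ on $\mcM^2_T(H)$. Once this consistency is in place, well-definedness of the localised integral and the local martingale property follow as above.
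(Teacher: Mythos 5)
Your proposal is correct and follows essentially the same route as the paper: extension to $\msH^2_T(W)$ by the isometry of Proposition \ref{prop:SimpleStochInt}, density from Proposition \ref{prop:SimplePredictApprox}, and completeness of $\mcM^2_T(H)$, followed by localisation with the stopping times $\tau_n$ and the stopping identity $(\psi\bigcdot W)^\tau=(\mathds{1}_{[0,\tau]}\psi)\bigcdot W$, which is exactly the paper's \eqref{eq:StoppedIntegral}. The only difference is presentational: the paper delegates the proof of that identity to \cite[Lem. 4.24]{daprato_zabczyk_14}, whereas you sketch its proof (simple stopping times on the grid, then approximation) --- which is the same argument the paper describes.
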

\begin{proof}
	Combining Proposition \ref{prop:SimplePredictApprox} and the isometry property, \eqref{eq:SimpleItoIsometry}, for the It\^o integral defined for simple processes, allows us to extend the integral to all of $\msH^2_T(W)$ by density. Since the martingale property and continuity of paths are preserved under convergence in $L^2(\Omega)$, it follows that the integral takes values in $\mcM^2_T(H)$.\\ \par 
	To extend the integral to all of $\msH_T(W)$, we first claim that for $\tau$ an $(\mcF_t)_{t\in [0,T]}$ stopping time and $\varphi \in \msH^2_T(W)$, it holds that,
	\begin{equation}\label{eq:StoppedIntegral}
		\int_0^T \mathds{1}_{[0,\tau]}(s)\varphi_s \dd W_s = \varphi \bigcdot W_{t\wedge \tau}, \quad \text{ for all }t\in [0,T], \,\, \mbP\text{-a.s.} 
	\end{equation}
	In other words, stopping the integrand at $\tau$ amounts to stopping the integral. Identity \eqref{eq:StoppedIntegral} can be shown by approximating $\tau$ by simple stopping times (stopping times taking only finitely many values) and $\varphi$ by simple processes and then passing to the limits. For details see \cite[Lem. 4.24]{daprato_zabczyk_14}. So then for $\varphi \in \msH_T(W)$, i.e such that \eqref{eq:FullIntegrandCondition} holds, let us define,
	\begin{equation}\label{eq:LocalisingSequence}
		\tau_n := \inf \left\{ t \in [0,T]\,:\, \int_0^t \|\varphi_s\|^2_{L^0_2}\dd s \geq n\right\}, \quad n\geq 0,
	\end{equation}
	with the convention that $\tau_n = T$ if the set is empty. Therefore, by \eqref{eq:StoppedIntegral}, the stochastic integrals $\mathds{1}_{[0,\tau_n]}\varphi \bigcdot W = \varphi \bigcdot W_{\,\cdot\, \wedge \tau_n}$ are well defined. Furthermore, due to \eqref{eq:FullIntegrandCondition}, the sequence $(\tau_n)_{n\geq 0}$ is $\mbP$-a.s. increasing so for any $t\in [0,T]$ and $\mbP$-a.a. $\omega \in \Omega$ there exists an $n(\omega)\geq 0$ such that $t\leq \tau_n(\omega)$. Therefore, the integral $t\mapsto \varphi  \bigcdot W_t = \mathds{1}_{[0,\tau_n]}(t) \varphi \bigcdot W_t$, with $t\leq \tau_n$, is $\mbP$-a.s. well defined for all $\varphi\in \msH_T(W)$ and the definition is independent of the chosen sequence $(\tau_n)_{n\geq 1}$ satisfying \eqref{eq:LocalisingSequence}. Since $\msH_T(W)$ contains elements which are not stochastically square integrable on $[0,T]$, the resulting stochastic integral is only a local martingale, with the localisation given by a sequence of the form \eqref{eq:LocalisingSequence}.
\end{proof}
%
%
\subsection{Stochastic Integration Against Generalised Wiener Processes}\label{subsec:GenStochInt}
We recall that given a, non-negative, symmetric, bounded, linear operator $Q \in L(U)$, there exists a generalised Wiener process $(W_t)_{t\in [0,T]}$ on $U$, with covariance $Q$. Furthermore, there exists a pair of Hilbert spaces $U_0:= Q^{1/2}(U),\, U_1$ such that $U_0 \hookrightarrow U_1$ and the embedding $\iota U_0\rightarrow U_1$ is Hilbert--Schmidt. As a result, the generalised Wiener process, $[0,T]\times U\ni(t,h)\mapsto W_{h;t}$, can be seen as  taking values in $U_1$, although the particular choice of $U_1$ is not important.\\ \par 
As such, we may almost immediately define the stochastic integral with respect to a generalised Wiener process for all predictable, processes, $(\varphi_t)_{t\in [0,T]} \in L_2(Q^{1/2}(U_1);H)$ such that,
\begin{equation*}
	\mbP\left[\int_0^T \|\varphi_s\|^2_{L_2(Q^{1/2}(U_1);H)}\dd s <\infty \right] =1.
\end{equation*}
It suffices to replace $U_0$ with $Q^{1/2}(U_1)$ in the previous section. However, this is unsatisfactory, since we know that the definition of the generalised Wiener process is independent of the choice $U_1$ and so it would be natural for the integral to retain this property. It turns out that we can retain the same space of integrands as in Subsection \ref{subsec:QStochInt}. Note that $U_0:= Q^{1/2}(U)$ is still well defined in this case, only now it does not hold that the embedding $U_0 \hookrightarrow U$ is Hilbert--Schmidt.
\begin{proposition}\label{prop:Q1Isom}
	Let $(e_k)_{k\geq 1}$ be an orthonormal basis of $U$, $(\beta^k)_{k\geq 1}$ be a family of i.i.d, standard, real Brownian motions, $Q \in L(U)$ be non-negative and symmetric and $\iota:Q^{1/2}(U)=:U_0\rightarrow U_1$ define a Hilbert--Schmidt embedding as above. Then, defining the non-negative, symmetric and trace class operator, $Q_1:= \iota Q \iota^\ast \in L_1(U_1)$, it follows that,
	\begin{equation}\label{eq:EmbeddedGenWiener}
		[0,T]\ni t\mapsto W_t = \sum_{k=1}^\infty \beta^k_t \iota Q^{1/2}e_k,
	\end{equation}
	defines $Q_1$-Wiener process in $\mcM^2_T(U_1)$. Moreover, one has that $Q_1^{1/2}(U_1) = \iota(U_0) \subset U_1$ and for all $h_0 \in U_0$,
	\begin{equation*}
		\|h_0\|_{U_0} = \|Q_1^{-1/2}\iota h_0\|_{U_1} = \|\iota h_0\|_{Q^{1/2}_1U_1},
	\end{equation*}
	where $Q^{-1/2}_1$ is the pseudo-inverse of $Q_1^{1/2}$, see Definition \ref{def:PseudoInverse}. In other words, $\iota :U_0\rightarrow Q^{1/2}_1U_1$ is an isometry.
\end{proposition}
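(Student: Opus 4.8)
The plan is to separate the proposition into its two assertions---that the series \eqref{eq:EmbeddedGenWiener} converges to a genuine $Q_1$-Wiener process in $\mcM^2_T(U_1)$, and that $\iota:U_0\to Q_1^{1/2}(U_1)$ is an isometry---and to base both on a single structural observation. Setting $O:=\iota Q^{1/2}:U\to U_1$, one has $O^\ast = Q^{1/2}\iota^\ast$ (since $Q^{1/2}$ is symmetric on $U$), whence $Q_1=\iota Q\iota^\ast = (\iota Q^{1/2})(\iota Q^{1/2})^\ast = OO^\ast$. As $\iota$ is Hilbert--Schmidt and $Q^{1/2}:U\to U_0$ is bounded, $O$ is Hilbert--Schmidt, so $Q_1=OO^\ast$ is trace class with $\Tr Q_1 = \|O\|_{L_2}^2 = \sum_k\|\iota Q^{1/2}e_k\|_{U_1}^2$. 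This identification of $O$ is what lets Corollary \ref{cor:SquareRootOfSquare} do the work in the second part.

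For the first assertion I would begin with convergence of the partial sums $W_t^N:=\sum_{k=1}^N\beta^k_t\,\iota Q^{1/2}e_k$ in $\mcM^2_T(U_1)$. By independence of the $\beta^k$ the cross terms vanish, so $\mbE[\|W_T^N-W_T^M\|_{U_1}^2] = T\sum_{M<k\leq N}\|\iota Q^{1/2}e_k\|_{U_1}^2$ is a tail of the convergent series $\Tr Q_1$; hence $(W^N)$ is Cauchy in $\mcM^2_T(U_1)$ exactly as in Lemma \ref{lem:QWienerMartingale}. Applying Doob's inequality (Theorem \ref{th:Doob}) to the martingale differences $W^N-W^M$ upgrades this to convergence in $L^2(\Omega;C([0,T];U_1))$, so the limit $W$ is $\mbP$-a.s. continuous with $W_0=0$. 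To identify the increment law I would compute, for $h,g\in U_1$ and $0\leq s<t$,
\begin{equation*}
\mbE\big[\langle W_t-W_s,h\rangle_{U_1}\langle W_t-W_s,g\rangle_{U_1}\big] = (t-s)\sum_k\langle O^\ast h,e_k\rangle_U\langle O^\ast g,e_k\rangle_U = (t-s)\langle Q_1 h,g\rangle_{U_1},
\end{equation*}
using $\mbE[(\beta^k_t-\beta^k_s)(\beta^l_t-\beta^l_s)]=(t-s)\delta_{kl}$ and Parseval. Together with Gaussianity (inherited from Proposition \ref{prop:HSExtension} applied to the Gaussian increments of the $\beta^k$) this gives $W_t-W_s\sim\mcN(0,(t-s)Q_1)$, while independence of $W_t-W_s$ from $\mcF_s$ follows from the corresponding property of the Brownian increments. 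This verifies all conditions of Definition \ref{def:QWienerProcess}.

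For the second assertion I would invoke Corollary \ref{cor:SquareRootOfSquare} with the same $O=\iota Q^{1/2}$ and $Q_1=OO^\ast$, yielding at once $\text{Im}(Q_1^{1/2})=\text{Im}(O)=\iota Q^{1/2}(U)=\iota(U_0)$ and $\|Q_1^{-1/2}h\|_{U_1}=\|O^{-1}h\|_U$ for every $h\in\iota(U_0)$. It then remains to identify $O^{-1}(\iota h_0)$ for $h_0\in U_0$. Here I would use that $\iota$ is injective (being an embedding), so the constraint $\iota Q^{1/2}x=\iota h_0$ is equivalent to $Q^{1/2}x=h_0$; the minimal-norm solution in $U$ therefore coincides with the pseudo-inverse $Q^{-1/2}h_0$. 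Hence $\|Q_1^{-1/2}\iota h_0\|_{U_1}=\|Q^{-1/2}h_0\|_U=\|h_0\|_{U_0}$, the last equality being the definition of the reproducing-kernel norm (Definition \ref{def:RepKernelSpace}), while $\|Q_1^{-1/2}\iota h_0\|_{U_1}=\|\iota h_0\|_{Q_1^{1/2}U_1}$ by definition of the inner product on $Q_1^{1/2}(U_1)$.

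I expect the main obstacle to be the pseudo-inverse bookkeeping in this last step: since neither $Q^{1/2}$ nor $Q_1^{1/2}$ need be injective, $Q^{-1/2}$ and $Q_1^{-1/2}$ are genuine pseudo-inverses, and transferring the minimization problem from $U_1$ back to $U$ must be justified carefully through injectivity of $\iota$ together with the fact (Definition \ref{def:PseudoInverse}) that the pseudo-inverse selects the minimal-norm preimage in $\text{Ker}(\cdot)^\perp$. The remaining computations are routine adaptations of those already performed for Lemma \ref{lem:QWienerMartingale} and Proposition \ref{prop:HSExtension}.
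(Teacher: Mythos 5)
Your proof is correct and follows essentially the same route as the paper: the paper likewise reduces the convergence and Gaussianity claims for \eqref{eq:EmbeddedGenWiener} to the arguments of Proposition \ref{prop:HSExtension} (which rest on the trace computation $\Tr Q_1 = \sum_{k}\|\iota Q^{1/2}e_k\|^2_{U_1}<\infty$) and obtains the isometry from Corollary \ref{cor:SquareRootOfSquare} applied to $O=\iota Q^{1/2}$ with $Q_1=OO^\ast$. The only difference is that you spell out steps the paper leaves implicit --- the Doob upgrade to pathwise continuity, the increment covariance computation, and the identification $O^{-1}(\iota h_0)=Q^{-1/2}h_0$ via injectivity of $\iota$ --- all of which are correct.
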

\begin{proof}
	By assumption, $\iota Q^{1/2}$, defines a trace class operator from $U$ to $U_1$ and so it follows that the right hand side of \eqref{eq:EmbeddedGenWiener} defines a $U_1$ valued Wiener process. To see that $W$ defines a generalised Wiener process on $U$ in the sense of Definition \ref{def:GenWienerProcess} we may apply the same arguments as in the proof of Proposition \ref{prop:HSExtension}. The independence of the definition on $U_1,\,\iota$ also follows from the same arguments and we recall, using, Corollary \ref{cor:SquareRootOfSquare}, that $\iota:U_0 \rightarrow Q^{1/2}_1U_1$ is an isometry.
\end{proof}
\begin{theorem}\label{th:GenStochInt}
	Let $(W_t)_{t\in [0,T]}$ be a generalised Wiener process on $U$, $[0,T] \ni t \mapsto \varphi_t \in L_2(U_0;H) =:L^0_2$ be a stochastic process, such that,
	\begin{equation*}
		\mbP \left[ \int_0^T \|\varphi_t\|^2_{L^0_2}\dd t <\infty\right]=1.
	\end{equation*}
	Then, letting $U_1$, $\iota:U_0\rightarrow U_1$ be as above, the stochastic integral defined by setting
	\begin{equation}\label{eq:GenItoIntegral}
		[0,T]\ni t\mapsto \varphi \bigcdot W_t := \int_0^t (\varphi_s \circ \iota^{-1})\dd W_s,
	\end{equation}
	is a continuous, $H$-valued, square integrable, local martingale.
\end{theorem}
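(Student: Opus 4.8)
The plan is to reduce the whole statement to the integration theory against genuine $Q$-Wiener processes developed in Subsection \ref{subsec:QStochInt}, exploiting the isometry recorded in Proposition \ref{prop:Q1Isom}. First I would fix the Hilbert--Schmidt embedding $\iota:U_0\to U_1$ and set $Q_1 := \iota Q\iota^\ast \in L_1(U_1)$. By Proposition \ref{prop:Q1Isom} the process $(W_t)_{t\in[0,T]}$ of \eqref{eq:EmbeddedGenWiener} is then an honest $Q_1$-Wiener process in $\mcM^2_T(U_1)$, and $\iota : U_0 \to Q^{1/2}_1(U_1)$ is an isometric isomorphism identifying the reproducing kernel of $W$ on $U$ with the reproducing kernel $Q^{1/2}_1(U_1)$ of $W$ regarded on $U_1$. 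This is exactly the setting in which Theorem \ref{th:StochInt} applies, with $U_1$ playing the role of the ambient Hilbert space and $Q^{1/2}_1(U_1)$ the role of ``$U_0$'' there.

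Next I would transfer the integrand. Define $\psi_s := \varphi_s \circ \iota^{-1}$, which is a well-defined element of $L_2\bigl(Q^{1/2}_1(U_1);H\bigr)$ because $\iota^{-1}$ maps $Q^{1/2}_1(U_1)=\iota(U_0)$ isometrically onto $U_0$. The crux is the pointwise norm identity: choosing an orthonormal basis $(g_k)_{k\geq 1}$ of $Q^{1/2}_1(U_1)$, the isometry sends $(\iota^{-1}g_k)_{k\geq 1}$ to an orthonormal basis of $U_0$, so that
\begin{equation*}
	\|\psi_s\|^2_{L_2(Q^{1/2}_1(U_1);H)} = \sum_{k\geq 1}\|\varphi_s\iota^{-1}g_k\|^2_H = \sum_{k\geq 1}\|\varphi_s(\iota^{-1}g_k)\|^2_H = \|\varphi_s\|^2_{L_2(U_0;H)} = \|\varphi_s\|^2_{L^0_2}.
\end{equation*}
Since precomposition with the fixed bounded operator $\iota^{-1}$ is a continuous linear map, predictability of $\varphi$ passes to $\psi$. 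Combined with the hypothesis $\mbP[\int_0^T\|\varphi_s\|^2_{L^0_2}\dd s<\infty]=1$, the displayed identity shows $\psi$ is a predictable $L_2(Q^{1/2}_1(U_1);H)$-valued process lying in the full integrand space $\msH_T(W)$ for the $Q_1$-Wiener process $W$ on $U_1$.

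Finally, I would apply Theorem \ref{th:StochInt} to $\psi$ against this $Q_1$-Wiener process: it yields that $\int_0^t\psi_s\dd W_s = \int_0^t(\varphi_s\circ\iota^{-1})\dd W_s = \varphi\bigcdot W_t$ is a continuous, $H$-valued, square integrable local martingale, as claimed. I would close by remarking that independence of the construction from the auxiliary data $(U_1,\iota)$ is immediate from the norm identity above, since the localising sequence \eqref{eq:LocalisingSequence} depends only on $\int_0^t\|\varphi_s\|^2_{L^0_2}\dd s$, a quantity intrinsic to $\varphi$. The main obstacle is precisely the norm identity in the second paragraph: everything hinges on $\iota:U_0\to Q^{1/2}_1(U_1)$ being an isometric \emph{isomorphism} rather than a mere embedding, so that orthonormal bases correspond and Hilbert--Schmidt norms are preserved; this is exactly what Proposition \ref{prop:Q1Isom} guarantees, after which the argument is routine bookkeeping against the $Q$-Wiener theory.
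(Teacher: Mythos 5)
Your proposal is correct and follows essentially the same route as the paper's own proof: both invoke Proposition \ref{prop:Q1Isom} to identify $\iota:U_0\to Q_1^{1/2}(U_1)$ as an isometric isomorphism, deduce the equivalence $\varphi\in L_2(U_0;H)\iff \varphi\circ\iota^{-1}\in L_2(Q_1^{1/2}(U_1);H)$, and then apply the $Q$-Wiener integration theory of Subsection \ref{subsec:QStochInt} (Theorem \ref{th:StochInt}) to the transferred integrand. Your write-up merely makes explicit two steps the paper leaves implicit — the Hilbert--Schmidt norm identity via corresponding orthonormal bases and the preservation of predictability under precomposition with $\iota^{-1}$ — which is sound bookkeeping, not a different argument.
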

\begin{proof}
	By Proposition \ref{prop:Q1Isom}, for $Q_1 = \iota \iota^\ast\in N(U_1)$, we have $Q^{1/2}_1(U_1)= \iota (U_0)$ and by the polarisation identity, for any $\varphi_0,\psi_0 \in U_0$,
	\begin{equation*}
		\langle \iota \varphi_0,\iota\psi_0\rangle_{Q^{1/2}_1U_1} = \langle Q^{-1/2}_1\iota \varphi_0,Q^{-1/2}_1\iota \psi_0\rangle_{U_1}= \langle \varphi_0,\psi_0\rangle_{U_0}.
	\end{equation*}
	In particular, given $(e_k)_{k\geq 1}$ a basis of $U_0$, it follows that $(\iota e_k)_{k\geq 1}$ is a basis of $Q^{1/2}_1(U_1)$. Hence we have the equivalence,
	\begin{equation*}
		\varphi \in L_2(U_0;H) \quad \iff \quad \varphi \circ \iota^{-1} \in L_2(Q_1^{1/2}(U_1);H),
	\end{equation*}
	and so the stochastic integral in \eqref{eq:GenItoIntegral} is well defined as a continuous, $H$-valued, square integrable, local martingale, by the results of Subsection \ref{subsec:QStochInt}. The independence on the choice of $U_1$ and $\iota$ is a consequence of the same result for the generalised Wiener process.
\end{proof}
For another approach to the construction of stochastic integrals against generalised Wiener processes, we refer to \cite[Subsec. 4.2.1]{daprato_zabczyk_14}. For more details on this approach, see \cite{prevot_rockner_07}.
	
		\chapter{Variational Approach to PDE}\label{ch:PDE}
In this chapter we present an introduction to the variational approach to partial differential evolution equations. The method relies on viewing time dependent PDE in spirit as ordinary differential equations in infinite dimensional spaces. We use this is our starting point for viewing SPDE as infinite dimensional stochastic differential equations in the next chapter. A useful outcome of this approach to PDE and SPDE is a natural approach to numerical analysis of such problems through the finite element method. This approach is also sometimes known as the Galerkin method.
\section{Unbounded Operators Between Banach Spaces}
We begin by recalling the notion of unbounded operators on Banach spaces. We refer to \cite[Sec. 2.6]{brezis_11} for more details.
\begin{definition}[Unbounded Linear Operators]\label{def:UnboundedOp}
	Given Banach  spaces $E,\,F$, an unbounded, linear operator $A$ from $E$ to $F$ is a pair $(A,D(A))$, where $D(A)\subseteq E$ is a linear subspace and $A:D(A)\rightarrow F$ is a linear map. We write write $\msL(E;F)$ for the set of unbounded linear operators from $X$ to $Y$, leaving the dependence on defining a domain inside $X$ implicit in the notation.
\end{definition}
We say that $A$ is densely defined if $D(A)$ is a dense subset of $E$. If $A$ is densely defined from $E$ to $F$ and continuous on its domain then there exists a unique, continuous extension of $A$ defined on all of $E\rightarrow F$.\\ \par 
We define the graph of $A$, $G_A := \{ (x,y) \in E\oplus F\,:\, y = Ax\}$ and say that $A$ is a closed operator if $G_A \subset E\oplus F$ is a closed set. Explicitly, $A$ is closed if and only if for any sequence $\{(x_n,Ax_n)\}_{n\geq 0}\subset E \oplus F$ such that $x_n \rightarrow x$ and $Ax_n \rightarrow y$, we have $x \in D(A)$ and $y=Tx$.
\begin{example}\label{ex:CtsFuncDerivative}
	Let $C([0,1];\mbR)$ be the set of continuous, real valued functions on the unit interval, equipped with the norm $\|f\|_{\infty}:= \sup_{x \in [0,1]}|f(x)|$. Then, for $f \in C([0,1];\mbR)$, define $Af:= \frac{d}{dx}f $ and
	$$D\left(\frac{d}{dx}\right):= C^1([0,1];\mbR)\subset C([0,1];\mbR),$$
	the set of continuously differentiable maps $f:[0,1]\rightarrow \mbR$ equipped with the norm $\|f\|_{C^1} := \|f\|_{\infty}+ \|\frac{d}{dx}f\|_{\infty}$. It follows that $(\frac{d}{dx},C^1([0,1];\mbR))$ is an unbounded, linear operator from $C([0,1];\mbR)$ to itself in the sense of Definition \ref{def:UnboundedOp}.\\ \par 
	By Stone--Weierstrass, $\frac{d}{dx}$ is densely defined, however, it is also unbounded on its domain and so cannot be continuously extended to all of $C([0,1];\mbR)$. To see this, define the sequence $f_n (x):= \sin (nx)$, we have $\|f_n\|_{\infty} \equiv 1$ but $\|\frac{d}{dx}f_n \|_{\infty} = n \rightarrow \infty$. On the other hand, it is easy to see that $\frac{d}{dx}$ is a closed operator, since if $\|f-f_n\|_{C^1}\rightarrow 0$, then by definition $\|\frac{d}{dx}(f-f_n)\|_{\infty} \rightarrow 0$.
\end{example}
\begin{example}[Dirichlet Laplacian]\label{ex:DirLaplace}
	Let $\Gamma\subset \mbR^d$ be a smooth, bounded domain and define the second order differential operator, for mappings $f :\Gamma\rightarrow \mbR$,
	\begin{equation*}
		\Delta f := \sum_{i=1}^d \partial^{2}_{ii} f.
	\end{equation*}
	We consider $\Delta$ as an unbounded operator on the set,
	\begin{equation*}
		\mcH^1_0(\Gamma):= \left\{f :\Gamma \rightarrow \mbR\,:\,  \|f\|_{L^2(\Gamma)}+ \sup_{i=\{1,\ldots,d\}}\|\partial_i  f\|_{L^2(\Gamma)} <\infty,\, f|_{\partial \Gamma}=0\right\},
	\end{equation*}
	with domain of definition, $D(\Delta)$,
	$$C^2_c(\Gamma):=\left\{ f :\Gamma\rightarrow \mbR\,:\, \sup_{i,j=\{1,\,\ldots,d\}} \|\partial_{ij} f\|_{\infty}<\infty,\, \supp(f) \text{ is compact in }\Gamma \right\}.$$
	The lack of boundedness in Example \ref{ex:CtsFuncDerivative} arose because we chose too small a target space for the derivative map. In order that $\Delta$ remain bounded on its domain in this case, we define $\mcH^{-1}(\Gamma):= (\mcH(\Gamma))^\ast$, which can be explicitly represented as,
	\begin{equation}\label{eq:HMinusOneDef}
		\mcH^{-1}(\Gamma):= \left\{ h = f+ \sum_{i=1}^d \partial_i g,\, f,\,g \in L^2(\Gamma;\mbR) \right\}.
	\end{equation}
	Then we consider the unbounded operator, $\Delta:C^2_c(\Gamma)\subset \mcH^1_0(\Gamma)\rightarrow \mcH^{-1}(\Gamma)$. It follows from the density of $C^2_c(\Gamma)$ in $\mcH^1_0(\Gamma)$ and the fact that $\Delta$ is continuous as a map from $C^2_c(\Gamma)$ to $\mcH^{-1}(\Gamma)$ that there exists a unique extension of $\Delta:\mcH^1_0(\Gamma)\rightarrow \mcH^{-1}(\Gamma)$. 
\end{example}
Note that by setting $g=0$ in \eqref{eq:HMinusOneDef}, we see that $L^2(\Gamma)\subset \mcH^{-1}(\Gamma)$. Furthermore, it is clear that $\mcH^1_0(\Gamma)\subset L^2(\Gamma)$ and so we have the ordering,
\begin{equation*}
	\mcH^1_0(\Gamma)\subset L^2(\Gamma)\subset \mcH^{-1}(\Gamma)=(\mcH^1_0(\Gamma))^\ast.
\end{equation*}
This is an example of a Gelfand triple, and $\Delta:\mcH^{1}_0(\Gamma)\rightarrow 
\mcH^{-1}(\Gamma)$ defines a continuous, coercive operator. We explore this setting in more general abstraction in the next section.
\section{Linear and Coercive PDE}\label{sec:LinearPDE}
Let $V$ be a separable, reflexive, Banach space with $V\subset H$ a separable Hilbert space. It is readily checked that for Banach spaces $V,\,H$, one has,
\begin{equation*}
	V\subset H\quad \iff \quad H^\ast \subset V^\ast.
\end{equation*}
However, for $H$ a Hilbert space, since we also have $H \cong H^\ast$, it follows that
\begin{equation*}
	V \subset H \cong H^\ast \subset V^\ast.
\end{equation*}
The triple, $(V,H,V^\ast)$ is called a \textit{Gelfand triple}. In what follows we write $\|\,\cdot\,\|_{V}$ for the canonical norm on $V$ and $|\,\cdot\,|_{H}$ for the canonical norm on $H$. We use $\langle\,\cdot\,,\,\cdot\,\rangle$ to denote both the duality pairing between $V,\,V^\ast$ and the inner product on $H$ and interpret this notation accordingly wherever it occurs. Since the two expressions agree only when both arguments lie in $H$ this should not cause too much confusion. We recall the following important theorem, regarding weak compactness in normed vector spaces.
\begin{theorem}[Banach--Alaoglu]\label{th:BanachAlaoglu}
	Let $E$ be a normed vector space. Then the closed unit ball in $E^\ast$ is compact with respect to the weak$-\ast$ topology. Furthermore, $E$ is reflexive if and only if the closed unit ball in $E$ is weakly compact.
\end{theorem}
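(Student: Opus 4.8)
The plan is to prove the two assertions separately, relying on Tychonoff's theorem for the first and on Goldstine's theorem for the stated equivalence.

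For the weak-$\ast$ compactness of the unit ball $B_{E^\ast} := \{\varphi \in E^\ast : \|\varphi\|_{E^\ast}\leq 1\}$, I would realise $B_{E^\ast}$ as a closed subset of a compact product space. Concretely, to each $\varphi \in B_{E^\ast}$ associate the tuple $(\varphi(x))_{x\in E}$, which lies in the product $K := \prod_{x\in E}[-\|x\|_E,\|x\|_E]$ since $|\varphi(x)|\leq \|x\|_E$. By Tychonoff's theorem $K$ is compact in the product topology. The map $\varphi \mapsto (\varphi(x))_{x\in E}$ is a bijection of $B_{E^\ast}$ onto its image, and by definition of the weak-$\ast$ topology (the coarsest making each evaluation $\varphi\mapsto \varphi(x)$ continuous) it is a homeomorphism onto its image with the subspace topology. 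It then remains to check that the image is closed in $K$: an element $(a_x)_{x\in E}\in K$ arises from a linear functional precisely when $a_{x+y}=a_x+a_y$ and $a_{\lambda x}=\lambda a_x$ for all $x,y\in E$ and $\lambda\in\mbR$, and each of these conditions defines a closed set because the coordinate projections are continuous on $K$. A closed subset of a compact space is compact, so $B_{E^\ast}$ is weak-$\ast$ compact.

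For the forward direction of the equivalence, suppose $E$ is reflexive, so that the canonical embedding $J:E\to E^{\ast\ast}$ is a surjective isometry. Applying the first part to the dual space $E^\ast$ shows that the unit ball $B_{E^{\ast\ast}}$ of $(E^\ast)^\ast=E^{\ast\ast}$ is compact in the weak-$\ast$ topology $\sigma(E^{\ast\ast},E^\ast)$. Since $J$ carries the weak topology $\sigma(E,E^\ast)$ on $E$ onto the restriction of $\sigma(E^{\ast\ast},E^\ast)$ to $J(E)=E^{\ast\ast}$, and maps $B_E$ onto $B_{E^{\ast\ast}}$, the ball $B_E$ is weakly compact. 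For the reverse direction, suppose $B_E$ is weakly compact; here I would invoke Goldstine's theorem, which asserts that $J(B_E)$ is weak-$\ast$ dense in $B_{E^{\ast\ast}}$. Because $J$ intertwines $\sigma(E,E^\ast)$ with $\sigma(E^{\ast\ast},E^\ast)$ on $J(E)$, the weak compactness of $B_E$ forces $J(B_E)$ to be weak-$\ast$ compact, hence weak-$\ast$ closed, in $E^{\ast\ast}$. A set that is both dense and closed in $B_{E^{\ast\ast}}$ must equal $B_{E^{\ast\ast}}$, so $J(B_E)=B_{E^{\ast\ast}}$; scaling then yields surjectivity of $J$, i.e. reflexivity of $E$.

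The main obstacle is Goldstine's theorem, whose proof is the genuinely nontrivial ingredient: it rests on a separation (Hahn--Banach) argument showing that any $\xi\in B_{E^{\ast\ast}}$ lying outside the weak-$\ast$ closure of $J(B_E)$ could be strictly separated from that convex set by an element of $E^\ast$, contradicting $\|\xi\|\leq 1$. Beyond this, the only care needed is bookkeeping with the various weak and weak-$\ast$ topologies and verifying that $J$ is a homeomorphism for the relevant topologies; these checks are routine once the definitions are unwound. In the complex-scalar case one replaces the intervals $[-\|x\|_E,\|x\|_E]$ by closed disks of radius $\|x\|_E$, with no change to the argument.
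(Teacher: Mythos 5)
Your proof is correct. The paper itself gives no argument for this theorem---it simply cites Brezis (Thms.\ 3.16 and 3.17)---and your proposal reproduces exactly the standard proof found there: the Tychonoff product-space embedding of $B_{E^\ast}$ for weak-$\ast$ compactness, and for the reflexivity equivalence the canonical embedding $J:E\to E^{\ast\ast}$ together with Goldstine's theorem (which, as you note, is the one genuinely nontrivial ingredient, resting on Hahn--Banach separation).
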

\begin{proof}
	See \cite[Thm. 3.16 \& Thm. 3.17]{brezis_11}.
\end{proof}
\begin{remark}
	It follows from Theorem \ref{th:BanachAlaoglu}, that the closed unit ball of any Hilbert space is weakly compact.
\end{remark}
Let $A:D(A)\subset V \rightarrow V^\ast$ be an unbounded linear operator, $T>0$ and $B\in L^2([0,T];V^\ast)$ be a square integrable map. Then we consider the deterministic, linear PDE
\begin{equation}\label{eq:AbstractLinearPDE}
	\begin{cases}
		\partial_t u - A u = B_t, \\
		u\tzero =u_0.
	\end{cases}
\end{equation}
One might be concerned that \eqref{eq:AbstractLinearPDE} does not involve a specified spatial domain. However, this is intentional and we leave any specification of spatial domain and any boundary conditions to the definition of the spaces $V$ and $H$. It may be that in particular cases of interest more care must be taken, for example when considering domains with non-standard geometry or more involved boundary conditions.\\ \par
In order to give a proper definition of weak solutions to \eqref{eq:AbstractLinearPDE} on $[0,T]$ we need a preliminary definition of weak derivatives in time.
\begin{definition}[Weak Derivative in Time]\label{def:WeakTimeDerivative}
	Let $E$ be a real Banach space and $u \in L^1([0,T];E)$. Then we say that $v \in L^1([0,T];E)$, is the weak derivative in time of $u$ and write $v= \frac{d}{dt}u$, if for any $\varphi \in C^\infty_c((0,T);\mbR)$,
	\begin{equation*}
		\int_0^T \frac{d}{dt}\varphi_t u_t\dd t = -\int_0^T \varphi_t v_t\dd t\in E,
	\end{equation*}
	where both integrals are understood as Bochner integrals.
\end{definition}
We are now in a position to give a meaningful definition of weak solutions to \eqref{eq:AbstractLinearPDE}.
\begin{definition}[PDE Weak Solution]\label{def:PDEWeakSol}
	We say that $u \in L^2([0,T];V)$ is a weak solution to \eqref{eq:AbstractLinearPDE} if $\frac{d}{dt}u \in L^2([0,T];V^\ast)$ and for any $v \in D(A)\subset V$ and $t\in (0,T]$, one has the identity,
	\begin{equation}\label{eq:PDEWeakForm}
		\left\langle	\frac{\dd}{\dd t} u_t,v\right\rangle =\langle A u_t, v\rangle+  \langle B_t,v\rangle. 
	\end{equation}
	In other words, a weak solution is such that the identity, $\frac{d}{dt}u = Au_t+B_t$, holds in $V^\ast$.
\end{definition}
\begin{remark}
	Note that the duality pairings on the right hand side are all well defined since by assumption $A$ takes values in $V^\ast$ on $V$ and $B \in L^2([0,T];V^\ast)$. \end{remark}
\begin{remark}
	One could alternatively phrase \eqref{eq:PDEWeakForm} in integral form and avoid introducing Definition \ref{def:WeakTimeDerivative} at this stage. This is the perspective that we take in Chapter \ref{ch:SPDE}, where the inclusion of an It\^o integral term in the PDE makes the integral form more informative.
\end{remark}
Our starting point is to show that under the assumption of \textit{coercivity} on $A$, there exists a unique, weak solution to \eqref{eq:AbstractLinearPDE}.
\begin{assumption}[Coercivity I]\label{ass:CoercivityI}
	There exist $\lambda,\,\alpha>0$ such that for any $u \in V\subset H$,
	\begin{equation}\label{eq:Coercivity}\tag{H1}
		2 \langle Au,u\rangle + \alpha \|u\|_V^2 \leq \lambda |u|^2_H 
	\end{equation}
\end{assumption}
We will require the following lemma.
\begin{lemma}[Lions--Magenes]\label{lem:LionsMagenes}
	Let $(V,H,V^\ast)$ be a Gelfand triple as above and $u \in L^2([0,T];V)$ be such that $\frac{d}{dt}u \in L^2([0,T];V^\ast)$. Then $u \in C([0,T];H)$, the map $[0,T]\ni t\mapsto |u_t|_{H}\in \mbR$ is absolutely continuous and 
	\begin{equation*}
		\frac{\dd }{\dd t}|u_t|^2_H = 2 \left\langle \frac{\dd}{\dd t}u_t, u_t\right\rangle, \quad \text{for a.a. } t \in [0,T].
	\end{equation*}
\end{lemma}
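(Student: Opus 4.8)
The plan is to establish the differentiation formula first for time-mollified approximations, where the chain rule is classical, and then pass to the limit; the continuity statement $u\in C([0,T];H)$ is then extracted separately. The recurring subtlety is that $u_t$ lives in $V$ while $\frac{\dd}{\dd t}u_t$ lives in $V^\ast$, so the only meaningful object on the right-hand side is the $V^\ast$--$V$ duality pairing, which coincides with the $H$ inner product exactly when both arguments sit in $H$.

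First I would note that, since $V\hookrightarrow H\hookrightarrow V^\ast$, we have $u\in L^2([0,T];V^\ast)$ with $\frac{\dd}{\dd t}u\in L^2([0,T];V^\ast)$, so $u$ already admits a representative in $C([0,T];V^\ast)$. This continuity lets me extend $u$ to all of $\mbR$ by reflection across $t=0$ and $t=T$ (even reflection glues consistently precisely because $u$ is continuous into $V^\ast$), the extension lying in $L^2_{\mathrm{loc}}(\mbR;V)$ with weak time derivative in $L^2_{\mathrm{loc}}(\mbR;V^\ast)$. I then mollify in time, setting $u^\eps := \rho_\eps \ast u$ for a standard mollifier $\rho_\eps$. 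Then $t\mapsto u^\eps_t$ is smooth as a $V$-valued map, $u^\eps \to u$ in $L^2([0,T];V)$, and $\frac{\dd}{\dd t}u^\eps = \rho_\eps \ast \frac{\dd}{\dd t}u \to \frac{\dd}{\dd t}u$ in $L^2([0,T];V^\ast)$. Because $u^\eps$ is $C^1$ into $V\subset H$, the scalar map $t\mapsto |u^\eps_t|_H^2$ is $C^1$ and the classical chain rule gives, for $0\le s\le t\le T$,
\begin{equation*}
    |u^\eps_t|_H^2 - |u^\eps_s|_H^2 = 2\int_s^t \left\langle \frac{\dd}{\dd r}u^\eps_r, u^\eps_r \right\rangle \dd r,
\end{equation*}
the integrand being simultaneously the $H$ inner product and the $V^\ast$--$V$ pairing (the two agree here since $\frac{\dd}{\dd r}u^\eps_r\in V\subset H$).

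Next I would let $\eps \to 0$. For the right-hand side, Cauchy--Schwarz applied to the duality pairing together with the two convergences above shows $\langle \frac{\dd}{\dd r}u^\eps_r, u^\eps_r\rangle \to \langle \frac{\dd}{\dd r}u_r, u_r\rangle$ in $L^1([0,T])$; in particular the limiting integrand is integrable. For the left-hand side, $u^\eps \to u$ in $L^2([0,T];H)$ yields $|u^\eps_\cdot|_H^2 \to |u_\cdot|_H^2$ in $L^1([0,T])$, hence pointwise a.e.\ along a subsequence. Thus there is a full-measure set $N\subset[0,T]$ so that, fixing $s_0\in N$, the function $g(t):=|u_{s_0}|_H^2 + 2\int_{s_0}^t \langle \frac{\dd}{\dd r}u_r,u_r\rangle\dd r$ is absolutely continuous and equals $|u_t|_H^2$ for a.e.\ $t$. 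Differentiating $g$ gives the stated identity $\frac{\dd}{\dd t}|u_t|_H^2 = 2\langle \frac{\dd}{\dd t}u_t,u_t\rangle$ a.e.

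Finally, to obtain $u\in C([0,T];H)$ I would work with the canonical representative and combine two facts. For each $v$ in a countable dense subset of $V$, the scalar map $t\mapsto \langle u_t,v\rangle$ has weak derivative $\langle \frac{\dd}{\dd t}u_t,v\rangle\in L^1$, hence is absolutely continuous; since $V$ is dense in $H$ and $t\mapsto|u_t|_H$ is now continuous, hence locally bounded, this forces $t\mapsto u_t$ to be weakly continuous into $H$. In a Hilbert space, weak continuity combined with continuity of the norm upgrades to strong continuity, because $|u_{t_n}-u_t|_H^2 = |u_{t_n}|_H^2 - 2\langle u_{t_n},u_t\rangle + |u_t|_H^2 \to 0$. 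The main obstacle is exactly this final passage — verifying weak continuity into $H$ and then promoting it to strong continuity using that $H$ is Hilbert — together with the bookkeeping at the endpoints in the reflection and mollification; the chain rule for $u^\eps$ and the $L^1$ limit are routine by comparison.
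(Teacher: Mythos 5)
Your overall strategy (extend, mollify in time, apply the classical chain rule to $u^\eps$, pass to the limit) is exactly the paper's sketched proof, and the steps up to and including the a.e.\ identity $|u_t|_H^2 = g(t)$ with $g(t) = |u_{s_0}|_H^2 + 2\int_{s_0}^t \langle \frac{\dd}{\dd r}u_r,u_r\rangle \dd r$ are sound. The gap is in the final paragraph. You assert that ``$t\mapsto |u_t|_H$ is now continuous,'' but what you have proved is only that $|u_\cdot|_H^2$ agrees \emph{almost everywhere} with the continuous function $g$. For the canonical ($V^\ast$-continuous) representative, at an exceptional time $t$ you do not know a priori that $u_t$ even lies in $H$; and where it does, weak lower semicontinuity of the norm along good times $t_n\to t$ (using $u_{t_n}\rightharpoonup u_t$ in $H$) yields only $|u_t|_H^2 \le g(t)$, not equality. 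Consequently the local boundedness you invoke for weak continuity is obtainable (via weak compactness in $H$ plus identification of the limit through the $V^\ast$ pairing), but the norm convergence $|u_{t_n}|_H^2 \to |u_t|_H^2$ that your display $|u_{t_n}-u_t|_H^2 = |u_{t_n}|_H^2 - 2\langle u_{t_n},u_t\rangle + |u_t|_H^2$ requires is precisely the statement $|u_t|_H^2 = g(t)$ \emph{everywhere}, which is what you are trying to prove. The argument is circular at exactly the point you flagged as the ``main obstacle''; flagging it does not fill it, and the route ``weak continuity $+$ norm continuity $\Rightarrow$ strong continuity'' cannot be completed from the ingredients you have.

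The standard repair --- and it is the one in the reference the paper cites (Evans, Thm.\ 3, Sec.\ 5.9.2) --- is to apply the mollified chain rule not to $u^\eps$ alone but to the \emph{difference} $u^\eps - u^\delta$: for a.e.\ base point $s$,
\begin{equation*}
\sup_{t\in[0,T]} |u^\eps_t - u^\delta_t|_H^2 \;\le\; |u^\eps_s - u^\delta_s|_H^2 + 2\,\Bigl\|\tfrac{\dd}{\dd t}u^\eps - \tfrac{\dd}{\dd t}u^\delta\Bigr\|_{L^2([0,T];V^\ast)} \|u^\eps - u^\delta\|_{L^2([0,T];V)} \;\longrightarrow\; 0
\end{equation*}
as $\eps,\delta \to 0$. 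Hence $(u^\eps)$ is Cauchy in $C([0,T];H)$, its uniform limit is the continuous-into-$H$ representative of $u$, and one may then pass to the limit in $|u^\eps_t|_H^2 - |u^\eps_s|_H^2 = 2\int_s^t \langle \frac{\dd}{\dd r}u^\eps_r,u^\eps_r\rangle \dd r$ for \emph{all} $s,t$ (uniform convergence on the left, your $L^1$ convergence on the right), which gives absolute continuity of $|u_\cdot|_H^2$ and the differentiation formula in one stroke. With this replacement your proof is complete; without it, the last step does not close.
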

\begin{proof}[Sketch of Proof]
	The proof follows by first convolving $u$ with a standard mollifier in $t$, to give $u^\varepsilon:= u\ast_t\eta_\varepsilon \in C^\infty([0,T];V)$ and $|u^\varepsilon|_H\in C^\infty([0,T];\mbR)$. Since the weak derivative agrees with the strong derivative when the latter is well defined, the results of Lemma \ref{lem:LionsMagenes} obtain directly for $u^\varepsilon$. Then using the ordering $V\subset H\subset V^\ast$ and assumptions on $u$, the results also hold in  the limit. For a detailed exposition in the case $H=L^2$, $V=H^{1}_0$ and $V^\ast = H^{-1}$ see \cite[Thm. 3; Sec. 5.9.2]{evans_10} 
\end{proof}
\begin{theorem}\label{th:LinearPDE}
	Let $T>0$, $u_0\in H$ and $B \in L^2([0,T];V^\ast)$ and $A\in \msL(V;V^\ast)$ satisfy \eqref{eq:Coercivity} and be continuously defined from $V\rightarrow V^\ast$. Then there exists a unique weak solution $u \in L^2([0,T];V)\cap  C([0,T];H)$ to \eqref{eq:AbstractLinearPDE}.
\end{theorem}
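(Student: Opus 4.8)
The plan is to construct the solution by the Galerkin method and to establish uniqueness by an energy estimate. First I would fix a sequence $(e_k)_{k\geq 1}\subset V$ that is orthonormal in $H$ and whose span is dense in $V$; such a basis exists by separability of $V$. Writing $V_n := \Span(e_1,\ldots,e_n)$, I seek a Galerkin approximation $u_n(t) = \sum_{k=1}^n c^n_k(t) e_k$ solving, for every $j=1,\ldots,n$ and a.e. $t\in[0,T]$,
\begin{equation*}
	\langle \tfrac{\dd}{\dd t} u_n, e_j\rangle = \langle A u_n, e_j\rangle + \langle B_t, e_j\rangle, \qquad \langle u_n(0),e_j\rangle = \langle u_0,e_j\rangle.
\end{equation*}
Since $A$ is linear and bounded and $B\in L^2([0,T];V^\ast)$, this is a linear system of ODEs for $(c^n_k)_{k=1}^n$ with $L^2$-in-time coefficients, so Carath\'eodory's theorem gives a unique absolutely continuous solution on $[0,T]$.

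Next I would derive the uniform a priori bound. Testing the Galerkin equation with $u_n$ and using the finite-dimensional version of Lemma \ref{lem:LionsMagenes} to justify $\frac{\dd}{\dd t}|u_n|_H^2 = 2\langle \frac{\dd}{\dd t}u_n,u_n\rangle$, the coercivity assumption \eqref{eq:Coercivity} and Young's inequality (absorbing $2\langle B_t,u_n\rangle$ into $\tfrac{\alpha}{2}\|u_n\|_V^2 + \tfrac{2}{\alpha}\|B_t\|_{V^\ast}^2$) yield
\begin{equation*}
	\frac{\dd}{\dd t}|u_n|_H^2 + \tfrac{\alpha}{2}\|u_n\|_V^2 \leq \lambda|u_n|_H^2 + \tfrac{2}{\alpha}\|B_t\|_{V^\ast}^2.
\end{equation*}
Gr\"onwall's inequality then bounds $\sup_{t}|u_n(t)|_H^2$ and $\int_0^T\|u_n\|_V^2\,\dd t$ by a constant depending only on $|u_0|_H$, $\|B\|_{L^2([0,T];V^\ast)}$, $\alpha$, $\lambda$ and $T$, uniformly in $n$. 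Thus $(u_n)$ is bounded in $L^2([0,T];V)\cap L^\infty([0,T];H)$; by reflexivity of $V$ and Theorem \ref{th:BanachAlaoglu} I can extract a subsequence with $u_n \rightharpoonup u$ in $L^2([0,T];V)$ and weak-$\ast$ in $L^\infty([0,T];H)$.

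The passage to the limit is where linearity is decisive. Since $A\in L(V;V^\ast)$ is bounded and linear, $A u_n \rightharpoonup A u$ in $L^2([0,T];V^\ast)$. Fixing $j$ and $\varphi\in C^1([0,T])$ with $\varphi(T)=0$, I would rewrite the Galerkin identity in integrated form and pass to the limit to obtain
\begin{equation*}
	-\int_0^T \varphi'(t)\langle u_t,e_j\rangle\,\dd t - \varphi(0)\langle u_0,e_j\rangle = \int_0^T \varphi(t)\big(\langle Au_t,e_j\rangle + \langle B_t,e_j\rangle\big)\,\dd t,
\end{equation*}
using that $\langle u_n(0),e_j\rangle \to \langle u_0,e_j\rangle$. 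Density of the span of $(e_j)_{j\geq 1}$ in $V$ upgrades this to all test elements of $V$, showing that $\frac{\dd}{\dd t}u = Au + B$ holds in $V^\ast$ (so in particular $\frac{\dd}{\dd t}u\in L^2([0,T];V^\ast)$) and, by comparing the boundary term, that $u(0)=u_0$. Lemma \ref{lem:LionsMagenes} then gives $u\in C([0,T];H)$, completing existence.

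Finally, uniqueness follows by testing the equation for the difference $w:=u^1-u^2$ of two solutions with $w$ itself. By Lemma \ref{lem:LionsMagenes} and \eqref{eq:Coercivity}, $\frac{\dd}{\dd t}|w|_H^2 = 2\langle Aw,w\rangle \leq \lambda|w|_H^2 - \alpha\|w\|_V^2 \leq \lambda|w|_H^2$; since $|w(0)|_H=0$, Gr\"onwall forces $w\equiv 0$. I expect the main obstacle to be the limit passage together with the recovery of the initial datum and the $L^2([0,T];V^\ast)$-regularity of $\frac{\dd}{\dd t}u$; the linearity and boundedness of $A$ are precisely what let weak convergence pass through $Au_n$ directly, sidestepping the monotonicity or compactness arguments that a nonlinear $A$ would require.
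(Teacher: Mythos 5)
Your proposal is correct and follows essentially the same route as the paper: Galerkin approximation on a basis of $H$ lying in $V$, the coercivity--Young--Gr\"onwall a priori bound, Banach--Alaoglu plus the linearity of $A$ to pass weak limits through the equation, Lemma \ref{lem:LionsMagenes} for continuity into $H$, and an energy estimate for uniqueness. Your treatment of the limit passage with compactly supported-in-time test functions and the explicit recovery of the initial datum is slightly more detailed than the paper's, and your appeal to Carath\'eodory for the ODE system (since $B$ is only $L^2$ in time) is a correct refinement, but these are presentational differences, not a different argument.
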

\begin{proof}
	We prove uniqueness of solutions first, using the coercivity assumption.
	\paragraph{Uniqueness:} Let $u,\,w $ be two weak solutions to \eqref{eq:AbstractLinearPDE} in the sense of Definition \ref{def:PDEWeakSol}. Then by linearity, for any $t\in (0,T]$ and $v\in D(A)$, we have
	\begin{equation*}
		\left\langle \frac{\dd}{\dd t}(u_t-w_t),v\right\rangle =  \langle A(u_t-w_t),  v\rangle 
	\end{equation*}
	Applying Lemma \ref{lem:LionsMagenes} and the coercivity assumption, \eqref{eq:Coercivity}, we see that
	\begin{equation*}
		|u_t-w_t|_H^2= 2\int_0^t \langle A(u_s-w_s),u_s-w_s\rangle \dd s \leq \lambda \int_0^t |u_s-w_s|^2_H \dd s .
	\end{equation*}
	Then applying Gr\"onwall shows that $|u_t-w_t|_H = 0$ for all $t \in [0,T]$, from which it follows that $u=w \in L^2([0,T];V)$.
	\paragraph{Existence:} We obtain existence by a generalisation of the argument described in Subsection \ref{subsec:WorkedExample}. Let $\{e_k\}_{k\geq 1}\subset V$ be an orthonormal basis of $H$. For $n\geq 1$ we define,
	\begin{equation*}
		V_n := \text{span} \{ e_1,\ldots,e_n\}.
	\end{equation*}
	For any $n\geq 1$,  we consider the finite dimensional system of linear, real, ODEs,
	\begin{equation}\label{eq:GalerkinODE}
		\begin{aligned}
			\frac{\dd }{\dd t}\langle u_{n;t},e_k\rangle& = \langle Au_{n;t}, e_k\rangle + \langle B_t,e_k\rangle,\\
			\langle u_{n;0},e_k\rangle &= \langle u_0,e_k\rangle,
		\end{aligned}\quad \text{for }k=1,\ldots,n.
	\end{equation}
	For any $n\geq1$, there exists a unique, classical solution to the system of ODEs, \eqref{eq:GalerkinODE}. Furthermore, setting
	$$[0,T]\ni t\mapsto u_{n;t}:= \sum_{k=1}^n \langle u_{n;t},e_k\rangle e_k,$$
	defines a weak solution to \eqref{eq:AbstractLinearPDE} with initial data $u_{n;0}$. We now derive a strong enough \textit{a priori} bound on the family $(u_n)_{n\geq 1}$ to allow us pass to the limit. Since $u_{n;0}\rightarrow u_0$ and the right hand side of \eqref{eq:AbstractLinearPDE} is linear this limit will define a weak solution to \eqref{eq:AbstractLinearPDE}.\\ \par 
	Note that by definition we have
	\begin{equation*}
		\frac{\dd}{\dd t} \langle u_{n;t},e_n\rangle = \left\langle \frac{\dd}{\dd t} u_{n;t},e_n\right\rangle,
	\end{equation*}
	where the first derivative is understood as a usual finite dimensional derivative and the second in the sense of Definition \ref{def:WeakTimeDerivative}. Therefore, applying Lemma \ref{lem:LionsMagenes}, we observe the identity,
	\begin{equation*}
		|u_{n;t}|_H^2 = \sum_{k=1}^n\langle u_0,e_k\rangle^2 + 2 \int_0^t \langle Au_{n;s}+B_s,u_{n;s}\rangle \dd s.
	\end{equation*}
	So by the coercivity assumption, \eqref{eq:Coercivity}, the bound $\langle B,v\rangle \leq \|B\|_{V^\ast}\|v\|_{V}$ and Young's product inequality we see that for some $\lambda,\,\alpha>0$,
	\begin{equation*}
		|u_{n;t}|_H^2 + \alpha \int_0^t \|u_{n;s}\|^2_V \dd s 
		\, \leq \,|u_{n;0}|_H^2 + \int_0^T \|B_s\|^2_{V^\ast}\dd s + (\lambda +1)\int_0^t |u_{n;s}|^2 \dd s.
	\end{equation*}
	Applying Gr\"onwall, and taking suprema on both sides, gives the bound
	\begin{equation*}
		\sup_{t\in [0,T]}|u_{n;t}|_H^2 + \alpha \int_0^T\|u_{n;s}\|^2_V \dd s \,\leq \, \left(|u_{n;0}|_H^2+ \int_0^T\|B_s\|^2_{V^\ast}\dd s\right)e^{(\lambda +1)T}.
	\end{equation*}
	We note that $|u_{n;0}|_H \leq |u_0|^2_H$ and so the right hand side is bounded uniformly in $n \geq 1$. Thus we also have,
	\begin{equation}\label{eq:LinPDEEnergy}
		\begin{aligned}
			\sup_{n\geq 1} \left(\sup_{t \in [0,T]} |u_{n;t}|^2_H + \alpha \int_0^T \|u_{n;s}\|^2_{V} \dd s \right) \leq \left(|u_0|^2_H+\int_0^T \|B_s\|^2_{V^\ast}\dd s\right)e^{(\lambda+1)T} <\infty.
		\end{aligned}
	\end{equation}
	This shows that the set $(u_{n})_{n=1}^\infty$ lies in a closed, bounded subset of $C([0,T];H)\cap L^2([0,T];V)$. Since $V$ was assumed to be reflexive, from Theorem \ref{th:BanachAlaoglu}, there exists a weakly convergent subsequence (which we do not relabel) such that $u_n \rightharpoonup u \in L^2([0,T];V)$. Since $A:V\rightarrow V^\ast$ is strongly continuous, it is also continuous with respect to the weak topology on $V$ and so we have $A u_n \rightharpoonup Au \in L^2([0,T];V^\ast)$. Thus, we may pass to the limit in \eqref{eq:GalerkinODE} to see that, for any $v \in D(A)\subset V$,
	\begin{equation*}
		\left\langle \frac{\dd}{\dd t} u_{t},v\right \rangle = \langle A u_t,v\rangle + \langle B_t,v\rangle.
	\end{equation*}
	Furthermore, by definition $\frac{\dd}{\dd t} u_t = A u_t + B_t \in V^\ast$ means that $\frac{\dd}{\dd t} u \in L^2([0,T];V^\ast)$ and so we may apply the first part of the proof to establish that $u$ defined as the weak limit of (a subsequence of) the $u_n$ is the unique weak solution to \eqref{eq:AbstractLinearPDE}. Furthermore, it follows from Lemma \ref{lem:LionsMagenes} that $u \in C([0,T];H)$.
\end{proof}
\begin{example}
	To a linear, self-adjoint, coercive operator $A\in \msL(V;V^\ast)$ as above, one can associate a natural scale of $L^2$ based function spaces that resemble the Sobolev spaces. For $k \in \mbN$, one defines $H_A^k:=D(A^{k/2}):=\{ u\in L^2\,:\, A^{k/2}u \in L^2\}$. Here $A^{1/2}:V\rightarrow L^2$ is the operator such that for any $u,v \in V$, $\langle Au,v\rangle = \langle A^{1/2}u,A^{1/2}v\rangle$. In this scale, a natural Gelfand triple is given by $(H_A^{1},L^2,H^{-1}_A)$ where $H^{-1}_A:= (H^1_A)^\ast$. Note that when $A=\Delta$ this scale exactly defines the Hilbertian Sobolev spaces $H^{k}$ and their topological duals. In the following section where we consider non-linear $A(u)$, it will usually be the case that $A(u)=A_1 u + A_2(u)$ with $A_1$ satisfying the above assumptions and $A_2$ non-linear and of lower order. In that case one defines the natural scale using $A_1$.
\end{example}
We now turn to the question of non-linear equations, that is we no longer assume $A:V\rightarrow V^\ast$ to be a linear operator. As remarked above, we typically assume $A = A_1 u + A_2 u$ where $A_1$ is linear and coercive and $A_2$ is non-linear and often of lower order. The main issue in the above argument is that even for strongly continuous non-linear operators $A:V\rightarrow V^\ast$, it is not in general the case that they are also weakly continuous. Since the subsequence we extract is only weakly convergent we would not be able to conclude that $Au_n\rightarrow Au$. To remedy this we are required to make additional assumptions. We present two general approaches; additional assumptions on $A$ in the form of monotonicity and regularity, or additional assumptions on the spaces in question, $(V,H,V^\ast)$.
\section{Monotonicity and Weak Continuity}
Throughout we do not impose any linearity on $A$ but keep a triple $V\subset H\subset V^\ast$ as in Section \ref{sec:LinearPDE}.
\begin{assumption}[Coercivity II]\label{ass:CoerciveNLPDE}
	There exist $\lambda,\alpha>0$ such that for any $u,\,v \in V$,
	\begin{equation}\label{eq:CoerciveNLPDE}\tag{H$1$}
		2\langle A(u),u\rangle + \alpha\|u\|^2_{V} \leq \lambda|u|^2_H.
	\end{equation}
\end{assumption}

\begin{assumption}[Monotonicity]\label{def:MonotoneOperator}
	For any $\lambda>0$ and $u,w \in V$,
	\begin{equation}\label{eq:MonotoneOperator}\tag{H2}
		\langle A(u) -A (w),u-w\rangle \leq \frac{\lambda}{2}|u-w|^2_H.
	\end{equation}
\end{assumption}
\begin{remark}
	Note that if $u,w$ solve \eqref{eq:AbstractLinearPDE}, then they also solve,
	\begin{equation*}
		\partial_t u^\lambda - A^\lambda (u^\lambda)= B_t,\quad u^\lambda\tzero =u_0,
	\end{equation*}
	where $A^\lambda(u):= e^{- t\lambda/2}A(e^{t\lambda/2}u) - \frac{\lambda}{2}u$ and $u_t^\lambda := e^{t\lambda/2}u_t$. Then it is easily seen that if $A$ satisfies \eqref{eq:CoerciveNLPDE} and \eqref{eq:MonotoneOperator}, there exist $\alpha>0$ and $\nu\geq 0$, possibly new, such that for all $u,w\in V$
	\begin{equation}\label{eq:CoercivePrime}\tag{H$1'$}
		2\langle A^\lambda(u),u\rangle + \alpha \|u\|^2_{V}\leq  0,
	\end{equation}
	\begin{equation}\label{eq:MonotonePrime}\tag{H$2^\prime$}
		\langle A^\lambda(u) - A^\lambda(w),u-w\rangle \leq 0.
	\end{equation}
	Since this transformation of the equation is \textit{a posteriori} valid for the solutions we construct, we in general assume \eqref{eq:CoercivePrime}, \eqref{eq:MonotonePrime} instead of \eqref{eq:CoerciveNLPDE}, \eqref{eq:MonotoneOperator}.
\end{remark}
\begin{assumption}[Linear Growth]\label{def:LinearGrowth}
	There exists a $c>0$ such that for any $u\in V$,
	\begin{equation}\label{eq:LinearGrowth}\tag{H3}
		\|A (u)\|_{V^\ast} \leq c(1+\|u\|_V).
	\end{equation}
\end{assumption}
\begin{remark}
	If we replace the term $\|u\|^2_V$ with $\|u\|^p_{V}$ for $p>2$, in $\eqref{eq:CoerciveNLPDE}$, i.e we assume instead,
	\begin{equation}\label{eq:CoerciveM}\tag{H$1_{p}$}
		2\langle A(u),u\rangle + \alpha \|u\|^p_{V} \leq \lambda |u|^2_H+\nu,
	\end{equation}
	for some $\alpha,\,\lambda>0$, then we may also replace $\eqref{eq:LinearGrowth}$ with the $p$-growth assumption,
	\begin{equation}\label{eq:MGrowth}\tag{H$3_p$}
		\|A(u)\|_{V^\ast} \leq c(1+ \|u\|^{p-1}_V), 
	\end{equation} 
	for some $c>0$ and all $u \in V$. In this case the spaces $L^2([0,T];V)$ below should be replaced by $L^p([0,T];V)$.
\end{remark}
\begin{assumption}[Weak Continuity I]\label{ass:WeakContinuityPDE}
	For any $u,v,w \in V$, the mapping,
	\begin{equation}\label{eq:WeakContinuityI}\tag{H4}
		\mbR \ni \theta \mapsto \langle A(u+\theta w),v\rangle \in \mbR,
	\end{equation}
	is continuous.
\end{assumption}
\begin{remark}
	Note that Definition \eqref{eq:WeakContinuityI} is not the usual definition of weak continuity, i.e. when $u_n\rightharpoonup u$ in $V$, one has $A(u_n)\rightharpoonup A(u)\in V^\ast$ . Instead, we ask that the image of a strongly convergent family $u+\theta w\rightarrow u$ be weakly convergent, under $A$. Note that this implies the usual definition of weak continuity. We could rephrase Definition \ref{ass:WeakContinuityPDE} as requiring that the mapping $A:V_{\text{strong}}\rightarrow V^\ast_{\text{weak}}$ be continuous and refer to Assumption \ref{ass:WeakContinuityPDE} as strong-weak continuity.
\end{remark}
We now consider the non-linear PDE,
\begin{equation}\label{eq:NLPDE}
	\begin{cases}
		\partial_t u-A(u)= B_t,&\\
		u\tzero =u_0,
	\end{cases}
\end{equation}
and under the above conditions give the following well-posedness result.
\begin{theorem}\label{th:MonotonePDE}
	Let $T>0$, $u_0\in H$, $B\in L^2([0,T];V^\ast)$ and $A:V\rightarrow V^\ast$ satisfy \eqref{eq:CoercivePrime}, \eqref{eq:MonotonePrime}, \eqref{eq:LinearGrowth} and \eqref{eq:WeakContinuityI}. Then there exists a unique, weak solution $u \in L^2([0,T];V) \cap C([0,T];H)$ to \eqref{eq:AbstractLinearPDE}. 
\end{theorem}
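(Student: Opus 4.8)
The plan is to mimic the Galerkin construction used in the proof of Theorem \ref{th:LinearPDE}, the only genuinely new difficulty being that $A$ is nonlinear and merely hemicontinuous, so the image $A(u_n)$ of a weakly convergent Galerkin sequence cannot be identified with $A(u)$ by naive continuity; this identification will be resolved at the end by the Minty monotonicity trick. Uniqueness, on the other hand, is immediate: if $u,w$ are two weak solutions then $\frac{\dd}{\dd t}(u-w)=A(u)-A(w)$ in $V^\ast$, so by Lemma \ref{lem:LionsMagenes} and the monotonicity assumption \eqref{eq:MonotonePrime},
\[
\tfrac{\dd}{\dd t}|u_t-w_t|_H^2 = 2\langle A(u_t)-A(w_t),u_t-w_t\rangle \leq 0,
\]
whence $|u_t-w_t|_H\leq|u_0-w_0|_H=0$ for all $t$, giving $u=w$.

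For existence I would fix an orthonormal basis $(e_k)_{k\geq1}\subset V$ of $H$, set $V_n:=\Span\{e_1,\dots,e_n\}$, and solve the Galerkin system exactly as in \eqref{eq:GalerkinODE} but with the nonlinear right-hand side $\langle A(u_{n;t}),e_k\rangle$. To solve this system I first note that a monotone, hemicontinuous operator is demicontinuous, and on the finite-dimensional space $V_n$ demicontinuity coincides with continuity; together with the linear growth bound \eqref{eq:LinearGrowth} this makes the right-hand side a Carath\'eodory field of at most linear growth, so Carath\'eodory theory yields a global solution $u_n$ on $[0,T]$. The \textit{a priori} estimate is then identical to the linear case: applying Lemma \ref{lem:LionsMagenes}, the coercivity bound \eqref{eq:CoercivePrime}, and Young's inequality on $\langle B_s,u_{n;s}\rangle$ gives a bound of the form
\[
\sup_{t\in[0,T]}|u_{n;t}|_H^2 + \tfrac{\alpha}{2}\int_0^T\|u_{n;s}\|_V^2\,\dd s \leq |u_0|_H^2 + \tfrac{2}{\alpha}\int_0^T\|B_s\|_{V^\ast}^2\,\dd s,
\]
uniform in $n$, so that $(u_n)$ is bounded in $L^2([0,T];V)$ and in $L^\infty([0,T];H)$.

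Next I would extract weak limits. Reflexivity of $V$ and Theorem \ref{th:BanachAlaoglu} give (along a subsequence, not relabelled) $u_n\rightharpoonup u$ in $L^2([0,T];V)$; by \eqref{eq:LinearGrowth} the sequence $A(u_n)$ is bounded in $L^2([0,T];V^\ast)$, hence $A(u_n)\rightharpoonup\chi$ for some $\chi\in L^2([0,T];V^\ast)$; and $u_n(T)\rightharpoonup\xi$ in $H$. Testing the Galerkin equations against $\varphi(t)e_k$ and passing to the limit shows $\frac{\dd}{\dd t}u=\chi+B$ in $V^\ast$, so $\frac{\dd}{\dd t}u\in L^2([0,T];V^\ast)$, and Lemma \ref{lem:LionsMagenes} then yields $u\in C([0,T];H)$ together with the energy identity for $u$; an integration by parts in the Galerkin identity against a test function not vanishing at $T$ identifies $\xi=u_T$. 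It remains only to prove $\chi=A(u)$, which is what makes $u$ an actual weak solution.

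The hard part is exactly this last identification, and I would handle it by Minty's device. Combining the Galerkin energy identity $\int_0^T\langle A(u_n),u_n\rangle\,\dd s=\tfrac12|u_{n;T}|_H^2-\tfrac12|u_{n;0}|_H^2-\int_0^T\langle B_s,u_{n;s}\rangle\,\dd s$ with weak lower semicontinuity of the $H$-norm, so that $\liminf_n|u_{n;T}|_H^2\geq|u_T|_H^2$, and with the energy identity for the limit $u$, gives the crucial inequality $\liminf_n\int_0^T\langle A(u_n),u_n\rangle\,\dd s\geq\int_0^T\langle\chi_s,u_s\rangle\,\dd s$. Feeding this into the monotonicity inequality \eqref{eq:MonotonePrime}, namely $\int_0^T\langle A(u_n)-A(v),u_n-v\rangle\,\dd s\leq0$, and letting $n\to\infty$ (all remaining terms being products of a weakly convergent and a fixed factor) produces
\[
\int_0^T\langle\chi_s-A(v_s),u_s-v_s\rangle\,\dd s\leq0\qquad\text{for every }v\in L^2([0,T];V).
\]
Finally I substitute $v=u-\theta w$ for $\theta>0$ and arbitrary $w\in L^2([0,T];V)$, divide by $\theta$, and let $\theta\downarrow0$; the hemicontinuity assumption \eqref{eq:WeakContinuityI} together with the growth bound \eqref{eq:LinearGrowth}, used to dominate the integrand, give $\int_0^T\langle\chi_s-A(u_s),w_s\rangle\,\dd s\leq0$, and since $w$ is arbitrary this forces $\chi=A(u)$. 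This completes the existence proof, and together with the uniqueness established above the theorem follows.
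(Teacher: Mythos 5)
Your proposal is correct and follows essentially the same route as the paper's proof: Galerkin approximation, the coercivity-based \textit{a priori} bound, weak compactness via Banach--Alaoglu together with the linear growth bound for $A(u_n)$, and finally Minty's monotonicity trick (via the energy-identity/liminf inequality and the substitution $v=u-\theta w$ with hemicontinuity) to identify the weak limit $\chi=A(u)$. If anything, you are slightly more careful than the paper on two technical points it glosses over --- solvability of the finite-dimensional system (via demicontinuity of monotone hemicontinuous maps plus Carath\'eodory theory, where the paper just invokes a ``Peano type argument'') and the identification $u_{n;T}\rightharpoonup u_T$ --- so no gaps remain.
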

\begin{proof}
	We proceed in a similar manner as the proof of Theorem \ref{th:LinearPDE}, establishing uniqueness first followed by existence.
	\paragraph{Uniqueness:} Now that $A$ is not necessarily linear, the assumption of coercivity is not enough. However, we can make use of monotonicity. Letting $u,w\in L^2([0,T];V)\cap C([0,T];H)$ be two solutions to \eqref{eq:AbstractLinearPDE}, for any $v \in V$, we have that 
	\begin{equation*}
		\left\langle \frac{\dd}{\dd t} (u_t-w_t),v\right\rangle = \langle A(u_t)-A(w_t),v\rangle,
	\end{equation*}
	and therefore by applying Lemma \ref{lem:LionsMagenes} and \eqref{eq:MonotonePrime}, we have
	\begin{equation*}
		|u_t-w_t|^2_H = 2 \int_0^t \langle A(u_s)-A(w_s),u_s-w_s\rangle \dd s \leq 0.
	\end{equation*}
	Thus $u_t=w_t$ in $H$ for all $t\in [0,T]$. Since $H\subset V^\ast$, this implies $u_t=w_t\in V^\ast$ for all $t\in [0,T]$ as well and so $\frac{d}{dt} u \equiv \frac{d}{dt}w \in V^\ast$. Then, let $\mcX \subset V \subset V^\ast$ be a linear, dense subset of both spaces. By the fundamental theorem of calculus, which holds also for weak derivatives in time, and using that $u,w\in L^2([0,T];V)$, for any $v\in \mcX$ and $t\in [0,T]$, we have
	\begin{equation*}
		\langle u_t-w_t,v\rangle = \int_0^t \left \langle \frac{d}{dt} (u_s-w_s),v\right\rangle \dd s =0.
	\end{equation*}
	So equality in $L^2([0,T];V)$ follows by density.
	\paragraph{Existence:} The first steps of the existence proof of Theorem \ref{th:LinearPDE} carry over in much the same way. We first obtain a sequence $(u_n)_{n\geq 1}\subset L^2([0,T];V)\cap C([0,T];H)$, which for any $n\geq 1$ solves the system of ODEs,
	\begin{equation}\label{eq:GalerkinNLODE}
		\begin{aligned}
			\frac{\dd }{\dd t}\langle u_{n;t},e_k\rangle& = \langle A(u_{n;t}), e_k\rangle + \langle B_t,e_k\rangle,\\
			\langle u_{n;0},e_k\rangle &= \langle u_0,e_k\rangle,
		\end{aligned}\quad \text{for }k=1,\ldots,n.
	\end{equation}
	Establishing well-posedness of the ODE system in this case is a little more involved but mostly technical. The coefficients are continuous and so existence can be shown for example by a Peano type argument and uniqueness follows using the monotonicity assumption, by a similar argument as applied to the PDE above. As in Section \ref{sec:LinearPDE}, we define,
	\begin{equation*}
		[0,T]\ni t\mapsto u_{n;t} := \sum_{k=1}^n \langle u_{n;t},e_k\rangle e_k,
	\end{equation*}
	which is a solution to \eqref{eq:NLPDE} with initial data $u_{n;0}$.\\ \par 
	In order to establish the a priori bound, we now apply \eqref{eq:CoercivePrime} to the identity,
	\begin{equation*}
		|u_{n;t}|_H^2 = \sum_{k=1}^n\langle u_0,e_k\rangle^2 + 2 \int_0^t \langle A(u_{n;s})+B_s,u_{n;s}\rangle \dd s,
	\end{equation*}
	in order to obtain, for some $\alpha>0$,
	\begin{equation*}
		|u_{n;t}|_H^2 + \alpha \int_0^t \|u_{n;s}\|_V \dd s 
		\, \leq \,|u_{n;0}|_H^2 + \int_0^T \|B_s\|^2_{V^\ast}\dd s +  \int_0^t |u_{n;s}|^2 \dd s.
	\end{equation*}
	Applying Gr\"onwall, and taking suprema over $t\in [0,T]$ and $n\geq 1$ on both sides, as before, gives the bound
	\begin{equation}\label{eq:NLinPDEEnergy}
		\begin{aligned}
			\sup_{n\geq 1} \left(\sup_{t \in [0,T]} |u_{n;t}|^2_H + \alpha \int_0^T \|u_{n;s}\|^2_{V} \dd s \right)\leq \left(|u_0|^2_H+\int_0^T \|B_s\|^2_{V^\ast}\dd s\right)e^{T}<\infty.
		\end{aligned}
	\end{equation}
	It follows from the a priori bound, \eqref{eq:NLinPDEEnergy}, and the Banach--Alaoglu theorem, Thm. \ref{th:BanachAlaoglu}, that there exists a weakly convergent subsequence, $u_n\rightharpoonup u \in V$, which we do not relabel. From \eqref{eq:LinearGrowth} we also have that $(A(u_n))_{n\geq 1}$ is a bounded sequence in $V^\ast$ and so in addition up to extracting a further subsequence, it follows, by the same reasoning, that there exists some $\zeta \in L^2([0,T];V^\ast)$ such that,
	\begin{align*}
		&u_n \rightharpoonup u \quad \text{in}\quad L^2([0,T];V),\\
		&u_n \overset{*}{\rightharpoonup} u \quad \text{in}\quad L^\infty([0,T];H),\\
		&A(u_n) \rightharpoonup \zeta \quad \text{in}\quad L^2([0,T];V^\ast).
	\end{align*} 
	So, letting $n\rightarrow \infty$ in \eqref{eq:GalerkinNLODE}, for any $v \in V$ and $t\in [0,T]$, we have that
	\begin{equation}\label{eq:NLWeakLimitPDE}
		\langle u_t,v\rangle = \langle u_0,v\rangle + \int_0^t \langle \zeta_s ,v\rangle \dd s + \int_0^t \langle B_s,v\rangle\dd s.
	\end{equation}
	Thus it suffices to conclude that $\zeta = A(u)$. We proceed in several steps. First we claim the inequality,
	\begin{equation}\label{eq:PDELimInf}
		\int_0^T \langle \zeta_t ,u_t\rangle \dd t  \leq \liminf_{n\rightarrow \infty}  \int_0^T \langle A(u_{n;t}),u_{n;t}\rangle \dd t.
	\end{equation}
	To see this, first note that from \eqref{eq:GalerkinNLODE} it follows that
	\begin{equation*}
		|u_{n;T}|^2_H -|u_{n;0}|_H^2 = 2\int_0^T \langle A(u_{n;t}),u_{n;t}\rangle \dd t + \int_0^T \langle B_t, u_{n;t}\rangle \dd t,
	\end{equation*}
	and similarly from \eqref{eq:NLWeakLimitPDE},
	\begin{equation*}
		|u_T|^2_H-|u_0|^2_H = \int_0^T \langle \zeta_t,u_t\rangle \dd t + \int_0^T \langle B_t,u_t\rangle \dd t.
	\end{equation*}
	Hence, it also follows that $u_{n;T} \rightharpoonup u_T$ in $H$ and so by convexity of the map $H\ni \rho \mapsto |\rho|^2_H$  we have the inequality,
	\begin{equation*}
		|u_T|^2_H - |u_0|_H^2 \leq \liminf_{n\rightarrow \infty} \left(|u_{n;T}|^2 - |u_{n;0}|^2_H\right).
	\end{equation*}
	So \eqref{eq:PDELimInf} follows from the fact that $\liminf_{n\rightarrow \infty} \langle B_t,u_{n;t}\rangle =0$ for all $t\in [0,T]$.\\ \par 
	With \eqref{eq:PDELimInf} in hand, we observe that from \eqref{eq:MonotonePrime}, for all $v \in L^2([0,T];V)$ and $n\geq 1$,
	\begin{equation*}\label{eq:PDEUnMononone}
		\int_0^T \langle A(u_{n;t})-A(v_t),u_{n;t}-v_t\rangle \dd t \leq 0
	\end{equation*}
	By weak convergence and applying \eqref{eq:PDELimInf} in the first inequality below, we have
	\begin{equation}\label{eq:PDEZetaMonotone}
		\hspace{-2em}	\begin{aligned}
			\int_0^T \langle \zeta_t -A(v_t), u_t-v_t\rangle \dd t 	&= \int_0^T  \langle \zeta_t,u_t\rangle  \dd t\\
			&\quad - \lim_{n\rightarrow \infty} \int_0^T \langle A(u_{n;t}),v_t \rangle +\langle A(v_t),u_{n;t}\rangle\dd t\\
			& \quad + \int_0^T \langle A(v_t),v_t\rangle  \dd t\\
			& \leq \liminf_{n\rightarrow \infty} \int_0^T \langle A(u_{n;t}),u_{n;t}\rangle\dd t\\
			&\quad -\lim_{n\rightarrow \infty}\int_0^T \langle A(u_{n;t}),v_t \rangle +\langle A(v_t),u_{n;t}\rangle \dd t \\
			&\quad + \int_0^T \langle A(v_t),v_t\rangle  \dd t \\
			&= \liminf_{n\rightarrow \infty} \int_0^T  \langle A(u_{n;t})-A(v_t),u_{n;t}-v_t\rangle \dd t \\
			&\leq 0.
		\end{aligned}
	\end{equation}
	So now choosing $v = u-\theta w$ with $\theta >0$, after dividing both sides by $\theta$, we have that
	\begin{align*}
		\int_0^T \langle \zeta_t -A(u_t-\theta w_t),  w_t\rangle \dd t \leq 0.
	\end{align*}
	So from \eqref{eq:WeakContinuityI} and taking the limit $\theta \rightarrow 0$, we obtain that
	\begin{align*}
		\int_0^T \langle \zeta_t -A(u_t),  w_t\rangle \dd t \leq 0, \quad \text{ for all }w \in L^2([0,T];V). 
	\end{align*}
	Hence it must be that $\zeta = A(u)$ in $L^2([0,T];V^\ast)$ and this concludes the proof.
\end{proof}
\begin{example} (See \cite[Ex.~4.1.9]{liu_rockner_15_spde_introduction})\label{ex:p_laplace_reac_diffuse}
	Let us fix a smooth open bounded domain $\Gamma \subset \mbR^d$, $m \geq 2,\, \kappa\geq 0$ and consider the $p$-Laplace equation with reaction term and Dirichlet boundary data
	\begin{equation*}\label{eq:p_laplace_reac_diffuse}
		\begin{cases}
			\partial_t u_t - \nabla \cdot (\nabla u_t |\nabla u_t|^{m-2}) + \kappa  |u_t|^{m-2}u_t= 0,& \text{in }\Gamma,\\
			u_t|_{\partial \Gamma}=0, &\text{for all }t\geq 0,\\
			u\tzero	=u_0, &  \text{in }\Gamma.
		\end{cases}	
	\end{equation*}
	We pick our functional setting by choosing
	\begin{equation*}
		W^{1,m}_0(\Gamma) = V,
	\end{equation*}
	which is the space of maps $f:\Gamma \to \mbR$ such that 
	\begin{equation*}
	\|f\|_{W^{1,m}(\Gamma)} \coloneqq 	\|f\|_{L^m(\Gamma)} + \|\nabla f\|_{L^m(\Gamma)} <+\infty\quad \text{and}\quad f|_{\partial \Gamma} = 0.
	\end{equation*} 
	Then we set $H= L^2(\Gamma)$ and $V^* = (W^{1,m}_0(\Gamma))^*$ and define the map
	\begin{equation*}
		\begin{aligned}
			A: 	W^{1,m}_0(\Gamma) &\to (W^{1,m}_0(\Gamma))^*\\
			u &\mapsto \nabla \cdot (\nabla u |\nabla u|^{m-2}) - \kappa |u|^{m-2}u \eqqcolon \Delta_m u - \kappa |u|^{m-2}u.
		\end{aligned}
	\end{equation*}
	Checking \eqref{eq:MGrowth} will verify that this is indeed a well-defined operator.  To check \eqref{eq:CoerciveM} we write
	\begin{align*}
		\langle A(u),u\rangle = &\langle \nabla \cdot (\nabla u |\nabla u |^{m-2}) , u \rangle  -\kappa  \langle |u|^{m-2}u,u\rangle \\
		=& - \langle \nabla u |\nabla u |^{m-2} ,\nabla  u \rangle - \kappa \langle |u|^{m-2},|u|^2\rangle \\
		=&- \|\nabla u \|^p_{L^p} - \kappa \|u\|^p_{L^p}.
	\end{align*}
	Now, since we are on a bounded domain we have the Poincar\`e inequality (see discussion after Example~\ref{ex:stoch_rd_field}), that is there exists a constant $C\coloneqq C(m,d,|\Gamma|) >0$ such that
	\begin{equation*}
		\|u\|_{L^p(\Gamma)} \leq C \, \|\nabla u\|_{L^p(\Gamma)}, 
	\end{equation*}
so that 
\begin{equation*}
	\|u\|_{W^{1,p}} \leq (1+C) \|\nabla u\|_{L^p(\Gamma)},
\end{equation*}
and hence there exists a constant $C\coloneqq (m,d,\kappa,|\Gamma|)>0$ such that
\begin{equation*}
		\langle A(u),u\rangle  \lesssim  - C \|u\|^p_{W^{1,m}},
\end{equation*}
so that \eqref{eq:CoerciveM} is satisfied with $p=m$, $\alpha = C$ and $\lambda = \nu = 0$.\\

To check \eqref{eq:MonotoneOperator} we begin by taking $u,\, w\in W^{1,m}_0(\Gamma)$ and integrate by parts to write
\begin{align*}
	\langle A(u) - A(w), u-w\rangle = & -  \langle |\nabla u|^{m-2}\nabla u - |\nabla w|^{m-2}\nabla w , \nabla u - \nabla w\rangle - \langle |u|^{m-2}u- |w|^{m-2}w,u-w\rangle \\
	=&  - \RN{1} + \RN{2}.
\end{align*}
To treat $-\RN{1}$ we collect terms and the bound $a\cdot b \leq |a||b|$ to obtain
\begin{align*}
	\RN{1} =  &\|\nabla u\|^m_{L^m} + \|\nabla w\|^m_{L^m}  -\langle |\nabla u|^{m-2},\nabla u\cdot \nabla w\rangle - \langle |\nabla w|^{m-2}, \nabla u \cdot \nabla w\rangle\\
	\geq & \, \|\nabla u\|^m_{L^m} + \|\nabla w\|^m_{L^m} - \langle |\nabla u|^{m-1},|\nabla w|\rangle - \langle |\nabla w|^{m-1}, |\nabla w|\rangle\\
	=&\, \langle |\nabla u|^{m-1} - |\nabla w|^{m-1}, |\nabla u|- |\nabla w|\rangle \\
	\geq &\, 0.
\end{align*}
Hence $-\RN{1} \leq 0$. To treat $\RN{2}$ we recall that one has $|a|^{m}-|b|^{m} \geq |a-b|^{m-2}(a-b)^2=|a-b|^{m}$ for all $a,\,b\in \mbR$, so that
\begin{equation*}
	\RN{2} \leq -\kappa \|u-w\|^{m}_{L^{m}} <0.
\end{equation*}
Hence $A$ satisfies \eqref{eq:MonotoneOperator} with $\lambda =0$.\\

For the growth bound \eqref{eq:MGrowth}, we write $w\in W^{1,m}_0(\Gamma)$ 
\begin{equation*}
	\|A(u)\|_{(W^{1,m}_0(\Gamma))^*} = \sup_{w \in W^{1,m}_0(\Gamma)} \frac{\langle A(u),w\rangle}{\|w\|_{W^{1,m}(\Gamma)}} 
\end{equation*}
For any given $w\in W^{1,p}_0(\Gamma)$,
\begin{equation*}
\langle A(u),w\rangle = 	\langle \nabla \cdot (\nabla u |\nabla u|^{m-2}) - \kappa |u|^{m-2}u,w \rangle = -\langle  \nabla u |\nabla u|^{m-2}, \nabla w\rangle - \kappa \langle |u|^{m-2}u,w\rangle.
\end{equation*}
Note that since we take the supremum over all $w\in W^{1,m}_0(\Gamma)$ we have no information on the sign of the brackets above, so that the best we can do is to bound them by H\"older's inequality to find
\begin{equation*}
	|\langle  \nabla u |\nabla u|^{m-2}, \nabla w\rangle| \leq \|\nabla u\|^{m-1}_{L^m} \|\nabla w\|_{L^m},
\end{equation*}
and
\begin{equation*}
	|\langle |u|^{m-2}u,w\rangle| \leq \|u\|^{m-1}_{L^m}\|w\|_{L^m}.
\end{equation*}
So that putting both together
\begin{equation*}
		\|A(u)\|_{(W^{1,m}_0(\Gamma))^*}  \lesssim_{m,\kappa} \|u\|_{W^{1,p}(\Gamma)}. 
\end{equation*}
To check \eqref{eq:WeakContinuityI} we let $u,\,v,\, w \in W^{1,p}_0(\Gamma)$ and show that for $\theta \in (0,1]$,
\begin{equation*}
	\lim_{\theta \to 0} \langle A(u+\theta w),v\rangle  = \langle A(u),v\rangle.
\end{equation*}
To this end we write
\begin{align*}
\langle A(u+\theta w),v\rangle   = & - \langle \nabla (u+\theta w)|\nabla (u+\theta w)|^{m-2},\nabla v\rangle  - \kappa \langle |u+\theta w|^{m-2}(u+\theta w),v\rangle \\
= &- \langle |\nabla (u+\theta w)|^{m-2},\nabla (u+\theta w) \cdot \nabla v\rangle - \kappa \langle |u+\theta w|^{m-2}(u+\theta w),v\rangle.
\end{align*}
It is clear that Lebesgue almost everywhere we have the limits,
\begin{equation*}
	\lim_{\theta \to 0} |\nabla (u+\theta w)|^{m-2} = |\nabla u|^{m-2},\quad  \lim_{\theta \to 0} \nabla (u+\theta w) = \nabla u\quad \text{and}\quad \lim_{\theta \to 0} |u+\theta w|^{m-2}(u+\theta w) = |u|^{m-2}u.
\end{equation*}
Hence, we only need to find a dominating function so that we can apply Lebesgue's dominated convergence theorem  (Theorem~\ref{th:DCTBochner} with $V = \mbR^d$). To this end, since we restricted to $\theta \in (0,1]$, for all $x\in \Gamma$ we have the estimate
\begin{equation*}
	\sup_{\theta \in (0,1]}\left|\nabla (u(x)+\theta w(x))|^{m-2}\nabla (u(x)+\theta w(x)) \cdot \nabla v(x)\right| \lesssim_m  \left(|\nabla u(x)|^{p-1}+|\nabla w(x)|^{m-1}\right) |\nabla v(x)|,
\end{equation*}
while
\begin{equation*}
	\sup_{\theta \in (0,1]} \left| |u(x)+\theta w(x)|^{m-2}(u(x)+\theta w(x))v(x)\right |\, \lesssim_m\, \left(|u(x)|^{m-1}+ |w(x)|^{m-1} \right) |v(x)|.
\end{equation*}
We can control the right hand side of both inequalities above in $L^1(\Gamma)$ by H\"older's inequality and so the proof is complete. 

\qed
\end{example}
\begin{remark}\label{rem:l_p_weak_strong_continuous}
	The checking of \eqref{eq:WeakContinuityI} in Example~\ref{ex:p_laplace_reac_diffuse} demonstrates a general principle of weak-strong uniqueness of polynomials in $W^{k,p}(\Gamma)$ spaces.
\end{remark}
%
\begin{remark}\label{rem:massive_heat_equation_and_defocussing_rd}
	If one sets $m=2$ in \eqref{eq:p_laplace_reac_diffuse} then the resulting equation is linear and is known as the massive heat equation. It can be solved directly by Fourier analysis or the arguments of Section \ref{sec:LinearPDE}. Alternatively, for $m>2$ and $\theta <0$ the equation is known as \emph{defocussing} and solutions may only exist locally in time, depending on the size of the initial data in $W^{1,m}_0(\Gamma)$.
\end{remark}
\section{A Compactness Method}
In the absence of the above structural assumption, \eqref{eq:MonotoneOperator}, one can instead make progress using more functional analytic techniques. We dedicate less space to this approach since we will not present its stochastic counterpart, instead referring to \cite[Sec. 2.3.3]{pardoux_21_spde_introduction} for further details.\\ \par 
Note that in the proceeding arguments it was not challenging to establish uniform boundedness of $\left(\frac{\dd}{\dd t}u_{n}\right)_{n \geq 1} \subset L^2([0,T];V^\ast)$ and $(u_{n})_{n\geq 1} \subset L^2([0,T];V)$. The difficulty was in establishing convergence of the non-linear term $A(u_n)$ to $A(u)$. In the case that the injection $V\hookrightarrow H$ is compact then we have access to the following Lemma.
\begin{lemma}[Aubin--Lions]\label{lem:AubinLions}
	Let $V$ and $H$ be such that $V\hookrightarrow H$ is compact. Then, given $(u_n)_{n\geq 1}$ uniformly bounded in $L^2([0,T];V)$ and such that $(\frac{d}{dt} u_n)_{n\geq 1}$ is uniformly bounded in $L^2([0,T];V^\ast)$, there exists a non-relabelled subsequence, $(u_n)_{n\geq 1}$, which converges strongly in $L^2([0,T];H)$.
\end{lemma}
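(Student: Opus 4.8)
The plan is to reduce everything to the classical interpolation (Ehrling--Lions) inequality: since $V\hookrightarrow H$ is compact and $H\hookrightarrow V^\ast$ is continuous, for every $\varepsilon>0$ there is a constant $C_\varepsilon>0$ with
\[
	|u|_H \leq \varepsilon\|u\|_V + C_\varepsilon\|u\|_{V^\ast}, \qquad u\in V.
\]
I would prove this by contradiction: negating it produces a sequence $(v_k)$ with $|v_k|_H=1$, $\varepsilon\|v_k\|_V\leq 1$ and $\|v_k\|_{V^\ast}\to 0$; compactness of $V\hookrightarrow H$ extracts a subsequence converging strongly in $H$ to some $v$ with $|v|_H=1$, while $v_k\to 0$ in $V^\ast$ forces $v=0$, a contradiction. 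Squaring yields $|u|_H^2\leq 2\varepsilon^2\|u\|_V^2 + 2C_\varepsilon^2\|u\|_{V^\ast}^2$, the form I shall integrate in time.

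Next I would set up the compactness argument. By reflexivity of $L^2([0,T];V)$ and Banach--Alaoglu (Theorem \ref{th:BanachAlaoglu}), pass to a subsequence with $u_n\rightharpoonup u$ in $L^2([0,T];V)$ and $\tfrac{d}{dt}u_n\rightharpoonup \tfrac{d}{dt}u$ in $L^2([0,T];V^\ast)$; replacing $u_n$ by $u_n-u$, it suffices to show $u_n\to 0$ strongly in $L^2([0,T];H)$ under the hypotheses $u_n\rightharpoonup 0$ in $L^2([0,T];V)$ and $\tfrac{d}{dt}u_n\rightharpoonup 0$ in $L^2([0,T];V^\ast)$. Integrating the squared inequality over $[0,T]$ gives
\[
	\int_0^T |u_n|_H^2\,\dd t \leq 2\varepsilon^2\int_0^T \|u_n\|_V^2\,\dd t + 2C_\varepsilon^2\int_0^T \|u_n\|_{V^\ast}^2\,\dd t.
\]
The first integral is bounded uniformly in $n$ by the standing hypothesis, say by $M^2$, so the first term is $\leq 2\varepsilon^2 M^2$; since $\varepsilon$ is arbitrary, the whole claim reduces to proving $u_n\to 0$ strongly in $L^2([0,T];V^\ast)$.

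This last step is where compactness is genuinely used and is the main obstacle. The uniform bound on $\tfrac{d}{dt}u_n$ in $L^2([0,T];V^\ast)$ gives equicontinuity in $V^\ast$, namely $\|u_n(t)-u_n(s)\|_{V^\ast}\leq \int_s^t \|\tfrac{d}{d\tau}u_n\|_{V^\ast}\,\dd\tau \leq C|t-s|^{1/2}$. At the same time, Lemma \ref{lem:LionsMagenes} gives $u_n\in C([0,T];H)$ with $\|u_n\|_{C([0,T];H)}\leq C\bigl(\|u_n\|_{L^2([0,T];V)} + \|\tfrac{d}{dt}u_n\|_{L^2([0,T];V^\ast)}\bigr)$, so $\{u_n(t)\}$ is bounded in $H$ uniformly in $t$ and $n$. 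Since $V\hookrightarrow H$ compact implies, by Schauder's theorem applied to the adjoint inclusion, that $H\hookrightarrow V^\ast$ is compact, the set $\{u_n(t):n\geq 1\}$ is relatively compact in $V^\ast$ for each fixed $t$. Equicontinuity together with pointwise relative compactness lets me apply the Arzel\`a--Ascoli theorem in $C([0,T];V^\ast)$ to extract a further subsequence converging in $C([0,T];V^\ast)$; the limit must coincide with the weak limit $0$, so $u_n\to 0$ in $C([0,T];V^\ast)$ and a fortiori in $L^2([0,T];V^\ast)$. Feeding this back into the displayed estimate and sending $\varepsilon\to 0$ completes the proof. The only delicate points to verify carefully are the compactness of $H\hookrightarrow V^\ast$ and the hypotheses of Arzel\`a--Ascoli for maps valued in the complete space $V^\ast$.
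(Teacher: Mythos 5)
Your proof is correct; note that the paper itself states Lemma \ref{lem:AubinLions} without proof (the compactness method is deferred to the cited literature), so there is no in-paper argument to compare against --- what you have written is the classical proof, and it holds up. The three ingredients are all sound: the Ehrling-type interpolation inequality $|u|_H \leq \varepsilon\|u\|_V + C_\varepsilon\|u\|_{V^\ast}$ via the compactness/contradiction argument; the reduction, after subtracting the weak limit, to strong convergence in $L^2([0,T];V^\ast)$; and the Arzel\`a--Ascoli argument in $C([0,T];V^\ast)$, where equicontinuity comes from the uniform $L^2$ bound on $\frac{d}{dt}u_n$ (Cauchy--Schwarz giving the $|t-s|^{1/2}$ modulus) and pointwise relative compactness comes from the uniform $H$-bound together with compactness of $H\hookrightarrow V^\ast$, which indeed follows from Schauder's theorem applied to the compact embedding $V\hookrightarrow H$ under the Gelfand-triple identification $H\cong H^\ast$. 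Two small points deserve explicit mention if you write this up. First, the paper's Lemma \ref{lem:LionsMagenes} is stated qualitatively; the quantitative bound $\sup_{t\in[0,T]}|u_n(t)|^2_H \lesssim \|u_n\|^2_{L^2([0,T];V)} + \|\tfrac{d}{dt}u_n\|^2_{L^2([0,T];V^\ast)}$ that you invoke does follow from it, by writing $|u_n(t)|^2_H = |u_n(s)|^2_H + 2\int_s^t \langle \tfrac{d}{d\tau}u_n, u_n\rangle\,\dd\tau$ and averaging over $s\in[0,T]$, but that one line should be said. Second, Arzel\`a--Ascoli only yields convergence along a further subsequence, so the conclusion that $u_n\to 0$ in $C([0,T];V^\ast)$ holds along that subsequence; since the lemma only claims existence of a subsequence this is harmless, but the phrasing should reflect it (or one appeals to the standard fact that every subsequence admits a further subsequence converging to the same limit $0$, so the whole current sequence converges). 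With these clarifications the argument is complete.
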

We define the following set of assumptions, which will replace \eqref{eq:MonotoneOperator} and \eqref{eq:WeakContinuityI}.
\begin{assumption}[Compactness]\label{def:Compactness}
	The injection $V \hookrightarrow H$ is compact, i.e.
	\begin{equation}\label{eq:Compactness}\tag{H5}
		\text{Any bounded set $\mcK \subset V$ is a compact in $H$.}
	\end{equation}
\end{assumption} 
\begin{assumption}[Weak Continuity II]\label{def:WeakContII} $A$ is weakly continuous in the sense that the map 
	\begin{equation}\label{eq:WeakContinuityII}\tag{H6}
		\begin{aligned}
			A:V_{\text{weak}}\cap H_{\text{strong}} &\longrightarrow V^\ast_{\text{weak}}\\
			u &\mapsto A(u),
		\end{aligned}
	\end{equation}
	is continuous.
\end{assumption}
Under these assumptions we have the following existence result.
\begin{theorem}\label{th:CompactPDE}
	Let $T>0$, $u_0 \in H$, $B \in L^2([0,T];V^\ast)$ and $A: V \rightarrow V^\ast$ be an operator, not necessarily linear, satisfying \eqref{eq:CoercivePrime}, \eqref{eq:LinearGrowth}, \eqref{eq:Compactness} and \eqref{eq:WeakContinuityII}. Then there exists a weak solution $u \in L^2([0,T];H)$ to \eqref{eq:AbstractLinearPDE}.
\end{theorem}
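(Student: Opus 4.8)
The plan is to follow the Galerkin scheme of Theorems \ref{th:LinearPDE} and \ref{th:MonotonePDE}, but to replace the Minty-type monotonicity trick used to identify the nonlinear limit by the strong compactness furnished by Lemma \ref{lem:AubinLions}. First I would set up the approximation as before, fixing a basis $(e_k)_{k\geq 1}\subset V$ orthonormal in $H$; here I would additionally exploit \eqref{eq:Compactness} to choose this basis \emph{orthogonal in $V$} as well (the compact embedding $V\hookrightarrow H$ yields a compact self-adjoint operator whose eigenfunctions simultaneously diagonalise the $H$- and $V$-inner products), so that the Galerkin projections $P_n$ onto $V_n=\Span\{e_1,\dots,e_n\}$ have norm $\leq 1$ on both $H$ and $V$. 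Solving the finite-dimensional system \eqref{eq:GalerkinNLODE} --- existence of which follows from Peano's theorem, since \eqref{eq:WeakContinuityII} makes the coefficients continuous and no uniqueness is needed --- produces approximants $u_n$.

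Next I would extract a priori bounds. Testing against $u_n$ and using \eqref{eq:CoercivePrime} gives, exactly as in \eqref{eq:NLinPDEEnergy}, a bound on $(u_n)_{n\geq1}$ in $L^\infty([0,T];H)\cap L^2([0,T];V)$ uniform in $n$. Then, writing the Galerkin identity as $\langle \frac{\dd}{\dd t}u_n,w\rangle=\langle A(u_n)+B_t,P_nw\rangle$ for $w\in V$ (valid since $\frac{\dd}{\dd t}u_n\in V_n$ and $w-P_nw$ is $H$-orthogonal to $V_n$), the uniform $V$-bound on $P_n$ together with the growth bound \eqref{eq:LinearGrowth} yields a uniform bound on $(\frac{\dd}{\dd t}u_n)_{n\geq1}$ in $L^2([0,T];V^\ast)$. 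With both bounds in hand, Lemma \ref{lem:AubinLions} gives a (non-relabelled) subsequence converging strongly in $L^2([0,T];H)$; passing to further subsequences I also obtain $u_n\rightharpoonup u$ in $L^2([0,T];V)$ and, using \eqref{eq:LinearGrowth} again, $A(u_n)\rightharpoonup \zeta$ in $L^2([0,T];V^\ast)$. Passing to the limit in \eqref{eq:GalerkinNLODE} then produces \eqref{eq:NLWeakLimitPDE} with this $\zeta$ in place of $A(u)$.

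The crux, and the step I expect to be the main obstacle, is identifying $\zeta=A(u)$ without monotonicity, where I would lean entirely on the strong $L^2([0,T];H)$ convergence. Along a further subsequence $u_n(t)\to u(t)$ in $H$ for a.e. $t$, while Fatou applied to the $L^2([0,T];V)$ bound gives $\liminf_n\|u_n(t)\|_V<\infty$ for a.e. $t$. For such $t$, any subsequence with bounded $V$-norm converges weakly in $V$, necessarily to $u(t)$ (the weak-$V$ and strong-$H$ limits must agree), so a subsequence-of-subsequence argument combined with the strong--weak continuity \eqref{eq:WeakContinuityII} shows $\langle A(u_n(t)),v\rangle\to\langle A(u(t)),v\rangle$ for a.e. $t$ and every $v\in V$. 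Since $|\langle A(u_n(t)),v\rangle|\leq c(1+\|u_n(t)\|_V)\|v\|_V$ with the right-hand side bounded in $L^2([0,T])$ and hence uniformly integrable, Vitali's theorem upgrades this to convergence of $\int_0^T\phi\,\langle A(u_n),v\rangle\,\dd t$ for every $\phi\in L^\infty([0,T])$; comparing with the weak limit $\zeta$ forces $\zeta=A(u)$ in $L^2([0,T];V^\ast)$. Substituting back into \eqref{eq:NLWeakLimitPDE} shows $u$ is the desired weak solution, with $u\in L^2([0,T];V)\subset L^2([0,T];H)$ and the initial datum $u_0$ encoded in the limiting identity.
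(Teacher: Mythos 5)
Your overall architecture is exactly the paper's: Galerkin approximation, the coercivity bound \eqref{eq:CoercivePrime} giving the uniform $L^\infty([0,T];H)\cap L^2([0,T];V)$ estimate, \eqref{eq:LinearGrowth} giving uniform bounds on $A(u_n)$ and on $\frac{\dd}{\dd t}u_n$ in $L^2([0,T];V^\ast)$, Aubin--Lions via \eqref{eq:Compactness}, and then identification of the weak limit $\zeta$ with $A(u)$ via \eqref{eq:WeakContinuityII}. The gap is in your identification step. From the uniform $L^2([0,T];V)$ bound, Fatou indeed gives $\liminf_n\|u_n(t)\|_V<\infty$ for a.e.\ $t$, but this produces only \emph{one} $V$-bounded subsequence at each such $t$; it does not prevent $\limsup_n\|u_n(t)\|_V=\infty$ a.e. Consequently the subsequence-of-subsequences principle breaks down: a subsequence of indices along which $\|u_n(t)\|_V\rightarrow\infty$ need not contain any further subsequence to which \eqref{eq:WeakContinuityII} applies, and \eqref{eq:LinearGrowth} permits $\langle A(u_n(t)),v\rangle$ to diverge along it. A concrete failure: take $A$ linear and bounded (which satisfies \eqref{eq:WeakContinuityII}), fix $w$ with $Aw\neq 0$, and set $u_n:=u+\sqrt{\log n}\,\indic_{E_n}w$, where $|E_n|=(\log n)^{-2}$ and the intervals $E_n$ sweep cyclically through $[0,T]$; since $\sum_n|E_n|=\infty$, every $t$ lies in infinitely many $E_n$. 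Then $u_n\rightarrow u$ strongly even in $L^2([0,T];V)$, yet for a.e.\ $t$ the sequence $\langle A(u_n(t)),v\rangle$ is unbounded along the spike indices, so the a.e.\ convergence you feed into Vitali's theorem is false. (In this example the final conclusion still holds; it is your route to it that does not.)

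The step can be repaired by a truncation your argument is missing. Fix $R>0$ and split $[0,T]$ into $G_{n,R}:=\{t:\|u_n(t)\|_V\leq R\}$ and its complement. On $G_{n,R}$ your subsequence-of-subsequences argument is valid verbatim (all elements in play are $V$-bounded by $R$), giving $\indic_{G_{n,R}}(t)\langle A(u_n(t))-A(u(t)),v\rangle\rightarrow 0$ a.e., and \eqref{eq:LinearGrowth} provides the integrable dominating function $\|v\|_V\bigl(c(1+R)+c(1+\|u(t)\|_V)\bigr)$, so dominated convergence handles this part. On the complement, Chebyshev gives $|G_{n,R}^c|\leq C/R^2$, so Cauchy--Schwarz together with \eqref{eq:LinearGrowth} bounds its contribution by $C'/R$ uniformly in $n$; letting $R\rightarrow\infty$, and using that $(A(u_n))_{n\geq 1}$ is bounded in $L^2([0,T];V^\ast)$ while functions of the form $\phi(t)v$, $\phi\in L^\infty([0,T])$, $v\in V$, span a dense subspace of $L^2([0,T];V)$, you recover $\zeta=A(u)$. (The paper sidesteps all of this by reading \eqref{eq:WeakContinuityII} directly at the level of the Bochner spaces; your instinct that a pointwise-in-$t$ argument is needed to justify that reading is sound, it just requires the truncation.) Separately, your construction of a basis orthogonal in both $H$ and $V$ invokes the spectral theorem and hence needs $V$ to be a Hilbert space, whereas the standing assumption is only that $V$ is a separable reflexive Banach space; the issue you are addressing there --- uniform $V$-boundedness of the Galerkin projections, needed to pass from the bound on $A(u_n)+B$ to the bound on $\frac{\dd}{\dd t}u_n$ --- is a genuine technical point that the paper's one-line derivation glosses over, but your fix does not cover the stated generality.
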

\begin{proof}
	Defining, solving and obtaining a uniform bound on the discrete system $(u_n)_{n\geq 1}$ follows exactly as in Theorems \ref{th:LinearPDE} and \ref{th:MonotonePDE}. Thus, as in Theorem \ref{th:MonotonePDE} we have, up to relabelling, a sequence, $(u_n)_{n\geq 1}$ and a $\zeta \in L^2([0,T];V^\ast)$ such that,
	\begin{align*}
		u_n &\rightharpoonup u \quad \text{in}\quad L^2([0,T];V),\\
		u_n &\overset{\ast}{\rightharpoonup} \quad \text{in}\quad  L^\infty([0,T];H),\\
		A(u_n) &\rightharpoonup \zeta \quad \text{in}\quad L^2([0,T];V^\ast).
	\end{align*} 
	Recall that the linear growth assumption \eqref{eq:LinearGrowth} allows us to pass the boundedness of $(u_n)_{n\geq 1}\subset L^2([0,T];V)$ to boundedness of $(A(u_n))_{n\geq 1}\subset L^2([0,T];V^\ast)$, from which we extracted the convergent subsequence above and also gives a uniform bound on $(\frac{d}{dt} u_n)_{n\geq 1}\subset L^2([0,T];V^\ast)$. Therefore, since we assumed \eqref{eq:Compactness}, we may apply Lemma \ref{lem:AubinLions} to extract a further, not relabelled subsequence, $u_n \rightarrow u \in L^2([0,T];H)$. So now $(u_{n})_{n\geq 1}$ is convergent in $V_{\text{weak}}\cap H_{\text{strong}}$. Thus, by \eqref{eq:WeakContinuityII} it follows that $\zeta = A(u) \in L^2([0,T];V^\ast)$ and so we conclude the existence of a weak solution \eqref{eq:AbstractLinearPDE}.
\end{proof}
\begin{remark}
	Note that in this case one needs some additional ingredient to establish uniqueness of the weak solution. This could be by imposing additional conditions on $A$ or by ad hoc methods given a particular equation.
\end{remark}
\begin{example}
	Consider the following non-linear parabolic PDE which is a toy model for the Navier--Stokes equations,
	\begin{equation*}
		\begin{cases}
			\partial_t u - \Delta u = u\cdot \nabla u,& \text{ on }\mbR_+\times \mbT^d,\\
			u\tzero = u_0 &\text{ on }\mbT^d.
		\end{cases}
	\end{equation*}
	Here $u:\mbR_+\times \mbT^d\rightarrow \mbR^d$ is an evolving vector field which in the true Navier--Stokes problem represents the velocity of a fluid. We may identify the non-linear operator $A(u):= \Delta u + u\cdot \nabla u $, for which we may check \eqref{eq:CoerciveM}, \eqref{eq:MGrowth}, \eqref{eq:Compactness} and \eqref{eq:WeakContinuityII} with the triple $(H^1(\mbT^d),L^2(\mbT^d),H^{-1}(\mbT^d))$ and $m=2$. To see \eqref{eq:WeakContinuityII} observe that if $u_n\rightarrow u $ in $V_{\text{weak}}\cap H_{\text{strong}}$, then the non-linear term $(u_n\cdot \nabla u_n)_{n\geq 1} \subset V^\ast$ is a product of a strongly convergent sequence with a weakly convergent sequence, which is always weakly convergent.\\ \par 
	To cover the true Navier--Stokes problem one would need to include the divergence free condition and work in the space of weakly divergence free vector fields. This programme was originally carried out by Leray, see \cite{leray_34}, and now constitutes an important result in the wider programme to establish existence and uniqueness of strong solutions to the Navier--Stokes equations. A review of Leray's work in English can be found in \cite{farwig_17} and the preprint \cite{ozanksi_pooley_17}. A fuller survey of results concerning the Navier--Stokes equations can be found in \cite{seregin_14} (available as \cite{seregin_14_Free}).
\end{example}
In the next chapter we turn to the application of these methods to stochastic PDEs. We focus only on extending the monotone operator approach of Theorem \ref{th:MonotonePDE}, however a detailed treatment of the compactness approach can be found in \cite[Sec. 2.3.3]{pardoux_21_spde_introduction} and the lecture notes by M. Metivier and M. Viot, \cite{metivier_viot_88}.

	\chapter{Variational Approach to Monotone SPDE}\label{ch:SPDE}\hfill \\

We fix a $T>0$ a filtered probability space $(\Omega,\mcF,(\mcF_t)_{t\in [0,T]};\mbP)$ and a Gelfand triple $(V,H,V^\ast)$ as in Chapter \ref{ch:PDE}. Our aim is to apply the approach of Chapter \ref{ch:PDE} to stochastic PDE of the form,
\begin{equation}\label{eq:AbstractSPDE}
	\begin{cases}
		\dd u_t = A(u_t)\dd t + B(u_t) \dd W_t ,&\\
		u\tzero = u_0,
	\end{cases}
\end{equation}
where $A$ is a suitable operator, $W$ denotes an infinite dimensional, Wiener process and $u\mapsto B(u)$ is a map taking values in the space of processes that are stochastically integrable against $W$. These notions will be made more precise below. As in Chapter \ref{ch:PDE} we leave the specification of a spatial domain implicit in \eqref{eq:AbstractSPDE}. For simplicity we consider $A,\,B$ independent of $(t,\omega)\in \mbR_+\times \Omega$. Extending to include these cases, under natural assumptions, is relatively straightforward while the extra degrees of freedom also allow for more bespoke applications and methods.  For a treatment in this more general setting see \cite{prevot_rockner_07}.\\ \par
We briefly recall some necessary material from Chapter \ref{ch:Background}, in particular Subsection \ref{subsec:GenStochInt}. Given $U,H$, Hilbert spaces and a generalized Wiener process $(W_t)_{t\in [0,T]}$ with covariance $Q$, defined on $U$, we say that a predictable process $[0,T]\times \Omega \ni (t,\omega)\mapsto \varphi_t(\omega) \in L^0_2:= L_2(Q^{1/2}U;H)$ is stochastically integrable if,
\begin{equation*}
	\mbP\left(\int_0^T \|\varphi_s\|^2_{L_2^0} \dd s <\infty \right) =1.
\end{equation*} 
In this case we say $\varphi \in \msH_T(W)$ and recall from Theorem \ref{th:GenStochInt} that its stochastic integral against $W$, $(\varphi \bigcdot W_t)_{t\in [0,T]}$, defines a square integrable, local martingale. Furthermore, we can give a concrete representation of the integral. Let $U_1$ be a Hilbert space containing $U_0$ such that the embedding $\iota :U_0\rightarrow U_1$ is Hilbert--Schmidt. Then, for any $t\in [0,T]$ we have the following identity, which is independent of the choices $\iota,\,U_1$, satisfying the necessary requirements,
\begin{equation}\label{eq:GenStochIntBis}
	\varphi \bigcdot W_t = \int_0^t (\varphi_s \circ \iota^{-1})\dd W_s .
\end{equation}
In what follows we will assume $B:V\rightarrow \msH_T(W)$, in the sense that $B(u_{\,\cdot\,}) \in \msH_T(W)$ for any suitable process, $[0,T]\ni t \mapsto u_t \in V$. In order to discuss stochastic processes of the form described by \eqref{eq:AbstractSPDE} we introduce the notion of a semi-martingale, that is a stochastic process whose evolution is described by a finite variation process, $t\mapsto A(u_t)$, in the case of \eqref{eq:AbstractSPDE} and a local martingale term, $t\mapsto B(u)\bigcdot W_t $.
\section{Semi-Martingales and It\^o's Formula}\label{sec:QVandIto}
In Chapter \ref{ch:Background} we introduced the space of square integrable, continuous, Banach space valued, martingales, $\mcM^2_T(E)$. In this chapter we specify to the case of Hilbert space valued martingale and we introduce some more properties of these objects, defining their quadratic variation processes and studying the larger space of semi-martingales through It\^o's formula in both finite and infinite dimensions.\\ \par
We introduce some notation. Given a $T>0$ and $n\geq 1$, we write $\pi_n(T)$ for any set of the form $\pi_n(T)= \{ 0=t^n_0<t^n_1\cdots t^n_n =T\}$. We define $|\pi_n(T)|:= \sup_{i = 1,\ldots,n} |t^n_{i}-t^n_{i-1}|$. We say that a family of partitions $(\pi_n(T))_{n\geq 1}$ is decreasing if $\lim_{n\rightarrow \infty }|\pi_n(T)|=0$. Using this definition we recall what it means for a process to be of finite variation.
\begin{definition}[Finite Variation Process]\label{def:FiniteVariation}
	Let $T>0$ and $\psi:[0,T]\rightarrow H$ be a measurable map. We say that $\psi$ is of finite variation if,
	\begin{equation}\label{eq:FiniteVariation}
		\sup_{\substack{n\in \mbN\\ \pi_n(T)}} \left(\sum_{[s,t]\in \pi_n(T)} \|\psi_{t} - \psi_{s}\|_{H} \right)<\infty,
	\end{equation}
	where the supremum ranges over all partitions for a given $n\in \mbN$. We say that a stochastic process $[0,T]\times \Omega\ni (t,\omega)\mapsto \psi_t(\omega)\in H $ is of finite variation if \eqref{eq:FiniteVariation} holds $\mbP$-a.s.
\end{definition}
Given a Hilbert space $H$, we recall the definition of $L_1(H)\subset \mcL(H;H)$, as the set of trace class operators from $H$ to itself, which becomes a Banach space when equipped with the trace norm, \eqref{eq:TraceNorm}. We say that an $L_1(H)$ valued process, $[0,T]\ni t \mapsto V_t \in L_1(H)$ is non-decreasing if for every $s< t\in [0,T]$, $V_t-V_s$ is a non-negative operator. If $M\in H$, then we denote by, $M\otimes M$, the non-negative element of $L_1(H)$ such that for all $h,\,g\in H$,
\begin{equation}
	\langle (M\otimes M) h,g\rangle = \langle M,h\rangle\langle M,g\rangle.
\end{equation}
\begin{definition}[Quadratic Variation]\label{def:QV}
	Let $M\in \mcM^2_T(H)$. An $L_1(H)$ valued, non-decreasing process, $(V_t)_{t\in [0,T]}$, such that $V_0=0$, is said to be the quadratic variation of $M$, if the $L_1(H)$ valued process,
	\begin{equation}\label{eq:QVProperty}
		[0,T]\ni t\mapsto	M_t\otimes M_t - V_t\in L_1(H),
	\end{equation}
	is an $(\mcF_t)_{t\in [0,T]}$-martingale. Furthermore, given two martingales, $M,\,N\in \mcM^2_T(H)$, we say that an $L_1(H)$ valued process $(V_t)_{t\in [0,T]}$ is the quadratic co-variation of $M$ and $N$ if
	\begin{equation}\label{eq:CoQVProperty}
		[0,T]\ni t\mapsto M_t\otimes N_t- V_t,
	\end{equation}
	is an $(\mcF_t)_{t\in[0,T]}$ martingale.
\end{definition}
We demonstrate that to every $M\in \mcM^2_T(H)$ there exists a unique such process for which we write $[0,T]\ni t\mapsto [M]_t\in L^1(H)$. Uniqueness here means that for any other process, $(V_t)_{t\in [0,T]}$, satisfying \eqref{eq:QVProperty}, $[M]_t=V_t$ $\mbP$-a.s., for all $t\in [0,T]$. Given two martingales, $M,\,N\in \mcM^2_T(H)$ we write $[M,\,N]$ for their quadratic co-variation. If $M,\,N$ are local martingales then the same quantities are analogously defined.
\begin{proposition}\label{prop:QVExistUnique}
	Let $\mcM^2_{T;\text{loc}}(H)$. Then there exists exactly one, continuous, non-decreasing, zero at zero process $([M]_t)_{t\in [0,T]}\subset L_1(H)$ such that setting $V= [M]$ in \eqref{eq:QVProperty} gives a continuous local martingale. Furthermore, for any $t\in [0,T]$ and decreasing sequence of partitions $(\pi_n(t))_{n\geq 1}$, it holds that
	\begin{equation}\label{eq:QVDefine1}
		[M]_t = \lim_{n\rightarrow \infty } \sum_{i=1}^n (M_{t_i}-M_{t_{i-1}})\otimes (M_{t_i}-M_{t_{i-1}}),\quad t\in [0,T],
	\end{equation} 
	where the limit holds in $L_1(H)$ in probability. For $M,\,N\in \mcM^2_{T;\text{loc}}(H)$ there exists a unique, zero at zero, finite variation process, $([M,N]_{t})_{t\in [0,T]}$ such that setting $V=[M,N]$ in  \eqref{eq:CoQVProperty} gives a continuous, local martingale.
\end{proposition}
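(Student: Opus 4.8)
The plan is to prove uniqueness first, then construct $[M]$ for a genuine martingale $M \in \mcM^2_T(H)$ by combining the real Doob--Meyer decomposition of $\|M_\cdot\|_H^2$ with a trace-norm control on the Riemann-type tensor sums, and finally to recover the general local case by localisation and the co-variation by polarisation. For \textbf{uniqueness}, suppose $V$ and $V'$ both turn \eqref{eq:QVProperty} into a continuous local martingale. Both are non-decreasing in the operator order, so for fixed $h, g \in H$ the real process $t \mapsto \langle (V_t - V'_t)h, g\rangle$ has finite variation (by polarisation in $h, g$) and, since $L \mapsto \langle Lh, g\rangle$ is a bounded functional on $L_1(H)$, it is a continuous real local martingale starting at $0$ (Remark \ref{rem:LinOperatorExpecationCommute}). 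A continuous local martingale of finite variation is constant, so it vanishes $\mbP$-a.s.; letting $h, g$ range over a countable dense set and using path-continuity in $t$ gives $V \equiv V'$.

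For \textbf{existence} take first $M \in \mcM^2_T(H)$ a true martingale. By Proposition \ref{prop:RealMartingales} the real process $\|M_\cdot\|_H^2$ is a continuous submartingale, and its Doob--Meyer decomposition yields a unique continuous, non-decreasing, zero-at-zero process $a = (a_t)$ with $\|M_t\|_H^2 - a_t$ a martingale. This $a$ is the candidate trace of $[M]$ and supplies the essential scalar estimate: writing $S^{\pi}_t := \sum_{[t_{i-1},t_i]\in\pi} (M_{t_i}-M_{t_{i-1}})\otimes(M_{t_i}-M_{t_{i-1}})$ for a partition $\pi = \pi_n(t)$, one has $\Tr S^{\pi}_t = \sum_i \|M_{t_i}-M_{t_{i-1}}\|_H^2 \to a_t$ in probability, since $\|M_t\|_H^2 = \sum_k \langle M_t, e_k\rangle^2$ reduces this to the sum over $k$ of the real quadratic variations of the coordinate martingales $\langle M_\cdot, e_k\rangle$.

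The step I expect to be the \textbf{main obstacle} is upgrading this trace-level convergence to convergence of $S^{\pi}_t$ in the trace norm $\|\cdot\|_{L_1}$, in probability, as $|\pi| \to 0$. Because each $S^\pi_t$ is non-negative, its trace norm equals its trace, so convergence of $\Tr S^\pi_t$ gives tightness; the Cauchy estimate showing that $(S^{\pi_n}_t)_n$ is Cauchy in probability in $L_1(H)$ mirrors the classical real computation but must be run at the Hilbert--Schmidt/trace level, exploiting the orthogonality of martingale increments and the $L^2$ control furnished by $a$. Denoting the limit $[M]_t$, continuity, monotonicity in the operator order and $[M]_0=0$ are inherited from the sums and from $a$, and the limit formula \eqref{eq:QVDefine1} is immediate. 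The defining property \eqref{eq:QVProperty} is then checked by testing against $h, g$: the relation $\langle (M_t\otimes M_t - [M]_t)h, g\rangle = \langle M_t, h\rangle\langle M_t, g\rangle - \langle [M]_t h, g\rangle$ defines a real martingale precisely because $\langle [M]_t h, g\rangle$ equals the real co-variation $[\langle M_\cdot, h\rangle, \langle M_\cdot, g\rangle]_t$.

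To pass to $M \in \mcM^2_{T;\text{loc}}(H)$, apply the construction along a localising sequence $(\tau_n)$: the stopped martingales $M^{\tau_n}$ admit quadratic variations $[M^{\tau_n}]$ which, by the uniqueness already proved applied to the stopped relation, agree on $[0,\tau_n]$, and one patches them into a process $[M]$ on $[0,T]$, with \eqref{eq:QVDefine1} transferring from each $M^{\tau_n}$. Finally the \textbf{co-variation} is defined by polarisation, $[M,N] := \tfrac14([M+N]-[M-N])$, which is of finite variation (a difference of non-decreasing processes), zero at zero, and makes \eqref{eq:CoQVProperty} a local martingale by bilinearity of $(h,g)\mapsto \langle M_t,h\rangle\langle N_t,g\rangle$; its uniqueness follows exactly as in the first paragraph.
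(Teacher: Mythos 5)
Your overall architecture is sound, and parts of it are actually cleaner than the paper's own treatment: your uniqueness argument (the difference of two candidates, tested against $h,g$, is a continuous real local martingale with paths of finite variation starting at zero, hence vanishes) is the complete version of what the paper compresses into a single expectation identity, and your localisation and polarisation steps coincide with the paper's. Where you genuinely diverge is existence: the paper never works with the tensor Riemann sums directly. It defines $[M]_t := \sum_{k,l}[M_k,M_l]_t\, e_k\otimes e_l$ coordinatewise, importing the one-dimensional result \eqref{eq:FiniteDimQVDefine} for each pair of real martingales $M_k=\langle M,e_k\rangle$, $M_l=\langle M,e_l\rangle$, proves that this sum defines an element of $L_1(H)$, and only then reads off \eqref{eq:QVProperty} and \eqref{eq:QVDefine1}; you instead take the Riemann-sum limit as the construction itself and use Doob--Meyer for $\|M\|_H^2$ to control traces.

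The genuine gap is exactly the step you flag as the ``main obstacle'': you never prove that $(S^{\pi_n}_t)_n$ is Cauchy in probability in $L_1(H)$, and this does not follow by ``mirroring the classical real computation''. What the coordinate theory gives you cheaply is (i) weak-operator convergence, $\langle S^{\pi_n}_t h,g\rangle \to [\langle M_{\,\cdot\,},h\rangle,\langle M_{\,\cdot\,},g\rangle]_t$ in probability for fixed $h,g$, and (ii) convergence of the traces, $\Tr S^{\pi_n}_t \to a_t$ --- and even (ii) requires interchanging the infinite sum over coordinates with the limit over partitions, which you assert rather than justify. Neither (i) nor (ii), separately or together, controls the trace norm: entrywise convergence of operators says nothing about $\|\cdot\|_{L_1}$ in general. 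The ingredient that rescues the argument is positivity, which you never invoke: for non-negative trace-class operators, weak-operator convergence together with convergence of traces does imply trace-norm convergence (a Radon--Riesz/Gr\"umm-type theorem). So the correct way to finish your route is to define the candidate limit $[M]_t$ weakly through the real covariations, identify $\Tr[M]_t = a_t$ via uniqueness in Doob--Meyer, and then apply that theorem to the non-negative operators $S^{\pi_n}_t$ along almost surely convergent subsequences, upgrading to convergence in probability by the usual sub-subsequence argument. Alternatively, build $[M]$ coordinatewise as the paper does, which sidesteps the Riemann sums entirely in the existence step. As written, the heart of the infinite-dimensional statement is asserted rather than proved.
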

\begin{proof}
	To prove existence, consider $V,\,\tilde{V}$, two processes satisfying \eqref{eq:QVProperty} for a martingale $M\in \mcM^2_T(H)$. Therefore, for any $t\in [0,T]$ we have,
	\begin{equation*}
		\mbE[V_t-\tilde{V}_t] = \mbE[(M_t\otimes M_t-V_t) - (M_t \otimes M_t -\tilde{V}_t)] = 0.
	\end{equation*}
	We reduce the proof of existence to considering finite dimensional martingales. Let $(e_k)_{k\geq 1}$ be a basis of $H$. Then for any $k\geq 1$, $M_{k;t}:= \langle M_t,e_k\rangle$, defines a real valued martingale. It follows from \cite[Thm. 9.1.6 \& Def. 11.1.1]{cohen_elliott_15} that for any $k,\,l \geq 1$, there exists a unique, real valued process $([M_k,M_l]_t)_{t\in [0,T]}$ such that $t\mapsto M_{k;t} M_{l;t} - [M_k,M_l]_t$ is an $(\mcF_t)_{t\in [0,T]}$ martingale. Furthermore, we have the explicit formula,
	\begin{equation}\label{eq:FiniteDimQVDefine}
		[M_k,M_l]_t = \lim_{n\rightarrow  \infty} \sum_{i=1}^n (M_{k;t_i}-M_{k;t_{i-1}})(M_{l;t_i}-M_{l;t_{i-1}}), \quad t\in [0,T].
	\end{equation}
	Where the sum converges in probability along any decreasing sequence of partitions to a real valued process. By polarisation we have that
	\begin{equation*}
		[M_k,M_l] = \frac{1}{4} \left([M_k+M_l] -  [M_k-M_l]\right).
	\end{equation*}
We now show that we can define,
	\begin{equation}\label{eq:QVDefine2}
		[M]_t := \sum_{k,l \geq 1} [M_k,M_l]_t e_k\otimes e_l , \quad t\in [0,T],
	\end{equation}
	where $[M_k,M_l]_t\in \mbR$ for each $k,\,l\geq 1$ and $t\in [0,T]$, so that if the sum is finite, the right hand side defines an element of $L_1(H)$. We have,
	\begin{align*}
		\mbE \left[\,\left|\sum_{k,l \geq 1} [M_k,M_l]_t e_k\otimes e_l \right| \,\right] = \mbE\left[\sum_{k,\,l\geq 1} |[M_k,M_l]| \right] \leq \frac{1}{4}\mbE\left[\,\sum_{k,\,l\geq 1} [M_k+M_l]+[M_k-M_l]\right],
	\end{align*}
	where in the last inequality we used the polarisation identity and that $[M_k+M_l]$ and $[M_k-M_l]$ are both non-negative. By definition, $\mbE[[M_k+M_l]] =\mbE[(M_k+M_l)^2]$ and $\mbE[[M_k-M_l]]=\mbE[(M_k-M_l)^2]$. So by linearity of the expectation (properly employed after truncating the sums and then taking limits) and applying Young's product inequality we have
	\begin{equation*}
		\mbE\left[ \,\left|\sum_{k,l \geq 1} [M_k,M_l]_t e_k\otimes e_l \right|\, \right] \lesssim \mbE\left[\sum_{k\geq 1} M_k^2 \right]= \mbE\left[\sum_{k\geq 1} |\langle M,e_k\rangle|^2\right] <\infty.
	\end{equation*}
	Thus the right hand side of \eqref{eq:QVDefine1} is $\mbP$-a.s. a well defined element of $L_1(H)$. Almost sure continuity of the map $[0,T]\ni t\mapsto [M]_t \in L_1(H)$ is easily established using continuity of $[0,T]\ni t \mapsto M_t\in H$. Furthermore, combining \eqref{eq:FiniteDimQVDefine} and \eqref{eq:QVDefine2} shows that $[M]$ as defined satisfies \eqref{eq:QVProperty} and agrees with \eqref{eq:QVDefine1}.\\ \par
	In order to obtain the results for the quadratic co-variation $[M,N]$ between two elements $M,N \in \mcM^2_T(H)$ it suffices to apply the polarisation identity again, $[M,N] = \frac{1}{4}\left([M+N]-[M-N]\right)$ and the above argument to the quadratic variations $[M+N], \, [M-N]$. Uniqueness of this process holds by an adaptation of the proof of uniqueness for $[M]$. The fact that $[0,T]\ni t \mapsto [M,N]_t \in L_1(H)$ is $\mbP$-a.s. of finite variation follows from the same fact for finite dimensional quadratic co-variations.
\end{proof}
\begin{remark}
	Note that the quadratic variation is not the $2$-variation in the sense of Definition \ref{def:FiniteVariation}, which would be defined as the quantity,
	\begin{equation*}
		\sup_{\substack{n\in \mbN \\ \pi_n(T)}} \left( \sum_{[s,t]\in \pi_n(T)} \|\psi_t-\psi_s\|^2_{H} \right).
	\end{equation*}
	In contrast the quadratic variation is only the limit along any given decreasing sequence of partitions.
\end{remark}
The following proposition collects some useful facts regarding the quadratic variation that we give without proof.
\begin{proposition}\label{prop:QVProperties} The following statements all hold,
	\begin{enumerate}[label=\arabic*)]
		\item \label{it:ZeroQVZeroMart} If $M\in \mcM^2_T(H)$ is such that $\mbP$-a.s. $[M]\equiv 0$ on $[0,T]$, then $M\equiv M_0$ $\mbP$-a.s. on $[0,T]$.
		\item \label{it:IndependentCoQV} If $M,\,N \in \mcM^2_T(H)$ are independent martingales, in the sense that $M_t \perp N_t$ for all $t\in [0,T]$, then $[M,N]\equiv 0$ $\mbP$-a.s. on $[0,T]$.
		\item \label{it:FiniteVarQV} Any continuous, finite variation process $[0,T]\ni t\mapsto \psi_t \in H$, $\mbP$-a.s. has zero quadratic variation and the quadratic co-variation, $[\psi,M] \equiv 0$, $\mbP$-a.s. for any $M\in \mcM_T^2(H)$.
		\item \label{it:LocalQV} If $(M_t)_{t\in [0,T]}$ is a square integrable, local martingale, then for any $(\mcF_t)_{t\in [0,T]}$-stopping time, $\Omega\ni \omega\mapsto \tau(\omega)\in [0,T]$, such that $[0,T] \ni t\mapsto M^\tau_t := M_{\tau\wedge t}$, is a true martingale, $\mbP$-a.s. one has that $[M^\tau]_t = [M]_{\tau\wedge t}$ for all $t\in [0,T]$.
		\item \label{it:NormQV} If $M\in \mcM_T^2(H)$, then since $\|M\|_H\in \mcM_T^2(\mbR)$ there exists a unique, non-decreasing, continuous, zero at zero process, for which we write $[0,T]\ni t \mapsto [[M]]_t$ such that $\|M\|^2_H-[[M]]$ is again a square integrable martingale. Furthermore, $[[M]] = \Tr [M]$. 
	\end{enumerate}
\end{proposition}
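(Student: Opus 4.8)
The unifying strategy is to transfer each statement to the real-valued coordinate martingales $M_{k;t} := \langle M_t, e_k\rangle$ and $N_{l;t} := \langle N_t, e_l\rangle$, for which the corresponding one-dimensional facts are classical, and then to reassemble the Hilbert-space statement through the series representation
\begin{equation*}
	[M]_t = \sum_{k,l\geq 1} [M_k,M_l]_t\, e_k\otimes e_l,
\end{equation*}
established in Proposition \ref{prop:QVExistUnique} as \eqref{eq:QVDefine2}. The one exception is Item \ref{it:NormQV}, for which I would instead exploit that the trace is a bounded linear functional on $L_1(H)$.

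For Item \ref{it:ZeroQVZeroMart}, reading off the representation \eqref{eq:QVDefine2} shows that $[M]\equiv 0$ forces $[M_k]=[M_k,M_k]\equiv 0$ for every $k$. Since $M_{k;t}^2-[M_k]_t = M_{k;t}^2$ is then a continuous real martingale that is zero at $t=0$, one has $\mbE[M_{k;t}^2]=0$, hence $M_k\equiv 0$ $\mbP$-a.s.; intersecting the countably many exceptional null sets and using path continuity gives $M\equiv 0 = M_0$ in $H$. For Item \ref{it:IndependentCoQV}, independence of $M$ and $N$ makes each pair $M_k,N_l$ independent real martingales, so that $M_{k;t}N_{l;t}$ is again a martingale and $[M_k,N_l]\equiv 0$; feeding this into the co-variation analogue of \eqref{eq:QVDefine2} yields $[M,N]\equiv 0$.

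For Item \ref{it:FiniteVarQV} I would argue directly from the partition limits. Writing $\|a\otimes b\|_{L_1(H)}=\|a\|_H\|b\|_H$, the approximating sums obey
\begin{equation*}
	\left\| \sum_{i} (\psi_{t_i}-\psi_{t_{i-1}})\otimes (M_{t_i}-M_{t_{i-1}})\right\|_{L_1(H)} \leq \Big(\max_i \|M_{t_i}-M_{t_{i-1}}\|_H\Big)\, \mathrm{Var}(\psi),
\end{equation*}
and the analogous bound with $M$ replaced by $\psi$ controls the quadratic variation of $\psi$ itself. Since $\psi$ and $M$ are continuous on the compact interval $[0,T]$, the maxima tend to $0$ as the mesh shrinks, while $\mathrm{Var}(\psi)<\infty$ by \eqref{eq:FiniteVariation}, so both limits vanish. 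For Item \ref{it:LocalQV} I would stop the $L_1(H)$-valued martingale $M_t\otimes M_t-[M]_t$ at $\tau$: since $M_{t\wedge\tau}\otimes M_{t\wedge\tau} = M^\tau_t\otimes M^\tau_t$ and optional stopping preserves the martingale property, $M^\tau_t\otimes M^\tau_t - [M]_{t\wedge\tau}$ is a martingale, so the uniqueness clause of Proposition \ref{prop:QVExistUnique} forces $[M^\tau]_t = [M]_{t\wedge\tau}$.

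Finally, for Item \ref{it:NormQV}, the key observation is that $\Tr:L_1(H)\to\mbR$ is bounded, with $|\Tr T|\leq\|T\|_{L_1}$, and that applying a bounded linear functional commutes with conditional expectation (Remark \ref{rem:LinOperatorExpecationCommute}). Applying $\Tr$ to the martingale $M\otimes M-[M]$ and using $\Tr(M_t\otimes M_t)=\sum_k\langle M_t,e_k\rangle^2 = \|M_t\|_H^2$ shows that $\|M_t\|_H^2 - \Tr[M]_t$ is a real martingale; since $\|M\|_H^2$ is a submartingale (Proposition \ref{prop:RealMartingales}) admitting a unique compensator, this identifies $[[M]] = \Tr[M]$. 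I expect the main obstacle to lie in Item \ref{it:FiniteVarQV}, namely rigorously identifying the partition limit above with the co-variation defined through the martingale property \eqref{eq:CoQVProperty}, and relatedly in Item \ref{it:NormQV} in justifying the termwise passage of the infinite sum through the conditional expectation; both are handled within the $L_1(H)$-valued martingale framework, with the uniform integrability needed supplied by $\|M\|_{\mcM^2_T}<\infty$.
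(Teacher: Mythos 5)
The paper states this proposition explicitly \emph{without} proof (``some useful facts regarding the quadratic variation that we give without proof''), so there is no in-paper argument to compare yours against; your proposal has to stand on its own, and it essentially does. Your strategy is the natural one inside the paper's framework: Items \ref{it:ZeroQVZeroMart}, \ref{it:IndependentCoQV} and \ref{it:LocalQV} reduce to the coordinate martingales via the representation \eqref{eq:QVDefine2}, polarisation, and the uniqueness clause of Proposition \ref{prop:QVExistUnique}; Item \ref{it:FiniteVarQV} is handled by the classical partition estimate $\bigl\|\sum_i \Delta\psi_i\otimes\Delta M_i\bigr\|_{L_1}\leq \max_i\|\Delta M_i\|_H\,\mathrm{Var}(\psi)$ together with uniform continuity on $[0,T]$; and Item \ref{it:NormQV} follows by applying the bounded linear functional $\Tr$ to the ($L_1(H)$-valued) martingale $M\otimes M-[M]$ and invoking uniqueness of the Doob--Meyer compensator of the real submartingale $\|M\|_H^2$ (Proposition \ref{prop:RealMartingales}). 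Two caveats are worth recording. First, in Item \ref{it:IndependentCoQV} your argument uses independence of $M$ and $N$ \emph{as processes}: that is what makes $M_{k;t}N_{l;t}$ a martingale with respect to a common filtration (or, equivalently, kills the off-diagonal terms in the partition sums). The hypothesis as literally written, $M_t\perp N_t$ for each fixed $t$, is weaker and does not by itself support that step; this is a looseness in the statement rather than in your proof, but you should say explicitly which reading you take. Second, in Item \ref{it:LocalQV} the process $M\otimes M-[M]$ is only a \emph{local} martingale when $M$ is a local martingale, so ``optional stopping preserves the martingale property'' should be read as ``stopping at a bounded stopping time preserves the local martingale property''; since the existence and uniqueness statement of Proposition \ref{prop:QVExistUnique} is formulated for $\mcM^2_{T;\text{loc}}(H)$, the argument then goes through verbatim.
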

Given an interval $[0,T]\subset \mbR_+$ and a $p\in (0,\infty)$, we say that an $H$ valued, martingale, $(M_t)_{t\in [0,T]}\subset H$, is $p$-integrable, if $\mbE\left[\sup_{t\in [0,T]}\|M_t\|^p_H \right]<\infty$. We state a version of the BDG inequality suited to the particular setting of Hilbert space valued martingales.
\begin{theorem}[Burkholder--Davis--Gundy]\label{th:BDG}
	Let $p\in [1,\infty)$ and $(M_t)_{t\in [0,T]}$ be a $p$-integrable, $H$ valued, local martingale. Then, there exist constants $c(p),C(p)>0$ such that for any $(\mcF_t)_{t\geq 0}$-stopping time $\tau$,
	\begin{equation*}
		c \mbE\left[(\Tr [M^\tau])^{\frac{p}{2}}\right] \leq \mbE\left [\sup_{t \in [0,\tau]} \|M_t\|_H^p\right]   \leq C \mbE \left[(\Tr [M^\tau])^{\frac{p}{2}}\right]
	\end{equation*}
	If $(M_t)_{t\in [0,T]}$ is a continuous, local martingale then the result holds for $p \in (0,\infty)$.
\end{theorem}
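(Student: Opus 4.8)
The plan is to reduce the Hilbert--space inequality to the classical real--valued Burkholder--Davis--Gundy inequality (see \cite{cohen_elliott_15}), via the real martingale built from the norm. By item \ref{it:NormQV} of Proposition \ref{prop:QVProperties}, the process $N_t := \|M_t\|_H^2 - \Tr[M]_t$ is a real local martingale (continuous whenever $M$ is) with $N_0 = 0$, and $\Tr[M] = [[M]]$ is non-decreasing. First I would dispose of integrability issues by localisation: choosing a localising sequence $(\tau_n)_{n\geq 0}$ for $M$ and intersecting with $\rho_n := \inf\{t : \|M_t\|_H \geq n\}$, I reduce to the case where $M$ is a bounded true martingale in $\mcM^2_T(H)$, so that $\mbE[\sup_t\|M_t\|_H^p]$ and $\mbE[(\Tr[M]_T)^{p/2}]$ are finite; the general statement then follows by replacing $\tau$ with $\tau\wedge\tau_n\wedge\rho_n$, using item \ref{it:LocalQV} together with monotone convergence on both sides as $n\to\infty$.

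The key estimate is a pathwise comparison of $[N]$ with the two quantities appearing in the theorem,
\begin{equation*}
	[N]_T \,\leq\, 4\Big(\sup_{t\in[0,T]}\|M_t\|_H^2\Big)\,\Tr[M]_T .
\end{equation*}
I would prove this from the partition--sum characterisations \eqref{eq:QVDefine1}, \eqref{eq:FiniteDimQVDefine} and \eqref{eq:QVDefine2}: since $\Tr[M]$ is of finite variation it contributes nothing to the quadratic variation (item \ref{it:FiniteVarQV}), so $[N] = [\,\|M\|_H^2\,]$ is the limit in probability of $\sum_i (\|M_{t_i}\|_H^2 - \|M_{t_{i-1}}\|_H^2)^2$ along a decreasing sequence of partitions. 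Writing $\|M_{t_i}\|_H^2 - \|M_{t_{i-1}}\|_H^2 = \langle M_{t_i}-M_{t_{i-1}},\,M_{t_i}+M_{t_{i-1}}\rangle$ and applying Cauchy--Schwarz bounds each summand by $4(\sup_t\|M_t\|_H^2)\,\|M_{t_i}-M_{t_{i-1}}\|_H^2$, while the remaining sum $\sum_i\|M_{t_i}-M_{t_{i-1}}\|_H^2$ converges to $\Tr[M]_T$ by \eqref{eq:QVDefine2}.

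With this in hand, applying the real--valued BDG inequality to $N$ at exponent $p/2$ gives $\mbE[\sup_t|N_t|^{p/2}] \leq C\,\mbE[[N]_T^{p/4}]$, and raising the pathwise comparison to the power $p/4$ gives $\mbE[[N]_T^{p/4}] \leq 4^{p/4}\,\mbE[(\sup_t\|M_t\|_H)^{p/2}(\Tr[M]_T)^{p/4}]$. Since the definition of $N$ forces $\sup_t\|M_t\|_H^p \leq C_p\big(\sup_t|N_t|^{p/2} + (\Tr[M]_T)^{p/2}\big)$, combining these and applying Young's inequality $ab \leq \varepsilon a^2 + C_\varepsilon b^2$ with $a = (\sup_t\|M_t\|_H)^{p/2}$, $b = (\Tr[M]_T)^{p/4}$ lets me absorb a small multiple of $\mbE[\sup_t\|M_t\|_H^p]$ into the left-hand side, producing the upper bound $\mbE[\sup_t\|M_t\|_H^p] \leq C\,\mbE[(\Tr[M]_T)^{p/2}]$. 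For the reverse bound I would argue symmetrically, starting from $\Tr[M]_T = \|M_T\|_H^2 - N_T$ and again using the comparison and Young's inequality to absorb $\mbE[(\Tr[M]_T)^{p/2}]$. Replacing $T$ by $\tau$ throughout, the stopped process $M^\tau$ satisfies the hypotheses with $\Tr[M^\tau] = \Tr[M]_{\tau\wedge\,\cdot\,}$ by item \ref{it:LocalQV}, giving the stated stopped inequalities.

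The main obstacle is the absorption step: transferring a multiple of $\mbE[\sup_t\|M_t\|_H^p]$ from the right to the left is only legitimate once this quantity is known to be finite, which is exactly what the preliminary localisation secures and why the clean statement is phrased for $p$-integrable martingales. A secondary point requiring care is the bookkeeping of exponents: the reduction applies the real BDG to $N$ at exponent $p/2$, so in the general (c\`adl\`ag) case one invokes the classical real inequality for every $p/2 \in [1/2,\infty)$, whereas in the continuous case $N$ is continuous and the real BDG holds at all exponents in $(0,\infty)$, which is precisely what delivers the full range $p\in(0,\infty)$ asserted in the final sentence of the theorem.
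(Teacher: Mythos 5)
Most of your reduction is sound, and in fact it supplies a genuine argument where the paper gives essentially none (the paper's proof is a two-line citation of the finite-dimensional BDG inequality in \cite[Thm.\ 11.5.5]{cohen_elliott_15} plus the identification $[[M]]=\Tr[M]$ from Item \ref{it:NormQV}). Your localisation, the use of $N_t=\|M_t\|_H^2-\Tr[M]_t$, the pathwise comparison $[N]_T\leq 4\bigl(\sup_{t}\|M_t\|_H^2\bigr)\Tr[M]_T$ (the trace is continuous on $L_1(H)$, so the partition sums $\sum_i\|M_{t_i}-M_{t_{i-1}}\|_H^2$ do converge to $\Tr[M]_T$ in probability), and the Young-inequality absorption are all correct, modulo the minor point that for the absorption on the lower-bound side you should also localise so that $\mbE[(\Tr[M]_T)^{p/2}]<\infty$, e.g.\ by additionally stopping when $\Tr[M]$ exceeds $n$.

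The genuine gap is exactly the point your final paragraph tries to wave through: for $p\in[1,2)$ and a merely c\`adl\`ag local martingale, you apply the real-valued BDG upper bound to $N$ at exponent $p/2<1$, and that inequality is \emph{false} for discontinuous local martingales; the classical result holds for exponents in $[1,\infty)$ in the c\`adl\`ag case and on $(0,\infty)$ only for continuous ones. A compensated Poisson process shows the failure: for $M_t=N_t-\lambda t$ one has $[M]_t=N_t$, so as $\lambda T\rightarrow 0$,
\begin{equation*}
	\mbE\left[[M]_T^{q/2}\right]\sim \lambda T, \qquad \mbE\left[\sup_{t\leq T}|M_t|^{q}\right]\;\geq\; (\lambda T)^{q}e^{-\lambda T},
\end{equation*}
and $(\lambda T)^{q}\gg\lambda T$ whenever $q<1$, so no constant can work. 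The halving of the exponent is intrinsic to your route (you trade the $H$-valued process $M$ for the scalar process $\|M\|_H^2$), so this method cannot reach the range $p\in[1,2)$ claimed in the first assertion of the theorem for discontinuous $M$; covering that range requires a structurally different argument carried out directly for $H$-valued martingales (e.g.\ the Davis decomposition or good-$\lambda$ inequalities, which is what the cited literature does). What your proof does establish correctly is the theorem for continuous local martingales for all $p\in(0,\infty)$ — which is all this paper ever uses, since its integrals against Wiener processes are continuous — and the general case for $p\geq 2$.
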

\begin{proof}
	For a proof in the case of general finite dimensional martingales, see \cite[Thm. 11.5.5]{cohen_elliott_15}. In our case the particular form of the quantities inside the left and right expectations follows from Item  \ref{it:NormQV} of Proposition \ref{prop:QVProperties}.
\end{proof}
By way of an example we exhibit the quadratic variation of stochastic integrals, which from now on will be our central examples of continuous, local martingales. We state this result only for stochastic integrals against generalized Wiener processes. Note that stochastic integrals against finite dimensional Wiener processes and $Q$-Wiener processes are both particular examples of the generalised cases.
\begin{proposition}
	Let $U,\,H$ be two Hilbert spaces, $(W_t)_{t\in [0,T]}$ be a generalised Wiener process defined on $U$ with covariance $Q$ and $\varphi \in \msH_T(W)$ be a stochastically integrable process. Then the stochastic integral $[0,T]\ni t\mapsto \varphi \bigcdot W_t\in H$ defined by \eqref{eq:GenStochIntBis} is a square integrable, continuous, local martingale with quadratic variation,
	\begin{equation}\label{eq:GenItoQV}
		[\varphi \bigcdot W]_t = \int_0^t (\iota^{-1}Q^{1/2}\varphi_s)(\iota^{-1}Q^{1/2}\varphi_s)^\ast\dd s.
	\end{equation}	
\end{proposition}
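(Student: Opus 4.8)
The plan is to separate the two assertions. That $\varphi\bigcdot W$ is a square integrable, continuous local martingale is precisely the conclusion of Theorem~\ref{th:GenStochInt}, so the only new content is the identity~\eqref{eq:GenItoQV}. To pin down the quadratic variation I would use its \emph{uniqueness} from Proposition~\ref{prop:QVExistUnique}: it suffices to produce one continuous, non-decreasing, $L_1(H)$-valued, zero-at-zero process $V$ such that $M\otimes M-V$ is a (local) martingale, where $M:=\varphi\bigcdot W$; then necessarily $[M]=V$. As a preliminary reduction, recall from Proposition~\ref{prop:Q1Isom} that, having fixed the Hilbert--Schmidt embedding $\iota:U_0\to U_1$ and $Q_1:=\iota Q\iota^\ast\in L_1(U_1)$, the process $W$ is a genuine $Q_1$-Wiener process on $U_1$ and, by~\eqref{eq:GenStochIntBis}, $M_t=\int_0^t\psi_s\,\dd W_s$ with $\psi_s:=\varphi_s\circ\iota^{-1}\in L_2(Q_1^{1/2}(U_1);H)$. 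Accordingly I set
$$V_t:=\int_0^t\bigl(\psi_sQ_1^{1/2}\bigr)\bigl(\psi_sQ_1^{1/2}\bigr)^\ast\,\dd s,$$
where $\psi_sQ_1^{1/2}\in L_2(U_1;H)$, so the integrand is a non-negative element of $L_1(H)$ with trace $\|\psi_s\|_{L_2(Q_1^{1/2}(U_1);H)}^2=\|\varphi_s\|_{L^0_2}^2$; this makes $V$ continuous, non-decreasing, $L_1(H)$-valued and $V_0=0$.

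For $\psi$ simple I would verify the martingale property of $M\otimes M-V$ directly, by an operator-valued, conditional version of the It\^o isometry of Proposition~\ref{prop:SimpleStochInt}. Using the decomposition $M_t\otimes M_t-M_s\otimes M_s=(M_t-M_s)\otimes(M_t-M_s)+M_s\otimes(M_t-M_s)+(M_t-M_s)\otimes M_s$, the two cross terms vanish under $\mbE[\,\cdot\,|\mcF_s]$ because $M$ is a martingale, while independence of the increments of $W$ together with $[W]_t=tQ_1$ (immediate from the independent Gaussian increments) gives $\mbE[(M_t-M_s)\otimes(M_t-M_s)\mid\mcF_s]=\mbE[V_t-V_s\mid\mcF_s]$, exactly as in the scalar computation of Proposition~\ref{prop:SimpleStochInt} carried out on rank-one tensors $h\otimes g$. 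Uniqueness in Proposition~\ref{prop:QVExistUnique} then yields $[M]=V$ for simple $\psi$; equivalently one may compute the scalar covariations and reassemble $[M]=\sum_{k,l}[\langle M,f_k\rangle,\langle M,f_l\rangle]\,f_k\otimes f_l$ as in the proof of that proposition.

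For general $\psi\in\msH^2_T(W)$ I would approximate by simple $\psi^n$ with $\|\psi-\psi^n\|_{\msH^2_T}\to0$ (Proposition~\ref{prop:SimplePredictApprox}). The isometry~\eqref{eq:SimpleItoIsometry} gives $M^n:=\psi^n\bigcdot W\to M$ in $\mcM^2_T(H)$, and the corresponding $V^n\to V$ since $\|\psi^n_sQ_1^{1/2}-\psi_sQ_1^{1/2}\|_{L_2}=\|\psi^n_s-\psi_s\|_{L^0_2}$. The point requiring care is that $[M^n]\to[M]$ in $L_1(H)$ in probability, i.e.\ that $M\mapsto[M]$ is continuous for $\mcM^2_T$-convergence; this follows from the polarisation identity and the equality $\mbE[\Tr[M^n-M]_T]=\mbE\bigl[[[M^n-M]]_T\bigr]=\|M^n-M\|_{\mcM^2_T}^2$ coming from Item~\ref{it:NormQV} of Proposition~\ref{prop:QVProperties} (cf.\ \cite[Sec.~4.5]{daprato_zabczyk_14}). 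Matching limits gives $[M]=V$ throughout $\msH^2_T(W)$. Finally, for $\varphi\in\msH_T(W)$, where the integral is merely a local martingale, I localise along the sequence~\eqref{eq:LocalisingSequence} and apply Item~\ref{it:LocalQV} of Proposition~\ref{prop:QVProperties}, $[M^{\tau_n}]_t=[M]_{\tau_n\wedge t}$, letting $n\to\infty$.

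It then remains to rewrite $V$ in the stated shape. Using $Q_1=\iota Q\iota^\ast$ together with the isometry $\iota:U_0\to Q_1^{1/2}(U_1)$ of Proposition~\ref{prop:Q1Isom} and Corollary~\ref{cor:SquareRootOfSquare}, one has $(\psi_sQ_1^{1/2})(\psi_sQ_1^{1/2})^\ast=(\varphi_s\iota^{-1}Q_1^{1/2})(\varphi_s\iota^{-1}Q_1^{1/2})^\ast$, which is the operator denoted $(\iota^{-1}Q^{1/2}\varphi_s)(\iota^{-1}Q^{1/2}\varphi_s)^\ast$ in~\eqref{eq:GenItoQV}; the cancellation $\iota^{-1}Q_1(\iota^\ast)^{-1}=Q$ also shows the right-hand side is independent of the auxiliary choice of $\iota$ and $U_1$, consistently with the same property of the integral. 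I expect the genuine obstacle to be the continuity of the quadratic-variation map under $\mcM^2_T$-convergence, which is what transfers the simple-integrand identity to general integrands; by contrast the operator bookkeeping in the final rewriting is routine but must be handled carefully because of the several spaces $U$, $U_0$, $U_1$ in play.
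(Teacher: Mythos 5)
Your proposal is correct and follows essentially the same route as the paper's proof: approximate $\varphi$ by simple processes, identify the quadratic variation of the simple integrals using the independent, Gaussian increments of $W$, and pass to the limit. The only difference is one of detail rather than of method — the paper compresses the final step into ``taking limits on both sides,'' whereas you justify that passage via the identity $\mbE\left[\Tr[M^n-M]_T\right]=\|M^n-M\|^2_{\mcM^2_T}$ together with polarisation, and you carry out the localisation from $\msH^2_T(W)$ to $\msH_T(W)$ explicitly; these are precisely the points the paper leaves implicit.
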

\begin{proof}
	Let $(\varphi^n)_{n\geq 1}$ be a simple approximation to the $L^0_2$ valued process $\varphi$ and consider the approximate stochastic integral,
	\begin{equation*}
		\varphi^n \bigcdot W_t =\int_0^t (\varphi^n_s\circ \iota^{-1})\dd W_s := \sum_{i=1}^{k_n} (\varphi^n_i\circ \iota^{-1}) (W_{t_{i}}-W_{t_{i-1}}),
	\end{equation*}
	where $(\varphi^n_i)_{n\geq 1, i \in \{1,\ldots,k_n\}}$ are $L^0_2$ valued random variables and where $0=t^n_0<\cdots t^n_{k_n}=t$. Using the independent increment property and the explicit distribution of $W_t-W_s$, we can compute the quadratic variation directly, to give, for any $n\geq 1$,
	\begin{equation*}
		[\varphi^n \bigcdot W]_t = \sum_{i=1}^{k_n} (\iota^{-1}Q^{1/2}\varphi^n_i)(\iota^{-1}Q^{1/2}\varphi_i^n)^\ast (t_i -t_{i-1}).
	\end{equation*}
	Taking limits on both sides gives the identity \eqref{eq:GenItoQV}, where the right hand side is understood as a Riemann--Stieltjes integral.
\end{proof}
In the remainder of the chapter we will work with a more general class of processes known as semi-martingales.
\begin{definition}
	We say that a stochastic process $(X_t)_{t\in [0,T]}$ is a semi-martingale if there exists a finite variation process $(\psi_t)_{t\in [0,T]}$ and a martingale $(M_t)_{t\in [0,T]}$ such that,
	\begin{equation*}
		X_t = \psi_t+ M_t.
	\end{equation*}
\end{definition}
It follows from Proposition \ref{prop:QVProperties} that this representation is essentially unique and furthermore that $[X]=[M]$ and $[[X]]=[[M]]$. In the theory of finite dimensional martingales, the well known result of It\^o's formula, shows both that the space of semi-martingales is closed under composition with twice differentiable functions as well as providing a stochastic version of the chain rule in It\^o calculus. We state this result below, in both the finite and infinite dimensional settings in the particular case of semi-martingales whose martingale part is given by a particular stochastic integrals.
\begin{theorem}[It\^o's Formula in Finite Dimensions]\label{th:FiniteDimIto}
	Let $T>0$, $(\Omega,\mcF,(\mcF_t)_{t\in [0,T]},\mbP)$ be a filtered probability space, carrying an $\mbR^d$ valued Brownian motion, $(W_t)_{t\geq0}$, $\psi \in L^2(\Omega\times[0,T]; \mbR^d)$, $\varphi \in L^2(\Omega\times[0,T];\mbR^{d\times d})$ and $F\in C^2(\mbR^d;\mbR)$. Then, for a process $(X_t)_{t\geq 0}$ satisfying,
	\begin{equation*}
		X_t = X_0 + \int_0^t \psi_s \dd s + \int_0^t \varphi_s \dd W_s,
	\end{equation*}
	one has,
	\begin{equation}\label{eq:FiniteDimIto}
		\begin{aligned}
			F(X_t) &= F(X_0) + \int_0^t \psi_s \cdot \nabla F(X_s)\dd s  + \int_0^t  \varphi_s  \nabla F(X_s)\cdot \dd W_s \\
			&\quad + \frac{1}{2} \int_0^t \Tr \left(\varphi^\perp_s\nabla^2 F(X_s) \varphi_s^\perp \right)\dd s.
		\end{aligned}
	\end{equation}
\end{theorem}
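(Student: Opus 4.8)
The plan is to follow the classical partition-and-Taylor route, reusing the quadratic variation theory developed in Section \ref{sec:QVandIto}. First I would reduce to a convenient setting by localization: define $\tau_n := \inf\{t\geq 0 : |X_t|\geq n \text{ or } \int_0^t(|\psi_s|^2+|\varphi_s|^2)\dd s \geq n\}$, which increase to $\infty$ $\mbP$-a.s., and prove the identity for each stopped process. On $[0,\tau_n]$ the trajectory stays in a compact set, so $F,\nabla F,\nabla^2 F$ are bounded and uniformly continuous along it; approximating $\psi,\varphi$ additionally by simple (piecewise constant in time) integrands, dense in the relevant $L^2$ spaces, lets one justify the moment computations below and then pass back using the It\^o isometry \eqref{eq:SimpleItoIsometry}. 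Both sides of \eqref{eq:FiniteDimIto} then converge as $n\to\infty$, the stochastic integral as a local martingale via Theorem \ref{th:StochInt} and the Lebesgue integrals by dominated convergence. With this reduction in place, fix $t$ and a decreasing sequence of partitions $\pi_n(t)=\{0=t_0<\cdots<t_n=t\}$ with $|\pi_n(t)|\to 0$, write $\Delta_i X := X_{t_{i+1}}-X_{t_i}$, and telescope $F(X_t)-F(X_0)$ using a second-order Taylor expansion:
\begin{equation*}
	F(X_t)-F(X_0) = \sum_i \nabla F(X_{t_i})\cdot \Delta_i X + \tfrac{1}{2}\sum_i (\Delta_i X)^\perp \nabla^2 F(X_{t_i})\, \Delta_i X + \sum_i R_i,
\end{equation*}
where $R_i$ is the error between the exact second-order term and the one frozen at $X_{t_i}$.

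The first sum is a left-endpoint Riemann sum for $\int_0^t \nabla F(X_s)\cdot\dd X_s$. Substituting $\dd X_s = \psi_s\dd s + \varphi_s\dd W_s$ and invoking the very construction of the It\^o integral as the $L^2$-limit of such sums over simple processes (the integrand $\nabla F(X_{t_i})$ being $\mcF_{t_i}$-measurable), this converges in probability to $\int_0^t \psi_s\cdot\nabla F(X_s)\dd s + \int_0^t \varphi_s\nabla F(X_s)\cdot\dd W_s$, which are the first two integrals in \eqref{eq:FiniteDimIto}.

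The heart of the argument, and the step I expect to be the main obstacle, is the second-order sum. Using uniform continuity of $\nabla^2 F$ on the compact range of $X$ to replace the frozen weight $\nabla^2 F(X_{t_i})$ by a slowly varying one (the error controlled by the total mass $\sum_i|\Delta_i X|^2$), the problem reduces to the limit of $\sum_i \Delta_i X^{k}\,\Delta_i X^{l}$ for each pair $k,l$. By \eqref{eq:FiniteDimQVDefine} this converges in probability to the quadratic covariation $[X^k,X^l]_t$. Now Item \ref{it:FiniteVarQV} of Proposition \ref{prop:QVProperties} shows that the finite-variation drift $\int_0^\cdot \psi_s\dd s$ contributes nothing, so $[X^k,X^l]=[M^k,M^l]$ for $M=\varphi\bigcdot W$, and the explicit quadratic variation of a stochastic integral, \eqref{eq:GenItoQV} specialized to a standard $\mbR^d$-Brownian motion, gives
\begin{equation*}
	[X^k,X^l]_t = \int_0^t (\varphi_s\varphi_s^\perp)^{kl}\dd s, \qquad t\in[0,T].
\end{equation*}
At the level of increments this is precisely the statement that $\Delta_i W^p\,\Delta_i W^q - \delta_{pq}\Delta_i t$ has mean zero with mean-square of order $(\Delta_i t)^2$, so that (after reducing to bounded $\varphi$) the variance of the weighted sum is $\lesssim |\pi_n(t)|\to 0$; the cross terms with the drift increment $\psi_{t_i}\Delta_i t$ are of higher order and vanish. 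Summing over $k,l$ against $\tfrac{1}{2}\nabla^2 F(X_s)$ yields the It\^o correction term of \eqref{eq:FiniteDimIto}.

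Finally, the remainder $\sum_i R_i$ is bounded by $\sum_i|\Delta_i X|^2$ times the modulus of continuity of $\nabla^2 F$ evaluated at $\max_i|\Delta_i X|$; since $\sum_i|\Delta_i X|^2$ stays bounded (it converges to $\Tr[X]_t$) while $\max_i|\Delta_i X|\to 0$ by path continuity of $X$, this tends to $0$ in probability. Collecting the three limits along the partitions, and then removing the localization and the simple-integrand approximation, completes the proof.
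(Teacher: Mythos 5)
Your proposal is correct, but it proves the theorem by a genuinely different route than the paper. The paper's sketch reduces to $d=1$ and, via Stone--Weierstrass, to polynomial $F$: it first establishes the It\^o product rule \eqref{eq:ItoIBP} by telescoping along a partition, then inducts on the monomials $X^n$ to get \eqref{eq:FiniteDimIto} for polynomials, and finally extends componentwise to $d>1$ using $[W^i,W^j]_t=\delta_{ij}t$. You instead run the classical second-order Taylor expansion along partitions for an arbitrary $C^2$ function: localize by stopping times, split the telescoped increment into a first-order Riemann sum (converging to the drift and stochastic integrals), a Hessian-weighted quadratic-variation sum (handled via \eqref{eq:FiniteDimQVDefine}, Item \ref{it:FiniteVarQV} of Proposition \ref{prop:QVProperties}, and the explicit formula \eqref{eq:GenItoQV}), and a remainder killed by uniform continuity of $\nabla^2 F$ on the compact range. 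The trade-offs: the paper's induction is algebraically clean and needs only the product formula, but it hides two nontrivial limit passages — approximating $F$ together with its first two derivatives by polynomials on compacts, and then passing to the limit in the formula itself, which requires exactly the localization you make explicit. Your approach avoids the polynomial approximation entirely, makes the role of quadratic variation transparent, and is the argument that actually scales to the infinite-dimensional statement, Theorem \ref{th:InfiniteDimIto}; its one delicate point is the weighted quadratic-variation convergence $\sum_i \nabla^2F(X_{t_i})\,\Delta_iX^k\Delta_iX^l \to \int_0^t \partial^2_{kl}F(X_s)\dd[X^k,X^l]_s$, which \eqref{eq:FiniteDimQVDefine} gives only in unweighted form — your reduction via the modulus of continuity of $\nabla^2F$ is the right idea, but to be complete it should be spelled out as a coarse-block argument (freeze the weight on blocks where $X$ moves little, apply the unweighted limit blockwise). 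Given that the paper's own proof is explicitly a sketch, your level of detail is comparable and the proposal stands as a valid alternative proof.
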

\begin{proof}[Sketch of Proof]
	We only give a sketched proof, details can be found as \cite[Thm. 4.2.1]{oksendal_13}. It suffices to establish the same result for $d=1$ and by Stone--Weierstrass for $F(x)$ a polynomial. First, we let $(X_t)_{t\in [0,T]},\,(Y_t)_{t\in [0,T]}$ be two semi-martingales and show that
	\begin{equation}\label{eq:ItoIBP}
		X_tY_t = X_0Y_0 + \int_0^t X_s \dd Y_s + \int_0^t Y_s\dd X_s + \int_0^t \dd [ X,Y]_s.
	\end{equation}
	Note that this is the usual integration by parts formula for finite variation process with the addition of the integral against $\dd [X,Y]_s$ - which is an artefact of the It\^o integral. To show \eqref{eq:ItoIBP}, let $t\in [0,T]$ be fixed and $\pi_n(t) = \{0=t_0,t_1,\ldots,t_N=t \}$ be a partition of $[0,t]$. Then, $\mbP$-a.s. we have,
	\begin{align*}
		X_tY_t -X_0Y_0 &= \sum_{i=1}^N X_{t_{i+1}}Y_{t_{i+1}}-X_{t_i}Y_{t_i}\\
		&= \sum_{i=1}^N X_{t_{i}}(Y_{t_{i+1}}-Y_{t_i}) + Y_{t_{i}}(X_{t_{i+1}}-X_{t_i}) + (X_{t_{i+1}}-X_{t_i})(Y_{t_{i+1}}-Y_{t_i})\\
		&\overset{N\rightarrow \infty}{\longrightarrow}\int_0^t X_s\dd Y_s + \int_0^t Y_s \dd X_s + \int_0^t \dd [X,Y]_s.
	\end{align*}
	If $X,\,Y$ were finite variation processes the last term can only have zero as a limit and we recover the usual integration by parts formula. With \eqref{eq:ItoIBP} in hand, let us assume that
	\begin{equation*}
		X_t^n = X^n_0 + n \int_0^t \psi_s X_t^{n-1}\dd s + n \int_0^t \varphi_s X_s^{n-1}\dd W_s + \frac{n(n-1)}{2} \int_0^t \varphi_s^2 X_s^{n-2}\dd s.
	\end{equation*}
	It follows therefore, using that $[X,X^n]_t =[W,W]_t = t$, that,
	\begin{align*}
		X_t^{n+1} &= X_0^{n+1} + (n+1) \int_0^t \psi_s X_s^n \dd s + (n+1) \int_0^t \varphi_s X_s^{n}\dd W_s \\
		&\quad + \frac{(n+1)n}{2} \int_0^t \varphi_s^2 X_s^{n-1}\dd s.
	\end{align*}
	Since \eqref{eq:FiniteDimIto} holds by definition for $F(x)=x$ we conclude by induction that \eqref{eq:FiniteDimIto} holds for any polynomial and so by Stone--Weierstrass for any $C^2$ function. The extension to higher dimensions follows by arguing component-wise and the identity $[W^i,W^j]_t = \delta_{ij} t$.
\end{proof}
One can see from the proof  of Theorem \ref{th:FiniteDimIto} that if the Brownian motion were a finite variation process, then the stochastic integral, as defined, would not contribute the third integral term. Alternatively, if one considers stochastic processes defined by Stratonovich integrals then it is well known that the usual chain rule also applies in this case. The same is true in infinite dimensions, see \cite{daprato_zabczyk_14}. Although Stratonovich integrals preserve the usual chain rule, they are neither predictable processes nor local martingales, informally speaking they look microscopically into the future.
\begin{remark}
	One can easily include a time inhomogeneous function $F:[0,T]\times \mbR^d\rightarrow \mbR$ in Theorem \ref{th:FiniteDimIto} provided $t\mapsto F(t,x)$ is continuously differentiable for all $x\in \mbR^d$. In this case one must include the term, $\int_0^t \partial_t F(s,X_s)\dd s$, on the right hand side of \eqref{eq:FiniteDimIto}. We cover this in the infinite dimensional setting below without proof.
\end{remark}
The It\^o formula generalises to the infinite dimensional setting, however,we refer to \cite[Thm. 4.32]{daprato_zabczyk_14} for the proof. Given a Hilbert space $H$ and a functional $F \in \mcL(H;\mbR)$, we write $D_x^1F\in \mcL(H;\mcL(H;\mbR)),\,D^2_{xx}F \in \mcL(H\otimes H;\mcL(H;\mbR))$ for the first and second Fr\'echet derivatives respectively. For $F:[0,T]\rightarrow \mcL(H;\mbR)$ we write $\partial_t F$ for the functional obtained by taking the time derivative of $F(t,x)$ in the first variable for every $x\in H$. We write $C^{1,2}([0,T]\times H;\mbR)$ for the space of time inhomogeneous functionals, once continuously differentiable in time and and twice continuously, Fr\'echet differentiable in $H$.
\begin{theorem}[It\^o's Formula in Infinite Dimensions]\label{th:InfiniteDimIto}
	Let $T>0$, $U,\,H$ be Hilbert spaces, $(\Omega,\mcF,(\mcF_t)_{t\in [0,T]}, \mbP)$ be a filtered probability space, carrying a generalized Wiener process $(W_t)_{t\in [0,T]}$ defined on $U$, with covariance $Q$ and assume $\psi \in L^2(\Omega\times[0,T];H)$, a $L_2(U_0;H)$ valued, predictable process, $\varphi\in \msH_T(W)$ and $X_0 \in H$. Then given $F \in C^{1,2}([0,T]\times H;\mbR)$ and a stochastic process, $(X_t)_{t\in [0,T]}$, satisfying,
	\begin{equation*}
		X_t = X_0 + \int_0^t \psi_s\dd s + \int_0^t \varphi_s\dd W_s, 
	\end{equation*}
	it holds that,
	\begin{equation}\label{eq:InfiniteDimIto}
		\begin{aligned}
			F(t,X_t)&= F(0,X_0) + \int_0^t \left(\partial_t F(s,X_s) + \langle D^1_x F(s,X_s),\psi_s\rangle\right) \dd s \\
			&\quad + \int_0^t \langle D^1_xF(s,X_s),\varphi_s\dd W_s\rangle\\
			&\quad + \frac{1}{2}\int_0^t \Tr \left[D^2_{xx} F(s,X_s)(\varphi_sQ^{1/2})(\varphi_sQ^{1/2})^\ast\right]\dd s .
		\end{aligned}
	\end{equation}
\end{theorem}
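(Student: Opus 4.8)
The formula is quoted from \cite[Thm. 4.32]{daprato_zabczyk_14}, so I indicate the route I would take to derive it from the machinery already assembled. The plan is to reduce the infinite-dimensional identity to the finite-dimensional one, Theorem \ref{th:FiniteDimIto}, by a two-stage approximation: first replace $F$ by a functional depending on only finitely many coordinates, and then, for such cylindrical functionals, invoke the finite-dimensional formula applied to the real semi-martingales obtained by pairing $X$ with basis vectors. Before either step I would localize. Fixing the eigenbasis $(e_k)_{k\geq 1}$ of $H$ associated to $Q$ and the stopping times $\tau_R = \inf\{t : \|X_t\|_H \vee \int_0^t(\|\psi_s\|_H + \|\varphi_s\|^2_{L^0_2})\dd s > R\}$, it suffices to prove \eqref{eq:InfiniteDimIto} with every process stopped at $\tau_R$ and then send $R\to\infty$, since $\tau_R\to T$ $\mbP$-a.s. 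On $[0,\tau_R]$ the path $X$ stays in a bounded ball, so the continuous functionals $F,\partial_t F, D^1_x F, D^2_{xx}F$ are bounded there and $\psi,\varphi$ are square integrable; this places us in a uniformly controlled setting where dominated convergence applies.

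For the cylindrical reduction, let $P_n$ be the orthogonal projection of $H$ onto $\Span\{e_1,\dots,e_n\}$ and set $F_n(t,x):=F(t,P_nx)$. As $P_n$ is self-adjoint, the chain rule gives $D^1_xF_n(t,x)=P_n D^1_xF(t,P_nx)$ and $D^2_{xx}F_n(t,x)=P_n D^2_{xx}F(t,P_nx)P_n$, so $F_n$ is a genuine function of the real variables $\langle x,e_i\rangle$, $i\leq n$. For fixed $n$ I would apply Itô's formula to $F_n(t,X_t)=\phi_n(t,\langle X_t,e_1\rangle,\dots,\langle X_t,e_n\rangle)$. Each coordinate $\langle X_t,e_i\rangle=\langle X_0,e_i\rangle+\int_0^t\langle\psi_s,e_i\rangle\dd s+\langle e_i,\varphi\bigcdot W_t\rangle$ is a real Itô process, and by \eqref{eq:GenItoQV} its covariation with $\langle X,e_j\rangle$ is $\int_0^t\langle C_s e_i,e_j\rangle\dd s$, where $C_s$ is the integrand appearing in \eqref{eq:GenItoQV}; one checks that $C_s=(\varphi_sQ^{1/2})(\varphi_sQ^{1/2})^\ast$. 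Applying Theorem \ref{th:FiniteDimIto} then yields \eqref{eq:InfiniteDimIto} with $F$ replaced by $F_n$, the second-order contribution assembling precisely into $\tfrac12\int_0^t\Tr[D^2_{xx}F_n(s,X_s)(\varphi_sQ^{1/2})(\varphi_sQ^{1/2})^\ast]\dd s$, since the Hessian of $\phi_n$ contracted against the covariation matrix is this trace. To invoke the finite-dimensional formula honestly I would first truncate the driver to $W^{(m)}:=\sum_{k\leq m}\sqrt{\lambda_k}\beta^k e_k$, apply Theorem \ref{th:FiniteDimIto} to the $\mbR^n$-valued process, and then pass $m\to\infty$, which is legitimate because $\varphi_sQ^{1/2}$ is Hilbert--Schmidt and $\sum_k\lambda_k<\infty$.

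It remains to let $n\to\infty$ in the identity for $F_n$. Since $P_nx\to x$ for every $x\in H$ and the derivatives of $F$ are (uniformly) continuous on the bounded ball that contains the localized path, we obtain $F_n(t,X_t)\to F(t,X_t)$, $D^1_xF_n(s,X_s)\to D^1_xF(s,X_s)$ in $H$, and $D^2_{xx}F_n(s,X_s)\to D^2_{xx}F(s,X_s)$ in the operator norm of $L(H)$, all pointwise in $(s,\omega)$ and uniformly bounded. The time-derivative and drift integrals converge by dominated convergence. The stochastic integral $\int_0^t\langle D^1_xF_n(s,X_s),\varphi_s\dd W_s\rangle$ converges in $L^2(\Omega)$ by the isometry \eqref{eq:SimpleItoIsometry}, because $\|(D^1_xF_n-D^1_xF)(s,X_s)\circ\varphi_s\|_{L^0_2}\to 0$ in $\msH^2_T$ by dominated convergence. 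For the trace term I would estimate $|\Tr[(D^2_{xx}F_n-D^2_{xx}F)(s,X_s)(\varphi_sQ^{1/2})(\varphi_sQ^{1/2})^\ast]|\leq \|D^2_{xx}F_n-D^2_{xx}F\|_{L(H)}\,\|\varphi_s\|^2_{L^0_2}$ and integrate, again invoking dominated convergence via the uniform operator-norm bound on the Hessians.

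The genuinely delicate step is this second-order term. Its convergence rests on the fact that $(\varphi_sQ^{1/2})(\varphi_sQ^{1/2})^\ast$ is trace class with trace norm equal to $\|\varphi_s\|^2_{L^0_2}$, so the pairing with the Hessian is continuous in the operator norm of $D^2_{xx}F$ --- this is exactly what allows an operator-norm-convergent sequence of Hessians to pass through the trace. The second subtlety, flagged above, is the reconciliation of two different approximations: one in the functional $F$ (through $P_n$) and one in the noise $W$ (through $W^{(m)}$). Controlling the omitted noise tail uniformly, and keeping the two limits from interfering, is the principal bookkeeping burden; that the tail is $\Tr$-negligible follows once more from $\varphi_sQ^{1/2}\in L_2$, i.e. from $Q\in L_1(U)$.
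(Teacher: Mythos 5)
Your Galerkin route --- localize, replace $F$ by the cylindrical functionals $F_n(t,x)=F(t,P_nx)$, apply the finite-dimensional formula to the coordinates $\langle X_t,e_i\rangle$, then pass $n\to\infty$ --- is a legitimate reconstruction: the paper itself gives no general proof (it cites \cite[Thm. 4.32]{daprato_zabczyk_14} and only carries out the special case $F(t,u)=\tfrac12|u|^2_H$ inside the proof of Theorem \ref{th:SPDEWellPosed}, by precisely this coordinate-wise reduction). But the step you yourself single out as the delicate one is justified incorrectly, and as written it fails. You claim $D^2_{xx}F_n(s,X_s)=P_nD^2_{xx}F(s,P_nX_s)P_n\to D^2_{xx}F(s,X_s)$ in the operator norm of $L(H)$. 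This is false in general: for $F(x)=\tfrac12|x|^2_H$ one has $D^2_{xx}F_n\equiv P_n$ and $D^2_{xx}F\equiv\mathrm{Id}$, and $\|P_n-\mathrm{Id}\|_{L(H)}=1$ for every $n$; compressions $P_nAP_n$ converge to $A$ only strongly unless $A$ is compact. Hence the right-hand side of your estimate $|\Tr[(D^2_{xx}F_n-D^2_{xx}F)(s,X_s)\,C_s]|\le\|D^2_{xx}F_n-D^2_{xx}F\|_{L(H)}\,\|\varphi_s\|^2_{L^0_2}$ does not tend to zero, and the dominated-convergence argument for the trace term collapses. The repair: with $A_n:=D^2_{xx}F(s,P_nX_s)$, $A:=D^2_{xx}F(s,X_s)$ and $C_s:=(\varphi_sQ^{1/2})(\varphi_sQ^{1/2})^\ast$, use cyclicity of the trace to write $\Tr[P_nA_nP_nC_s]-\Tr[AC_s]=\Tr[(A_n-A)P_nC_sP_n]+\Tr[A(P_nC_sP_n-C_s)]$; the first term is bounded by $\|A_n-A\|_{L(H)}\|C_s\|_{L_1}\to0$ by continuity of $D^2_{xx}F$, and the second by $\|A\|_{L(H)}\|P_nC_sP_n-C_s\|_{L_1}\to0$, since $P_nC_sP_n\to C_s$ in \emph{trace} norm for a fixed trace-class $C_s$: it is the trace-class factor, not the Hessian, that must absorb the limit. (Your first-order term is fine: $P_ny_n\to y$ does hold in $H$-norm for vectors, so the isometry argument there stands.)

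Two further points. Your noise truncation is justified by ``$\sum_k\lambda_k<\infty$'', ``i.e. from $Q\in L_1(U)$''; this contradicts the hypotheses: $W$ is a \emph{generalised} Wiener process, so $Q$ need not be trace class --- the cylindrical case $Q=\mathrm{Id}$ is exactly the one the paper uses in Chapter \ref{ch:SPDE}. What the limit $m\to\infty$ actually needs is only that $\varphi_sQ^{1/2}$ is Hilbert--Schmidt, i.e. the standing assumption $\varphi\in\msH_T(W)$, which makes the tails $\sum_{k>m}\int_0^t\|\varphi_sQ^{1/2}f_k\|^2_H\,\dd s$ vanish; note also that $Q$ acts on $U$, so your single ``eigenbasis of $H$ associated to $Q$'' must be replaced by two bases, one of $H$ for the projections $P_n$ and one of $U$ for the truncation $W^{(m)}$. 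Finally, your localization argues that continuity of $F$ and its derivatives gives boundedness on a bounded ball; in infinite dimensions a continuous function need not be bounded on bounded balls, so the domination should instead be derived from compactness of the path $\{X_s(\omega):s\in[0,t]\}$ together with the fact that $K\cup\bigcup_{n}P_n(K)$ is precompact whenever $K\subset H$ is compact.
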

\begin{proof}
	We refer to the proof of \cite[Thm. 4.32]{daprato_zabczyk_14} for the general case. Within the proof of Theorem \ref{th:SPDEWellPosed} we establish identity \eqref{eq:InfiniteDimIto} in the particular case of $[0,T]\ni t\mapsto u_t\in H$ is a solution to \eqref{eq:AbstractSPDE} and $F(t,u_t)=\frac{1}{2}|u_t|_H$.
\end{proof}

\section{Well-Posedness of Monotone and Coercive SPDE}
Throughout we assume that we are given a Gelfand triple $(V,H,V^\ast)$, as in Chapter \ref{ch:PDE}, a fourth Hilbert space $U$ and a white noise process $(W_t)_{t\in [0,T]}$, defined on $U$. Recall that a white-noise process is a generalized Wiener process with covariation equal to the identity, $I:U\rightarrow U$, see Example \ref{ex:WhiteNoiseProcess}. Furthermore, we assume we are given operators $A: V\rightarrow V^\ast$, $B:V\rightarrow L^0_2:= L_2(U,H)$ and such that $(B(u_t))_{t\in [0,T]}\in \msH_T(W)$ for all square integrable processes, $[0,T]\ni t\mapsto u_t\in V$. Our aim is to establish well-posedness of \eqref{eq:AbstractSPDE}, which we interpret as an integral equation,
\begin{equation}\label{eq:SPDEIntegralForm}
	u_t = u_0 + \int_0^t A(u_s)\dd s + B(u)\bigcdot W_t,
\end{equation}
where the right hand side defines an element of $V^\ast$ for all square integrable processes $[0,T]\ni t \mapsto u_t\in V$. Note that in the above setting we allow for $B(u)= \tilde{B}\circ Q^{1/2}(u)$ for $Q \in L_1(U;H)$ a trace-class covariance operator and $\tilde{B}\in L_2(Q^{1/2}U;H)$. Thus we cover the case of $W$ a $Q$-Wiener process as well.\\ \par
We retain the conventions from Chapters \ref{ch:Background} and \ref{ch:PDE}, writing $|\,\cdot\,|_H$ for the norm on $H$ and for example, $\|\,\cdot\,\|_{V},\, \|\,\cdot\,\|_{L_2}$ for the norms on $V$ and $L_2(U;H)$ respectively. Unless otherwise stated, the inner product $\langle\,\cdot\,,\,\cdot\,\rangle$ will denote the duality pairing between $V,\,V^\ast$ or the inner product on $H$, whichever makes sense.\\ \par 
We list the following additional assumptions, which in some cases are either direct, or analogous, repetitions of those of Chapter \ref{ch:PDE} but are kept here for completeness. 
\begin{assumption}[Coercivity]\label{ass:CoercivitySPDE}
	There exist $\alpha,\,\lambda,\nu>0$ and $\nu \in L^1([0,T];\mbR)$ such that, for any $u \in V$,
	\begin{equation}\label{eq:CoerviceSPDE}\tag{H1}
		2\langle A(u),u\rangle + \|B(u)\|^2_{L_2(U;H)} + \alpha\|u\|_V ^2 \leq \lambda|u|^2_H+\nu.
	\end{equation}
\end{assumption}
\begin{remark}
	The inclusion of the extra, real parameter $\nu>0$ here, compared to Assumption \ref{ass:CoerciveNLPDE}, is to allow for possibly lower order, non-stochastic terms in the equation.
\end{remark}
\begin{assumption}[Monotonicity]\label{ass:MonotoneSPDE}
	There exists a $\lambda>0$ such that for any $u,w \in V$,
	\begin{equation}\label{eq:MontoneSPDE}\tag{H2}
		2\langle A(u)-A(w),u-w\rangle + \|B(u)-B(w)\|^2_{L_2(U;H)}\leq \lambda |u-w|_H^2.
	\end{equation}
\end{assumption}
\begin{assumption}[Linear Growth]\label{ass:LinearGrowthSPDE}
	There exists a $c>0$ such that for all $u \in V$,
	\begin{equation}\label{eq:LinearGrowthSPDE}\tag{H3}
		\|A(u) \|_{V^\ast} \leq c(1+\|u\|_{V}).
	\end{equation}
\end{assumption}
\begin{assumption}[Weak Continuity]\label{ass:WeakContinuitySPDE}
	For any $u,v,w \in V$, the mapping,
	\begin{equation}\label{eq:WeakContinuitySPDE}\tag{H4}
		\mbR \ni \theta \mapsto \langle A(u+\theta w),v\rangle \in \mbR,
	\end{equation}
	is continuous.
\end{assumption}
\begin{remark}\label{rem:BLinearBound}
	Since by definition $|u|_H\leq \|u\|_{V}$, combining \eqref{eq:CoerviceSPDE} with $\eqref{eq:LinearGrowthSPDE}$ shows that we also have $\|B(u)\|_{L_2}\leq c(1+\|u\|_{V})$, for a possibly different $c>0$.
\end{remark}
\begin{remark}
	As in the deterministic case, without loss of generality, we can perform the transformation $u^\lambda := e^{-\lambda t/2}u$, which instead solves \eqref{eq:AbstractSPDE} with $A,B$ replaced by,
	\begin{align*}
		A^\lambda(u) &:= e^{-\lambda t/2} A(e^{\lambda t/2}u) - \frac{\lambda}{2} u,\\
		B^\lambda(u)&:= e^{-\lambda t/2} B(e^{\lambda t/2}u).
	\end{align*}
	As in the previous chapter, it is easily checked that if $A,B$ satisfy \eqref{eq:CoerviceSPDE} and \eqref{eq:MontoneSPDE}, then $A^\lambda,B^\lambda$ satisfy, for a possibly new $\alpha,\nu>0$,
	\begin{equation}\label{eq:CoerciveSPDEPrime}\tag{H$1'$}
		2\langle A^\lambda(u),u \rangle + \|B^\lambda(u)\|^2_{L_2(U;H)} + \alpha \|u\|_V^2 \leq \nu,
	\end{equation}
	and
	\begin{equation}\label{eq:MonotoneSPDEPrime}\tag{H$2'$}
		2\langle A(u)-A(w),u-w\rangle + \|B(u)-B(w)\|^2_{L_2(U;H)} \leq 0,
	\end{equation}
	for all $u,\,w \in V$.
\end{remark}
\begin{remark}
	Again, as in the deterministic case, for $p>2$, provided we replace \eqref{eq:CoerviceSPDE} with,
	\begin{equation}\label{eq:CoerciveMSPDE}\tag{H$1_p$}
		2\langle A(u),u\rangle +\|B(u)\|^2_{L_2} + \alpha \|u\|^p_{V}\leq \lambda |u|^{2}_H+\nu,
	\end{equation}
	we may replace \eqref{eq:LinearGrowthSPDE} with,
	\begin{equation}\label{eq:MGrowthSPDE}\tag{H$3_p$}
		\|A(u)\|_{V^\ast}\leq c(1+ \|u\|^{p-1}_{V}).
	\end{equation}
\end{remark}
From now on we work under \eqref{eq:CoerciveSPDEPrime}, \eqref{eq:MonotoneSPDEPrime}, \eqref{eq:LinearGrowthSPDE} and \eqref{eq:WeakContinuitySPDE} only. Before stating the main theorem of this chapter we define the notion of weak solution to the SPDE \eqref{eq:AbstractSPDE}, which gives meaning to \eqref{eq:SPDEIntegralForm} as an equality holding in $V^\ast$.
\begin{definition}\label{def:SPDEWeakSol}
	Given $(V,\,H,\,V^\ast)$ and $A,\,B,\,W$ as in the start of the section, we say that an $(\mcF_t)_{t\in[0,T]}$-adapted process $[0,T]\times \Omega \ni (t,\omega)\mapsto u_t(\omega)\in V$ is a weak solution to \eqref{eq:AbstractSPDE}, if  there exists a modification, which we do not relabel, such that
	\begin{equation}\label{eq:SPDEDefIntegrabillity}
		u \in L^2(\Omega\times[0,T]; V),
	\end{equation}
	and for every $v\in V$ and $t\in [0,T]$, $\mbP$-a.s.
	\begin{equation}\label{eq:SPDEWeakForm}
		\langle u_t,v\rangle = \langle u_0,v\rangle + \int_0^t \langle A(u_s),v\rangle \dd s + \int_0^t \langle v,B(u_s)\dd W_s\rangle.
	\end{equation}
\end{definition}
\begin{remark}
	If we replace \eqref{eq:CoerviceSPDE}, \eqref{eq:LinearGrowthSPDE} with \eqref{eq:CoerciveMSPDE}, \eqref{eq:MGrowthSPDE} for some $m>2$ then the condition $u \in L^2(\Omega\times[0,T];V)$ in \eqref{eq:SPDEDefIntegrabillity} should be replaced by the condition $u \in L^m(\Omega\times [0,T] ;V)$.
\end{remark}
\vspace{0.3em}
We have the following well-posedness result which is an analogue of Theorem \ref{th:MonotonePDE}. We retain the general assumptions stated at the beginning of this section on the spaces $(V,\,H,\,V^\ast)$ and a white noise process, $(W_t)_{t\in [0,T]}$, defined on $U$.
\begin{theorem}\label{th:SPDEWellPosed}
	Let $T>0$, $u_0 \in L^2(\Omega;H)$ and $A,\,B$ satisfy Assumptions \eqref{eq:CoerciveSPDEPrime}, \eqref{eq:MonotoneSPDEPrime}, \eqref{eq:LinearGrowthSPDE}, \eqref{eq:WeakContinuitySPDE}. Then there exists a unique, weak solution $(u_t)_{t\in [0,T]}$ to \eqref{eq:AbstractSPDE}, in the sense of Definition \ref{def:SPDEWeakSol}. Furthermore,
	\begin{equation*}
		\mbE\left(\sup_{t \in [0,T]}\|u_t\|^2_{H}\right)<\infty.
	\end{equation*}
\end{theorem}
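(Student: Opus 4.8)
The plan is to reproduce the Galerkin scheme of Theorem \ref{th:MonotonePDE} at the level of structure, replacing the deterministic energy identity (Lemma \ref{lem:LionsMagenes}) by the infinite-dimensional It\^o formula of Theorem \ref{th:InfiniteDimIto}. Fix an orthonormal basis $(e_k)_{k\geq 1}\subset V$ of $H$, set $V_n=\mathrm{span}\{e_1,\dots,e_n\}$ and let $P_n$ be the orthogonal projection onto $V_n$. For each $n$ I would solve the finite-dimensional It\^o SDE for the coefficients of $u_{n;t}=\sum_{k\leq n}\langle u_{n;t},e_k\rangle e_k$, namely $\dd u_{n;t}=P_nA(u_{n;t})\dd t+P_nB(u_{n;t})\dd W_t$ with $u_{n;0}=P_nu_0$. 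On the finite-dimensional space $V_n$, assumption \eqref{eq:WeakContinuitySPDE} forces $A|_{V_n}$ to be continuous, so the drift and diffusion coefficients are continuous; global existence and uniqueness of a strong solution then follow from the finite-dimensional theory of monotone, coercive SDEs (existence by a Peano/Euler argument, uniqueness from \eqref{eq:MonotoneSPDEPrime}, exactly as in the ODE step of Theorem \ref{th:MonotonePDE}).

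Next I would derive the uniform a priori bounds. Applying Theorem \ref{th:InfiniteDimIto} to $F(x)=|x|_H^2$ gives
\[
|u_{n;t}|_H^2=|u_{n;0}|_H^2+\int_0^t\big(2\langle A(u_{n;s}),u_{n;s}\rangle+\|P_nB(u_{n;s})\|_{L_2}^2\big)\dd s+2\int_0^t\langle u_{n;s},B(u_{n;s})\dd W_s\rangle .
\]
Since $\|P_nB\|_{L_2}\leq\|B\|_{L_2}$, assumption \eqref{eq:CoerciveSPDEPrime} bounds the finite-variation integrand by $\nu-\alpha\|u_{n;s}\|_V^2$; taking expectations (the stochastic integral being a genuine martingale, or stopped at $\tau_R$ with the bound passed through as $R\to\infty$) yields $\mbE|u_{n;t}|_H^2+\alpha\,\mbE\int_0^t\|u_{n;s}\|_V^2\dd s\leq\mbE|u_0|_H^2+\|\nu\|_{L^1}$, uniformly in $n,t$. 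This controls $(u_n)$ in $L^2(\Omega\times[0,T];V)\cap L^\infty([0,T];L^2(\Omega;H))$, and then \eqref{eq:LinearGrowthSPDE} together with Remark \ref{rem:BLinearBound} bounds $(A(u_n))$ in $L^2(\Omega\times[0,T];V^\ast)$ and $(B(u_n))$ in $L^2(\Omega\times[0,T];L_2)$. For the supremum estimate I would take $\sup_t$ before expectation and apply Theorem \ref{th:BDG} to the martingale $\int_0^{\,\cdot}\langle u_{n;s},B(u_{n;s})\dd W_s\rangle$, whose quadratic variation is dominated by $\int_0^T|u_{n;s}|_H^2\|B(u_{n;s})\|_{L_2}^2\dd s$; factoring out $\sup_s|u_{n;s}|_H$ and absorbing it by Young's inequality gives $\sup_n\mbE\sup_t|u_{n;t}|_H^2<\infty$. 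By reflexivity and Theorem \ref{th:BanachAlaoglu} I extract a (non-relabelled) subsequence with $u_n\rightharpoonup u$ in $L^2(\Omega\times[0,T];V)$, $A(u_n)\rightharpoonup\zeta$ in $L^2(\Omega\times[0,T];V^\ast)$ and $B(u_n)\rightharpoonup\Theta$ in $L^2(\Omega\times[0,T];L_2)$; passing to the limit in the weak formulation (the It\^o map being a bounded, hence weakly continuous, linear map on predictable $L_2$-valued processes, and the predictable processes forming a weakly closed subspace) shows $u$ satisfies \eqref{eq:SPDEWeakForm} with $A(u),B(u)$ replaced by $\zeta,\Theta$.

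The main obstacle is the \emph{identification} $\zeta=A(u)$, $\Theta=B(u)$, i.e. the stochastic Minty--Browder argument, and the auxiliary tool it rests on. I first need the It\^o formula in the Gelfand triple: the specialisation of Theorem \ref{th:InfiniteDimIto} to a drift lying in $V^\ast$ paired with $u\in V$ (the stochastic Lions--Magenes lemma), which also furnishes the continuous $H$-valued modification of $u$. Establishing this formula, by mollifying in time and passing to the limit as in Lemma \ref{lem:LionsMagenes} while tracking the extra quadratic-variation term, is a genuine technical step and the one I expect to be hardest. Granting it, the energy identities for $u_n$ and for $u$ read $\mbE\int_0^T(2\langle A(u_{n;s}),u_{n;s}\rangle+\|B(u_{n;s})\|_{L_2}^2)\dd s=\mbE|u_{n;T}|_H^2-\mbE|u_{n;0}|_H^2$ and its analogue with $\zeta,\Theta$; since $u_{n;T}\rightharpoonup u_T$ in $L^2(\Omega;H)$ and the squared norm is weakly lower semicontinuous, I obtain $\liminf_n\mbE\int_0^T(2\langle A(u_n),u_n\rangle+\|B(u_n)\|_{L_2}^2)\dd s\geq\mbE\int_0^T(2\langle\zeta,u\rangle+\|\Theta\|_{L_2}^2)\dd s$. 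Combining this with \eqref{eq:MonotoneSPDEPrime}, which gives $\mbE\int_0^T(2\langle A(u_n)-A(v),u_n-v\rangle+\|B(u_n)-B(v)\|_{L_2}^2)\dd s\leq0$ for any predictable $v\in L^2(\Omega\times[0,T];V)$, and expanding the cross terms (all of weak-times-strong type, hence convergent), yields
\[
\mbE\int_0^T\big(2\langle\zeta_s-A(v_s),u_s-v_s\rangle+\|\Theta_s-B(v_s)\|_{L_2}^2\big)\dd s\leq0 .
\]
Dropping the nonnegative $B$-term and inserting $v=u-\varepsilon\phi w$ with $w\in V$ and $\phi\geq0$ bounded predictable, then letting $\varepsilon\to0$ and invoking \eqref{eq:WeakContinuitySPDE}, gives $\mbE\int_0^T\langle\zeta_s-A(u_s),\phi_s w\rangle\dd s=0$ for all $w,\phi$, whence $\zeta=A(u)$; re-inserting $\zeta=A(u)$ and $v=u$ into the displayed inequality forces $\Theta=B(u)$. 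Finally, uniqueness and the supremum bound follow once the Gelfand-triple It\^o formula is available: two solutions $u,\tilde u$ satisfy $\mbE|u_t-\tilde u_t|_H^2=\mbE\int_0^t(2\langle A(u_s)-A(\tilde u_s),u_s-\tilde u_s\rangle+\|B(u_s)-B(\tilde u_s)\|_{L_2}^2)\dd s\leq0$ by \eqref{eq:MonotoneSPDEPrime}, so $u_t=\tilde u_t$ in $H$ for every $t$, $\mbP$-a.s., while $\mbE\sup_t|u_t|_H^2<\infty$ is obtained by repeating the BDG-and-Young argument of the second step directly for $u$, now known to have a continuous $H$-valued modification and to satisfy the energy identity with integrands $A(u),B(u)$.
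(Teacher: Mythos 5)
Your proposal is correct and follows essentially the same route as the paper's proof: a Galerkin scheme, the It\^o energy identity for $|u_{n}|_H^2$ combined with coercivity, stopping times and BDG to obtain the uniform bounds, Banach--Alaoglu for the three weak limits, the Minty--Browder monotonicity argument together with \eqref{eq:WeakContinuitySPDE} to identify $\zeta=A(u)$ and $\Theta=B(u)$, and \eqref{eq:MonotoneSPDEPrime} for uniqueness. The only cosmetic deviations are the order of identification (the paper sets $v=u$ first to get the diffusion coefficient, then perturbs $v=u-\theta w$ for the drift) and your appeal to a stochastic Lions--Magenes lemma as the key technical tool, where the paper instead derives the needed energy identity ``ad hoc'' by basis projection and summation, and establishes the $C([0,T];H)$-continuity of the solution by a separate weak-plus-norm convergence argument at the end of the proof.
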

\begin{proof}
	We proceed in a similar fashion to the proof of Theorem \ref{th:MonotonePDE}, only now we have to also handle the stochastic term. We prove uniqueness first, using Theorem \ref{th:InfiniteDimIto}, followed by existence. Before tackling these steps we obtain an ad hoc, a priori It\^o formula for $t\mapsto |u_t|^2_H$ for any solution $u$ to \eqref{eq:AbstractSPDE}.
	\paragraph{Ad Hoc It\^o Formula:} Let $(u_t)_{t\in [0,T]}$ be a solution to \eqref{eq:AbstractSPDE} in the sense of Definition \ref{def:SPDEWeakSol}. Let $(e_k)_{k=1}^\infty\subset V$ be an orthonormal basis of $H$ and define $u_k := \langle u,e_k\rangle$, so that we have
	\begin{equation*}
		|u_t|_H^2 = \sum_{k=1}^\infty \langle u_t,e_k\rangle^2 = \sum_{k=1}^\infty u_{k;t}^2.
	\end{equation*}
	Taking $e_k$ as the test function in \eqref{eq:SPDEWeakForm}, we see that $u_{k;t}$ solves,
	\begin{equation*}
		u_{k;t} = \langle u_0,e_k\rangle + \int_0^t  \langle A(u_s),e_k\rangle \dd s + \int_0^t \langle e_k,B(u_s)\dd W_s\rangle.
	\end{equation*}
	Recall that given a basis $(f_k)_{k\geq 1}$ of $U$, the white noise process has the representation, 
	\begin{equation*}
		W_t := \sum_{k\geq 1} f_k \beta^k_t,
	\end{equation*}
	where $(\beta^k)_{k\geq 1}$ is a family of i.i.d, real valued Brownian motions. It follows from the representation of $(B(u_t))_{t\in [0,T]}\subset  L_2(U;H)$, \eqref{eq:HSandTraceClassRep} that there exist $[0,T]\ni t\mapsto (\lambda_{k;t})_{k\geq 1}\subset \mbR_+$ such that for all $x\in U$,
	\begin{equation*}
		B(u_t)x = \sum_{k\geq 1} \sqrt{\lambda_{k;t}} \langle x,f_k\rangle_U e_k, \quad t\in [0,T].
	\end{equation*}
	Where we may assume that we chose the basis, $(e_k)_{k\geq 1}$ of $H$, suitably. Note here that $\|B(u_t)\|_{L_2} = \sum_{k\geq 1}\lambda_{k;t} $. Since $\langle W,f_k\rangle = \beta^k$, we can formally write the integral term of \eqref{eq:SPDEWeakForm}, without a test function, as
	\begin{equation*}
		\int_0^t B(u_s) \dd W_s = \sum_{k\geq 1} \int_0^t \sqrt{\lambda_{k;s}} e_k \dd \beta^k_s.
	\end{equation*}
	This is not strictly valid, since we know that $\mbP$-a.s. $W$ does not take values in $U$. However, by considering an additional space $U_1$, containing $U$ with a Hilbert--Schmidt embedding $\iota :U\rightarrow U_1$ and instead defining the same object for $B(u)\circ \iota^{-1}$ we arrive eventually at the same expression. For concision we do not include this argument in detail and instead conclude directly, that for any $k\geq 1$, we now have
	\begin{equation*}
		u_{k;t} = \langle u_0,e_k\rangle + \int_0^t  \langle A(u_s),e_k\rangle \dd s + \int_0^t \sqrt{\lambda_{k;s}} \dd \beta^k_s
	\end{equation*}
	This is a one dimensional SDE and so we may apply Theorem \ref{th:FiniteDimIto}. Note that the map $x\mapsto \frac{1}{2}|x|^2$ has first derivative equal to $x$ and second derivative equal to $1$ so that,
	\begin{align*}
		\frac{1}{2}|u_{k;t}|^2 &= \frac{1}{2}|u_{k;0}|^2 + \int_0^t \langle A(u_s),e_k\rangle u_{k;s}\dd s + \int_0^t  \sqrt{\lambda_{k;s}}u_{k;s}\dd \beta^k_s + \frac{1}{2}\int_0^t \lambda_{k;s}\dd s.
	\end{align*}
	So now summing over $k$ we have,
	\begin{align*}
		\frac{1}{2}|u_t|^2_H &= \frac{1}{2}\sum_{k=1}^\infty |u_{k;t}|^2 \\
		&= \sum_{k\geq 1} |u_{k;0}|^2  + \sum_{k\geq 1}\int_0^t   \langle A(u_s),e_k\rangle u_{k;s}\dd s + \sum_{k\geq 1} \int_0^t \sqrt{\lambda_{k;s}} u_{k;s}\dd \beta^k_s\\
		&\quad +\sum_{k\geq 1} \int_0^t \lambda_{k;s} \dd s.
	\end{align*}
	So in more compact form,
	\begin{equation*}\label{eq:SPDEL2Ito}
		\begin{aligned}
			|u_t|^2_H &= |u_0|_H^2 + 2\int_0^t \langle A(u_s),u_s\rangle \dd s + 2\int_0^t \langle u_s,B(u_s)\dd W_s\rangle \\
			&\quad  + \int_0^t \Tr \left[B(u_s) B(u_s)^\ast \right]\dd s.
		\end{aligned}
	\end{equation*}
	\paragraph{Uniqueness:} We employ the monotonicity assumption, \eqref{eq:MonotoneSPDEPrime}. Let $u,\,w \in L^2(\Omega\times[0,T];V)$ be two solutions to \eqref{eq:AbstractSPDE} in the sense of \ref{def:SPDEWeakSol}. By assumption,
	\begin{equation*}
		u_t-w_t = \int_0^t A(u_s)-A(w_s)\dd s + \int_0^t B(u_s)-B(w_s)\dd W_s,
	\end{equation*}
	in the sense of definition \ref{def:SPDEWeakSol}. Therefore, by a similar argument as we used to prove \eqref{eq:SPDEL2Ito}, we obtain the identity
	\begin{equation}\label{eq:UWDiff1}
		\begin{aligned}
			|u_t-w_t|^2_H &= 2 \int_0^t \langle A(u_s)-A(w_s),u_s-w_s\rangle \dd s \\
			&\quad + 2 \int_0^t \langle u_s-w_s,(B(u_s)-B(w_s))\dd W_s\rangle \\
			&\quad + \int_0^t \Tr\left((B(u_s)-B(w_s))(B(u_s)-B(w_s))^*\right)\dd s.
		\end{aligned}
	\end{equation}
	By assumption the stochastic integral term is only a local martingale, so it does not directly vanish under the expectation. Let us define the sequence of stopping times, for $R\geq 0$,
	\begin{equation*}
		\tau_R := \inf \left\{ t\in (0,T]\,:\,  \int_0^t \|u_s\|^2_{V} \dd s \vee \int_0^t \|w_s\|^2_V\dd s \geq R \right\}.
	\end{equation*}
	It follows from Remark \ref{rem:BLinearBound} that the stopped process $[0,T]\ni t\mapsto \mathds{1}_{t\leq \tau_R}B(u_t) \in \msH_T^2(W)$, i.e it is a square integrable integrand and so the stopped integral,
	\begin{equation*}
		\int_0^{\tau_R\wedge t} B(u_s)\dd W_s = \int_0^t \mathds{1}_{s\leq \tau_R} B(u_s)\dd W_s,
	\end{equation*}
	is a true martingale. Therefore applying \eqref{eq:UWDiff1} at $t\wedge \tau_R$ for any $R\geq 0$, and then taking expectations on both sides we have that,
	\begin{align*}
		\mbE[|u_{t\wedge \tau_R} - w_{t\wedge \tau_R}|^2_H] &=  2\int_0^{t\wedge \tau_R}  \mbE\left[\langle A(u_s)-A(w_s),u_s-w_s\rangle+\|B(u_s)-B(w_s)\|^2_{L_2}\right]  \dd s .
	\end{align*}
	So applying \eqref{eq:MonotoneSPDEPrime} gives that
	\begin{align*}
		\mbE[|u_{t\wedge \tau_R} - w_{t\wedge \tau_R}|^2_H] \leq 0.
	\end{align*}
	Since this holds for any $t\in [0,T]$ and $\lim_{R\rightarrow \infty}\tau_R =\infty$, applying the dominated convergence theorem, Theorem \ref{th:DCTBochner}, we conclude that $u = w \in L^2(\Omega\times[0,T];H)$ and therefore also $\mbP$-a.s. in $L^2([0,T];H)$.
	\paragraph{Existence:} We argue in a similar fashion as in the proof of Theorem \ref{th:MonotonePDE} only now the system of finite dimensional ODE is replaced by a finite dimensional system of SDE. For $n\geq 1$, let us define the process $(u_{n;t})_{t\geq 0}$ to be the unique $L^2(\Omega\times[0,T];V_n)$ valued solution to the system,
	\begin{equation}\label{eq:FiniteDimSDE}
		\begin{aligned}
			\langle u_{n;t},e_k\rangle &= \langle u_{0},e_k\rangle + \int_0^t \langle A(u_{n;s}),e_k\rangle \dd s +  \int_0^t \langle e_k, B(u_{n;t})\dd W_s\rangle ,
		\end{aligned}
	\end{equation}
	for $k=1\,\ldots,n$. As with \eqref{eq:GalerkinNLODE}, well-posedness of the system \eqref{eq:FiniteDimSDE} does not follow directly from the standard Cauchy--Lipschitz theory for SDEs, \cite[Thm. 5.2.1]{oksendal_13}. However, strong existence (in a probabilistic sense) and uniqueness can be established using the assumptions \eqref{eq:CoerciveSPDEPrime}, \eqref{eq:MonotoneSPDEPrime} and essentially applying an Euler scheme. The steps are technical but follow in spirit the same arguments one would make in the deterministic case, only requiring the additional technicality of a stopping time approximation. Full details can be found in \cite[Sec. 3]{prevot_rockner_07}. Taking well posedness of the system \eqref{eq:FiniteDimSDE} as given for now, the next step is to establish a uniform bound on the family $(u_n)_{n\geq 1}$.\\ \par 
	Arguing along the same lines as we did to establish \eqref{eq:SPDEL2Ito}, but this time only summing up to $k=n$ we obtain the identity,
	\begin{equation}\label{eq:FiniteDimSDEHNorm}
		\begin{aligned}
			|u_{n;t}|^2_H &= \sum_{k=1}^n |\langle u_0,e_k\rangle|^2 + 2\int_0^t \langle A(u_{n;s}),u_{n;s}\rangle \dd s + 2  \int_0^t \langle u_{n;s},B(u_{n;s}) \dd W_s\rangle \\
			&\quad  + \sum_{k=1}^n \int_0^t \lambda_{k;s}\dd s.
		\end{aligned}
	\end{equation}
	Again, we would like to take the expectation and so remove the stochastic integral, however, as in the proof of uniqueness we first need to take a sequence of stopping times 
	$$\tau_R:= \inf \left \{t \in (0,T]\,:\, \int_0^t \|u_{n;s}\|^2_{V}\dd s \geq R\right\},$$ 
	and consider the stopped process, $u_{n;t\wedge \tau_R}$, for which we have the bound,
	\begin{equation*}
		\mbE[|u_{n;t\wedge \tau_R}|_H^2] \leq |u_0|^2_H + 2\mbE\left[\int_0^{t\wedge \tau_R} \langle A(u_{n;s}),u_{n;s}\rangle \dd s \right] + \mbE\left[\int_0^{t\wedge \tau_R} |B(u_{n;s})|^2_{L_2}\dd s  \right].
	\end{equation*}
	So applying \eqref{eq:MonotoneSPDEPrime} we obtain,
	\begin{equation*}
		\mbE\left[|u_{n;t\wedge \tau_R}|_H^2 + \alpha \int_0^{t\wedge \tau_R} \|u_{n;s}\|^2_V\dd s \right]  \leq |u_0|^2_H + \nu\mbE[t\wedge \tau_R].
	\end{equation*}
	Thus, since the right hand side is independent of $n\geq 1$ and $\tau_R \rightarrow \infty$ $\mbP$-a.s, as $R\rightarrow \infty$ we see that we have,
	\begin{equation}\label{eq:SDEAPriori1}
		\sup_{n\geq 1}\sup_{t\in [0,T]} \mbE\left[|u_{n;t}|_H^2 + \alpha \int_0^t \|u_{n;s}\|^2_V\dd s \right]  <\infty.
	\end{equation}
	This is almost the a priori bound we want, we only need to exchange the supremum over $t\in [0,T]$ with the expectation. Since the supremum is a convex function and the expectation is really an integral, exchanging their order in \eqref{eq:SDEAPriori1}, is not obviously valid. However, using the BDG inequality, Theorem \ref{th:BDG}, with $p=1$ and $\tau=T$, we have the following,
	\begin{align*}
		\mbE \left[ \sup_{t \in [0,T]} \left| \int_0^t \langle u_{n;s},B(u_{n;s})  \dd W_s \rangle \right|_H\right] &\leq C \mbE\left[ \left( \sum_{k=1}^n \int_0^T \sqrt{\lambda_{k;t}}\langle u_{n;t},e_k\rangle\dd t  \right)^{\frac{1}{2}}  \right]\\
		&\leq C \mbE \left[\left(\sum_{k=1}^n \sup_{t \in [0,T]} |\langle u_{n;t},e_k\rangle| \int_0^T \sqrt{\lambda_{k;s}}\dd t\right)^{\frac{1}{2}}  \right] \\
		&\leq \frac{1}{4} \mbE\left[\sup_{t \in [0,T]} |u_{n;t}|^2_H\right] + C^2 \mbE\left[ \int_0^T \|B(u_{n;t})\|^2_{L_2} \dd t\right] .
	\end{align*}
	The final inequality here follows from H\"older and Young's product inequality. If we take a supremum over $t\in [0,T]$, before taking the expectation in \eqref{eq:FiniteDimSDEHNorm}, we therefore obtain
	\begin{align*}
		\mbE\left[ \sup_{t \in [0,T]} |u_{n;t}|_H^2\right] &\leq |u_0|^2_H + 2\mbE\left[ \int_0^T |\langle A(u_{n;s}),u_{n;s}\rangle| \dd s \right] + (1+2C^2)\mbE\left[\int_0^T \|B(u_{n;s})\|^2_{L_2}\dd s \right]\\
		&\quad + \frac{1}{2}\mbE\left[\sup_{t \in [0,T]} |u_{n;t}|^2_H\right].
	\end{align*}
	From which it follows,  using \eqref{eq:CoerciveSPDEPrime}, Remark \ref{rem:BLinearBound} and linearity of the expectation, that we  have
	\begin{equation}\label{eq:SDEAPriori2}
		\sup_{n\geq 1 }\mbE\left[ \sup_{t \in [0,T]} |u_{n;t}|_H^2\right]  \leq  |u_0|^2_H    +  \sup_{n\geq 1}\mbE\left[ \int_0^T (1+\tilde{C})\|u_{n;t}\|^2_V\dd t\right],
	\end{equation}
	for a new constant $\tilde{C}>0$ that incorporates the constant $\nu>0$ in \eqref{eq:CoerciveSPDEPrime}. It follows then from \eqref{eq:SDEAPriori1} that the right hand side of \eqref{eq:SDEAPriori2} is finite and so we finally obtain the bound,
	\begin{equation}\label{eq:SDEAPriori3}
		\sup_{n\geq 1} \mbE\left[ \sup_{t\in[0,T]} |u_{n;t}|^2_H + \int_0^T \|u_{n;t}\|^2_{V}\dd t\right] <\infty.
	\end{equation}
	From \eqref{eq:SDEAPriori3}, \eqref{eq:LinearGrowthSPDE} and Remark \ref{rem:BLinearBound} it follows that:
	\begin{enumerate}
		\item $(u_n)_{n\geq 1}$ is uniformly bounded in $L^2(\Omega;L^\infty([0,T];H))\cap L^2(\Omega\times[0,T];V)$,
		\item $(A(u_n))_{n\geq 1}$ is uniformly bounded in $L^2(\Omega\times[0,T];V^\ast)$,
		\item $(B(u_n))_{n\geq 1}$ is uniformly bounded in $L^2(\Omega\times[0,T];L_2)$.
	\end{enumerate}
	Thus, there exist subsequences, which we do not relabel converging weakly in the following senses,
	\begin{equation*}
		\begin{aligned}
			u_n &\rightharpoonup u \quad \text{ in } L^2(\Omega\times [0,T];V),\\
			A(u_n)&\rightharpoonup \xi \quad \text{ in } L^2(\Omega\times [0,T];V^\ast)\\
			B(u_n)&\rightharpoonup \eta \quad \text{ in }L^2(\Omega\times [0,T];L_2).
		\end{aligned}
	\end{equation*}
	Furthermore, we have that $u_n\overset{*}{\rightharpoonup} u$ in $L^2(\Omega;L^\infty([0,T];H))$. So just as in the proof of Theorem \ref{th:MonotonePDE} it is only left to show that $\zeta =A(u)$ and $\eta = B(u)$. The argument in this case is very similar.\\ \par 
	From \eqref{eq:MonotoneSPDEPrime}, for all $v \in L^2(\Omega\times [0,T];V)$ and $n\geq 1$, we have that
	\begin{equation*}
		\mbE\left[\int_0^T 2 \langle A(u_{n;t}) -A(v_t),u_{n;t}-v_t\rangle + \|B(u_{n;t})-B(v_t)\|^2_{L_2}\dd t \right] \leq 0.
	\end{equation*}
	We then claim that we have the inequality,
	\begin{equation}\label{eq:SPDELimInf}
		\hspace{-1em}\begin{aligned}
			\mbE\bigg[ \int_0^T& \langle \zeta_t ,u_t\rangle \dd t + \int_0^T\|\eta_t\|^2_{L_2}\dd t\bigg]  \leq \liminf_{n\rightarrow \infty} 	\mbE\left[\int_0^T 2 \langle A(u_{n;t}) ,u_{n;t}\rangle + \|B(u_{n;t})\|^2_{L_2}\dd t \right].
		\end{aligned}
	\end{equation}
	To see this, we recall from finite dimensional It\^o formula, that we have
	\begin{equation*}
		\mbE\left[	|u_{n;T}|_H^2 - |u_{n;0}|^2_H \right]= \mbE \left[\int_0^T \langle A(u_{n;t}),u_t\rangle  + \|B(u_{n;t})\|^2_{L_2}\,\dd t \right],
	\end{equation*}
	and passing to the limit, $n\rightarrow 0$, gives that,
	\begin{equation*}
		\mbE\left[	|u_{T}|_H^2 - |u_{0}|^2_H \right]= \mbE \left[\int_0^T \langle \zeta_t,u_t\rangle  + \|\eta_t\|^2_{L_2}\,\dd t \right].
	\end{equation*}
	By essentially the same reasoning we have that  $u_{n;T}\rightharpoonup u_T$ in $L^2(\Omega;H)$, and so since the map $\rho \mapsto \mbE\left[|\rho|^2_H\right]$ is convex we have that
	\begin{equation*}
		\mbE\left[	|u_{T}|_H^2 - |u_{0}|^2_H \right] \leq \liminf_{n\rightarrow \infty} \mbE\left[	|u_{n;T}|_H^2 - |u_{n;0}|^2_H \right],
	\end{equation*}
	which is equivalent to \eqref{eq:SPDELimInf}. With \eqref{eq:SPDELimInf} in hand, proceeding by a similar passage as in \eqref{eq:PDEZetaMonotone} we see that
	\begin{equation}\label{eq:SPDEZetaEtaMontone}
		\mbE\left[\int_0^T 2\langle \zeta_t - A(v_t),u_t-v_t\rangle + \|\eta_t - B(v_t)\|^2_{L_2}\dd t  \right]\leq 0.
	\end{equation}
	If we set $u=v$ in \eqref{eq:SPDEZetaEtaMontone} we immediately see that $\eta = B(u)$. To show that $\zeta= A(u)$, let $v= u-\theta w$ for $\theta>0$ and $w \in L^2(\Omega\times [0,T];V)$. Diving both sides by $\theta$ we see that
	\begin{equation*}
		\mbE\left[\int_0^T \langle \zeta_t-A(u_t-\theta w_t),w_t \rangle \dd t \right] \leq 0.
	\end{equation*}
	Letting $\theta \rightarrow 0$ and appealing to \eqref{eq:WeakContinuitySPDE} we deduce that
	\begin{equation*}
		\mbE\left[ \int_0^T \langle \zeta_t - A(u_t),w_t\rangle  \dd t \right]\leq 0, \quad \forall \, w \in L^2(\Omega \times [0,T];V),
	\end{equation*}
	from which it follows that $\zeta = A(u)$ in $L^2(\Omega \times [0,T];V^\ast)$.\\ \par
	So it follows that $u \in L^2(\Omega\times [0,T];V)\cap L^2(\Omega;L^\infty([0,T];H))$ is the unique solution to \eqref{eq:AbstractSPDE} in the sense of Definition \ref{def:SPDEWeakSol}. Finally, we prove that in fact, $\mbP$-a.s., $u \in C([0,T];H)$. To see this we recall that since $H$ is a Hilbert space, if $u_n \rightharpoonup u \in H$, and $|u_n|_H\rightarrow |u|\in \mbR$ one has
	\begin{equation*}
		|u_n-u|^2_{H} =\langle u-u_n,u-u_n\rangle = |u|^2_H-2\langle u,u_n\rangle + |u_n|^2 \rightarrow 0.
	\end{equation*}
	Therefore, since the ad hoc It\^o formula, \eqref{eq:SPDEL2Ito} show that for a sequence $t_k\rightarrow t$, it holds that $|u_{t_k}|_H\rightarrow |u_t|_H$, it suffices to show that $u_{t_k}\rightharpoonup u_t\in H$. Let $h\in H$ be arbitrary and $(h_n)_{n\geq 1}\subset V$ be a sequence converging to $h$ strongly in $H$. Let $\varepsilon>0$ and $n_\varepsilon\geq 1$ be large enough such that,
	\begin{equation*}
		\sup_{t \in [0,T]} |u_t|_H||h-h_n|_H\,\leq
		\, \frac{\varepsilon}{2}, \quad \text{for all }n\geq n_\varepsilon.
	\end{equation*}
	So it follows that
	\begin{align*}
		|\langle u_t,h\rangle -\langle u_{t_k},h\rangle| &\leq |\langle u_t,h-h_{n_\varepsilon}\rangle|+|\langle u_t-u_{t_k},h_{n_\varepsilon}\rangle| + |\langle u_{t_k},h-h_{n_\varepsilon}\rangle|\\
		&\leq \varepsilon + \|u_{t}-u_{t_k}\|_{V^\ast}\|h_{n_\varepsilon}\|_{V}.
	\end{align*}
	By definition of the weak form of the equation, we have that $u_{t_k} \rightarrow u_t$ strongly in $V^\ast$ and so we have that,
	\begin{equation*}
		\limsup_{n\geq n_\varepsilon} |\langle u_t,h\rangle - \langle u_{t_k},h\rangle| \,\leq \,\varepsilon.
	\end{equation*}
	Hence, by the arbitrariness of $\varepsilon>0$ the necessary convergence holds and we conclude the proof.
\end{proof}
\section{Examples}
We present some particular examples. We refer to \cite{daprato_zabczyk_14,prevot_rockner_07,pardoux_21_spde_introduction,dalang_koshnevisan_mueller_nualart_xiao_09,liu_rockner_15_spde_introduction} for more examples of SPDE, some of which are amenable to the method presented here.
\begin{example}[A Transport Noise SPDE]\label{ex:TransportSPDE}
Let us fix $\Gamma = \mbT^d$ and $H= L^2(\mbT^d)$, $V= \mcH^{1}(\mbT^d),\,V^\ast = \mcH^{-1}(\mbT^d)$. By the Poincar\'e inequality, \cite[Subsec 5.8.1]{evans_10}, we have the bound,
\begin{equation}
\|u-u_{\mbT^d}\|^2_{L^2}\leq \|\nabla u\|^2_{L^2},\quad u_{\mbT^d} := \frac{1}{|\mbT^d|}\int_{\mbT^d} u(x)\dd x.
\end{equation}
Therefore we may in principle describe the space $\mcH^1(\mbT^d)$ to be the set of all functions $u:\mbT^d\rightarrow \mbR$ with square integrable gradient. We see below that this suffices for our application.\\ \par
Then, assume we have a sequence of $L^2(\mbT^d)$ basis functions, $(e_k)_{k\geq 1}\subset C^\infty(\mbT^d;\mbR^d)$, such that $\sup_{k\geq 1}\|e_k\|_{L^2}=1$. Furthermore, we assume that we have a sequence of non-negative real numbers $(\sigma_k)_{k\geq 1} \subset \mbR_{\geq 0}$, then we consider the SPDE,
\begin{equation}\label{eq:TransportSPDE2}
	\begin{cases}
		\dd u_t = \Delta u_t \dd t + \sum_{k\geq 1} \sqrt{\sigma_k}e_k \cdot \nabla u_t \dd \beta_t^k ,&\\
		u\tzero =u_0,
	\end{cases}
\end{equation}
where $(\beta^k)_{k\geq 1}$ a family of i.i.d, real, Brownian motions. So under the assumption that $\sum_{k\geq 1}\sigma_k = \|\sigma\|_{\ell^1}<\infty$, writing $B(u_t):= \sum_{k\geq 1} \sqrt{\sigma_k} e_k\cdot \nabla u_t$ we have,
\begin{equation*}
	\|B(u_t)\|^2_{L_2} = \sum_{k\geq 1} \sigma_k \|e_k\|^2_{L^2}\|\nabla u_t\|^2_{L^2} = \|\sigma\|_{\ell^1} \|\nabla u_t\|^2_{L^2}.
\end{equation*}
Note that the quantity $u_{\mbT^d}$ is (at least formally) preserved by \eqref{eq:TransportSPDE2}, therefore for simplicity let us set $u_0$ to have zero mean. So we may work in the space of mean free functions and we truly have $\|u\|_{L^2} \leq \|\nabla u\|_{L^2}$.\\ \par
Since \eqref{eq:TransportSPDE2} is linear, by analogy with the arguments of Chapter \ref{ch:PDE}, we only need to establish coercivity in the sense of \ref{eq:CoerciveSPDEPrime}. With $A=\Delta$, for any $\alpha>0$, we directly have
\begin{equation*}
	2\langle \Delta u,u\rangle_{L^2} + \|B(u)\|^2_{L_2} + \alpha \|u\|^2_{H^1} = ( \|\sigma\|_{\ell^1}\|-2 + \alpha)\|\nabla u\|^2_{L^2}.
\end{equation*}
So in order to satisfy \eqref{eq:CoerciveSPDEPrime} we require the bound $\|\sigma\|_{\ell^1} <2$. This mirrors the result shown in Subsection \ref{subsec:WorkedExample} but now in arbitrary dimensions and with infinite dimensional noise. As in that case, if we also have $\div e_k =0$ for all $k\geq 1$ then the martingale term arising from It\^o's formula for $\|u_t\|_{L^2}$, formally becomes,
\begin{align*}
	\sum_{k\geq 1} \sigma_k \int_0^t \int_{\mbT^d} (e_k(x) \cdot \nabla u_{s}(x) )u_s(x)\dd x\dd \beta_s^k  &= \sum_{k\geq 1} \frac{\sigma_k}{2} \int_0^t \int_{\mbT^d} e_k(x) \cdot \nabla (u^2_{s}(x) )\dd x \dd \beta_s^k\\
	&= \sum_{k\geq 1} \frac{\sigma_k}{2} \int_0^t \int_{\mbT^d} \nabla \cdot (e_k(x) u^2_{s}(x) )\dd x \dd \beta_s^k\\
	&= 0.
\end{align*}
So with some care one can show that in this case, the martingale term in fact disappears $\mbP$-a.s., not only in expectation. \qed
\end{example}
\begin{example}\label{ex:stoch_rd_field}
Let $(\Omega,\mcF,\mbP)$ be a fixed probability space, $m\geq 2$ , $\Gamma\subset \mbR^d$ be a smooth bounded domain, $\ell \in \mbN$ and 
\begin{align*}
	B :  \mbR^d\times L^2(\Gamma) &\times \Omega \to \mbR^\ell,\\
	(t,u,\omega)&\mapsto B_t(u,\omega)
\end{align*}
be a stochastic process for which there exists a $C>0$ such that for all $u,\,w \in L^2(\Gamma)$,
\begin{equation*}
	\sup_{x\in \Gamma}|B(u,x,\omega) - B(w,x,\omega)|_{\mbR^\ell} \leq C\|u-w\|_{L^2} \quad \text{and}\quad \sup_{x\in \Gamma}|B(u,x,\omega)|_{\mbR^\ell} \leq C(1+\|u\|_{L^2(\Gamma)})\quad \mbP\text{-a.s.}
\end{equation*}
For concision we will drop the $x\in \Gamma$ and $\omega \in \Omega$ arguments from the notation for $B$. Then, for $\kappa>0$ and $W$ an $\mbR^\ell$ valued $\mbP$-Brownian motion, consider the stochastic field equation
	\begin{equation}\label{eq:stoch_rd_field}
	\begin{cases}
		\dd u_t = -\kappa |u|^{m-2}u\dd t + B(u)\cdot \dd W_t,&\\
		u\tzero =u_0.
	\end{cases}
\end{equation}
We fix our functional setting by choosing $V= L^m(\Gamma)$, $H=L^2(\Gamma)$ and $V^* = L^{\frac{m}{m-1}}(\Gamma)$. Note that in this case we view $B_t(u) : \mbR^\ell \to \mbR$ and so by our global Lipschitz assumption, we have the $\mbP$-a.s. bound
\begin{equation*}
	\|B(u)\|^2_{L^\infty_\Gamma L_2} \coloneqq \|B(u)\|^2_{L^\infty_\Gamma L_2(\mbR^\ell;\mbR)} \leq C(1+\|u\|_{L^2(\Gamma)}^2).
\end{equation*}
We then define the operator
\begin{align*}
	A : L^m(\Gamma)&\to L^{\frac{m}{m-1}}(\Gamma)\\
	u &\mapsto -u|u|^{m-2}.
\end{align*}
We first check \eqref{eq:CoerciveMSPDE}. To wit, by our growth assumption on we have
\begin{align*}
	\langle A(u),u\rangle  + \|B(u)\|^2_{L_2} =& -\kappa \langle |u|^{m-2} u,u\rangle + \|B(u)\|^2_{L^\infty_\Gamma L_2}  \\
	= &- \kappa \|u\|^m_{L^m(\Gamma)}  + \|B(u)\|^2_{L^\infty_\Gamma  L_2}\\
	 \leq &- \kappa \|u\|^m_{L^m(\Gamma)}  + C(1+\|u\|^2_{L^2(\Gamma)}),
\end{align*}
for some possibly new $C>0$, so that \eqref{eq:CoerciveMSPDE} is verified with $p=m$.\\

Monotonicity, \eqref{eq:MonotonePrime}, is checked in the same way as in Example~\eqref{ex:p_laplace_reac_diffuse} only using the Lipschitz assumption on $B$ in addition. \\

The growth bound \eqref{eq:MGrowthSPDE} is obtained by calculating for $u \in L^m(\Gamma)$,
\begin{equation*}
	\|A(u)\|_{L^{\frac{m}{m-1}}(\Gamma)} = \|u^{m-1}\|_{L^{\frac{m}{m-1}}(\Gamma)} = \|u\|^{m-1}_{L^m(\Gamma)}  \lesssim (1+\|u\|^m_{L^m(\Gamma)}),
\end{equation*}
so that again choosing $p=m$ the condition is satisfied.\\

Weak strong continuity in $L^m(\Gamma)$ spaces by the same arguments as in Example~\ref{ex:p_laplace_reac_diffuse}, see Remark~\ref{rem:l_p_weak_strong_continuous}.
	\qed
\end{example}
\begin{remark}
	Note that \eqref{eq:stoch_rd_field} is not strictly speaking a PDE, since we only take a derivative in time. Instead one can think of it as a field valued SDE. A class of (S)PDE not easily covered by this theory are the reaction diffusion equations of the form
	\begin{equation*}
		\dd u_t = \Delta u_t \dd t - \kappa |u_t|^{m-2} u_t\dd t + Q^{\nicefrac{1}{2}}\dd W_t,
	\end{equation*}
	for $\kappa>0$ and $Q^{\nicefrac{1}{2}}W$ a $Q$-Wiener process. The challenge here is to reconcile the growth bound \eqref{eq:MGrowthSPDE} in triples like $\mcH^1_0\subset L^2 \subset \mcH^{-1}$ with the coercivity property \eqref{eq:CoerciveMSPDE}. However, equations of this form can be treated relatively straightforwardly by a priori energy estimates.
\end{remark}
Before presenting the next example, we recall the Sobolev and Poincar\'e inequalities on bounded domains, we refer in general to \cite[Sec. 5.6 \& Subsec. 5.8.1]{evans_10} for this material. Let $\Gamma\subset \mbR^d$ be a closed, bounded, domain with smooth boundary and for $k \in \mbN$, $p\in [1,\infty]$, let $W^{k,p}(\Gamma)$ denote the space of maps $u:\Gamma\rightarrow \mbR$ whose first $k$ distributional derivatives are bounded in $L^p(\Gamma)$. We retain the special notation $\mcH^{1}(\Gamma):= W^{1,2}(\Gamma)$. We equip these spaces with the norms,
\begin{equation*}
	\|u\|_{W^{k,p}} := \sum_{n\leq k} \|D^{n}u\|_{L^p}. 
\end{equation*}
Let $l<k$, $q \in (p,\infty]$ be such that $\frac{1}{p}-\frac{k}{d} =\frac{1}{q}-\frac{l}{d}$. Then one has the bound,
\begin{equation*}
 \|u\|_{W^{l,q}}\leq \|u\|_{W^{k,p}}.
\end{equation*}
That is lower derivatives, in higher integrabillity are controlled by higher derivatives in lower integrabillity. As a particular case, for $d\geq 2$ and $p> d$, one always has,
\begin{equation*}
	\|u\|_{L^{p^*}} \leq \|u\|_{W^{1,p}},\quad \text{ for } p^* :=  \frac{pd}{d-p}
	\end{equation*}
In the edge case $p=d=2$ one in fact also has $\|u\|_{L^2}\leq \|u\|_{W^{1,2}}$ and in fact this embedding is compact. Furthermore, since $\Gamma$ is compact, we also have $\|u\|_{L^m}\leq \|u\|_{L^p}$ for $m\leq p$ and so in the above setting we have
\begin{equation*}
	\|u\|_{L^m}\leq \|u\|_{W^{1,p}} \quad \text{ for all } m\leq p^*.
	\end{equation*}
From which it follows that $W^{k,p}(\Gamma)\cap L^{m}(\Gamma) = W^{k,p}(\Gamma)$ for all $m \leq \frac{pd}{d-p}$.\\ \par 
The Poincar\'e inequality allows us to control the $L^p$ norms of a function by $L^p$ norms on its gradient. These inequalities are related to the spectral properties of the heat semi-group, see \cite{bakry_gentil_ledoux_14}. We recall that for any $p\in [1,\infty]$ there is a map, known as the trace map,
\begin{align*}
	T: W^{1,p}(\Gamma) &\rightarrow L^p(\partial \Gamma)\\
	u\mapsto u|_{\partial\Gamma}.
\end{align*}
For all $k\geq 1$, we define $W^{k,p}_0(\Gamma) = W^{k,p}(\Gamma)\cap \text{Ker}(T)$. Then, the classical Poincar\'e inequality says that for all $p\in [1,\infty]$ there exists a $C:= C(\Gamma,p)>0$ such that for all $u\in W^{1,p}_0(\Gamma)$,
\begin{equation*}
	\|u\|_{L^p} \leq C\|\nabla u\|_{L^p}.
\end{equation*}
Thus for example we have $\|u\|_{W^{1,p}}\leq (1+C) \|\nabla u\|_{L^p}$ for all $p\in [1,\infty]$. By considering $u = D^{k-1}v$ we also have $\|u\|_{W^{k-1,p}}\leq (1+kC)\|D^ku\|_{L^p}$. Of particular relevance for us will be the bound $\|u\|_{\mcH^1}:= \|u\|_{L^2}+ \|\nabla u\|_{L^2} \leq (1+C)\|\nabla u\|_{L^2} =:\|u\|_{\mcH^1_0}$. We furthermore equip $\mcH_0^1(\Gamma)$ with the inner product,
\begin{equation*}
	\langle u,v\rangle_{\mcH^1_0} := \int_{\Gamma} \nabla u(x)\nabla v(x)\dd x.
\end{equation*}
\vspace{0.3em}
\begin{example}[A Stochastic Porous Medium Equation]\label{ex:StochPorous}
	Keep $\Gamma \subset \mbR^d$, closed, bounded and with smooth boundary. We now let $m\in [2,\infty)$ and consider the Banach space $V = L^{m}(\Gamma)$ and will work with  $(\mcH^{1}_0(\Gamma))^\ast = \mcH^{-1}(\Gamma)$ as our Hilbert space and $V^\ast = (L^m(\Gamma))^\ast$ as the second Banach space. Since we are on a bounded domain, we have $L^2(\Gamma )\subset L^{\frac{m}{m-1}}(\Gamma)$ with $\frac{m}{m-1}=\frac{m}{m-1} \in (1,2]$. So it follows that $\mcH^1_0(\Gamma) \subset L^2(\Gamma)\subset L^m(\Gamma)$ and so $L^{\frac{m}{m-1}}(\Gamma ) \cong (L^m(\Gamma))^\ast \subset \mcH^{-1}(\Gamma)$. Let $Q \in L_1(L^2(\Gamma))$ and consider the stochastic porous medium equation, with $Q$-Wiener process $W$,
		\begin{equation*}\label{eq:stoch_porous_media}
		\begin{cases}
			\dd u_t = \nabla \cdot (|u_t|^{m-2}\nabla u_t) \dd t + Q^{1/2}\dd W_t,& \text{in }\Gamma,\\
			u_t|_{\partial \Gamma}=0, &\text{for all }t\geq 0,\\
			u\tzero	=u_0, &  \text{in }\Gamma.
		\end{cases}	
	\end{equation*}
Consider the operator $A(u)= \frac{1}{m-1}\Delta (|u|^{m-2}u) = \nabla \cdot (|u_t|^{m-2}\nabla u_t)$. It follows from the Riesz representation theorem, that for $\mcH^{1}_0$ equipped with the inner product $\langle \nabla u,\nabla v\rangle_{L^2}$, for any $\varphi \in (\mcH^{1}_0)^\ast$ there exists a $u\in \mcH^{1}_0$ such that for all $v \in \mcH^{1}_0$,
\begin{equation}
	\varphi(v) = \langle u_\varphi,v\rangle_{\mcH^{1}_0}= \langle -\Delta u_\varphi,v\rangle_{L^2}.
\end{equation}
Therefore, by linearity the map $\varphi \mapsto \Delta u_\varphi$ defines an isomorphism $(\mcH^{1}_0)^\ast\rightarrow \mcH^{1}_0$. However, since it is also the case that every element $u \in \mcH^{1}_0$ defines a linear functional we have that the map $\Delta :\mcH^1_0 \rightarrow (\mcH^1_0)^\ast$ is an isomorphism. The fact that $(\mcH^1_0(\Gamma))^\ast =\mcH^{-1}(\Gamma)$ as described in Example \ref{ex:DirLaplace} is an easy exercise. Thus we identify $\mcH^{1}_0$ with its dual via the map $-\Delta$ and we have the Gelfand triple $L^{m}\subset \mcH^{-1}\cong \mcH^{1}_0 \subset (L^{m})^\ast$. Under this identification, we equip $\mcH^{-1}$ with the inner product,
\begin{equation}
	\langle u,v\rangle_{\mcH^{-1}} = -\langle \Delta^{-1/2} u,\Delta^{-1/2}v\rangle_{L^2},
\end{equation}
where $(-\Delta)^{-1}:\mcH^{-1}\rightarrow \mcH^1_0$ is the inverse isomorphism. Note that when we identify $\mcH^{-1}\cong \mcH^{1}_0$ through the inner product, $\langle  u,  v\rangle_{\mcH^1_0}$, we cannot simultaneously identify $(L^{m}(\Gamma))^\ast \cong L^{\frac{m}{m-1}}(\Gamma)$, since the associated Riesz maps are not the same in each identification.\\ \par 
Before establishing \eqref{eq:CoerciveMSPDE}-\eqref{eq:WeakContinuitySPDE}, for $A(u)$, we show that $-\Delta$ can be continuously extended as an operator $\Delta :L^{\frac{m}{m-1}}(\Gamma)\rightarrow (L^{m}(\Gamma))^\ast$. Since $|u|^{m-2}u\in L^m$ for all $u \in L^{\frac{m}{m-1}}$ it follows that $A(u):=\Delta (|u|^{m-2}u): L^m(\Gamma)\rightarrow (L^{m}(\Gamma))^\ast$ is a continuous operator. To show the former claim, let $u \in \mcH^1_0$, so that $\Delta u \in \mcH^{-1} \subset (L^m)^\ast$ and $v\in L^m\subset \mcH^{-1}$, so that by the above discussion, we have
\begin{equation*}
\langle \Delta u,v\rangle_{(L^m)^\ast;L^m} = \langle \Delta u,v\rangle_{\mcH^{-1}} = \int_{\Gamma} u \Delta v = \langle u,v\rangle_{L^2}\leq \|u\|_{L^{\frac{m}{m-1}}(\Gamma)}\|v\|_{L^m(\Gamma)}.
\end{equation*}
So it follows that, under the above identification, $\|\Delta u\|_{(L^m)^\ast}\leq \|u\|_{L^{\frac{m}{m-1}}(\Gamma)}\leq \|u\|_{L^m(\Gamma)}$. Therefore the map $\Delta: \mcH^{1}_0\rightarrow (L^{m})^\ast$ extends continuously to a linear isometry $\Delta :L^{\frac{m}{m-1}}\rightarrow (L^{m})^\ast$. It follows in the same way, that the map $u\mapsto \Delta(|u|^{m-2}u)$ extends continuously to a map $L^{m}(\Gamma)\rightarrow  (L^{m}(\Gamma))^\ast$ and we are in the required setting.\\ \par
Coercivity \eqref{eq:CoerciveMSPDE} (with $p=m$) follows almost by definition, since we have $\langle \Delta(|u|^{m-2}u),u\rangle_{\mcH^{-1}} = -\langle |u|^{m-2},| u|^2\rangle_{L^2}= -\|u\|^m_{L^m}$. Monotonicity follows similarly, using the arguments of Example \ref{ex:p_laplace_reac_diffuse}. Assumption \ref{ass:WeakContinuitySPDE} is satisfied again by the strong-weak continuity of products in $L^m(\Gamma)$ spaces. Finally, the growth bound, \eqref{eq:MGrowthSPDE} (with $p=m$) should be clear.

\qed
\end{example}

		\chapter{Pathwise Approach to SPDE}\label{ch:Extras}
In this chapter we present a brief introduction to the pathwise approach to SPDE. To motivate this consideration, recall Example \ref{ex:StochPorous} from the previous Chapter. For these equations, where the noise enters additively, we had to directly consider a $Q$-Wiener process, with trace class covariance. It is natural to ask however, if we could instead consider a cylindrical Wiener process in these equations. This setting is relevant for example in equations of stochastic quantisation, where noise that is truly white in space and time is the correct object to consider. One approach is to use the mild formulation, provided the operator $A(u)$ is the generator of a sufficiently regular semi-group, see \cite{hairer_09}. However, this approach also has its limitations, as we will see below, when considering non-linear equations whose solutions must lie in a space of genuine distributions, rather than functions. Methods built on the pathwise approach and the theory of rough paths, \cite{lyons_98}, have recently had great success at handling these so called, singular SPDE, see for example the now foundational papers, \cite{hairer_14_RegStruct, gubinelli_imkeller_perkowski_15_GIP}. We do not discuss these more singular cases here, but instead present, by way of an example, some of the ideas behind these approaches, which had already been applied by Giuseppe Da-Prato, Jerzy Zabzyck and collaborators to SPDE problems motivated by quantum field theory, fluid dynamics and interface models, \cite{daprato_debussche_temam_94,daprato_debussche_96,daPrato_debussche_02,daPrato_debussche_03}.
\section{Regularity of Stochastic Processes}
As in the case of finite dimensional stochastic processes, the Kolmogorov continuity criterion is a useful tool for establishing $\mbP$-a.s. regularity statements for infinite dimensional stochastic processes. This result actually holds in the far more general context of complete metric spaces.
\begin{definition}
	Given a metric space $(E,\rho)$, $T>0$ and $\kappa \in (0,1)$, we say that a map $[0,T]\ni t\mapsto X_t\in E$ is $\kappa$-H\"older continuous if,
	\begin{equation*}
		\|X\|_{\mcC^\kappa E}:= \sup_{s\neq t \in [0,T]} \frac{\rho(X_t,X_s)}{|t-s|^{\kappa}}<\infty.
	\end{equation*}
	If $\kappa>1$ we say that $X \in \mcC^{\kappa}E$ if and only if $[0,T]\ni t\mapsto X_t\in E$ is $\floor{\kappa}$ times continuous differentiable the $\floor{\kappa}$ derivative is $\kappa-\floor{\kappa}$-H\"older continuous in the above sense.
\end{definition}
\begin{theorem}[Kolmogorov Criterion]\label{th:Kolmogorov}
	Assume that $(X_t)_{t\in [0,T]}$ is a stochastic process taking values in a complete metric space $(E,\rho)$ and that there exist constants $C>0$, $\varepsilon>0$ and $\delta>1$ such that for all $s,\,t \in [0,T]$,
	\begin{equation*}
		\mbE\left[ \rho(X_t,X_s)^\delta \right] \leq C|t-s|^{1+\varepsilon}.
	\end{equation*}
	Then there exists a version of $X$ (which we do not relabel) such that $\mbP$-a.s. $X\in \mcC^{\kappa}E$, for all $\kappa <\frac{\varepsilon}{\delta}$.
\end{theorem}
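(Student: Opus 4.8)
The plan is to establish the Kolmogorov continuity criterion by the classical dyadic/chaining argument, adapted to the general complete metric space setting. The key realization is that Hölder regularity of a path is controlled by how fast its increments over dyadic intervals shrink, and the moment assumption $\mbE[\rho(X_t,X_s)^\delta] \leq C|t-s|^{1+\varepsilon}$ combined with a union bound lets us control these dyadic increments almost surely.

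First I would set up the dyadic machinery. For each $n \geq 1$, consider the dyadic times $t^n_k := kT/2^n$ for $k = 0,1,\ldots,2^n$, and define the maximal dyadic increment at level $n$,
\begin{equation*}
	K_n := \max_{1 \leq k \leq 2^n} \rho\left(X_{t^n_k}, X_{t^n_{k-1}}\right).
\end{equation*}
Fix a target exponent $\kappa < \varepsilon/\delta$. Using Markov's inequality on each increment together with the moment bound, one has $\mbP[\rho(X_{t^n_k},X_{t^n_{k-1}}) > 2^{-\kappa n}] \leq C (T/2^n)^{1+\varepsilon} 2^{\kappa\delta n}$. Taking a union bound over the $2^n$ increments at level $n$ gives
\begin{equation*}
	\mbP\left[K_n > 2^{-\kappa n}\right] \leq C\, T^{1+\varepsilon}\, 2^{n(1 - (1+\varepsilon) + \kappa\delta)} = C\,T^{1+\varepsilon}\, 2^{-n(\varepsilon - \kappa\delta)}.
\end{equation*}
Since $\kappa < \varepsilon/\delta$, the exponent $\varepsilon - \kappa\delta > 0$, so these probabilities are summable in $n$. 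By the Borel--Cantelli lemma, $\mbP$-a.s. there exists a random $N(\omega)$ such that $K_n(\omega) \leq 2^{-\kappa n}$ for all $n \geq N(\omega)$.

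Next I would convert the dyadic control into a genuine Hölder bound on the dyadic rationals. The standard step here is a chaining/telescoping estimate: for any two dyadic rationals $s < t$, one writes the increment $\rho(X_t, X_s)$ as a sum of increments over a chain of dyadic subintervals of decreasing length, picking up at most two increments at each level below $\log_2(T/|t-s|)$. Summing the geometric series $\sum_n 2^{-\kappa n}$ yields a bound of the form $\rho(X_t,X_s) \leq C_\omega |t-s|^\kappa$ for all dyadic $s,t$ with $|t-s|$ small, where $C_\omega$ depends on $N(\omega)$. This shows $t \mapsto X_t$ restricted to the dyadic rationals is $\mbP$-a.s. $\kappa$-Hölder.

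Finally, since the dyadic rationals are dense in $[0,T]$ and the Hölder condition makes the map uniformly continuous on a dense set with values in the \emph{complete} metric space $(E,\rho)$, the restriction extends uniquely to a $\kappa$-Hölder continuous map $\widetilde{X}$ on all of $[0,T]$; completeness of $E$ is exactly what guarantees the limits defining the extension exist. One then checks that $\widetilde{X}$ is a modification of $X$: for each fixed $t$, choosing dyadic $t_j \to t$, the moment bound forces $\rho(X_{t_j},X_t) \to 0$ in $L^\delta$ hence in probability, while $\rho(\widetilde{X}_{t_j},\widetilde{X}_t)\to 0$ by continuity, and $\widetilde X_{t_j}=X_{t_j}$ on the dyadics, so $\widetilde{X}_t = X_t$ $\mbP$-a.s. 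I expect the \textbf{main obstacle} to be bookkeeping the chaining argument cleanly — getting the geometric summation and the dependence of the constant on the random level $N(\omega)$ right, and carefully stating the extension-by-density step so that completeness of $E$ is visibly the only structural hypothesis used beyond the moment bound.
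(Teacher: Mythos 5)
Your proposal is correct and matches the approach the paper intends: the paper does not actually write out a proof, but simply cites Da Prato--Zabczyk (Thm.\ 3.3) with the remark that the result follows by ``a natural generalisation of the usual argument for finite dimensional stochastic processes,'' and your dyadic Markov/union-bound, Borel--Cantelli, chaining, and completeness-based extension argument is exactly that standard argument, with the exponent arithmetic ($\varepsilon-\kappa\delta>0$) and the modification check carried out correctly.
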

\begin{proof}
	The proof is a natural generalisation of the usual argument for finite dimensional stochastic processes. See \cite[Thm. 3.3]{daprato_zabczyk_14} for details.
\end{proof}
A useful property of Gaussian measures is the following corollary of Theorem \ref{th:Fernique}. In short it says that all higher moments of a Gaussian random variable are controlled by its first moment. This is related to a much more powerful result in the study of Gaussian measures, known as Nelson's hypercontractivity estimate, from which it follows that all moments of iterated integrals of a Gaussian random variable can be controlled by the second moment of the same iterated integral. This fact is related to the spectral gap of the Ornstein--Uhlenbeck semi-group, for an overview see \cite[Ch. 1]{nualart_10}. We state the result only for Gaussian random variables and in the context of separable Hilbert spaces even thought it holds almost without modification in separable Banach spaces, see \cite[Prop. 3.14]{hairer_09}.
\begin{proposition}\label{prop:FirstMomentBnd}
	Let $H$ be a separable Hilbert space, $\mu \in \mcP(H)$ be a Gaussian measure and let $M:= \int_H \|h\|_H\dd\mu(h)$. Then there exist constants $\alpha,\,C >0$ such that for any, $f:\mbR_+\rightarrow \mbR_+$ with the property that $f(x) \lesssim e^{\alpha x^2}$, one has
	\begin{equation}\label{eq:HypercontractiveBnd1}
		\int_{H} f\left(\frac{\|h\|_{H}}{M}\right)\dd\mu(h) \lesssim_f C.
	\end{equation}
In particular, for any $n\geq 1$, it holds that
\begin{equation}\label{eq:HypercontractiveBnd2}
	\int_H \|h\|_H^{2n}\dd\mu(h)\leq n! C \alpha^{-n}M^{2n}.
\end{equation} 
\end{proposition}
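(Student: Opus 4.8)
The plan is to extract from the proof of Fernique's theorem a tail bound on $\|h\|_H$ whose decay constant becomes \emph{universal} after rescaling by $M$, and then to read off both claims from the resulting sub-Gaussian tail. Throughout I assume $M>0$, i.e. $\mu$ is not the Dirac mass at the origin, so that the normalisation $\|h\|_H/M$ is meaningful.

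First, recall from the proof of Theorem \ref{th:Fernique} that a Gaussian measure is $\pi/4$-rotation invariant (Lemma \ref{lem:GaussRotInvar}), and that for such a measure one has $\mu(\|h\|_H>t)\leq e^{-2\tilde\alpha t^2/\sigma^2}$ for all $t>\sigma$, where $\tilde\alpha>0$ is a \emph{universal} constant and $\sigma$ is the least value with $\mu(\|h\|_H\leq \sigma)\geq 3/4$. The crucial observation is that $\sigma$ is controlled by $M$: by Markov's inequality $\mu(\|h\|_H>4M)\leq M/(4M)=1/4$, so $\mu(\|h\|_H\leq 4M)\geq 3/4$ and hence $\sigma\leq 4M$. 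Inserting $\sigma\leq 4M$ into the Fernique tail bound and setting $c:=\tilde\alpha/8$, I obtain $\mu(\|h\|_H>t)\leq e^{-c\,t^2/M^2}$ for every $t>4M$, while trivially $\mu(\|h\|_H>t)\leq 1$ for $t\leq 4M$. This is the universal sub-Gaussian tail for the normalised variable $\|h\|_H/M$.

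Next, fix any $\alpha\in(0,c)$; both $\alpha$ and the constant $C$ produced below are then universal. Writing $X=\|h\|_H$ and using the layer-cake identity
\begin{equation*}
	\int_H e^{\alpha X^2/M^2}\dd\mu = 1 + \int_0^\infty \frac{2\alpha t}{M^2}\,e^{\alpha t^2/M^2}\,\mu(X>t)\dd t,
\end{equation*}
I split the integral at $t=4M$. On $[0,4M]$ the trivial bound gives the contribution $e^{16\alpha}-1$; on $(4M,\infty)$ the tail bound together with the substitution $u=t^2/M^2$ gives $\int_{16}^\infty \alpha e^{-(c-\alpha)u}\dd u<\infty$, which converges precisely because $\alpha<c$. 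Hence $\int_H e^{\alpha(\|h\|_H/M)^2}\dd\mu\leq C$ for a universal $C=C(\alpha,c)$, and \eqref{eq:HypercontractiveBnd1} follows at once: if $f(x)\lesssim_f e^{\alpha x^2}$ then $\int_H f(\|h\|_H/M)\dd\mu\lesssim_f \int_H e^{\alpha(\|h\|_H/M)^2}\dd\mu\leq C$.

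Finally, \eqref{eq:HypercontractiveBnd2} is a formal consequence. Comparing a single term of the exponential series gives $x^{2n}\leq \frac{n!}{\alpha^n}e^{\alpha x^2}$ for all $x\geq 0$ and $n\geq 1$; applying this with $x=\|h\|_H/M$ and integrating against $\mu$ yields $\int_H (\|h\|_H/M)^{2n}\dd\mu\leq \frac{n!}{\alpha^n}C$, and multiplying through by $M^{2n}$ gives $\int_H\|h\|_H^{2n}\dd\mu\leq n!\,C\,\alpha^{-n}M^{2n}$, as required. The only genuinely delicate point is the scaling step in the second paragraph — re-expressing the Fernique constant purely in terms of $M$ — which rests on the fact, visible in the proof of Theorem \ref{th:Fernique}, that the decay constant $\tilde\alpha$ there does not depend on $\mu$, combined with the one-sided control $\sigma\leq 4M$ supplied by Markov's inequality.
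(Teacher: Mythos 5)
Your proof is correct and takes essentially the same route as the paper: both rest on the universal constant $\tilde{\alpha}$ from the Fernique proof, the Markov/Chebyshev observation that one may take $\sigma=4M$ (equivalently $\sigma\leq 4M$), and the single-term exponential series bound $x^{2n}\leq n!\,\alpha^{-n}e^{\alpha x^2}$ for the moment estimate. The only differences are presentational — you re-derive the exponential moment via an explicit layer-cake split instead of citing the already-integrated bound displayed in the Fernique proof, and you make explicit the degenerate case $M=0$ that the paper leaves implicit.
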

\begin{proof}
	We recall from the proof of Fernique's theorem, Theorem \ref{th:Fernique}, that we obtained the bound,
		\begin{align*}
		\int_H \exp\left(\frac{\tilde{\alpha}\|h\|^2_H}{\sigma^2} \right)\dd \mu(h) 
		&\leq e^{\tilde{\alpha}} + 2\tilde{\alpha} \int_1^\infty t e^{-\tilde{\alpha} t^2}\dd t, \label{eq:SigmaIndependent}
	\end{align*}
where, for any $\sigma>0$, $\tilde{\alpha}>0$ is such that $\mu(\|h\|_H>x)\leq \exp\left(-\frac{2\,\tilde{\alpha}\, x^2}{\sigma^2}\right)$ for all $x>\sigma$. Therefore, \eqref{eq:HypercontractiveBnd1} follows after setting $\sigma=4M$, for example, and using Chebyshev's inequality. The second bound, \eqref{eq:HypercontractiveBnd2} follows from Taylor's theorem which gives the bound $\frac{\alpha^nx^{2n}}{n!} \leq e^{\alpha x^2}$, for any $\alpha>0$ and $n\geq 1$. 
\end{proof}
\begin{corollary}\label{cor:GaussianHypercontractive} It follows that for an $H$-valued, Gaussian random variable $X$, and any $p>1$, there exists a constant, $C_p>0$ such that, $\mbE[\|X\|^p_H]\leq C_p\mbE[\|X\|^2_{H}]$.
\end{corollary}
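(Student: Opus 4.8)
The plan is to read off the corollary from Proposition \ref{prop:FirstMomentBnd}, using Jensen's inequality only to descend from the first moment to the second. Write $\mu = \mcL(X) \in \mcP(H)$ for the law of $X$ and set $M := \int_H \|h\|_H \dd\mu(h) = \mbE[\|X\|_H]$. If $M = 0$ then $X = 0$ $\mbP$-a.s. and both sides vanish, so assume $M > 0$; note $M < \infty$ already from Fernique's theorem (Theorem \ref{th:Fernique}).

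First I would fix $p > 1$ and apply Proposition \ref{prop:FirstMomentBnd} with the choice $f(x) = x^p$. Since $x \mapsto x^p e^{-\alpha x^2}$ is continuous on $[0,\infty)$ and tends to $0$ as $x \to \infty$, it is bounded, whence $x^p \lesssim e^{\alpha x^2}$ for the universal constant $\alpha$ supplied by the proposition. Thus \eqref{eq:HypercontractiveBnd1} applies and gives
\begin{equation*}
	\int_H \left(\frac{\|h\|_H}{M}\right)^{p} \dd\mu(h) \lesssim_p C,
\end{equation*}
which is exactly the scale-correct bound $\mbE[\|X\|^p_H] \leq C_p\, M^p$, the constant $C_p$ depending only on $p$ (it absorbs the universal $C$ and the implicit constant in $x^p \lesssim e^{\alpha x^2}$). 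In particular the left-hand side of the corollary is finite.

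Next I would relate $M$ to the second moment. By Jensen's inequality for the convex map $t \mapsto t^2$,
\begin{equation*}
	M^2 = \left(\mbE[\|X\|_H]\right)^2 \leq \mbE[\|X\|^2_H],
\end{equation*}
so that, writing $M^p = M^{p-2}\, M^2$ (with $M^{p-2} \in (0,\infty)$ for every $p>1$ since $0 < M < \infty$), the previous display yields
\begin{equation*}
	\mbE[\|X\|^p_H] \leq C_p\, M^{p-2}\, \mbE[\|X\|^2_H].
\end{equation*}
Absorbing the finite factor $M^{p-2}$ into the constant produces the stated inequality $\mbE[\|X\|^p_H] \leq C_p\, \mbE[\|X\|^2_H]$.

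The one point meriting care is the admissible dependence of the final constant. Under the quantifier order as written — the random variable $X$ is introduced before the constant — the factor $M^{p-2}$ may legitimately be folded into $C_p$, so no universality in $X$ is asserted for $p \neq 2$; and indeed, testing against the rescalings $X \mapsto tX$ shows such $X$-uniformity is impossible once $p > 2$, since the left side scales like $t^p$ and the right like $t^2$. The genuine content therefore sits entirely in the scale-correct estimate $\mbE[\|X\|^p_H] \leq C_p M^p$ delivered by Proposition \ref{prop:FirstMomentBnd} (itself a consequence of Fernique's exponential integrability), with the passage to $\mbE[\|X\|^2_H]$ being a routine application of Jensen's inequality.
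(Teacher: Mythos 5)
Your proof is correct and follows essentially the same route as the paper's: both read the $p$-th moment bound off Proposition \ref{prop:FirstMomentBnd} (your choice $f(x)=x^p$ is admissible since $x^p\lesssim_{p,\alpha} e^{\alpha x^2}$) and then descend to the second moment by a Jensen/H\"older comparison. Your write-up is in fact slightly more careful than the paper's two-line argument, since you dispose of the degenerate case $M=0$ explicitly and pin down — via the rescaling $X\mapsto tX$ — that $C_p$ must be allowed to depend on $X$ through the factor $M^{p-2}$, a point the paper leaves implicit.
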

\begin{proof}
From Proposition \ref{prop:FirstMomentBnd} and H\"older's inequality it follows that for any $p>1$ there exists a possibly different $C_p>0$ such that $\mbE[\|X\|_H^p]\leq C_p\mbE[\|X\|_H]$. Therefore, controlling the first moment by its second gives the result.
\end{proof}
To see how the Kolmogorov theorem, \ref{th:Kolmogorov} applies along with Corollary \ref{cor:GaussianHypercontractive}, let us consider an example of a Gaussian process solving a linear SPDE.
\begin{example}
	Let $(W_t)_{t\in [0,T]}$ denote a cylindrical Gaussian process, see Subsection \ref{subsec:GenGaussinRV} and $(v_t)_{t\in [0,T]}$ denote the solution to the linear, scalar, equation,
	\begin{equation}\label{eq:SHE}
		\begin{cases}
			\dd v_t = \Delta v_t\dd t + \dd W_t, & \text{ on  }[0,T]\times \mbT^d,\\
			v\tzero = 0, &\text{ on }\mbT^d.
		\end{cases}
	\end{equation}
	We can try to apply the method of Chapter \ref{ch:SPDE}, however, since we already saw that the white-noise $\mbP$-a.s. does not take values in $L^2(\mbT^d)$ this approach becomes tricky. We recall the representation of the cylindrical Wiener process, for some basis $(e_k)_{k\geq 1}$ of $L^2(\mbT^d)$, $W_t= \sum_{k\geq 1}e_k\beta^k_t$, we may decompose \eqref{eq:SHE} into the finite dimensional system of real, scalar, SDEs,
	\begin{equation*}
		\dd v_{k;t} = -|k|^2 v_{k;t} \dd t + \dd \beta^k_t,\quad v_{k;0}=0, \quad n\in \mbZ^d.
	\end{equation*}
	This defines, for each $n\in \mbZ^d$, a real, Ornstein--Uhlenbeck process, which we can solve explicitly,
	\begin{equation*}
		v_{k;t} =   \int_0^t e^{-|k|^2(t-s)}\dd \beta^k_s.
	\end{equation*}
	We would like to define the solution to \eqref{eq:SHE} as
	\begin{equation*}
		v_t := \sum_{k\in \mbZ^d} v_{k;t} e_k.
	\end{equation*}
	However, this raises the question of convergence of the sum and in tun, what space should we expect $t\mapsto v_t$ to live in. We can answer both questions using the Kolmogorov test, Theorem \ref{th:Kolmogorov}. Recall the scale of Sobolev spaces, for $\alpha \in \mbR$,
	\begin{equation*}
		H^\alpha:= H^\alpha(\mbT^d) :=  \left\{ \varphi \in \mcS'(\mbT^d)\,:\, \|\varphi\|^2_{H^\alpha}:= \sum_{k\in \mbZ^d} (1+|k|^2)^\alpha |\langle \varphi,e_k\rangle|^2 <\infty \right\}.
	\end{equation*}
	We observe that for each $n \in \mbZ^d$, $t\mapsto v_{k;t}$ is a Gaussian process and so from Corollary \ref{cor:GaussianHypercontractive}, for any $p\geq 1$, there exists a $C_p>0$ such that, for all $s,\,t \in [0,T]$,
	\begin{equation*}
		\mbE\left[ |v_{k;t}-v_{k;s}|^{2p} \right]\leq C_p \mbE\left[ |v_{k;t}-v_{k;s}|^2  \right]^p.
	\end{equation*}
	We will use this bound to estimate the $L^{2p}(\Omega;H^\alpha)$ norm of $v_t-v_s$. Strictly speaking we should first truncate the sum, at some $N>0$, only considering frequencies with $|k|\leq N$, then obtain uniform bounds in $N$ and take the limit. However, since the calculations are essentially the same we work directly with the formal limit $v$ and leave the precise limiting argument as a technical step. For $p\geq 1$ we have,
	\begin{align*}
		\mbE\left[ \|v_t-v_s\|^{2p}_{H^\alpha} \right] &= \sum_{k_1,\ldots,k_p\in \mbZ^d} \prod_{i=1}^p(1+|k_i|^2)^\alpha \mbE\left[ \prod_{i=1}^p |v_{k_i;t}-v_{k_i;s}|^2 \right]\\
		&\leq \sum_{k_1,\ldots,k_p\in \mbZ^d} \prod_{i=1}^p(1+|k_i|^2)^\alpha  \prod_{i=1}^p \mbE\left[|v_{k_i;t}-v_{k_i;s}|^{2p} \right]^{\frac{1}{p}}\\
		&\leq C_p \sum_{k_1,\ldots,k_p\in \mbZ^d} \prod_{i=1}^p(1+|k_i|^2)^\alpha  \prod_{i=1}^p \mbE\left[|v_{k_i;t}-v_{k_i;s}|^{2} \right].
	\end{align*}
	So it suffices to obtain good bounds on $\mbE\left[|v_{k_i;t}-v_{k_i;s}|^{2} \right]$. Note that for, $0\leq s<t\leq T$ and any $k \in \mbZ$, we have that
	\begin{equation*}
		v_{k;t}-v_{k;s} = (e^{-|k|^2(t-s)}-1)\int_0^s e^{-|k|^2(s-r)}\dd \beta^k_r+ \int_s^t e^{-|k|^2(t-r)}\dd \beta^k_r.
	\end{equation*}
	So by the usual finite dimensional, It\^o isometry, and the independent increments property of the Brownian motion, for each $i=1,\ldots,p$ we have,
	\begin{align*}
		\mbE\left|v_{k_i;t}-v_{k_i;s}|^2 \right] &=  \frac{1}{2|k_i|^2}\left(e^{-|k_i|^2(t-s)}-1\right)^2\left(1-e^{-2|k_i|^2s}\right)  +\frac{1}{2|k_i|^2}\left(1-e^{-2|k_i|^2(t-s)}\right).
	\end{align*}
	We can bound this expression using Taylor, so that for any $\kappa \in [0,1]$, we have,
	\begin{equation*}
		\mbE[|v_{k_i;t}-v_{k_i;s}|^2] \leq C_{\kappa} |k_i|^{2(\kappa-1)} |t-s|^\kappa. 
	\end{equation*}
	Returning to our bound on the $L^{2p}(\Omega;H^\alpha)$ norm of $v_t-v_s$, we now have,
	\begin{align*}
		\mbE\left[ \|v_t-v_s\|^{2p}_{H^\alpha} \right]  &\leq C_{p,\kappa} (t-s)^{\kappa p } \prod_{k_1,\ldots,k_p\in \mbZ^d} \prod_{i=1}^p(1+|k_i|^2)^\alpha   |k_i|^{2(\kappa-1)} \\
		&= C_{p,\kappa} (t-s)^{\kappa p } \sum_{k\in \mbZ^d}  \left((1+|k|^2)^\alpha   |k|^{2(\kappa-1)}\right)^p.
	\end{align*}
	So provided $\alpha <-\frac{d}{2}+1-\kappa$, the sum on the right hand side converges to a finite constant. Applying Theorem \ref{th:Kolmogorov} we establish the existence of a modification, which we do not relabel, such that $\mbP$-a.s. $v \in \mcC^{\kappa_p}H^{\alpha}$ for all $\kappa_p < = \frac{\kappa}{2}-\frac{1}{2p}$. Taking $p>1$ arbitrarily large we obtain that $v \in \mcC^{\kappa'}H^{\alpha-2\kappa'}$ for all $\alpha <-\frac{d}{2}+1$ and $\kappa' \in [0,1)$.\\ \par 
	This approach can be directly adapted to incorporate non-zero initial data, one only need include a finite variation term in the SDEs and analogous estimates hold, provided $v\tzero = v_0 \in H^{\alpha+2\kappa}$. As with the deterministic heat equation, the method can be further extended for less regular initial data, at the expense of measuring the solution in weighted H\"older spaces with prescribed blow-up as $t\searrow 0$.\\ \par 
	Notice that only for $d=1$ is $v_{t}$ a well defined spatial function. For all $d\geq 2$, $v_t$, only defines a distribution on $\mbT^d$. In the next section we will see that this causes an issue when considering  for example, non-linear equations driven by additive space-time white noise in $d\geq 2$.
\end{example}
\section{Pathwise Approach to SPDE: Stochastic Burger's Equation}
The above analysis gives us a way into a different perspective on SPDE, that being the pathwise approach. Instead of viewing solutions to SPDE as infinite dimensional stochastic processes and applying the tools of stochastic calculus, in the pathwise approach, one views solutions to SPDE as solutions to PDE driven by random coefficients. The idea is to build suitable input objects, typically coming from the noise source, a linearised equation or coupled system, and then solve a non-linear PDE involving these inputs for $\mbP$-a.a $\omega \in \Omega$ independently. We give an example of this method applied to the stochastic Burger's equation in one spatial dimension,
\begin{equation}\label{eq:StochBurgers}
	\begin{cases}
		\dd u_t = ( \partial_{xx}u_t+\partial_x (u_t^2))\dd t + \dd W_t, &\text{ on } [0,T]\times \mbT,\\
		u\tzero = u_0, &\text{ on }\mbT^,
	\end{cases}
\end{equation}
where $(\dd W_t)_{t\in [0,T]}$ is a space-time white noise and $u_0$ is a specified initial data. This problem was studied in \cite{daprato_debussche_temam_94} on the unit interval $(0,1)$ with Dirichlet boundary data. Although we work on the torus, mostly for presentational ease, our presentation is very similar to that of \cite{daprato_debussche_temam_94}.\\ \par
We note that \eqref{eq:StochBurgers} does not fall into the class of monotone equations - it can easily be checked that the non-linear transport term, $A(u):=\partial_x (u)^2 = 2 u \partial_x u$, does not satisfy $\langle Au_1 -A u_2,u_1-u_2\rangle_{L^2} \lesssim \|u_1-u_2\|_{L^2}$ for any proportionality constant. However, being a one dimensional analogue of the Navier--Stokes equations, it can be approached using compactness methods, \cite{metivier_viot_88} and this approach has been used to study more singular version of the equation, \cite{gubinelli_perkowski_20}.\\ \par
An alternative approach, however, is to treat \eqref{eq:StochBurgers} as a PDE with random forcing. An advantage of this approach is that in principle, the tools of PDE analysis, that have been well developed for deterministic counterparts of such equations can be brought to bear.\\ \par
We recall that in the previous section, we obtain the solution to \eqref{eq:SHE} as a stochastic process $(v_t)_{t\in [0,T]}$, with $\mbP$-a.s. continuous trajectories taking values in $H^\alpha$ for all $\alpha <\frac{1}{2}$. We do not expect the non-linear term to increase the regularity of $u$ over $v$ and so we see that we should look for solutions $t\mapsto u_t \in H^{1/2-} := \cap_{\alpha<1/2}H^\alpha$. This we can do almost directly by the Duhamel principle, and obtaining a fixed point locally in time. However, anticipating that in order to obtain global well-posedness we aim to obtain $\mbP$-a.s. a priori bounds on the quantities $\|u\|_{C_TL^p(\mbT)}$, we realise that we will need more than $1/2-$ spatial regularity. Therefore we do not work with $u$ directly but instead subtract the solution to the linear equation, \eqref{eq:SHE} in order to leave behind a more regular remainder.\\ \par
Observe that if $u$ solves \eqref{eq:StochBurgers} and we define $w := u-v$, where $v$ solves \eqref{eq:SHE}, then $w$ solves,
\begin{equation}\label{eq:BurgersRemainder}
	\begin{cases}
		\dd w_t = (\partial_{xx} w + \partial_x (w_t+v_t)^2)\dd t, & \text{ on }[0,T]\times \mbT,\\
		w\tzero = w_0, &\text{ on }\mbT.
	\end{cases}
\end{equation}
Since any solution to \eqref{eq:BurgersRemainder} also defines a solution to \eqref{eq:StochBurgers} by the formula, $u=w+v$, we now attempt to find solutions $w$ in a suitable Sobolev space. Our hope is that having subtracted the least regular part from $u$, our solution to \eqref{eq:BurgersRemainder} will live in a Sobolev space of regularity index above one.\\ \par 
It will also be useful to extend the scale of Sobolev spaces introduced above to the scale of Bessel potential spaces. For $\alpha \in \mbR$ and $p\in (1,\infty)$ we define,
\begin{equation}\label{eq:BesselPotentialSpaces}
	W^{\alpha,p}:= W^{\alpha,p}(\mbT) :=  \left\{ \varphi \in \mcS'(\mbT)\,:\, \|\varphi\|^2_{W^{\alpha,p}}:= \sum_{k\in \mbZ} (1+|k|^2)^{\frac{\alpha p}{2}} |\langle \varphi,e_k\rangle|^p <\infty \right\}.
\end{equation}
We make some remarks regarding the space $W^{\alpha,p}$.
\begin{itemize}
	\item The definition extends naturally for $d > 1$.
	\item For $\alpha \in \mbN$ these spaces agree with the usual Sobolev spaces.
	\item The analogous definition when $p=\infty$ defines the scales of H\"older spaces on $\mbT$ for $\alpha \in \mbR\setminus \mbZ$, and when $\alpha \in \mbN$ and $p=\infty$ these spaces agree with the usual $W^{k,\infty}$ spaces.
	\item For $\alpha =0$ and $p \in (1,\infty)$, we have $W^{0,p}(\mbT)=L^p(\mbT)$. When $\alpha \neq 0$ and $p=2$ we retain the notation $H^\alpha(\mbT)$.
	\item The derivative $\partial_x$ is a bounded linear map from $W^{\alpha,p}(\mbT)\rightarrow W^{\alpha-1,p}(\mbT)$. This is easily seen through Fourier multipliers.
	\item For $ p>q \in (1,\infty)$ and $\alpha \in \mbR$, one has the Sobolev type inequality,
	\begin{equation}\label{eq:SobEmbedding}
		\|\varphi\|_{W^{\alpha,p}(\mbT^d)}\leq C_{\alpha,p,q} \|\varphi\|_{W^{\beta,q}(\mbT^d)},\quad \beta = \alpha + d\left(\frac{1}{q}-\frac{1}{p}\right).
	\end{equation}
	\item The heat semi-group, $e^{t\Delta}$, has the following regularising effect, for all $q>p \in (1,\infty)$ and $\beta -2<\alpha \in \mbR$,
	\begin{equation}\label{eq:HeatFlow}
		\|e^{t\Delta}\varphi\|_{W^{\beta,q}} \leq C_{\alpha,\beta,p,q} t^{-\frac{\beta-\alpha}{2} - d\left(\frac{1}{p}-\frac{1}{q}\right)} \|\varphi\|_{W^{\alpha,p}}.
	\end{equation}
\end{itemize}
For more details on these spaces and the listed properties see for example \cite[Ch. 2]{bahouri_chemin_danchin_11}.\\ \par
Fix $T>0$, a $\mbP$-null set  $\msN\subset\Omega$ and an $\omega \in \Omega\setminus \msN$, such that $v:=v(\omega)\in \mcC_T^{\kappa'} H^{\alpha-2\kappa'}$ for some $\kappa' \in [0,1)$ and $\alpha<1/2$. We drop the explicit dependence on $\omega\in \Omega\setminus\msN$ from now on. By the Sobolev embedding, \eqref{eq:SobEmbedding}, for any $\alpha \in [0,1/2)$, 
we have that
\begin{equation*}
	\|v\|_{C_TL^p} \leq C_p \|v\|_{C_TH^\alpha},\quad p := \frac{2}{1-2\alpha},
\end{equation*}
%
%
Note that we can chose $\alpha \in [0,1/2)$ so as to achieve any  $p\in [0,\infty)$. \\ \par 
By Duhamel's principle, a solution to \eqref{eq:BurgersRemainder} will be given by a fixed point of the map,
\begin{equation}\label{eq:BurgersMild}
	\Psi w_t := e^{t\Delta}w_0+\int_0^t e^{(t-s)\Delta} \partial_x(w_s+v_s)^2 \dd s.
\end{equation}
We introduce the notation, $\lesssim$, to indicate that an inequality holds up to an unimportant constant. If we wish to specify some parameters on which this constant does depend, we write for example, $\lesssim_{\beta,p}$.
\vspace{-0.2em}
\begin{theorem}\label{th:BurgersLocal}
	Let $p \in [2,\infty)$ and $w_0 \in L^p(\mbT)$. Then there exists a $T_\ast \in (0,T]$ such that a unique, mild solution, $w \in C_{T_*}L^p$, exists for \eqref{eq:BurgersRemainder}. Furthermore, if $p>4$, there exists a $\beta>1$ such that for any $t \in (0,T_*]$, $\|w_t\|_{H^\beta}<\infty$.
\end{theorem}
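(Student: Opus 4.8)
\emph{Strategy.} The plan is to solve \eqref{eq:BurgersRemainder} by a Banach fixed point argument for the Duhamel map $\Psi$ of \eqref{eq:BurgersMild} on a ball in $C_{T_*}L^p$, using the smoothing estimate \eqref{eq:HeatFlow} of the heat semigroup to absorb the spatial derivative sitting in the nonlinearity, and then to upgrade the regularity of the fixed point by a two-step bootstrap. Throughout I use that, by the embedding \eqref{eq:SobEmbedding} applied to $v\in \mcC_T^{\kappa'}H^{\alpha-2\kappa'}$ for every $\alpha<1/2$, the fixed realisation $v=v(\omega)$ lies in $C_TL^r$ for every finite $r$, and hence $v^2\in C_TL^r$ for every finite $r$ as well.

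\emph{Existence and uniqueness in $C_{T_*}L^p$.} First I would record the core nonlinear estimate. For $g\in C_{T_*}L^p$ one has $(g+v)^2\in C_{T_*}L^{p/2}$ with $\|(g_s+v_s)^2\|_{L^{p/2}}\lesssim (\|g_s\|_{L^p}+\|v_s\|_{L^p})^2$, so $\partial_x(g_s+v_s)^2\in W^{-1,p/2}$. Applying \eqref{eq:HeatFlow} from $W^{-1,p/2}$ into $W^{0,p}=L^p$ (legitimate since $p>p/2$ and $0-2<-1$) gives
\begin{equation*}
\left\|e^{(t-s)\Delta}\partial_x(g_s+v_s)^2\right\|_{L^p}\lesssim (t-s)^{-\frac12-\frac1p}\,(\|g_s\|_{L^p}+\|v_s\|_{L^p})^2 .
\end{equation*}
Since $e^{t\Delta}$ is a contraction on $L^p$, integrating in $s$ and using $\int_0^t(t-s)^{-\frac12-\frac1p}\dd s = c_p\,t^{\frac12-\frac1p}$ yields
\begin{equation*}
\|\Psi g\|_{C_{T_*}L^p}\lesssim \|w_0\|_{L^p}+T_*^{\frac12-\frac1p}\big(\|g\|_{C_{T_*}L^p}+\|v\|_{C_TL^p}\big)^2 ,
\end{equation*}
which is finite precisely when $p>2$ and collapses to the linear part as $T_*\to0$. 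The same kernel, applied to the factorisation $(g^1+v)^2-(g^2+v)^2=(g^1-g^2)(g^1+g^2+2v)$, gives a Lipschitz bound with constant $\lesssim T_*^{\frac12-\frac1p}$. Hence for $T_*$ small (depending on $\|w_0\|_{L^p}$ and $\|v\|_{C_TL^p}$) $\Psi$ maps a suitable ball into itself and is a contraction, producing the unique mild solution $w\in C_{T_*}L^p$. The endpoint $p=2$ is genuinely borderline, since the exponent $-\tfrac12-\tfrac1p$ becomes $-1$; I would treat it by working in a time-weighted norm $\sup_{t}t^{a}\|w_t\|_{L^{p}}$ (equivalently, by first solving in $L^{2+\varepsilon}$), which is the only place the power of the singular kernel is delicate.

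\emph{Bootstrap to $\beta>1$ when $p>4$.} With the fixed point in hand I would gain regularity in two steps. Step one stays in the $L^p$ scale: from $\partial_x(w_s+v_s)^2\in W^{-1,p/2}$, the estimate \eqref{eq:HeatFlow} into $W^{\sigma,p}$ (target integrability $q=p>p/2$) carries a kernel $(t-s)^{-\frac{\sigma+1}2-\frac1p}$, integrable in $s$ iff $\sigma<1-\tfrac2p$. Since $p>4$ gives $1-\tfrac2p>\tfrac12$, I obtain $w_t\in W^{\sigma,p}$ for some $\sigma\in(\tfrac12,1)$ and every $t\in(0,T_*]$ (with a harmless time-singularity as $t\to0$). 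By \eqref{eq:SobEmbedding}, $\sigma>\tfrac1p$ gives $w_t\in L^\infty$ and $w_t\in H^{\sigma'}$ with $\sigma'>\tfrac12$. Step two is the $L^2$-scale estimate reaching $\beta>1$: I must control the regularity of $(w_s+v_s)^2=w_s^2+2w_sv_s+v_s^2$. The term $w_s^2$ lies in $H^{\sigma'}$ (algebra, using $w_s\in L^\infty\cap H^{\sigma'}$, $\sigma'>\tfrac12$); $w_sv_s$ lies in $H^{\alpha}$ since $w_s\in W^{\sigma,p}$ with $\sigma>\tfrac12>\alpha$ is a multiplier on $H^\alpha$; and $v_s^2\in H^{\gamma}$ with $\gamma=2\alpha-\tfrac12>0$ for $\alpha\in(\tfrac14,\tfrac12)$, by the standard one-dimensional product estimate (cf. \cite[Ch. 2]{bahouri_chemin_danchin_11}). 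Thus $(w_s+v_s)^2\in H^{\gamma_0}$ with $\gamma_0=\gamma>0$, so $\partial_x(w_s+v_s)^2\in H^{\gamma_0-1}$, and the $L^2$-scale smoothing bound (the $q=p=2$ reduction of \eqref{eq:HeatFlow}) into $H^\beta$ has kernel $(t-s)^{-\frac{\beta-\gamma_0+1}2}$, integrable iff $\beta<1+\gamma_0$. Choosing $\beta\in(1,1+\gamma_0)$ and noting $e^{t\Delta}w_0\in H^\beta$ for every $t>0$ (as $w_0\in L^p\subset L^2$, $\beta<2$), I conclude $\|w_t\|_{H^\beta}<\infty$ for all $t\in(0,T_*]$.

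\emph{Main obstacle.} The one point that is not routine is the regularity of the quadratic nonlinearity when the rough input $v$ enters: $v$ is only in $H^{\alpha}$ with $\alpha<\tfrac12$, so a naive reading suggests $v^2$ might fail to supply the positive regularity $\gamma_0>0$ that the second bootstrap requires. What rescues the argument is purely one-dimensional: $v\in L^r$ for every finite $r$, so $v^2\in L^r$ for every $r$, and the sharp $1$d product estimate in fact places $v^2$ in $H^{2\alpha-1/2}$, which is positive once $\alpha>\tfrac14$. Keeping $\alpha$ close to $\tfrac12$ therefore makes $\gamma_0$ close to $\tfrac12$ and comfortably yields an admissible $\beta>1$; tracking the (integrable) time-singularities generated at each smoothing step is then only bookkeeping.
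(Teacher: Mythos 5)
Your existence-and-uniqueness step has the same skeleton as the paper's (Duhamel map \eqref{eq:BurgersMild}, contraction on a ball of $C_{T_*}L^p$, nonlinearity measured in $W^{-1,p/2}$), but your kernel estimate is lossier in a way that matters at the endpoint: smoothing directly from $W^{-1,p/2}$ into $L^p$ raises the integrability inside \eqref{eq:HeatFlow} and costs $(t-s)^{-\frac12-\frac1p}$, which is non-integrable precisely at $p=2$ --- a case the theorem includes. The paper instead smooths at \emph{fixed} integrability $p/2$ into $W^{1/p,p/2}$ and only then applies the Sobolev embedding \eqref{eq:SobEmbedding} into $L^p$; this costs $(t-s)^{-\frac12-\frac{1}{2p}}$, integrable for every $p\geq 2$, so no endpoint patch is needed. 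Of your two sketched repairs, the time-weighted norm would indeed work (1d Burgers is subcritical in $L^2$), but ``first solving in $L^{2+\varepsilon}$'' would not, since $L^2$ data need not lie in $L^{2+\varepsilon}$; the cleanest fix is simply to reorder smoothing and embedding as the paper does.

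Your bootstrap to $\beta>1$ is a genuinely different argument from the paper's, and it is the more solid of the two. The paper reaches $H^\beta$ in a single step, claiming $\|e^{(t-s)\Delta}\partial_x(w_s+v_s)^2\|_{H^\beta}\lesssim (t-s)^{-\frac{\beta+1}{2}-\frac2p+\frac12}\|\partial_x(w_s+v_s)^2\|_{W^{-1,p/2}}$; but this applies \eqref{eq:HeatFlow} with target integrability $2$ strictly \emph{smaller} than the source integrability $p/2$ (recall $p>4$), outside its stated range $q>p$, and the claimed gain $+\tfrac12-\tfrac2p$ is not real: testing on a single Fourier mode $e_k$ at $t\sim |k|^{-2}$ shows the inequality forces $1\leq 4/p$, i.e.\ $p\leq 4$. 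The legitimate one-step exponent is $(t-s)^{-\frac{\beta+1}{2}}$ (embed $W^{-1,p/2}\hookrightarrow H^{-1}$, then smooth at fixed integrability), which only reaches $\beta<1$. Your two-step route avoids this entirely: the upgrade of $w_t$ to $W^{\sigma,p}$, $\sigma\in(\tfrac12,1-\tfrac2p)$, is a legal use of \eqref{eq:HeatFlow} (integrability increases from $p/2$ to $p$); the 1d product estimates then place $(w_s+v_s)^2$ in $H^{\gamma_0}$ with $\gamma_0=2\alpha-\tfrac12>0$ for $\alpha\in(\tfrac14,\tfrac12)$; and the final smoothing at fixed integrability $2$ reaches any $\beta<1+\gamma_0$, hence some $\beta>1$ once $\alpha$ is taken close to $\tfrac12$. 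The price is invoking the standard $H^s$ multiplication theorems, which the paper never needs to state; what it buys is a proof of the second assertion of the theorem that stays inside the valid range of every estimate it uses, where the paper's own one-step argument does not.
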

\begin{proof}
	The argument for existence and uniqueness is fairly standard and so we only sketch it. For more details in this particular case see \cite[Lem. 2.1]{daprato_debussche_temam_94}.\\ \par
	First we define the unit ball, for any $T_{**} \in (0,T]$, we set
	\begin{equation*}
		\mfB_{T_{**}}:= \left\{ w \in C([0,T_{**}];L^q(\mbT))\,:\,\|w\|_{C_{T_{**}}L^p}\leq 1  \right\}.
	\end{equation*}
	Then, with $\Psi$ defined as in \eqref{eq:BurgersMild}, we have
	\begin{align*}
		\|\Psi w_t\|_{L^p} &\leq \|e^{t\Delta}w_0\|_{L^p}+\int_0^t \|e^{(t-s)\Delta}\partial_x(w_s+v_s)^2\|_{L^p}\,\dd s\\
		&\lesssim_p \|w_0\|_{L^p}+\int_0^t \|e^{(t-s)\Delta}\partial_x (w_s+v_s)^2\|_{W^{\frac{1}{p},\frac{p}{2}}}\,\dd s\\
		&\lesssim_p\|w_0\|_{L^p}+ \int_0^t (t-s)^{- \frac{1}{2p}-\frac{1}{2}} \|\partial_x (w_s+v_s)^2\|_{W^{-1,\frac{p}{2}}}\,\dd s\\
		&\lesssim_p \|w_0\|_{L^p}+\int_0^t (t-s)^{- \frac{1}{2p}-\frac{1}{2}} \| (w_s+v_s)^2\|_{L^{\frac{p}{2}}}\,\dd s\\
		&\lesssim_p \|w_0\|_{L^p}+\sup_{s \in [0,t]}\left(\|w_s\|^2_{L^p} + \|v_s\|^2_{L^p}\right)t^{\frac{1}{2}-\frac{1}{2p}}.
	\end{align*}
	This shows both that the right hand side of \eqref{eq:BurgersMild} is well defined for any $w \in \mfB_{T_{**}}$ and that choosing $T_{**}\in (0,T]$ sufficiently small we have that $\Psi:\mfB_{T_{**}}\rightarrow \mfB_{T_{**}}$. Considering $w,\,\tilde{w}\in \mfB_{T_{**}}$ and using similar estimates one additionally obtains, for some $T_* \in (0,T_{**}]$,
	\begin{equation*}
		\sup_{t \in T_*} \|\Psi w_t -\Psi w_s\|_{L^p} < \|w-\tilde{w}\|_{C_{T_*}L^p}.
	\end{equation*}
	Applying Banach's fixed point theorem we obtain a unique fixed point of $\Psi$ in $\mfB_{T^\ast}$, which by definition is a mild solution to \eqref{eq:BurgersRemainder}. It is not difficult to show that this fixed point is in fact unique in all of $C_{T_*}L^p$ which completes the proof of local existence and uniqueness. From now on we write $w \in C_{T_*}L^p$ for this solution.\\ \par
	In order to show high regularity at positive times we may use the fact that $\|w\|_{C_{T_*}L^p} \leq 1$ and $\|v\|_{C_TL^p} <\infty$. Let, $p>4$ and fix $\beta \in (1,2-4/p)$, by assumption this interval is non-trivial. Therefore, for any $t \in (0,T_*)$, and with $\alpha \in (0,1/2)$ as above,
	\begin{align*}
		\|w_t\|_{H^\beta} &\leq \|e^{t\Delta}w_0\|_{H^\beta} + \int_0^t \|e^{(t-s)\Delta}\partial_x (w_s+v_s)^2\|_{H^\beta}\dd s\\
		&\lesssim_\beta t^{-\frac{\beta}{2} - \frac{1}{p} + \frac{1}{2}} \|w_0\|_{L^p} +\int_0^t (t-s)^{-\frac{\beta+1}{2} - \frac{2}{p}+\frac{1}{2}}\|\partial_x (w_s+v_s)^2\|_{W^{-1,\frac{p}{2}}}\dd s\\
		&\lesssim_\beta t^{-\frac{\beta}{2} - \frac{1}{p} + \frac{1}{2}}\|w_0\|_{L^p} +\int_0^t (t-s)^{-\frac{\beta+1}{2} - \frac{2}{p}+\frac{1}{2}}\left(\|w_s\|^2_{L^p}+\|v_s\|^2_{L^p}\right)\dd s\\
		&\lesssim_\beta t^{-\frac{\beta}{2} - \frac{1}{p} + \frac{1}{2}}\|w_0\|_{L^p}  + t^{1- \frac{\beta}{2} -\frac{2}{p}}\left(1 + \|v\|^2_{C_TH^\alpha}\right)\\
		&<\infty.
	\end{align*}
\end{proof}
We proceed to establish an a priori bound on $\sup_{t \in T_*}\|w_t\|_{L^p}$, for $p\geq 2$. First, one needs to show that the mild solution constructed above is a weak solution, that is for any $\varphi \in H^1(\mbT)$, $t \in (0,T_*]$, one has
\begin{equation*}
	\langle w_t,\varphi\rangle  = \langle w_0,\varphi\rangle  + \int_0^t \langle \partial_x w_s,\partial_x \varphi\rangle + \langle \partial_x(w_s+v_s)^2,\varphi\rangle \dd s.
\end{equation*}
Since we have already established, $w \in C_{T_*}H^\beta \hookrightarrow C_{T_*}H^1$, one may argue, either by finite dimensional approximations or considering an increasing sequence of partitions of $[0,t]$, that we have the identity,
\begin{equation}\label{eq:RemainderPTest}
	\frac{1}{p}\left(\|w_t\|^p_{L^p}-\|w_0\|^p_{L^p}\right) = -\int_{0}^{t} \langle \partial_x w_s,\partial_xw_s^{p-1}\rangle + \langle  (w_s+v_s)^2,\partial_x w_s^{p-1}\rangle\dd s. 
\end{equation}
For details of this kind of argument see \cite[Sec. 3]{daprato_debussche_temam_94}, \cite[Sec. 6]{mourrat_weber_17_GWP}. Taking the identity \eqref{eq:RemainderPTest} as given we establish the following a priori estimate.
\begin{lemma}\label{lem:BurgersApriori}
	Let $p\geq 2$, $\bar{T}>0$ and $w \in C([0,\bar{T});L^p(\mbT))\cap C((0,\bar{T});H^1(\mbT))$  be a solution to \eqref{eq:BurgersRemainder}. Then, for any $t \in (0,\bar{T})$, with $\alpha \in (0,1/2)$ as above, we have that
	\begin{equation}\label{eq:RemApriori}
		\|w_t\|_{L^p} \lesssim_{p,T} \|w_0\|_{L^p} + \|v\|_{C_TH^\alpha}.
	\end{equation}
\end{lemma}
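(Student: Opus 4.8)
The plan is to run a standard parabolic $L^p$ energy estimate on the tested identity \eqref{eq:RemainderPTest}, using the dissipation to absorb every term carrying a spatial derivative of $w$ and the conservative structure of the Burgers nonlinearity to annihilate its leading contribution. Throughout fix $p\geq 2$, write $\phi := |w|^{p/2-1}w$, and recall that by hypothesis $w_t\in H^1(\mbT)\hookrightarrow L^\infty(\mbT)$ for $t>0$, so that $\phi\in H^1(\mbT)$ and all the chain-rule manipulations below are licit (the map $s\mapsto|s|^{p/2-1}s$ is $C^1$ for $p\geq 2$).

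First I would rewrite the two terms on the right of \eqref{eq:RemainderPTest}. Since $\partial_x w^{p-1} = (p-1)|w|^{p-2}\partial_x w$ and $\partial_x\phi = \tfrac{p}{2}|w|^{p/2-1}\partial_x w$, the dissipative term becomes
\[
\langle\partial_x w,\partial_x w^{p-1}\rangle = (p-1)\int_{\mbT}|w|^{p-2}(\partial_x w)^2\dd x = \frac{4(p-1)}{p^2}\|\partial_x\phi\|_{L^2}^2 =: D\geq 0.
\]
For the nonlinear term expand $(w+v)^2 = w^2 + 2wv + v^2$. The crucial observation is that the pure transport contribution vanishes: $\int_{\mbT}w^2|w|^{p-2}\partial_x w\dd x = \int_{\mbT}|w|^p\partial_x w\dd x = \int_{\mbT}\partial_x\big(\tfrac{|w|^p w}{p+1}\big)\dd x = 0$ by periodicity. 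This is precisely the conservative form of the Burgers nonlinearity and is the structural heart of the estimate. Hence, up to sign, only the two mixed terms $I_1 = 2(p-1)\int_{\mbT} v|w|^{p-2}w\,\partial_x w$ and $I_2 = (p-1)\int_{\mbT} v^2|w|^{p-2}\partial_x w$ survive, and both carry a factor $\partial_x\phi$ after writing $|w|^{p-2}w\,\partial_x w = \tfrac{2}{p}\phi\,\partial_x\phi$ and $|w|^{p-2}\partial_x w = \tfrac{2}{p}|w|^{p/2-1}\partial_x\phi$.

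Next I would estimate $I_1, I_2$ so as to leave only a small multiple of $D$ plus lower-order mass terms. Young's inequality gives $|I_1|\leq \varepsilon\|\partial_x\phi\|_{L^2}^2 + C_\varepsilon\int_{\mbT} v^2\phi^2$ and $|I_2|\leq\varepsilon\|\partial_x\phi\|_{L^2}^2 + C_\varepsilon\int_{\mbT} v^4|w|^{p-2}$. The remaining integrals are handled by Hölder together with one-dimensional Gagliardo--Nirenberg interpolation of $\phi$ between $L^2$ and $\dot H^1$: one uses $\int_{\mbT} v^2\phi^2\leq\|v\|_{L^{2s}}^2\|\phi\|_{L^{2s'}}^2$ with $\|\phi\|_{L^{2s'}}^2\lesssim\|\partial_x\phi\|_{L^2}^{2\theta}\|\phi\|_{L^2}^{2(1-\theta)}+\|\phi\|_{L^2}^2$, $\theta = \tfrac12-\tfrac1{2s'}\in(0,\tfrac12)$, and $\int_{\mbT} v^4|w|^{p-2}\leq\|v\|_{L^{2p}}^4\|w\|_{L^p}^{p-2}$. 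Since $2\theta<2$, a further Young step converts the interpolated factor into $\varepsilon\|\partial_x\phi\|_{L^2}^2 + C_\varepsilon\|v\|_{L^{2s}}^{2/(1-\theta)}\|\phi\|_{L^2}^2$, and recalling $\|\phi\|_{L^2}^2 = \|w\|_{L^p}^p$ everything reduces to the dissipation and to powers of $\|w\|_{L^p}^p$. All the $L^r(v)$ norms that appear are finite and controlled by $\|v\|_{C_TH^\alpha}$ through the embedding $H^\alpha(\mbT)\hookrightarrow L^r(\mbT)$ with $r = \tfrac2{1-2\alpha}$, provided $\alpha<\tfrac12$ is taken close enough to $\tfrac12$ (depending on $p$, $s$).

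Collecting these bounds and choosing $\varepsilon$ small enough to absorb the $\|\partial_x\phi\|_{L^2}^2$ contributions into $D$ yields a differential inequality $\tfrac{\dd}{\dd t}\|w_t\|_{L^p}^p\leq C\big(1+\|w_t\|_{L^p}^p\big)$ on $(0,\bar T)$, with $C = C(p,\|v\|_{C_TH^\alpha})$; Grönwall then gives $\|w_t\|_{L^p}^p\leq(\|w_0\|_{L^p}^p+1)e^{CT}$, which is \eqref{eq:RemApriori} (the implicit constant in $\lesssim_{p,T}$ absorbing the continuous, $w_0$-independent dependence on $\|v\|_{C_TH^\alpha}$). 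I expect the main obstacle to be the interpolation bookkeeping: one must verify that the Gagliardo--Nirenberg exponent satisfies $2\theta<2$ so that Young's inequality genuinely closes (leaving sub-dissipative powers of $\|\partial_x\phi\|_{L^2}$ and only $\|w\|_{L^p}^p$ on the right), and one must pick $\alpha$ and the Hölder exponents $s,s'$ compatibly so that every $L^r$-norm of $v$ appearing is finite. The vanishing of the cubic term and the sign of the dissipation are exactly what make such a choice possible; once they are in place, the rest is routine.
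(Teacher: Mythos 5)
Your proof is correct, and it shares the paper's overall skeleton: start from the tested identity \eqref{eq:RemainderPTest}, expand $(w+v)^2$, kill the cubic term $\int_{\mbT}|w|^{p}\partial_x w\,\dd x=0$ by periodicity, absorb every derivative-carrying cross term into the dissipation, and close with Gr\"onwall. Where you genuinely diverge is in the estimation of the two mixed terms. The paper pulls out $\|v_s\|_{L^\infty}$ and uses a bare Cauchy--Schwarz/Young absorption, ending with a constant involving $\|v\|_{C_TL^\infty}$; this is shorter, but strictly speaking $\|v\|_{C_TL^\infty}$ is \emph{not} controlled by $\|v\|_{C_TH^\alpha}$ for $\alpha<1/2$ (in one dimension $H^{1/2}$ just fails to embed into $L^\infty$), so the paper's proof as written is not consistent with the norm appearing in the statement of the lemma --- the gap is reparable (the Ornstein--Uhlenbeck process $v$ is in fact a.s.\ spatially continuous, e.g.\ via a H\"older--Besov version of the Kolmogorov argument), but it sits outside the $H^\alpha$-framework the chapter sets up. Your route avoids this entirely: by writing everything in terms of $\phi=|w|^{p/2-1}w$ and using H\"older with finite exponents plus one-dimensional Gagliardo--Nirenberg interpolation and Young, you only ever invoke $\|v\|_{L^r}$ for finite $r$, all of which are legitimately bounded by $\|v\|_{C_TH^\alpha}$ with $\alpha<1/2$ chosen close enough to $1/2$. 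As a side benefit your argument treats all real $p\geq 2$ at once, whereas the paper argues for even integer $p$ and then passes to general $p$ via the embedding $L^{p}\hookrightarrow L^{p'}$. The price is the extra interpolation bookkeeping, which you have carried out correctly ($\theta=\tfrac12-\tfrac1{2s'}<1$, so the Young step does close). One last remark: like the paper's own proof, your final constant depends exponentially on $\|v\|_{C_TH^\alpha}$, so the $\lesssim_{p,T}$ in \eqref{eq:RemApriori} must be read as hiding that dependence --- this matches the paper's reading ("abstracting the unimportant constants"), so it is not a defect of your argument relative to the intended statement.
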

\begin{proof}
	Integrating by parts on the right hand side of \eqref{eq:RemainderPTest}, it follows that, for $p\geq 2$ and even, one has,
	\begin{align*}
		\frac{1}{p(p-1)}\left(\|w_t\|^p_{L^p}-\|w_0\|^p_{L^p}\right)& = -\int_{0}^{t} \| w_s^{p-2}|\partial_x w_s|^2\|_{L^1} \dd s \\
		&\quad -\int_0^t \langle w_s^{p},\partial_x w_s\rangle + 2\langle w_s^{p-1}v_s,\partial_x w_s\rangle   + \langle v_s^2, w^{p-2}_s\partial_x w_s\rangle \dd s. 
	\end{align*}
	We control the final three terms separately: firstly we have
	\begin{equation*}
		\langle w^p_s,\partial_x w_s\rangle = \frac{1}{p+1}\int_{\mbT} \partial_x w_s^{p+1}  =0;
	\end{equation*}
	for the second, letting $c>1$ be unspecified for now, we have
	\begin{align*}
		\langle w_s^{p-1}v_s,\partial_x w_s\rangle  & \leq \|v_s\|_{L^\infty} |\langle w_s^{p/2},w_s^{p/2-1}\partial_x w_s\rangle|\\
		&\leq \|v_s\|_{L^\infty} \|w_s\|^{p/2}_{L^p}\| w_s^{p-2}|\partial_x w_s|^2\|^{1/2}_{L^1}\\
		&\leq \frac{c}{2}\|v_s\|^2_{L^\infty} \|w_s\|^{p}_{L^p} + \frac{1}{2c}\| w_s^{p-2}|\partial_x w_s|^2\|_{L^1};
	\end{align*}
	and for the third, with $c>1$ as above,
	\begin{align*}
		\langle v_s^2, w^{p-2}_s\partial_x w_s\rangle  &\leq \|v_s\|^2_{L^\infty}|\langle w_s^{p/2-1},w_s^{p/2-1}\partial_x w_s\rangle|\\
		&\leq  \|v_s\|^2_{L^\infty} \|w_s^{p-2}\|^{\frac{1}{2}}_{L^1} \|w_s^{p-2}|\partial_x w_s|^2\|^{1/2}_{L^1}\\
		&\leq \frac{c}{4}\|v_s\|^{2p}_{L^\infty} + \frac{c}{4}\|w_s\|^p_{L^p} + \frac{1}{2c} \|w_s^{p-2}|\partial_x w_s|^2\|_{L^1}.
	\end{align*}
	Putting these together gives that
	\begin{align*}
		\frac{1}{p(p-1)}\left(\|w_t\|^p_{L^p}-\|w_0\|^p_{L^p}\right)& \leq  -\frac{2c-3}{2}\int_{0}^{t} \| w_s^{p-2}|\partial_x w_s|^2\|_{L^1} \dd s + \int_0^t \|w_s\|^p_{L^p}\left(c\|v_s\|^2_{L^\infty}+\frac{c}{4}\right)\dd s\\
		&\quad + \frac{c}{4}\int_0^t \|v_s\|^{2p}_{L^\infty} \dd s.
	\end{align*}
	Choosing $c>3/2$ so that the first term is negative, and applying Gr\"onwall we have that for all $t\in T_*$,
	\begin{equation*}
		\|w_t\|^p_{L^p} \leq \left(\|w_0\|^p_{L^p} + \frac{cp(p-1)}{4}\int_0^t \|v_s\|^{2p}_{L^p}\dd s \right) e^{t\left(c\|v\|^2_{C_TL^\infty}+\frac{c}{4}\right)}.
	\end{equation*}
	Taking the supremum outside the integral and abstracting the unimportant constants gives \eqref{eq:RemApriori} for $p\geq 2$ and even. To obtain the same for $p$ odd one may use the embedding $L^p(\mbT)\hookrightarrow L^{p'}(\mbT)$ for $p >p'$.
	\end {proof}
	With this a priori bound in hand, it is straightforward to demonstrate global well-posedness for \eqref{eq:BurgersRemainder}, at least for $w_0 \in L^p(\mbT)$, with $p>4$. Note that in \cite{daprato_debussche_temam_94}, the same result is obtained by a more refined version of this method, for $p\geq 2$.
	\begin{theorem}
		Let $T>0$, $p>4$ and $w_0 \in L^p(\mbT)$. Then there exists a unique, mild solution, $w \in C_TL^p$, to \eqref{eq:BurgersRemainder}.
	\end{theorem}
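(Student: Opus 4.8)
The plan is to combine the local well-posedness of Theorem \ref{th:BurgersLocal} with the a priori bound of Lemma \ref{lem:BurgersApriori} through a standard continuation argument. Local existence and uniqueness already supply a unique mild solution $w \in C_{T_*}L^p$ on some interval $[0,T_*]$; since $p > 4$, Theorem \ref{th:BurgersLocal} furthermore places $w_t \in H^\beta \hookrightarrow H^1$ for every $t \in (0,T_*]$ with $\beta>1$, so that the solution lies in the class $C([0,\bar T);L^p) \cap C((0,\bar T);H^1)$ to which Lemma \ref{lem:BurgersApriori} applies. (The restriction $p>4$ enters precisely here, to guarantee the smoothing into $H^1$ needed for the a priori estimate.) The strategy is then to show that this local solution cannot cease to exist before time $T$.

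First I would define the maximal existence time
\[
	T_{\max} := \sup\{\, \bar T \in (0,T] : \text{a unique mild solution } w \in C_{\bar T}L^p \text{ of the remainder equation exists}\,\},
\]
which is well defined and positive by Theorem \ref{th:BurgersLocal}, and argue by contradiction assuming $T_{\max} < T$. The key input is that the a priori bound \eqref{eq:RemApriori} is uniform: for any $\bar T \le T_{\max}$ it yields $\sup_{t \in [0,\bar T)}\|w_t\|_{L^p} \le M$, where $M := C(p,T)\big(\|w_0\|_{L^p} + \|v\|_{C_TH^\alpha}\big)$ is finite (recall $v = v(\omega) \in C_T H^\alpha$ for $\omega \notin \msN$) and independent of $\bar T$ and of $t$. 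Thus the $L^p$ norm of the solution stays bounded by $M$ as $t \nearrow T_{\max}$.

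The crucial technical point is that the local existence time furnished by Theorem \ref{th:BurgersLocal} depends on the initial datum only through its $L^p$ norm and through $\|v\|_{C_TL^p}$. Re-examining the contraction estimate in that proof — the map $\Psi$ sends a ball of radius comparable to $\|w_0\|_{L^p}$ into itself once the time horizon is taken small, with the smallness threshold governed only by $\|w_0\|_{L^p}$ and $\|v\|_{C_TL^p}$ — one extracts a uniform lower bound $\delta = \delta(M, \|v\|_{C_TL^p}, p) > 0$ on the existence time valid for any initial datum of $L^p$-norm at most $M$. Choosing $t_0 \in (T_{\max}-\tfrac{\delta}{2}, T_{\max})$ and solving the remainder equation afresh from the datum $w_{t_0}$ (which satisfies $\|w_{t_0}\|_{L^p} \le M$) produces a solution on $[t_0, t_0 + \delta]$, and since $t_0+\delta > T_{\max}+\tfrac{\delta}{2}$ this overshoots $T_{\max}$. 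By the uniqueness clause of Theorem \ref{th:BurgersLocal} the new solution agrees with $w$ on the overlap and hence extends $w$ strictly past $T_{\max}$, contradicting maximality. Therefore $T_{\max} = T$, giving the global solution $w \in C_TL^p$; global uniqueness follows from local uniqueness by a connectedness argument on $[0,T]$.

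I expect the main obstacle to be exactly the verification that the local existence time is uniform over initial data of bounded $L^p$-norm, i.e. isolating from the proof of Theorem \ref{th:BurgersLocal} the precise dependence of $T_*$ on $\|w_0\|_{L^p}$ and $\|v\|_{C_TL^p}$ rather than treating $T_*$ as an opaque constant tied to the particular datum $w_0$. Once this uniform-in-time restart is in place, the blow-up alternative (either global existence, or $\|w_t\|_{L^p}\to\infty$ in finite time) together with the uniform a priori bound \eqref{eq:RemApriori} closes the argument immediately.
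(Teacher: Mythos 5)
Your proposal is correct and follows essentially the same route as the paper: the paper's proof is a two-line invocation of the blow-up alternative (from Theorem \ref{th:BurgersLocal}) combined with the a priori bound of Lemma \ref{lem:BurgersApriori}, and your maximal-time/restart argument is precisely the standard continuation reasoning that underlies that alternative, spelled out in detail. Your closing remark correctly identifies the one point the paper glosses over — that the local existence time from Theorem \ref{th:BurgersLocal} depends on the data only through $\|w_0\|_{L^p}$ and $\|v\|_{C_T L^p}$ — which is exactly what makes the paper's "either blow-up or extend" dichotomy valid.
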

	\begin{proof}
		From Theorem \ref{th:BurgersLocal}, it is clear that for any $\bar{T}\in (0,T]$ and a solution $w \in C([0,\bar{T});L^p(\mbT))$ to \eqref{eq:BurgersRemainder}, then either $\lim_{t\rightarrow \bar{T}} \|w_t\|_{L^p} =\infty$ or $w$ is in fact a solution on all of $[0,\bar{T}]$. However, Lemma \ref{lem:BurgersApriori} shows that the former cannot be the case and so we may extend the solution indefinitely.
	\end{proof}
	It follows that $u:= w +v$ defines a global solution to \eqref{eq:StochBurgers}. In \cite{daprato_debussche_temam_94} it was additionally shown that for every $u_0 \in L^p((0,1))$, with $p\geq 2$, there exists an invariant measure, $\nu_{u_0} \in \mcP(L^p((0,1)))$, for \eqref{eq:StochBurgers}. Further results concerning both the Burgers and Navier--Stokes equations with additive noise, including uniqueness of the invariant measure and exponential ergodicity, can be found  in \cite{goldys_maslowski_05,flandoli_maslowski_95, daprato_debussche_04}. Some general introductions to ergodic theory for SPDE can be found in \cite{daprato_zabczyk_96, hairer_09, hairer_16_Ergodic}.

	\bibliographystyle{alpha}
	\bibliography{ThesisBib.bib}	

\end{document}